\newtheorem{theorem}{Theorem}
\newtheorem{corollary}{Corollary}
\newtheorem{lemma}{Lemma}
\newtheorem{remark}{Remark}
\numberwithin{equation}{section}
\numberwithin{theorem}{section}
\numberwithin{lemma}{section}
\numberwithin{remark}{section}
\numberwithin{corollary}{section}
\pgfplotsset{compat=newest}
\tikzstyle{decision} = [diamond, draw, fill=blue!20, text badly centered, node distance=3cm, inner sep=0pt]
\tikzstyle{sdecision} = [diamond, draw, fill=blue!20, text centered, node distance=3cm, inner sep=0pt, text width=3cm]
\tikzstyle{block} = [rectangle, draw, fill=blue!20, text centered, rounded corners]
\tikzstyle{sblock} = [rectangle, draw, fill=blue!20, text centered, rounded corners, text width = 6cm]
\tikzstyle{line} = [draw, -latex']
\tikzstyle{cloud} = [draw, ellipse,fill=red!20, node distance=3cm,
\tikzstyle{scloud} = [draw, ellipse,fill=red!20, node distance=3cm,
\definecolor{blau0} {RGB}{ 131 198 216} 
\definecolor{grau}  {RGB}{  0  84 159}
\definecolor{rot}   {RGB}{204   7  30}
\definecolor{blau2} {RGB}{  0  61 128}
\definecolor{grun2} {RGB}{  0 85   0}
\definecolor{rot2}  {RGB}{120   7  30}
\definecolor{gelb}  {RGB}{70 70 70}
\definecolor{blau}{RGB}{0 144 188}
\definecolor{newblue1}{RGB}{0 144 188}
\definecolor{newblue2}{RGB}{197 216 227}
\definecolor{newgreen1}{RGB}{0 144 118}
\definecolor{grun}{RGB}{255 137 0}
\definecolor{newgreen2}{RGB}{197 222 215}
\definecolor{neworange1}{RGB}{255 137 0}
\definecolor{neworange2}{RGB}{255 205 105}
\renewcommand{\vec}{\bm}
\newcommand{\tensor}{\bm}
\newcommand{\x}{x}
\newcommand{\y}{y}
\newcommand{\stress}{\tensor{\sigma}}
\newcommand{\generalizedstress}{\tensor{\Sigma}}
\newcommand{\testgeneralizedstress}{\tensor{T}}
\newcommand{\g}{\vec{g}}
\newcommand{\f}{\vec{f}}
\renewcommand{\u}{\vec{u}}
\newcommand{\U}{\vec{U}}
\renewcommand{\v}{\vec{v}}
\newcommand{\w}{\vec{w}}
\newcommand{\flux}{\vec{q}}
\newcommand{\entropyflux}{\vec{j}}
\newcommand{\entropyfluxint}{\vec{j}_{\int}}
\newcommand{\dotentropyfluxint}{\dot{\vec{j}}_{\int}}
\newcommand{\z}{\vec{z}}
\newcommand{\n}{\vec{n}}
\newcommand{\q}{q}
\newcommand{\viscostrain}{\tensor{\varepsilon}_\mathrm{v}}
\newcommand{\viscostrainhydrostatic}{\varepsilon_\mathrm{v}^\mathrm{h}}
\newcommand{\viscostraindeviatoric}{\tensor{\varepsilon}_\mathrm{v}^\mathrm{d}}
\newcommand{\zeroviscostrain}{\tensor{\varepsilon}_\mathrm{v,0}}
\newcommand{\dotviscostrain}{\dot{\tensor{\varepsilon}}_\mathrm{v}}
\newcommand{\dotviscostrainhydrostatic}{\dot{\varepsilon}_\mathrm{v}^\mathrm{h}}
\newcommand{\dotviscostraindeviatoric}{\dot{\tensor{\varepsilon}}_\mathrm{v}^\mathrm{d}}
\newcommand{\testviscostrain}{\tensor{t}}
\newcommand{\epshydrostatic}[1]{\varepsilon^\mathrm{h}\!\left(#1\right)}
\newcommand{\epsdeviatoric}[1]{\tensor{\varepsilon}^\mathrm{d}\!\left(#1\right)}
\newcommand{\masssource}{q_\fluidmass}
\newcommand{\entropysource}{q_S}
\newcommand{\intmasssource}{Q_\fluidmass}
\newcommand{\intentropysource}{Q_S}
\newcommand{\erroru}{\bm{e}_{\u}}
\newcommand{\errorviscostrain}{\bm{e}_{\viscostrain}}
\newcommand{\errorflux}{\bm{e}_{\flux}}
\newcommand{\errorentropyflux}{\bm{e}_{\entropyflux}}
\newcommand{\errorstress}{\bm{e}_{\stress}}
\newcommand{\errorpressure}{e_p}
\newcommand{\errortemperature}{e_T}
\newcommand{\NABLA}{\vec{\nabla}}
\newcommand{\DIV}{\NABLA\cdot}
\newcommand{\GRAD}{\NABLA}
\newcommand{\eps}[1]{\tensor{\varepsilon}\!\left(#1\right)\hspace{-0.025cm}}
\newcommand{\conductivity}{\tensor{\kappa}_\mathrm{F}}
\newcommand{\permeability}{\tensor{\kappa}}
\newcommand{\fluidmass}{\theta}
\newcommand{\fext}{\f_{\mathrm{ext}}}
\newcommand{\dotfext}{\dot{\f}_\mathrm{ext}}
\newcommand{\gext}{\g_{\mathrm{ext}}}
\newcommand{\alphamatrix}{\mbox{$\alpha$}}
\newcommand{\llangle}{\left\langle}
\newcommand{\rrangle}{\right\rangle}
\def\mathcolor#1#{\@mathcolor{#1}}
\def\@mathcolor#1#2#3{%
  \protect\leavevmode
  \begingroup
    \color#1{#2}#3%
  \endgroup
}
\begin{document}

\title{The gradient flow structures of thermo-poro-visco-elastic processes in porous media}

\author{Jakub W.\ Both\thanks{Department of Mathematics, University of Bergen, Bergen, Norway; $\{$\texttt{erlend.storvik@uib.no, jakub.both@uib.no, jan.nordbotten@uib.no, florin.radu@uib.no}$\}$} \and
       Kundan Kumar\thanks{Department of Mathematics and Computer Science, Karlstad University, Karlstad, Sweden; $\{$\texttt{kundan.kumar@kau.se}$\}$}  \and 
       Jan M.\ Nordbotten\footnotemark[1] \and
       Florin A.\ Radu\footnotemark[1]
}

\date{}


\maketitle

\begin{abstract}
In this paper, the inherent gradient flow structures of thermo-poro-visco-elastic processes in porous media are examined for the first time. In the first part, a modelling framework is introduced aiming for describing such processes as generalized gradient flows requiring choices of physical states, corresponding energies, dissipation potentials and external work rates. It is demonstrated that various existing models can be in fact written within this framework. Ultimately, the particular structure allows for a unified well-posedness analysis performed for different classes of linear and non-linear models. In the second part, the gradient flow structures are utilized for constructing efficient discrete approximation schemes for thermo-poro-visco-elasticity -- in particular robust, physical splitting schemes. Applying alternating minimization to naturally arising minimization formulations of (semi-)discrete models is proposed. For such, the energy decrease per iteration is quantified by applying abstract convergence theory only utilizing convexity and Lipschitz continuity properties of the problem -- a fairly simple but flexible machinery. By this approach, e.g., the widely used undrained and fixed-stress splits for the linear Biot equations are derived and analyzed. By application of the framework to more advanced models, novel splitting schemes with guaranteed theoretical convergence rates are naturally derived. Moreover, based on the minimization character of the (semi-)discrete equations, relaxation of splitting schemes by line search is proposed; numerical results show a potentially great impact on the acceleration of splitting schemes for both linear and nonlinear problems.\end{abstract}


\section{Introduction}


Gradient flows describe the evolution of purely dissipative systems. Given an initial state $x_0$, a state $x$ evolves along the negative gradient of an energy $\mathcal{E}$ under the influence of an external force $f_\mathrm{ext}$, i.e., 
\begin{align}\label{introduction:classical-gradient-flow}
 \dot{x} + \GRAD \mathcal{E}(x) = f_\mathrm{ext},\quad \text{a.e.\ in }(0,T),\quad x(0)=x_0,
\end{align}
where $\dot{x}$ denotes the temporal derivative of $x$ and $\GRAD \mathcal{E}$ denotes the G$\hat{\text{a}}$teux-derivative of $\mathcal{E}$ wrt. $x$.

The formal gradient flow structure~\eqref{introduction:classical-gradient-flow} is ubiquitous in a broad set of applications and has been therefore of great research interest since the fundamental works by Komura~\cite{Komura1967}, Crandall and Pazy~\cite{Crandall1969} and Brezis~\cite{Brezis1971,Brezis1973}. Meanwhile, gradient flows have been studied in Hilbert spaces and metric spaces~\cite{Ambrosio2008}; in particular, since the seminal work by Otto~\cite{Otto2001}, much attraction has been paid to gradient flows in probability spaces endowed with the Wasserstein metric. It is not our intention to review the vast literature on the topic; we mention a small fragment of the long list of applications with an inherent gradient structure: heat conduction, the Stefan problem, Hele-Shaw cell, flow in porous media, parabolic variational inequalities, degenerate and quasi-linear parabolic PDEs, and transport.

Classical gradient flows are limited to dissipation mechanisms induced by a quadratic potential, which is quite restrictive for many practical situations. Far more systems can be modelled using the notion of \textit{generalized gradient flows} as, e.g., described by Peletier~\cite{Peletier2014}. Those allow in particular for non-quadratic dissipation potentials, including those which are vanishing, not finite, positively homogeneous of degree 1 or state-dependent. Additionally, generalized gradient flows allow for relating the tangent space of the state space with a process space. In this perspective, generalized gradient flows are formally defined by five components:
\begin{enumerate} 
 \item A state space $\mathcal{X}$.
 \item A process space $\mathcal{P}_{\dot{\mathcal{X}}}$ together with an instruction how states change $\dot{x} = \mathcal{T}(x)p$, where $x\in\mathcal{X}$, $p\in\mathcal{P}_{\dot{\mathcal{X}}}$, and $\mathcal{T}(x)$ a transformation operator.
 \item An (internal) free energy $\mathcal{E}(x)$ for states $x\in\mathcal{X}$.
 \item An (external) work rate $\mathcal{P}_\mathrm{ext}(x;p)$ in terms of the process vectors.
 \item A dissipation potential $\mathcal{D}(x;p)$ in terms of process vectors inducing the cost of the change of state.
\end{enumerate}
Then for given state $x\in\mathcal{X}$, the current change of state $\dot{x}$, in terms of the corresponding process vector $p\in\mathcal{P}_{\dot{\mathcal{X}}}$, is defined by
\begin{align}\label{generalized-gradient-flow-structure}
\begin{split}
 \dot{x} &= \mathcal{T}(x)p \\
 p &= \underset{q\in\mathcal{P}_{\dot{\mathcal{X}}}}{\text{arg\,min}}\, \Big\{ \mathcal{D}(x;q) + \llangle \GRAD\mathcal{E}(x), \mathcal{T}(x)q \rrangle - \mathcal{P}_\mathrm{ext}(x;q)  \Big\},
\end{split}
\end{align}
i.e., the loss of energy is maximized along the steepest descent of the energy under minimum cost. Again, many applications can be modelled by generalized gradient flows. We mention incompressible, immiscible two-phase flow in porous media~\cite{Cances2015}, doubly non-linear Allen-Cahn equations~\cite{Mielke2013}, rate-independent finite elasticity~\cite{Mielke2013}, rate-dependent visco-plasticity at finite strain~\cite{Mielke2018}.

Apart from the structure itself, a (generalized) gradient flow interpretation may be beneficial in many ways. A wide range of abstract theory for gradient systems has been established dealing, e.g., with the well-posedness analysis~\cite{Brezis1971,Brezis1973,Colli1992}, \textit{a posteriori} error analysis for time discretizations~\cite{Nochetto2000}, or \textit{a priori} error analysis for numerical discretizations in time and space~\cite{Bartels2014}. Furthermore, energy preserving time discretizations can be constructed~\cite{Jungel2018}, and optimization algorithms can be utilized for the construction of robust numerical solvers.


In this work, for the first time, we explore the gradient structure in the  consolidation of fluid-saturated porous media, also called \textit{theory of poro-elasticity}, and provide a generalized gradient flow formulation~\eqref{generalized-gradient-flow-structure} for various poro-elasticity models. Coupled thermo-hydro-mechanical-chemical processes in porous media have been of great research interest recently, due to the presence of many practical applications of societal and industrial relevance. We mention not only classical, geotechnical applications within soil and reservoir mechanics, but also geothermal reservoirs~\cite{Segall1998}, CO$_2$ storage~\cite{Bjornara2016}, deformation of hydrogels~\cite{Hong2008} or biomechanical applications~\cite{Cowin1999} among others.  
 
The theory of poro-elasticity goes back to the early seminal contributions by Terzaghi~\cite{Terzaghi1936} and Biot~\cite{Biot1941}; 
since then many mathematical models for thermo-hydro-mechanical-chemical processes in porous media have been established utilizing, e.g.,  averaging processes~\cite{Lewis1998}, thermodynamic arguments~\cite{Coussy2004}, or homogenization~\cite{Burridge1981,Mikelic2012,Brun2018,Vanduijn2017}. Traditionally, corresponding models are formulated as partial differential equations (PDE). Based on those formulations, there exists a mature literature on both analytical and numerical, rigorous mathematical theory for specific poro-elasticity models. It is beyond the scope of this work to give a comprehensive review; we only point out results connected to this paper: For the linear Biot model well-posedness has been showed using semigroup theory~\cite{Showalter2000}. Recent advances on extensions of the linear Biot equations include well-posedness for the dynamic poro-elasticity~\cite{Mikelic2012}, thermo-poro-elasticity with non-linear, thermal convection~\cite{Brun2018b}, poro-visco-elasticity with a purely visco-elastic strain~\cite{Showalter2000,Bociu2016}, and linear poro-elasticity with a deformation-dependent, non-linear permeability~\cite{Bociu2016}. We are not aware of any explicit result on the well-posedness of linear poro-visco-elasticity models, which consider strains composed of an elastic and a visco-elastic contribution as modelled by~\cite{Coussy2004}, or non-linear poro-elasticity under an infinitesimal strain assumption as studied in a fully discretized setting by~\cite{Borregales2018}. In terms of numerical discretization and solution of the linear Biot model, stable, spatial discretizations for various choices of primary variables have been introduced~\cite{Haga2012,Keilegavlen2017,Ambartsumyan2018b,Korsawe2005}. Furthermore, physically motivated, robust operator splittings have been of great, recent interest, allowing for either using independent, tailored simulators for different physics or developing good block preconditioners for monolithic Krylov subspace methods. Such have been developed and studied for in particular the linear Biot model~\cite{Kim2011,Haga2012b,Mikelic2013,Almani2016,Castelletto2016,White2016,Both2017,Bause2017,Adler2017,Castelletto2019}, for non-linear poro-elasticity under an infinitesimal strain assumption~\cite{Borregales2018}, thermo-poro-elasticity~\cite{Kim2018a} and large strain poro-elasticity~\cite{Kim2018b}. 

To our knowledge, the connection between gradient flows and poro-elasticity from a mathematical point of view has not yet been studied in the literature. However, we have to honor the work by Miehe~\cite{Miehe2015}, which has also been an inspiration for this paper. In the aforementioned work with a focus on general modelling, isothermal flow in fully-saturated poro-elastic media under large strains is formulated using minimization principles, which eventually can be identified as a generalized gradient flow~\eqref{generalized-gradient-flow-structure}. The authors have furthermore noted that the minimization structure allows arbitrary pairs of finite elements as spatial discretization of the coupled problem.


The aim of this paper is not only to reveal a natural gradient structure of thermo-poro-visco-elasticity, but also to discuss how to exploit this structure to study well-posedness and naturally develop numerical methods. Serving as proof of concept, we explore thoroughly the linear Biot equations: We highlight the gradient structure of the linear Biot equations; well-posedness results are deduced employing abstract theory for doubly non-linear evolution equations and convex analysis; additionally, we identify widely used splitting schemes~\cite{Kim2011,Kim2011b} as alternating minimization, which are \textit{a priori} guaranteed to converge. Utilizing abstract convergence theory, we are able to prove the same convergence rates as previously reported in the literature~\cite{Mikelic2013}, in which problem-specific proofs are performed. In addition, \textit{a posteriori} error bounds are provided, similar to Ostrowski-type bounds~\cite{Kumar2018}. We further apply the same workflow to more advanced poro-elasticity models with increased complexity and derive novel robust splitting schemes with guaranteed theoretical convergence rates. The findings are presented in two parts.

Part~I (Sec.~\ref{section:frameworks}--\ref{section:thermo-poro-elasticity-gradient-flow}) is concerned with two aspects: (i) The modelling of coupled processes in  poro-elastic materials as generalized gradient flows, and (ii) a subsequent well-posedness analysis. By combining the abstract generalized gradient flow formulation~\eqref{generalized-gradient-flow-structure} with conceptual considerations regarding poro-elasticity, an abstract modelling framework for thermo-poro-visco-elasticity is established in Sec.~\ref{section:frameworks}. In its most general form, it allows for non-isothermal, (non-)Darcy flow in a saturated, non-linearly poro-visco-elastic material governed by dissipation only. Specific models are then obtained by involving common thermodynamic knowledge on free energies and dissipation potentials: A gradient flow formulation is derived for linear poro-elasticity (Sec.~\ref{section:linear-biot-gradient-flow}), linear poro-visco-elasticity (Sec.~\ref{section:poro-visco-elasticity-wellposedness}), non-linear poro-elasticity in the infinitesimal strain regime (Sec.~\ref{section:non-linear-biot-linear-coupling}), non-Newtonian Darcy and non-Darcy flows in poro-elastic media (Sec.~\ref{section:non-newtonian-fluids}), and thermo-poro-elasticity without thermal convection (Sec.~\ref{section:thermo-poro-elasticity-gradient-flow}), all consistent with previously employed PDE-based models~\cite{Coussy2004}. Regarding the well-posedness analysis for poro-elasticity models, the main difficulty is the characteristic fact that the dissipation potential does not depend on all process vectors, as e.g., the change in mechanical displacement; this is solved by combining an abstract decoupling approach~\cite{Rossi2006} with classical convex analysis~\cite{Ekeland1999} and theory on doubly non-linear evolution equations~\cite{Colli1990,Mielke2013}, tailored to our needs, cf.\ Appendix~\ref{appendix:classic-theorems}. It is summarized in a unified well-posedness result, Thm.~\ref{thm:abstract-well-posedness-of-dne}, the main theoretical result of Part~I. Furthermore, it is applied to practically all models listed above; in particular it gives a new concise proof for the well-posedness of the linear Biot equations.
  
Part~II (Sec.~\ref{section:general-time-discretization}--\ref{section:numerical-results}) deals with the robust, numerical solution of aforementioned thermo-poro-visco-elasticity models, by exploiting the generalized gradient flow structure discussed in Part~I. More precisely, after a semi-implicit time discretization along the lines of the minimizing movement scheme for gradient flows~\cite{Ambrosio1995}, the generalized gradient flow formulation translates into a minimization problem. For models discussed in Sec.~\ref{section:linear-biot-gradient-flow}--\ref{section:thermo-poro-elasticity-gradient-flow}, the minimization problem is convex, enabling the vast literature on convex optimization for efficient numerical solution, i.e., a problem non-specific machinery. Motivated by the recent advances on splitting schemes in the community, we discuss in particular the application of the plain alternating minimization or cyclic block coordinate descent methods~\cite{Bertsekas1999,Luo1992}. They allow for natural decoupling of the entire problem into its physical subproblems.
Additionally, guaranteed convergence follows directly from abstract optimization theory, adjusted to our needs, cf.\ Appendix~\ref{appendix:section:alternating-minimization}.
By this, we provide a new perspective on widely used, physically motivated splitting schemes, as the undrained and fixed-stress splits~\cite{Kim2011,Kim2011b} for linear poro-elasticity (Sec.~\ref{section:splitting-linear-biot}). In particular, we provide a simple, mathematical intuition why those schemes are natural choices among predictor-corrector methods for which physical variables are simply fixed in the predictor step -- in contrast for example to the drained and fixed-strain splits which are only conditionally stable~\cite{Kim2011,Kim2011b}. In addition, by applying the unified approach, we derive novel, robust splitting schemes for linear poro-visco-elasticity (Sec.~\ref{section:splitting-poro-visco-elasticity}) and nonlinear poro-elasticity under infinitesimal strains (Sec.~\ref{section:splitting-non-linear-poro}), and provide a theoretical basis for the undrained-adiabatic and extended fixed-stress splits~\cite{Kim2018a} for thermo-poro-elasticity (Sec.~\ref{section:splitting-thermo-poro-elasticity}). This annexes the mathematically intuitive interpretation of directional minimization to the physical motivation of the splitting schemes. Finally, the minimization formulation allows for acceleration of the previously discussed splitting schemes, using a line search relaxation strategy (Sec.~\ref{section:minimization:line-search}). In the context of poro-elasticity, this has not yet been observed in the literature. In particular, for linear problems, exact line search can be performed cheaply using quadratic interpolation due to the quadratic nature of the time-discrete minimization problem; the same technique is proposed as inexact line search for semi-linear models. We close the second part with a succinct numerical study (Sec.~\ref{section:numerical-results}) aiming for answering four questions: (i) what is the impact of the relaxation of splitting schemes by line search; (ii) how does it relate to the optimization of tuning parameters employed within splitting schemes; (iii) how do relaxed splitting schemes perform for poro-visco-elasticity and (iv) and non-linear poro-elasticity? We observe that applying line search is effectively identical with optimizing splitting schemes, but no \textit{a priori} knowledge or user-interaction is required. Furthermore, splitting schemes for poro-visco-elasticity and non-linear poro-elasticity show similar performance as for linear poro-elasticity.

\subsection{Notation}

Throughout this work, let $\Omega\subset\mathbb{R}^d$, $d\in\{2,3\}$, be an open, connected domain, with Lipschitz boundary $\partial\Omega$ and outward normal $\n$; let $[0,T]$ denote a finite time interval with finite time $T>0$. 

We use the following notation for standard function spaces and their norms~\cite{Adams2003}: Let $L^p(\Omega)$ be the space of functions for which the p-th power of the absolute value is Lebesgue integrable. For $L^2(\Omega)$, let $\llangle \cdot,\cdot\rrangle=\llangle \cdot,\cdot\rrangle_{L^2(\Omega)}$ denote the standard $L^2(\Omega)$ scalar product, $\|\cdot\|=\|\cdot\|_{L^2(\Omega)}$ the associated norm. Let $\llangle \cdot,\cdot\rrangle_\Gamma:=\llangle \cdot,\cdot\rrangle_{L^2(\Gamma)}$ for measurable boundary segments $\Gamma\subset\Omega$. Let $W^{1,p}(\Omega)$, $p\geq 1$,  denote the usual Sobolev space, consisting of functions in $L^p(\Omega)$ with a weak derivative in $L^p(\Omega)$, $H^1(\Omega)=W^{1,2}(\Omega)$ and $H_0^1(\Omega)$ its subspace with zero trace on $\partial\Omega$. Furthermore, $H^p_\mathrm{div}(\Omega)$, $p\geq 1$, denotes vectorial functions with $d$ components in $L^p(\Omega)$ with a weak divergence in $L^2(\Omega)$; and $H(\mathrm{div};\Omega)=H^2_\mathrm{div}(\Omega)$.

We use bold symbols for vectors and tensors. Similarly, we use bold symbols for vector valued function spaces, e.g., $\bm{H}^1(\Omega)$. For elements of $\bm{H}^1(\Omega)$, let $\eps{\u}=\tfrac{1}{2} \left(\GRAD \u + \GRAD \u^\top \right)$ denote the symmetric gradient, also called linearized strain; $\GRAD$ denotes both the spatial gradient and the (partial) functional derivative given by the G$\hat{\text{a}}$teaux-derivative, depending on the context. For $\mathcal{V}$ a Banach space, let $L^p(0,T;\mathcal{V})$ and $H^1(0,T;\mathcal{V})$ denote standard Bochner spaces endowed with standard norms. Newton's notation is used for denoting temporal derivatives of variables, e.g., $\dot{x}$ for the temporal derivative of $x$, whereas partial temporal derivatives of functionals are denoted by $\partial_t$. For $\mathcal{V}$ a Banach space, we denote $\mathcal{V}^\star$ its dual space and $\llangle \cdot,\cdot\rrangle _{\mathcal{V}^\star\times\mathcal{V}}$ a duality pairing. If obvious, we omit the subscript.

Finally, let $|\cdot|$ denote the absolute value, the Euclidean distance and the Frobenius norm for scalars, vectors and second-order tensors, respectively. And let $\mathrm{tr}\,\mathbf{A}=\sum_i A_{ii}$ denote the trace of a quadratic second-order tensor $\mathbf{A}$. The inequality $a\lesssim b$ means there exists a generic constant $C>0$ independent of $a$ and $b$ such that $a \leq C b$.

Let $\otimes$ denote the Kronecker product, and for the special case of two vectors. Moreover, let $:$ denote the single, double or triple (depending on the context) inner product for tensors. For the double inner product of a fourth order and a second order tensor we often omit $:$ as often done in mathematical literature for linear elasticity. Finally, $\llangle \cdot,\cdot\rrangle$ with tensorial arguments of same order is equivalent to a Lebesgue integral over the double inner product of the arguments.

A nomenclature regarding notation for generalized gradient flows, physical fields, function spaces etc. is provided in Appendix~\ref{appendix:notation}.

\section*{Part I -- Modelling and analyzing thermo-poro-visco-elasticity as generalized gradient flow}
\addcontentsline{toc}{section}{Part I -- Modelling and analyzing thermo-poro-visco-elasticity as generalized gradient flow} 

The main objective of part~I is to highlight the inherent gradient  structure of various poro-elasticity models. Secondary, we prove well-posedness for such models. Sec.~\ref{section:frameworks}  lays a foundation for this, providing an abstract gradient flow modelling framework for poro-elasticity, and subsequently an abstract well-posedness result for degenerate, doubly non-linear evolution equations, which will allow for a unified well-posedness analysis of poro-elasticity models. Based on those tools, we discuss linear poro-elasticity (Sec.~\ref{section:linear-biot-gradient-flow}), linear poro-visco-elasticity (Sec.~\ref{section:poro-visco-elasticity-wellposedness}), non-linear poro-elasticity in the infinitesimal strain regime (Sec.~\ref{section:non-linear-biot-linear-coupling}), non-Darcy flows in poro-elastic media (Sec.~\ref{section:non-newtonian-fluids}), and linear thermo-poro-elasticity without thermal convection (Sec.~\ref{section:thermo-poro-elasticity-gradient-flow}).

\section{Foundation for modelling and analyzing poro-elasticity as generalized gradient flow}\label{section:frameworks}

In the following, tools are introduced which will be applied throughout Part~I of the paper. First, in Sec.~\ref{section:general-model-biot}, a general framework for modelling poro-elasticity based on the formal definition of generalized gradient flows~\eqref{generalized-gradient-flow-structure} is proposed. Additionally, in Sec.~\ref{section:general-well-posedness}, an abstract well-posedness result is derived,
which allows for a unified analysis of poro-elasticity in the subsequent sections.

\subsection{Formal modelling framework for non-isothermal flow in poro-visco-elastic media}\label{section:general-model-biot}

From a continuum mechanical perspective, it is fair to assume that fluid-saturated, deformable porous media are purely governed by dissipation. That remains true, when allowing for additional structural visco-elasticity or non-isothermal flow with negligible, thermal convection. Consequently, it is natural to expect that a wide class of poro-elasticity models have an inherent gradient flow structure.
Indeed, by incorporating thermodynamic interpretation into the notion of generalized gradient flows~\eqref{generalized-gradient-flow-structure}, we introduce a general modelling framework for non-isothermal flow in poro-visco-elastic media. 


To set modelling limits, we restrict the discussion to fully-saturated media which deform under an infinitesimal strain assumption. Visco-elastic and thermal effects are allowed. But it is implicitly assumed that the considered system can be formulated as a gradient flow. This cannot always be true, e.g., when thermal convection or non-quasi-static mechanical behavior are non-negligible.

In the following the single components of a generalized gradient flow are defined based on thermodynamic knowledge:

\begin{enumerate}

 \item As state space, we choose
 \begin{align}\label{general-biot-state-space}
  \mathcal{X}=\left\{ \left(\u,\fluidmass,\viscostrain,S\right) \right\}, 
 \end{align}
 where $\u$ is the displacement of the matrix with respect to a reference state $\Omega$; $\fluidmass$ is the change of the fluid mass on $\Omega$ with respect to some reference configuration scaled by the inverse of a reference fluid density; $\viscostrain$ is the visco-elastic strain such that $\eps{\u} - \viscostrain$ denotes the elastic strain; and $S$ is the total entropy. Depending on which processes are considered, we choose only a suitable subset of $\mathcal{X}$ as state space. \\
 
 \item Structural displacements $\u$ and visco-elastic strains $\viscostrain$ change with rates $\dot{\u}$ and $\dotviscostrain$, respectively. Instead of using the rates $\dot{\fluidmass}$ and $\dot{S}$ directly, we associate those with a volumetric flux $\flux$ and an entropy flux $\entropyflux$, respectively. Their relations are imposed by the conservation of mass and balance of entropy
 \begin{alignat}{2}
 \label{general-biot-process-vectors:start}
  \dot{\fluidmass} + \DIV \flux &= \masssource &&\text{ on }\Omega, \\
  \dot{S} + \DIV \entropyflux &= \entropysource &&\text{ on }\Omega,
 \label{general-biot-process-vectors:end}
 \end{alignat}
 where $\masssource$ and $\entropysource$ denote given, time-dependent production terms.
 
 Gradient flows effectively define changes of states, and boundary conditions can be imposed for those on boundary segments $\Gamma_{\u},\Gamma_{\flux},\Gamma_{\entropyflux}\subset\partial\Omega$. We define the function spaces for $t\in[0,T]$ (without specifying regularity for now)
 \begin{align}
 \label{general-function-spaces:start}
  \dot{\mathcal{V}}(t) &= \left\{ \v :\Omega \rightarrow \mathbb{R}^d \,|\, \v = \dot{\u}_{\Gamma}(t) \text{ on }\Gamma_{\u}\right\}, \\
  \mathcal{Z}(t) &= \left\{ \z :\Omega \rightarrow \mathbb{R}^d \,|\, \z\cdot\n = q_\mathrm{\Gamma,n}(t) \text{ on }\Gamma_{\flux} \right\}, \\
  \dot{\mathcal{T}}(t) &= \left\{ \testviscostrain :\Omega \rightarrow \mathbb{R}^{d\times d} \right\}, \\
  \mathcal{W}(t) &= \left\{ \w :\Omega \rightarrow \mathbb{R}^d \,|\, \w\cdot\n = j_\mathrm{\Gamma,n}(t) \text{ on }\Gamma_{\entropyflux} \right\}.
 \label{general-function-spaces:end}
 \end{align} 
 associated with the change of structural displacement, volumetric flux, the change of the visco-elastic strain and entropy flux. Function spaces associated with the states are implicitly defined. Due to its internal character, no boundary conditions are imposed for the change of the visco-elastic strain. We suppress the explicit time-dependence of function spaces and boundary data in the rest of the article; e.g. we write $\dot{\mathcal{V}}$ instead of $\dot{\mathcal{V}}(t)$.\\

 \item For given state $(\u,\fluidmass,\viscostrain,S)$, let the energy $\mathcal{E}$ be given by the Helmholtz free energy of the system. According to thermodynamic derivations~\cite{Coussy2004}, we can derive the total stress $\stress$, the fluid pressure $p$ and the temperature $T$ by
 \begin{align}\label{thermodynamic-interpretation}
  \stress := \partial_{\GRAD \u} \mathcal{E}, \qquad
  p := \partial_{\fluidmass} \mathcal{E}, \qquad
  T := \partial_{S} \mathcal{E}.
 \end{align}
 Those also act as dual variables to $(\u,\fluidmass,S)$, for which complementary boundary conditions to~\eqref{general-function-spaces:start}--\eqref{general-function-spaces:end} have to be prescribed
 \begin{alignat}{3}
 \label{boundary-conditions:dual:start}
  \stress\n &= \stress_\mathrm{\Gamma,n} &\quad& \text{on }\Gamma_{\stress}&&:=\partial\Omega\setminus\Gamma_{\u}, \\
  p &= p_\Gamma && \text{on }\Gamma_p&&:=\partial\Omega\setminus\Gamma_{\flux}, \\
  T &= T_\Gamma && \text{on }\Gamma_T&&:=\partial\Omega\setminus\Gamma_{\entropyflux}.
 \label{boundary-conditions:dual:end}
 \end{alignat}
 
 As common in poro-elasticity, we employ an effective stress approach. We assume therefore, the total energy $\mathcal{E}$ can be decomposed into three contributions
 \begin{align}\label{general-biot-energy}
  \mathcal{E}(\u,\fluidmass,\viscostrain,S) = \mathcal{E}_\mathrm{eff}(\GRAD \u,\viscostrain) + \mathcal{E}_\mathrm{v}(\viscostrain) + \mathcal{E}_\mathrm{fluid}(\GRAD \u,\fluidmass,\viscostrain,S),
 \end{align}
 where the first contribution is assigned to the solid and defines the effective stress and will finally depend only on the elastic strain; the second contribution is the energy stored (and potentially lost) due to inelastic effects; and the third contribution corresponds to the fluid, allowing for defining the fluid quantities. We obtain the effective stress $\stress_\mathrm{eff}$, $p$ and $T$ also from
 \begin{align}\label{thermodynamic-interpretation-2}
  \stress_\mathrm{eff} := \partial_{\GRAD\u} \mathcal{E}_\mathrm{eff}, \qquad
  p = \partial_\fluidmass \mathcal{E}_\mathrm{fluid}, \qquad
  T = \partial_{S} \mathcal{E}_\mathrm{fluid}.
 \end{align}
 
 \item The external work rate $\mathcal{P}_\mathrm{ext}$ acts as a negative potential for changes of state or associated process vectors. Throughout this work, we assume $\mathcal{P}_\mathrm{ext}$ is linear and state-independent, and we allow $\mathcal{P}_\mathrm{ext}$ to vary in time. Furthermore, it is natural to assume the total external work rate decomposes into separate, independent contributions
 \begin{align*}
  \mathcal{P}_\mathrm{ext}(t,\dot{\u},\flux,\dotviscostrain,\entropyflux) &= \mathcal{P}_\mathrm{ext,mech}(t,\dot{\u}) + \mathcal{P}_\mathrm{ext,fluid}(t,\flux) 
  + \mathcal{P}_\mathrm{ext,temp}(t,\entropyflux).
 \end{align*}
 Since the visco-elastic strain is interpreted as an internal variable, no external work rate is associated to $\dotviscostrain$. In the context of poro-elasticity, external work rates integrate external body and surface forces acting on the fluid and the matrix. In particular, surface forces can be identified as the boundary conditions imposed on the dual variables~\eqref{boundary-conditions:dual:start}--\eqref{boundary-conditions:dual:end}. All in all, we employ
 \begin{align*}
  \mathcal{P}_\mathrm{ext,mech}(t,\dot{\u}) \hspace{-1cm} & \hspace{1cm} = \llangle \fext(t), \dot{\u} \rrangle + \llangle \stress_\mathrm{\Gamma,n}(t) , \dot{\u} \rrangle_{\Gamma_{\stress}}, \\
  \mathcal{P}_\mathrm{ext,fluid}(t,\flux) \hspace{-1cm} & \hspace{1cm}= \llangle \gext(t) , \flux \rrangle + \llangle p_\Gamma(t), \flux \cdot \n \rrangle_{\Gamma_p}, \\
  \mathcal{P}_\mathrm{ext,temp}(t,\entropyflux) \hspace{-1cm} & \hspace{1cm}= \llangle T_\Gamma(t), \entropyflux \cdot \n \rrangle_{\Gamma_T}.
 \end{align*}
 Here, $\fext$ and $\gext$ denote external body forces applied to the matrix and the fluid, respectively. We stress that under the hypothesis of small perturbations of the Lagrangian porosity~\cite{Coussy2004}, often coming along with the assumptions of linear elasticity, it is indeed fair to assume that $\fext$ and $\gext$ are state-independent.\\
 
 \item Accounting for viscous dissipation, changes of states come at cost governed by a dissipation potential. For the poro-elasticity models considered in this work, it is adequate to assume that the underlying dissipation mechanisms are state-independent; e.g., for large strain poro-elasticity, this is not the case. Furthermore, we presume independent cost for each process such that the total dissipation potential decomposes into 
 \begin{align}\label{general-biot-dissipation}
  \mathcal{D}(\dot{\u},\flux,\dotviscostrain, \entropyflux) = \mathcal{D}_\mathrm{mech}(\dot{\u}) + \mathcal{D}_\mathrm{fluid}(\flux) + \mathcal{D}_\mathrm{v}(\dotviscostrain) + \mathcal{D}_\mathrm{th}(\entropyflux).
 \end{align}
 A common feature for many poro-elasticity models is to assume that structural displacements react instantaneously, which corresponds to the choice $\mathcal{D}_\mathrm{mech}=0$. The potentials $\mathcal{D}_\mathrm{fluid}$, $\mathcal{D}_\mathrm{v}$ and $\mathcal{D}_\mathrm{th}$ correspond to a (non-)Darcy-type law, some viscosity law for strain rates and a Fourier-type law, respectively. We will essentially consider quadratic dissipation potentials; besides in the context of non-Darcy flows  in poro-elastic materials, cf.\ Sec.~\ref{section:non-newtonian-fluids}.
  
\end{enumerate}

\noindent
Finally,~\eqref{generalized-gradient-flow-structure} yields an abstract model for describing the evolution of a fluid-saturated, deformable porous medium. The states $(\u,\fluidmass,\viscostrain,S)$ change in time $t\in(0,T)$ by
\begin{align}
\label{biot-generalized-gradient-flow-structure:start}
 \dot{\fluidmass} &= \masssource -\DIV\flux,  \\ 
\label{biot-generalized-gradient-flow-structure:mid}
 \dot{S} &= \entropysource -\DIV\entropyflux, \\
\label{biot-generalized-gradient-flow-structure:end}
 (\dot{\u},\flux,\dotviscostrain,\entropyflux) &=  \underset{(\v,\z,\testviscostrain,\w)\in\dot{\mathcal{V}}\times\mathcal{Z}\times\dot{\mathcal{T}}\times\mathcal{W}}{\text{arg\,min}}\, \Big\{ 
 \llangle \partial_{\GRAD \u}\mathcal{E}(\GRAD \u, \fluidmass,\viscostrain,S), \GRAD \v \rrangle - \mathcal{P}_\mathrm{ext,mech}(t,\v)  \\
\nonumber
 &\hspace{1cm}  + \mathcal{D}_\mathrm{fluid}(\z) - \llangle \partial_\fluidmass \mathcal{E}_\mathrm{fluid}(\GRAD \u, \fluidmass, \viscostrain, T), \DIV \z \rrangle - \mathcal{P}_\mathrm{ext,fluid}(t,\z) \\[0.5em]
\nonumber
 &\hspace{1cm}  + \mathcal{D}_\mathrm{v}(\testviscostrain) + \llangle \partial_{\viscostrain} \mathcal{E}(\GRAD \u, \fluidmass,\viscostrain,S), \testviscostrain \rrangle \\
\nonumber
 &\hspace{1cm} + \mathcal{D}_\mathrm{th}(\w) - \llangle \partial_S \mathcal{E}_\mathrm{fluid}(\GRAD \u, \fluidmass, \viscostrain,T), \DIV \w \rrangle - \mathcal{P}_\mathrm{ext,temp}(t,\w)
 \Big\}
\end{align}
and are subject to initial conditions at time $t=0$
\begin{align}
\label{general-biot-initial-conditions}
 \u = \u_0, \quad \ 
 \fluidmass  = \fluidmass_0,  \quad \ 
 \viscostrain = \zeroviscostrain, \quad \ 
 S = S_0 \qquad\quad\text{on }\Omega.
\end{align}

\begin{remark}[Primal and dual formulation]
 We distinguish between primal and dual variables. The gradient flow formulation~\eqref{biot-generalized-gradient-flow-structure:start}--\eqref{biot-generalized-gradient-flow-structure:end} governs primal variables. Hence, we will call this the primal formulation. In certain situations, a dual formulation governing dual variables can be derived from the primal formulation. This is, e.g., discussed for linear poro-elasticity, cf.\ Sec.~\ref{section:dual-formulation:linear-poro-elasticity}. 
\end{remark}

\subsection{Poro-elasticity formulated as doubly non-linear evolution equation}\label{section:abstract-dne}
 
The framework as described in the previous section is suitable for modelling poro-elasticity. Yet, in the next section, we provide tools for a unified well-posedness analysis of models of type~\eqref{biot-generalized-gradient-flow-structure:start}--\eqref{general-biot-initial-conditions}, which utilize the closely related reformulation of a generalized gradient flow as a doubly non-linear evolution equation. A natural reformulation of the general poro-elasticity model~\eqref{biot-generalized-gradient-flow-structure:start}--\eqref{general-biot-initial-conditions} is achieved by introducing accumulated fluxes
\begin{align}
\label{accumulated-darcy-flux}
 \flux_{\int}(t) &:= \int_0^t \flux(\tau) \, d\tau, \\
\label{accumulated-fourier-flux}
 \entropyfluxint(t) &:= \int_0^t \entropyflux(\tau) \, d\tau
\end{align}
as alternative states to $\fluidmass$ and $S$, respectively. Corresponding function spaces $\mathcal{Z}_{\int}$ and $\mathcal{W}_{\int}$ are implicitly defined by $\dot{\mathcal{Z}}_{\int} = \mathcal{Z}$ and $\dot{\mathcal{W}}_{\int} = \mathcal{W}$, i.e., $\flux_{\int} \in \mathcal{Z}_{\int}$ if and only if $\dot{\flux}_{\int}\in \mathcal{Z}$. Analogously, 
we set $\intmasssource(t) := \int_0^t \masssource(s)\, ds$ and $\intentropysource(t) := \int_0^t \entropysource(s)\, ds$. By~\eqref{biot-generalized-gradient-flow-structure:start}--\eqref{biot-generalized-gradient-flow-structure:mid}, the accumulated fluxes are associated with $\fluidmass$ and $S$ by
\begin{align}
\label{accumulated-mass-conservation}
 \fluidmass &= \fluidmass_0 + \intmasssource - \DIV \flux_{\int},\\
\label{accumulated-entropy-conservation}
 S &= S_0 + \intentropysource - \DIV \entropyfluxint.
\end{align}
By eliminating $\fluidmass$ and $S$, the generalized gradient flow formulation~\eqref{biot-generalized-gradient-flow-structure:start}--\eqref{general-biot-initial-conditions} becomes a degenerate doubly non-linear evolution equation for $(\u,\flux_{\int},\viscostrain,\entropyfluxint)\in\mathcal{V}\times \mathcal{Z}_{\int} \times \mathcal{T} \times \mathcal{W}_{\int}$
\begin{align}
\label{dne:biot-ggf}
 \GRAD \mathcal{D}(\dot{\u},\dot{\flux}_{\int},\dotviscostrain,\dotentropyfluxint) + \GRAD\tilde{\mathcal{E}}(t,\u, \flux_{\int}, \viscostrain, \entropyfluxint) = \GRAD\mathcal{P}_\mathrm{ext}(\dot{\u}, \dot{\flux}_{\int},\dotviscostrain,\dotentropyfluxint),
\end{align}
with reinterpreted (potentially, explicitly time-dependent) energy
\begin{align*}
 \tilde{\mathcal{E}}(t, \u, \flux_{\int}, \viscostrain, \entropyfluxint):= \mathcal{E}(\GRAD \u, \fluidmass_0 + \intmasssource(t) - \DIV \flux_{\int}, \viscostrain, S_0 + \intentropysource(t) - \DIV \entropyfluxint).
\end{align*} 
and initial conditions
\begin{align}
\label{dne:general-biot-initial-conditions}
 \u = \u_0, \quad \ 
 \flux_{\int}  = \bm{0},  \quad \ 
 \viscostrain = \zeroviscostrain, \quad \ 
 \entropyfluxint = \bm{0} \qquad\quad\text{on }\Omega.
\end{align}

\begin{remark}[Reformulation over linear spaces]
 The function spaces $\mathcal{V}\times \mathcal{Z}_{\int} \times \mathcal{T} \times \mathcal{W}_{\int}$ are given by linear spaces translated by some essential boundary conditions, cf.~\eqref{general-function-spaces:start}--\eqref{general-function-spaces:end}. By explicitly incorporating the translation into the definitions of $\mathcal{D}$, $\tilde{\mathcal{E}}$ and $\mathcal{P}_\mathrm{ext}$, the problem~\eqref{dne:biot-ggf} can be reformulated over time-independent, linear spaces; however, each functional becomes explicitly time-dependent.
\end{remark}


\begin{remark}[Time-independent dissipation potential and energy functional and linear function spaces]\label{remark:time-independent-energy-dissipation}
 From a modelling perspective, imposing time-dependent, essential boundary conditions is straightforward. However, the analysis of gradient systems under essential boundary conditions is known to be a delicate topic, cf., e.g.,~\cite{Peletier2014}. A model-specific discussion is most often required; e.g., for quadratic potentials and energies, boundary conditions or external sources can be equivalently reformulated as linear contributions of the external work rates, allowing for reducing the discussion to linear, time independent spaces and time-independent energy functionals. Non-homogeneous, time-independent boundary conditions are less of a problem, as the driving functional remains decreasing along solutions.
%
\end{remark}

\subsection{Abstract well-posedness result for degenerate doubly non-linear evolution equations}\label{section:general-well-posedness}

In the following, we establish an abstract well-posedness result which allows for a unified discussion of poro-elasticity models arising from the gradient flow modelling framework introduced above, cf.\ Sec.~\ref{section:linear-biot-gradient-flow}--\ref{section:thermo-poro-elasticity-gradient-flow}. For this, we first note that the problem~\eqref{dne:biot-ggf} falls into the category of degenerate, doubly non-linear evolution equations on Banach spaces. More specifically, the structural assumptions made in Sec.~\ref{section:general-model-biot}, and assuming solely external work rates are time-dependent, cf.\ Rem.~\ref{remark:time-independent-energy-dissipation}, motivates to consider the abstract evolutionary system
\begin{align}
\label{general-wellposedness:gradient-flow-system}
 \left(\dot{\x}_1,\dot{\x}_2\right) \,=\, \underset{(\y_1,\y_2)\in\mathcal{V}_1 \times\mathcal{V}_2}{\mathrm{arg\,min}}\, \bigg\{ &\Psi(\y_2) + \llangle \GRAD \mathcal{E}(\x_1,\x_2), (\y_1,\y_2) \rrangle \\[-1em]
 \nonumber 
 &\quad - \llangle \mathcal{P}_1(t),\y_1 \rrangle - \llangle \mathcal{P}_2(t),\y_2 \rrangle \bigg\}.
\end{align}
In particular, we assume:
\begin{itemize}

 \item[(P1)] The set of primary variables can be partitioned into two sets with either vanishing or non-vanishing dissipative character. Those can be respectively grouped in two (multi-valued) variables $\x_1$, $\x_2$. Let $\x_1$ denote the variables that change without cost. 

 \item[(P2)] The function spaces $\mathcal{V}_1$ and $\mathcal{V}_2$ corresponding to $x_1$ and $x_2$, respectively, are assumed to be time-independent and to have a linear structure. Thereby, they can be identified as both state and tangent spaces. Furthermore, let $\mathcal{V}_i$ be Banach spaces with norms $\|\cdot\|_{\mathcal{V}_i}$, $i=1,2$. In particular, assume there exists a semi-norm $|\cdot|_{\mathcal{V}_2}$ on $\mathcal{V}_2$ such that
 \begin{align*}
  \|y_2\|_{\mathcal{V}_2}^p = \| y_2 \|_{\mathcal{B}_2}^p + | y_2 |_{\mathcal{V}_2}^p,
 \end{align*}
 where $\mathcal{B}_2 \supset \mathcal{V}_2$ is a larger Banach space with norm $\|\cdot\|_{\mathcal{B}_2}$, and $p:=\mathrm{min}\{p_\psi,p_\mathcal{E}\}\in(1,\infty)$ with $p_\psi$ and $p_\mathcal{E}$ introduced in (P3) and (P4).

 \item[(P3)] The dissipation potential $\Psi:\mathcal{B}_2\rightarrow [0,\infty)$ is convex, continuously differentiable and coercive wrt.\ to $\mathcal{B}_2$. In particular, there exists a constant $C>0$ and $p_\psi\in(1,\infty)$ satisfying 
 \begin{align*}
  \Psi(y_2)\geq C \| y_2 \|_{\mathcal{B}_2}^{p_\psi},\quad y_2\in\mathcal{B}_2.
 \end{align*}

 \item[(P4)] The free energy of the system is convex, lower semi-continuous and continuously differentiable. Furthermore, it can be decomposed into a strictly convex part in the variable with vanishing dissipation, and a convex contribution in an affine combination of the primary variables
 \begin{align}
  \label{general-wellposedness:gradient-flow-system:energy}
  \mathcal{E}(\x_1,\x_2) = \mathcal{E}_1(\x_1) + \mathcal{E}_2(\Lambda(\x_1,\x_2)) , \ (\x_1,\x_2)\in\mathcal{V}_1\times\mathcal{V}_2,
 \end{align}
 i.e., $\mathcal{E}_1:\mathcal{V}_1 \rightarrow [0,\infty)$ is strictly convex; $\mathcal{E}_2:\tilde{\mathcal{V}} \rightarrow [0,\infty)$ is convex with $\tilde{\mathcal{V}}$ a (separable) Banach space; and $\Lambda:\mathcal{V}_1 \times \mathcal{V}_2 \rightarrow \tilde{\mathcal{V}}$ is an affine operator, satisfying
\begin{align*}
 \Lambda(\x_1,\x_2) - \Lambda(\y_1,\y_2)= \Lambda_1 (\x_1 - \y_1) + \Lambda_2 (\x_2-\y_2),\quad \forall \x_i,\y_i\in\mathcal{V}_i,\ i=1,2,
\end{align*}
for $\Lambda_i: \mathcal{V}_i \rightarrow \tilde{\mathcal{V}}$ two linear operators with adjoint operators $\Lambda_i^\star$, $i=1,2$. Furthermore, there exist constants $C_1,C_2,C_3$ and $p_1,p_\mathcal{E}\in(1,\infty)$ satisfying
\begin{align*}
 \mathcal{E}_1(x_1)     &\geq C_1\| x_1 \|_{\mathcal{V}_1}^{p_1},\\
 \mathcal{E}(\x_1,\x_2) &\geq C_2 |\x_2|_{\mathcal{V}_2}^{p_\mathcal{E}} - C_3.
\end{align*}

\item[(P5)] The external loads satisfy $\mathcal{P}_1\in C(0,T;\mathcal{V}_1^\star)\cap W^{1,p_1^\star}(0,T;\mathcal{V}_1^\star)$ and $\mathcal{P}_2\in C(0,T;\mathcal{V}_2^\star)\cap W^{1,p^\star}(0,T;\mathcal{V}_2^\star)$, where $\tfrac{1}{p_1}+\tfrac{1}{p_1^\star}=\tfrac{1}{p}+\tfrac{1}{p^\star}=1$.

\item[(P6)] The initial conditions $(x_1(0),x_2(0))\in \mathcal{V}_1 \times \mathcal{V}_2$ have finite energy $\mathcal{E}(x_1(0),x_2(0))<\infty$ and satisfy the compatibility condition
\begin{align*}
 x_1(0) = \underset{x_1\in\mathcal{V}_1}{\mathrm{arg\,min}}\, \Big\{\mathcal{E}(x_1,x_2(0)) - \llangle \mathcal{P}_1(0),x_1 \rrangle\Big\}.
\end{align*}
\end{itemize}

Quasi-static systems of type~\eqref{general-wellposedness:gradient-flow-system} have been studied in the literature before, cf., e.g.,~\cite{Rossi2006,Mielke2013}; however in the aforementioned works, energies with decompositions different than the poro-elasticity-specific choice~\eqref{general-wellposedness:gradient-flow-system:energy} are treated. And in the theory on doubly non-linear evolution equations in general, the external loading $\mathcal{P}_2$ would usually be assumed to be in the dual of a larger space ($\mathcal{B}_2^\star$ in our context). Here, the weaker regularity assumption on $\mathcal{P}_2$ originates from the nature of external loadings applied in the context of flow in porous media. In order to handle the weak, spatial regularity within the theory on doubly non-linear evolution equations, stronger temporal regularity is required along with above growth conditions on the energy functional. All in all, using similar ideas as~\cite{Rossi2006,Mielke2013} but tailored to the above problem structure, we prove well-posedness of~\eqref{general-wellposedness:gradient-flow-system} under (P1)--(P6).

\begin{theorem}[Well-posedness for generalized gradient flow system~\eqref{general-wellposedness:gradient-flow-system}]\label{thm:abstract-well-posedness-of-dne}
 Assuming~$\mathrm{(P1)}$--$\mathrm{(P6)}$, there exists a solution $(\x_1,\x_2)$ to~\eqref{general-wellposedness:gradient-flow-system} satisfying
 \begin{align*}
  x_1 &\in L^\infty(0,T;\mathcal{V}_1), \\
  x_2 &\in W^{1,p}(0,T;\mathcal{B}_2)\cap L^\infty(0,T;\mathcal{V}_2).
 \end{align*}
 If $\GRAD\Psi$ or $\GRAD\mathcal{E}$ is linear and self-adjoint, it is unique. 
\end{theorem}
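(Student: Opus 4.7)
The plan is to follow the classical minimizing-movement (implicit Euler) strategy for doubly non-linear evolution equations, in the spirit of Colli--Visintin and Rossi--Savar\'e, tailored to the peculiar features of the present setting: degenerate dissipation in $x_1$, energy decomposition $\mathcal{E}=\mathcal{E}_1+\mathcal{E}_2\circ\Lambda$, and weak spatial regularity of $\mathcal{P}_2$. On a uniform partition $0=t_0<\cdots<t_N=T$ with step $\tau$, I would solve the incremental problems
\begin{align*}
 (x_1^n,x_2^n) \in \underset{(y_1,y_2)\in\mathcal{V}_1\times\mathcal{V}_2}{\mathrm{arg\,min}} \Big\{ \tau\Psi\!\big(\tfrac{y_2-x_2^{n-1}}{\tau}\big) + \mathcal{E}(y_1,y_2) - \llangle \mathcal{P}_1(t_n),y_1\rrangle - \llangle \mathcal{P}_2(t_n),y_2\rrangle \Big\}.
\end{align*}
Existence at every step follows from the direct method: (P3)--(P4) provide coercivity (in $\mathcal{B}_2$ via $\Psi$, in $\mathcal{V}_1$ via $\mathcal{E}_1$, and in $|\cdot|_{\mathcal{V}_2}$ via $\mathcal{E}$), convexity gives weak lower semi-continuity, and strict convexity of $\mathcal{E}_1$ makes $x_1^n$ unique given $x_2^n$.

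Next, I would derive a priori estimates by testing the minimality of $(x_1^n,x_2^n)$ against $(x_1^{n-1},x_2^{n-1})$ and summing telescopically; this yields a discrete energy inequality whose loading terms, after discrete integration by parts, read $\sum_k \llangle \tfrac{\mathcal{P}_i(t_k)-\mathcal{P}_i(t_{k-1})}{\tau}, x_i^{k-1}\rrangle\tau$. Here the hypothesis $\mathcal{P}_2 \in W^{1,p^\star}(0,T;\mathcal{V}_2^\star)$ is decisive: the higher temporal regularity compensates for the weaker spatial regularity, and only by combining it with the full-norm coercivity $\|y_2\|_{\mathcal{V}_2}^p = \|y_2\|_{\mathcal{B}_2}^p + |y_2|_{\mathcal{V}_2}^p$ of (P2)--(P4) can the loading contributions be absorbed via Young's inequality. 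A discrete Gronwall argument then gives $\tau$-uniform bounds on $\|x_1^n\|_{\mathcal{V}_1}$, $|x_2^n|_{\mathcal{V}_2}$ and on the discrete time derivative in $L^p(0,T;\mathcal{B}_2)$. Interpreting these bounds on the piecewise-affine and piecewise-constant interpolants, standard weak/weak-$\ast$ compactness delivers limits $x_1\in L^\infty(0,T;\mathcal{V}_1)$ and $x_2\in W^{1,p}(0,T;\mathcal{B}_2)\cap L^\infty(0,T;\mathcal{V}_2)$.

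The main obstacle is passing to the limit in the discrete Euler--Lagrange system $\GRAD_{x_1}\mathcal{E}(x_1^n,x_2^n)=\mathcal{P}_1(t_n)$ and $\GRAD\Psi\big(\tfrac{x_2^n-x_2^{n-1}}{\tau}\big)+\GRAD_{x_2}\mathcal{E}(x_1^n,x_2^n)=\mathcal{P}_2(t_n)$, because there is no temporal regularity on $x_1$ from the dissipation side. My plan is to exploit the decomposition $\mathcal{E}=\mathcal{E}_1+\mathcal{E}_2\circ\Lambda$ and the strict convexity of $\mathcal{E}_1$: the first stationarity condition uniquely determines $x_1^n$ as a (Lipschitz) continuous function of $x_2^n$, so one can pass to the limit in $x_1$ by stability of minimizers under $\Gamma$-convergence. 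This effectively reduces the system to a single doubly non-linear evolution in $x_2$ driven by the reduced energy $\tilde{\mathcal{E}}(t,x_2):=\min_{y_1}\{\mathcal{E}(y_1,x_2)-\llangle \mathcal{P}_1(t),y_1\rrangle\}$, which is convex and $C^1$ in $x_2$. A Minty--Browder monotonicity argument, based on the convex inequality $\Psi(y) \geq \Psi(\dot{x}_{2,\tau}) + \llangle\GRAD\Psi(\dot{x}_{2,\tau}), y - \dot{x}_{2,\tau}\rrangle$ and lower semi-continuity of $\Psi$, then identifies the weak limit of $\GRAD\Psi(\dot{x}_{2,\tau})$ as $\GRAD\Psi(\dot{x}_2)$.

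For uniqueness, given two solutions $(x_1,x_2)$ and $(\tilde{x}_1,\tilde{x}_2)$ I would subtract the equations and test with $(x_1-\tilde{x}_1,x_2-\tilde{x}_2)$. If $\GRAD\Psi$ is linear and self-adjoint, the dissipation contribution rewrites as $\tfrac{d}{dt}\tfrac{1}{2}\llangle \GRAD\Psi(x_2-\tilde{x}_2),x_2-\tilde{x}_2\rrangle$; monotonicity of $\GRAD\mathcal{E}$ combined with strict convexity of $\mathcal{E}_1$ forces $x_1=\tilde{x}_1$ pointwise in time, after which coercivity of $\Psi$ and a Gronwall argument close the estimate. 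The case where $\GRAD\mathcal{E}$ is linear and self-adjoint is symmetric, with the roles of dissipation and energy interchanged.
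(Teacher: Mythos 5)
Your proposal is correct and follows essentially the same route as the paper: eliminate $x_1$ through the strictly convex static minimization (the reduced energy), treat the remaining doubly non-linear evolution for $x_2$ via a minimizing-movement discretization in which the temporal regularity of $\mathcal{P}_1,\mathcal{P}_2$ compensates their weak spatial regularity, and conclude uniqueness in the same two cases ($\GRAD\Psi$ or $\GRAD\mathcal{E}$ linear and self-adjoint). The only difference is organizational: the paper decouples first and then invokes its abstract well-posedness theorem for doubly non-linear evolution equations (whose appendix proof is precisely your time-discretization and Gr\"onwall argument), whereas you discretize the coupled system directly and perform the decoupling when passing to the limit.
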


\begin{proof}
 We follow ideas by~\cite{Rossi2006,Mielke2013} and decouple the system into a minimization problem and a gradient flow problem. The first is discussed using classical convex analysis (Thm.~\ref{appendix:well-posedness:convex-minimization}, cf.\ Appendix~\ref{appendix:classic-theorems}); the discussion of the gradient flow problem utilizes theory on doubly non-linear evolution equations (Thm.~\ref{preliminaries:thm:well-posedness:gradient-flows}, cf.\ Appendix~\ref{appendix:classic-theorems}).

 \paragraph{Decoupling.}
 
 For fixed time $t\in[0,T]$, the optimality conditions for $(x_1(t),x_2(t))$, derived as first variation, corresponding to~\eqref{general-wellposedness:gradient-flow-system} read
 \begin{alignat}{2}
\label{general-well-posedness:optimality-conditions:start}
  \GRAD \mathcal{E}_1(\x_1(t)) + \Lambda_1^\star \GRAD \mathcal{E}_2\big(\Lambda(\x_1(t),\x_2(t))\big) &= \mathcal{P}_1(t)  &\ \ &\text{in }\mathcal{V}_1^\star, \\
  \GRAD \Psi (\dot{\x}_2(t)) + \Lambda_2^\star \GRAD \mathcal{E}_2\big(\Lambda(\x_1(t),\x_2(t))\big) &= \mathcal{P}_2(t) &&\text{in }\mathcal{V}_2^\star.
\label{general-well-posedness:optimality-conditions:end}
 \end{alignat}
 We conclude that $\x_1$ is defined as solution to a minimization problem for given $\x_2$. Hence, given $t\in[0,T]$ and $\x_2\in\mathcal{V}_2$, we denote $\x_1^\star:=\x_1^\star(t,\x_2)\in\mathcal{V}_1$ to be the solution of the problem
 \begin{align}
 \label{general-well-posedness:aux-0}
  \underset{\y_1\in\mathcal{V}_1}{\mathrm{inf}}\, \mathcal{E}(\y_1,\x_2) - \llangle \mathcal{P}_1(t), \y_1 \rrangle.
 \end{align}
 Since $\mathcal{E}_1$ is strictly convex, $\x_1^\star$ is well-defined by Thm.~\ref{appendix:well-posedness:convex-minimization}. We introduce the reduced energy
 \begin{align*}
  \mathcal{E}_\mathrm{red}(t,\x_2):=\mathcal{E}\big(\x_1^\star(t,\x_2),\x_2\big) - \llangle \mathcal{P}_1(t), \x_1^\star(t,x_2) \rrangle, \ t\in[0,T],\ \x_2\in\mathcal{V}_2.
 \end{align*}
 We observe, that the optimality conditions~\eqref{general-well-posedness:optimality-conditions:start}--\eqref{general-well-posedness:optimality-conditions:end} can be decoupled into
 \begin{alignat}{2}
 \label{general-well-posedness:aux-1:start}
  \GRAD \mathcal{E}_1(\x_1^\star(t,\x_2))
  +
  \Lambda_1^\star \GRAD \mathcal{E}_2\big(\Lambda(\x_1^\star(t,\x_2),\x_2)\big)
  &=
  \mathcal{P}_1(t) && \text{in }\mathcal{V}_1^\star,\\
  \GRAD \Psi (\dot{\x}_2(t)) + \GRAD \mathcal{E}_\mathrm{red}(t,\x_2(t)) &= \mathcal{P}_2(t) &\ \ &\text{in }\mathcal{V}_{2}^\star.
 \label{general-well-posedness:aux-1:end}
 \end{alignat}

 \paragraph{Existence and uniqueness for the gradient flow problem.}
 
 Eq.~\eqref{general-well-posedness:aux-1:end} has the structure of a doubly non-linear evolution equation. The existence (and uniqueness) of a solution to~\eqref{general-well-posedness:aux-1:end} follows by employing Thm.~\ref{preliminaries:thm:well-posedness:gradient-flows}; under above assumptions, it is sufficient to check that $\mathcal{E}_\mathrm{red}$ complies with the assumptions of Thm.~\ref{preliminaries:thm:well-posedness:gradient-flows}.
 
 First, by~(P4) and (P5), it is simple to show that there exist constants $C_1>0$, $C_2\geq0$, independent of $t$, satisfying
 \begin{align*}
  &\mathcal{E}_\mathrm{red}(t,\x_2) 
  \geq
  C_1 | \x_2 |_{\mathcal{V}_2}^{p_\mathcal{E}} - C_2.
  \end{align*} 
  
  Second, $\mathcal{E}_\mathrm{red}(t,\cdot)$ is convex on $\mathcal{V}_2$: This follows from the fact that $\GRAD\mathcal{E}_\mathrm{red}$ is monotone~\cite{Rockafellar1970}. In order to see this, we derive an explicit expression for $\GRAD\mathcal{E}_\mathrm{red}$. Let $\x_2,\y_2\in\mathcal{V}_2$ be arbitrary, and let $D\x_1^\star(t,\x_2)[\y_2]:=\left.\tfrac{\text{d}}{\text{d}\delta}\right|_{\delta=0} \x_1^\star(t,\x_2 + \delta \y_2)$. Using the chain rule, the optimality condition corresponding to~\eqref{general-well-posedness:aux-0}, and the definitions of $\x_1^\star$ and $\mathcal{E}$, we obtain
 \begin{align}
 \nonumber
  &\llangle \GRAD \mathcal{E}_\mathrm{red}(t,\x_2), \y_2 \rrangle \\
  \nonumber
  &\quad= 
  \llangle \GRAD_{1} \mathcal{E}\big(\x_1^\star(t,\x_2),\x_2\big) - \mathcal{P}_1(t), D\x_1^\star(t,\x_2)[\y_2] \rrangle
  +
  \llangle \GRAD_{2} \mathcal{E}\big(\x_1^\star(t,\x_2),\x_2\big), \y_2 \rrangle \\
  \nonumber
  &\quad=
  \llangle \GRAD \mathcal{E}_2\big(\Lambda(\x_1^\star(t,\x_2),\x_2)\big), \Lambda_2 \y_2 \rrangle.
 \end{align}
 Hence, from the definition of $\Lambda$, we obtain
 \begin{align*}
  &\llangle \GRAD \mathcal{E}_\mathrm{red}(t,\x_2) - \GRAD \mathcal{E}_\mathrm{red}(t,\y_2), \x_2 - \y_2 \rrangle\\[0.5em]
  &\quad=
  \llangle \GRAD \mathcal{E}_2\big(\Lambda(\x_1^\star(t,\x_2),\x_2)\big) - \GRAD \mathcal{E}_2\big(\Lambda(\x_1^\star(t,\y_2),\y_2)\big), \Lambda_2 (\x_2 - \y_2) \rrangle  \\[0.5em]
  &\quad=
  \llangle \GRAD \mathcal{E}_2\big(\Lambda(\x_1^\star(t,\x_2),\x_2)\big) - \GRAD \mathcal{E}_2\big(\Lambda(\x_1^\star(t,\y_2),\y_2)\big), \right. \\
  &\quad\quad\quad \left.\big(\Lambda(\x_1^\star(t,\x_2),\x_2) - \Lambda(\x_1^\star(t,\y_2),\y_2)\big) - \Lambda_1\big(\x_1^\star(t,\x_2) - \x_1^\star(t,\y_2)\big) \rrangle.
 \end{align*}
 Subtracting the optimality condition for arbitrary $\x_2,\y_2\in\mathcal{V}_2$, yields
 \begin{align*}
  &\GRAD \mathcal{E}_1(\x_1^\star(t,\x_2)) - \GRAD \mathcal{E}_1(\x_1^\star(t,\y_2)) \\
  &\quad=
  - \Lambda_1^\star\left( \GRAD \mathcal{E}_2\big(\Lambda(\x_1^\star(t,\x_2),\x_2)\big) - \GRAD \mathcal{E}_2(\Lambda(\x_1^\star(t,\y_2),\y_2)) \right)
  \ \  \text{in }\mathcal{V}_1^\star.
 \end{align*}
 Hence, together, we obtain
 \begin{align*} 
  &\llangle \GRAD \mathcal{E}_\mathrm{red}(t,\x_2) - \GRAD \mathcal{E}_\mathrm{red}(t,\y_2), \x_2 - \y_2 \rrangle\\
    &\quad=
  \llangle \GRAD \mathcal{E}_2\big(\Lambda(\x_1^\star(t,\x_2),\x_2)\big) - \GRAD \mathcal{E}_2\big(\Lambda(\x_1^\star(t,\y_2),\y_2)\big), \Lambda(\x_1^\star(t,\x_2),\x_2) - \Lambda(\x_1^\star(t,\y_2),\y_2) \rrangle \\
  &\quad\quad +
  \llangle \GRAD \mathcal{E}_1\left(\x_1^\star(t,\x_2)\right) - \GRAD \mathcal{E}_1(\x_1^\star(t,\x_2)), \x_1^\star(t,\x_2) - \x_1^\star(t,\y_2) \rrangle.
 \end{align*}
 From the convexity of $\mathcal{E}_1$ and $\mathcal{E}_2$, we obtain the convexity of $\mathcal{E}_\mathrm{red}$.

 By exploiting the definition of $\mathcal{E}_\mathrm{red}$, the optimality condition~\eqref{general-well-posedness:aux-1:start},~(P4), and~(P5), we obtain for almost every $t\in(0,T)$ and $p_1^\star$ as in~(P5)
 \begin{align*}
  \left| \partial_t \mathcal{E}_\mathrm{red}(t,x_2) \right| 
  &= \big| \llangle \GRAD \mathcal{E}_1(\x_1^\star(t,x_2)), \dot{\x}_1^\star(t,x_2) \rrangle \\
  &\qquad + \llangle \GRAD \mathcal{E}_1\left(\Lambda\left(\x_1^\star(t,x_2),x_2\right)\right),\Lambda_1 \dot{\x}_1^\star(t,x_2) \rrangle - \llangle \mathcal{P}_1, \dot{\x}_1^\star(t,x_2) \rrangle\\
  &\qquad - \llangle \partial_t \mathcal{P}_1, \x_1^\star(t,x_2) \rrangle \big|\\
  &= \left| \llangle \partial_t \mathcal{P}_1, \x_1^\star(t,x_2) \rrangle \right|\\
  &\lesssim \left\| \partial_t \mathcal{P}_1 \right\|_{\mathcal{V}_1^\star}^{p_1^\star} + \left\|  \x_1^\star(t,x_2)  \right\|_{\mathcal{V}_1}^{p_1}.
 \end{align*}
 In addition, by employing~(P4), and using that $\mathcal{E}_2$ is positive, it follows
 \begin{align*}
 \left\|  \x_1^\star(t,x_2)  \right\|_{\mathcal{V}_1}^{p_1}
 \lesssim
  \mathcal{E}_1(x_1^\star(t,x_2))
  \leq
  \mathcal{E}_\mathrm{red}(t,x_2)
  +
  \llangle \mathcal{P}_1, \x_1^\star(t,x_2)  \rrangle.
 \end{align*}
 Employing (P5) and Young's inequality, yields
 \begin{align*}
  \left\|  \x_1^\star(t,x_2)  \right\|_{\mathcal{V}_1}^{p_1}
  \lesssim
  \mathcal{E}_\mathrm{red}(t,x_2)
  +
  \llangle \mathcal{P}_1, \x_1^\star(t,x_2)  \rrangle.
 \end{align*}
 Altogether, we obtain
 \begin{align}
 \label{general-well-posedness:aux-2}
  \left| \partial_t \mathcal{E}_\mathrm{red}(t,x_2) \right| 
  \lesssim  
  \left\| \mathcal{P}_1 \right\|_{\mathcal{V}_1^\star}^{p_1^\star} + \left\| \partial_t \mathcal{P}_1 \right\|_{\mathcal{V}_1^\star}^{p_1^\star} + \mathcal{E}_\mathrm{red}(t,x_2).
 \end{align}
 \indent
 Consequently, $\mathcal{E}_\mathrm{red}$ complies with Thm.~\ref{preliminaries:thm:well-posedness:gradient-flows}, and together with (P1)--(P6), there exists a solution $\x_2\in W^{1,p}(0,T;\mathcal{B}_2)\cap L^\infty(0,T;\mathcal{V}_2)$ to~\eqref{general-well-posedness:aux-1:start}. It is unique in case $\GRAD\Psi$ or $\GRAD\mathcal{E}_\mathrm{red}$ are linear and self-adjoint, cf.\ Thm.~\ref{preliminaries:thm:well-posedness:gradient-flows}. The latter follows for linear and self-adjoint $\GRAD\mathcal{E}_i$, $i=1,2$. 
 
 \paragraph{Finite energy.} By Thm.~\ref{preliminaries:thm:well-posedness:gradient-flows}, $x_2$ satisfies the characteristic energy identity
  \begin{align*}
  &\int_0^T \Psi(\dot{x}_2(t))\,dt + \mathcal{E}_\mathrm{red}(x_2(T)) - \llangle \mathcal{P}_2(T),x_2(T) \rrangle \\[-0.25em]
  \nonumber
  &\quad = \mathcal{E}_\mathrm{red}\left(x_2(0)\right) - \llangle \mathcal{P}_2(0), x_2(0) \rrangle + \int_0^T \partial_t \mathcal{E}_\mathrm{red}(t,x_2(t))\, dt - \int_0^T \llangle \dot{\mathcal{P}}_2(t), x_2(t) \rrangle \, dt.
 \end{align*}
 Using (P4) and~\eqref{general-well-posedness:aux-2}, we obtain
 \begin{align*}
  &\int_0^T \Psi(\dot{x}_2(t))\,dt + \mathcal{E}_\mathrm{red}(x_2(T)) \lesssim \llangle \mathcal{P}_2(T),x_2(T) \rrangle \\[-0.25em]
  &\quad 
  \leq 
  \mathcal{E}_\mathrm{red}\left(x_2(0)\right) - \llangle \mathcal{P}_2(0), x_2(0) \rrangle
  +
   \left\| \mathcal{P}_1 \right\|_{W^{1,p_1^\star}(0,T;\mathcal{V}_1^\star)}^{p_1^\star}\\
   &\quad 
  +
  \|\mathcal{P}_2(T)\|_{\mathcal{V}_2^\star}^{p_\mathcal{E}^\star} 
  +
  \left\| \mathcal{P}_2 \right\|_{W^{1,p_\mathcal{E}^\star}(0,T;\mathcal{V}_2^\star)}^{p_\mathcal{E}^\star}
  +C_3
  + 
  \int_0^T \mathcal{E}_\mathrm{red}(t,x_2) \, dt,
 \end{align*}
 with $\tfrac{1}{p_\mathcal{E}}+\tfrac{1}{p_\mathcal{E}^\star}=1$ and $C_3$ from (P4). The assumptions on the external data are chosen such that the right hand side is uniformly bounded in $T$ up to the last term. By the Gr\"onwall inequality it follows that $\mathcal{E}_\mathrm{red}(t,x_2(t))$ is uniformly bounded in time.
 
 \paragraph{Existence and uniqueness for the minimization problem.}
 Since $x_2 \in L^\infty(0,T;\mathcal{V}_2)$, \eqref{general-well-posedness:aux-0} is well-defined for $x_2=x_2(t)$ for a.e.\ $t\in(0,T)$; thereby also $x_1=x_1^\star(t,\x_2)$. Finally, by the definition of $\mathcal{E}_\mathrm{red}$ and~(P4), it follows  for a.e.\ $t\in(0,T)$
 \begin{align*}
  \|\x_1(t)\|_{\mathcal{V}_1}^{p_1} \lesssim \mathcal{E}_\mathrm{red}(t,\x_2) + \| \mathcal{P}_1(t) \|_{\mathcal{V}_1}^{p_1^\star}.
 \end{align*}
 Hence, by the above paragraph and (P5), $\x_1^\star\in L^\infty(0,T;\mathcal{V}_1)$. Altogether, we obtain existence (and uniqueness) of the coupled system~\eqref{general-wellposedness:gradient-flow-system}, which concludes the proof.

\end{proof}

\begin{remark}
 Detailed stability bounds can be derived using energy identities for gradient flows, cf., e.g.,~\eqref{appendix:well-posedness-dne:energy-identity}.
\end{remark}

\section{Linear Biot equations as generalized gradient flow}\label{section:linear-biot-gradient-flow}

The theory of linear poro-elasticity describes the continuum mechanics of coupled flow and geomechanics in porous media under several simplifying hypotheses: in particular, the fundamental linearizing assumptions of linear elasticity; the hypothesis of small perturbations of the Lagrangian porosity; and an at most slightly compressible, Newtonian fluid. Together with first principles and Darcy's law, the \textit{Biot's consolidation model}, also called \textit{linear Biot equations}, can be deduced, coupling elliptic and parabolic equations. For a detailed introduction, we refer to the seminal work by Biot~\cite{Biot1941} and the comprehensive  books~\cite{Coussy2004,Lewis1998}.

In this section, we provide a derivation of the linear Biot equations employing the modelling framework described in Sec.~\ref{section:general-model-biot}. Thereby we demonstrate the inherent gradient flow structure of the linear Biot equations. Acknowledging the fact that the linear Biot equations have been already studied quite thoroughly in the literature, the following discussion serves mostly as proof of concept and guide for subsequent discussions of more involved poro-elasticity models.

\subsection{Generalized gradient flow formulation of linear poro-elasticity}\label{section:linear-biot:primal-formulation}

Using the modelling approach described in Sec.~\ref{section:general-model-biot}, we derive Biot's consolidation model as a generalized gradient flow. It suffices to specify states, an associated Helmholtz free energy $\mathcal{E}$ and a dissipation potential $\mathcal{D}$. 


As states, we choose the mechanical displacement $\u$ and the volume content $\fluidmass$ with associated processes $\dot{\u}$ and the volumetric flux $\flux$, respectively. Suitable function spaces for the latter, incorporating essential boundary conditions are given by
\begin{align}
 \label{linear-poro:zero-spaces:start}
 \mathcal{V} &= \left\{ \v\in H^1(\Omega) \,|\, \v=\u_\Gamma\text{ on }\Gamma_{\u}\right\}, \\
 \dot{\mathcal{V}} &= \left\{ \v\in H^1(\Omega) \,|\, \v=\dot{\u}_\Gamma\text{ on }\Gamma_{\u}\right\}, \\
 \mathcal{Z} &= \left\{ \z\in H(\mathrm{div};\Omega) \,|\, \z\cdot\n = q_\mathrm{\Gamma,n}\text{ on }\Gamma_{\flux}\right\}.
\end{align}
For their variations, we define correspondingly
\begin{align}
 \mathcal{V}_0 &= \left\{ \v\in H^1(\Omega) \,|\, \v=\vec{0}\text{ on }\Gamma_{\u}\right\}, \\
 \mathcal{Z}_0 &= \left\{ \z\in H(\mathrm{div};\Omega) \,|\, \z\cdot\n = 0\text{ on }\Gamma_{\flux}\right\}.
\label{linear-poro:zero-spaces:end}
\end{align}

\noindent
The energy is chosen to be the Helmholtz free energy for linearly deformable porous media, cf.\ Ch.~4.2.2,~\cite{Coussy2004},
\begin{align*}
 \mathcal{E}(\u,\fluidmass) &= 
 \mathcal{E}_\mathrm{eff}(\u) + \mathcal{E}_\mathrm{fluid}(\u,\fluidmass),\\ 
 \mathcal{E}_\mathrm{eff}(\u) &=\tfrac{1}{2} \llangle \mathbb{C}\eps{\u},\eps{\u} \rrangle, \\
 \mathcal{E}_\mathrm{fluid}(\u,\fluidmass) &= \tfrac{M}{2} \left\| \fluidmass- \alpha \DIV \u \right\|^2,
\end{align*}
where $\mathbb{C}$ is a symmetric, uniformly positive definite, fourth-order stiffness tensor, $M$ can be identified as the inverse of the compressibility of the bulk and $\alpha$ is the Biot coefficient. In this work, we assume isotropic materials, modelled as St.\ Venant Kirchhoff material, i.e., there exist constants $\mu>0$ and $\lambda\geq 0$ satisfying
\begin{align*}
 \mathbb{C}\eps{\u} = 2\mu \eps{\u} + \lambda \DIV \u \,\mathbf{I}.
\end{align*}
From~\eqref{thermodynamic-interpretation}, we recover the classical relations
\begin{align}
\label{linear-biot-interpretation-m}
 \fluidmass&= \tfrac{1}{M} p + \alpha \DIV \u, \\
\label{linear-biot-interpretation-stress}
 \stress &= \mathbb{C}\eps{\u} - \alpha p \,\mathbf{I}, \\
\label{linear-biot-interpretation-effective-stress}
 \stress_\mathrm{eff} &= \mathbb{C}\eps{\u} = \stress + \alpha p \,\mathbf{I}.
\end{align}

A standard assumption in linear poro-elasticity is the quasi-static character of the mechanical problem. As consequence, mechanical deformation occur instantaneously and hence changes without any cost. Hence, allowing for viscous dissipation for the fluid, changes of displacements and volumetric fluxes come at costs based on the dissipation potentials
\begin{align*}
 \mathcal{D}_\mathrm{mech}(\dot{\u}) &= 0, \\
 \mathcal{D}_\mathrm{fluid}(\flux)   &= \tfrac{1}{2} \llangle \permeability^{-1} \flux, \flux \rrangle,
\end{align*}
where the conductivity $\permeability$ is a symmetric, uniformly positive definite and uniformly bounded second-order tensor. It can be identified as the permeability, scaled by the inverse of the fluid viscosity. 

Given the current state $(\u,\fluidmass)$, its change is then described by~\eqref{biot-generalized-gradient-flow-structure:start}--\eqref{biot-generalized-gradient-flow-structure:end}:
\begin{align}
\label{linear-biot-generalized-gradient-flow-structure:start}
 \dot{\fluidmass} &= \masssource -\DIV\flux \qquad \text{and} \\
\nonumber
 (\dot{\u},\flux) &= \underset{(\v,\z)\in\dot{\mathcal{V}}\times\mathcal{Z}}{\text{arg\,min}}\, \Big\{ 
 \llangle \mathbb{C}\eps{\u}, \eps{\v} \rrangle  - \alpha \llangle M(\fluidmass-\alpha\DIV\u), \DIV\v \rrangle 
 - \mathcal{P}_\mathrm{ext,mech}(\v) \\[-1em]
\label{linear-biot-generalized-gradient-flow-structure:end}
 &\qquad\qquad\qquad\quad + \tfrac{1}{2} \llangle \permeability^{-1}\z, \z\rrangle - \llangle M(\fluidmass- \alpha\DIV\u), \DIV \z \rrangle - \mathcal{P}_\mathrm{ext,fluid}(\z)   
 \Big\}.
\end{align}
The system~\eqref{linear-biot-generalized-gradient-flow-structure:start}--\eqref{linear-biot-generalized-gradient-flow-structure:end} can be reduced to a compact two-field formulation using ideas from Sec.~\ref{section:abstract-dne}. Recalling the definition of the accumulated flux $\flux_{\int}$, cf. Eq.~\eqref{accumulated-darcy-flux}, living in 
\begin{align}
\label{linear-poro:zero-spaces:second}
 \mathcal{Z}_{\int}(t) = \left\{ \z \in H(\mathrm{div};\Omega) \,\left|\, \z \cdot \n = \int_0^t q_\mathrm{\Gamma,n}\, dt\ \text{on }\Gamma_{\flux} \right. \right\},\quad t\in[0,T],
\end{align}
we introduce the generalized displacement $\bm{U}=(\u,\flux_{\int})$ and its change $\dot{\bm{U}}=(\dot{\u},\flux)$. Energies, external work rates and dissipation potentials can be naturally interpreted as functions of $\bm{U}$ and $\dot{\U}$, respectively. After all, the evolution of the generalized displacement $\U$ is governed by the generalized gradient flow
\begin{align}
\label{biot-generalized-gradient-flow-structure-U}
 \dot{\bm{U}}(t) 
 &= \underset{\bm{V}\in\dot{\mathcal{V}}(t)\times\dot{\mathcal{Z}}_{\int}(t)}{\text{arg\,min}}\, 
 \Big\{ 
 \mathcal{D}(\bm{V})
 + \llangle \GRAD \mathcal{E}(t,\bm{U}(t)), \bm{V} \rrangle 
 - \mathcal{P}_\mathrm{ext}(t,\bm{V}) \Big\}.
\end{align}
Formulations based on the generalized displacement are in the following referred to as the \textit{primal formulation} of linear poro-elasticity.

In order to verify that~\eqref{biot-generalized-gradient-flow-structure-U} is indeed formally equivalent to the Biot equations, we derive the corresponding optimality conditions. Written in variational form, they read: Find $(\u,\flux)\in\mathcal{V}\times\mathcal{Z}$ and $\fluidmass$ with suitable regularity such that
\begin{align}
\label{biot:mechanics}
\llangle \mathbb{C}\eps{\u}, \eps{\v} \rrangle - \alpha \llangle M(\fluidmass-\alpha \DIV\u), \DIV \v \rrangle &= \mathcal{P}_\mathrm{ext,mech}(\v) &&\forall\v\in\mathcal{V}_0,\\
\label{biot:darcy}
\llangle \permeability^{-1}\flux, \z \rrangle - \llangle M(\fluidmass-\alpha \DIV \u), \DIV \z \rrangle &= \mathcal{P}_\mathrm{ext,fluid}(\z), &&\forall\z\in\mathcal{Z}_0,\\
\label{biot:mass-conservation}
\dot{\fluidmass} + \DIV \flux &= \masssource, &&\text{in }L^2(\Omega).
\end{align}
Identifying the fluid pressure from~\eqref{linear-biot-interpretation-m}, we recover the three-field formulation of the classical quasi-static linear Biot equations.

\subsection{Dual formulation of linear poro-elasticity} \label{section:dual-formulation:linear-poro-elasticity}

For the special case of quasi-static linear poro-elasticity, a natural \textit{dual formulation} can be derived by applying the Legendre-Fenchel duality theory~\cite{Ekeland1999} to~\eqref{biot-generalized-gradient-flow-structure-U}. The procedure is analogous to the discussion of primal and dual formulations of linear elastostatics in the context of convex analysis~\cite{Ciarlet2011,Temam2000}. We skip the derivation here and present directly the dual formulation. It naturally employs the dual generalized stress $\generalizedstress=(\stress,p)$ as primary variable, pairing up the total mechanical stress and the fluid stress, i.e., fluid pressure. Suitable function spaces incorporating essential boundary conditions are given by
\begin{align}
 \dot{\mathcal{S}} &:= \left\{ \stress \in H(\text{div};\Omega)^d \,\left|\, 
 \begin{array}{l}  
 \stress\bm{n} = \dot{\stress}_{\Gamma,n} \text{ on }\Gamma_{\stress},\\[2pt]
 \DIV \stress + \dotfext = \bm{0} \text{ in } L^2(\Omega),\\[2pt]
 \llangle \stress,\bm{\gamma}\rrangle = 0 \ \forall \bm{\gamma}\in\bm{Q}_\mathrm{AS}
 \end{array} \right.\right\}, \\[2pt]
 \label{definition:skew-symmetric-tensors}
 \bm{Q}_\mathrm{AS} &:=\left\{ \bm{\gamma}\in L^2(\Omega)^{d\times d}\,|\, \bm{\gamma}\text{ skew-symmetric on }\Omega \right\} \\[2pt]
 \dot{\mathcal{Q}} &:= \left\{ q \in H^1(\Omega) \,|\, q=\dot{p}_\Gamma \text{ on }\Gamma_{p} \right\}, \\[2pt]
 \dot{\mathcal{H}}^\star &:= \dot{\mathcal{S}} \times \dot{\mathcal{Q}}.
\end{align}
We note, that the balance of linear momentum is incorporated intrinsically in $\dot{\mathcal{S}}$, which is characteristic for the dual formulation. Imposing only weak symmetry of stress tensors however is our choice, which is motivated by current advances in the robust discretization of the mixed formulation of elasticity and poro-elasticity, cf., e.g.,~\cite{Ahmed2019a,Ambartsumyan2018b,Baerland2017,Boffi2013,Keilegavlen2017}; imposing strong symmetry is also possible.

In between the primal and the dual formulation, the mathematical interpretation of dissipation and energy essentially swaps, similarly for essential and natural boundary conditions. Hence, utilizing~\eqref{linear-biot-interpretation-m}--\eqref{linear-biot-interpretation-stress} and Darcy's law, we define the dual energy, dissipation and external work rate by
\begin{align*}
 \mathcal{E}^\star(\generalizedstress) &= \mathcal{D}(\generalizedstress) = \frac{1}{2} \llangle \permeability (\GRAD p - \gext), \GRAD p - \gext \rrangle, \\
 \mathcal{D}^\star(\dot{\generalizedstress}) &= \mathcal{E}(\dot{\generalizedstress}) = \tfrac{1}{2} \llangle \mathbb{A} (\dot{\stress} + \alpha \dot{p} \,\mathbf{I}), \dot{\stress} + \alpha \dot{p} \,\mathbf{I} \rrangle + \tfrac{1}{2M} \| \dot{p} \|^2,\\
 \mathcal{P}^\star_\mathrm{ext}(\dot{\generalizedstress}) &= \llangle \dot{\u}_\Gamma, \dot{\stress}\n \rrangle_{\Gamma_{\stress}} + \llangle \masssource, \dot{p} \rrangle + \llangle q_\mathrm{\Gamma,n}, \dot{p} \rrangle_{\Gamma_{\flux}}. 
\end{align*}
Here, $\mathbb{A}=\mathbb{C}^{-1}$ denotes the compliance tensor; for homogeneous, isotropic materials, it satisfies for $\stress \in \mathbb{R}^{d \times d}$, with deviatoric and hydrostatic components $\stress^\mathrm{d} := \stress - \sigma^\mathrm{h}\, \mathbf{I}$ and $\sigma^\mathrm{h} := \tfrac{1}{d} \mathrm{tr}\,\stress$ , respectively,
\begin{align}
\label{elasticity-compliance-tensor-quadratic}
 (\mathcal{A} \stress) : \stress = \frac{1}{2\mu} \left| \stress^\mathrm{d} \right|^2 + \frac{1}{K_\mathrm{dr}} |\sigma^\mathrm{h}|^2
\end{align}

Finally, the evolution of the generalized stress $\generalizedstress$ is prescribed by the generalized gradient flow
\begin{align}
\label{generalized-gradient-flow:dual-formulation:linear-poro-elasticity}
 \dot{\generalizedstress} = \underset{\testgeneralizedstress\in\dot{\mathcal{H}}^\star}{\mathrm{arg\,min}} \,\Big\{ \mathcal{D}^\star(\testgeneralizedstress) + \llangle \GRAD \mathcal{E}^\star(\generalizedstress), \testgeneralizedstress \rrangle - \mathcal{P}^\star_\mathrm{ext}(\testgeneralizedstress) \Big\},
\end{align}
subject to compatible, initial data $\generalizedstress=\generalizedstress_0$ at time $t=0$. Evidently one major advantage of the dual formulation~\eqref{generalized-gradient-flow:dual-formulation:linear-poro-elasticity} compared to the primal formulation~\eqref{biot-generalized-gradient-flow-structure-U} is, it allows for incompressible solids and fluids.

The corresponding optimality conditions can be shown to be identical to the four field formulation of linear poro-elasticity~\cite{Baerland2017}, employing the total stress and the fluid pressure as primary variables, and mechanical displacement and rotation as Lagrange multipliers.

\subsection{Well-posedness of linear poro-elasticity}\label{section:linear-biot:well-posedness}

In the following, we establish existence and uniqueness of a weak solution to the primal formulation of linear poro-elasticity and discuss its regularity. For this, we apply the abstract well-posedness result, Thm.~\ref{thm:abstract-well-posedness-of-dne}.

\begin{lemma}[Well-posedness and regularity for linear poro-elasticity]\label{lemma:well-posedness-poro}
Let $\mathcal{V},\mathcal{V}_0,\mathcal{Z}_0$ and $\mathcal{Z}_{\int}$ as defined in~\eqref{linear-poro:zero-spaces:start}--\eqref{linear-poro:zero-spaces:end} and~\eqref{linear-poro:zero-spaces:second} with 
\begin{align*}
 \u_\Gamma &\in C(0,T;H^{1/2}(\Gamma_{\u})^d) \cap H^1(0,T;H^{1/2}(\Gamma_{\u})^d), \\
 q_\mathrm{\Gamma,n} &\in C(0,T;H^{-1/2}(\Gamma_{\flux})^d).
\end{align*}
For the external loadings, and natural boundary and initial conditions assume
\begin{align*} 
 \stress_\mathrm{\Gamma,n} &\in C(0,T;H^{-1/2}(\Gamma_{\stress})^d) \cap H^1(0,T;H^{-1/2}(\Gamma_{\stress})^d), \\
 p_\Gamma &\in C(0,T;H^{1/2}(\Gamma_{\flux})^d)\cap H^1(0,T;H^{1/2}(\Gamma_{\flux}),\\
 \fext &\in C(0,T;(\bm{H}^1(\Omega))^\star) \cap H^1(0,T;(\bm{H}^1(\Omega))^\star),\\
 \gext &\in C(0,T;H(\mathrm{div},\Omega)^\star) \cap H^1(0,T;H(\mathrm{div};\Omega)^\star),\\
 \masssource &\in L^2(0,T;\mathcal{V}_0^\star \cap \mathcal{Z}_0^\star), \\
 \fluidmass_0 &\in \mathcal{V}_0^\star \cap \mathcal{Z}_0^\star \\
 \u_0 &\in \bm{H}^1(\Omega),\text{ such that }{{\u_0}_|}_{\Gamma_{\u}} = \u_\Gamma(0),
\end{align*}
with the initial conditions satisfying the compatibility condition
\begin{align*}
 \llangle \mathbb{C} \eps{\u_0}, \eps{v} \rrangle  - \llangle M \left( \theta_0 - \alpha \DIV \u_0\right), \DIV \v \rrangle = \llangle \fext(0), \v \rrangle, \quad \forall \v\in \mathcal{V}_0.
\end{align*}
Then there exists a unique solution $\bm{U}=(\u,\flux_{\int})$ of~\eqref{biot-generalized-gradient-flow-structure-U} and equivalently~\eqref{linear-biot-generalized-gradient-flow-structure:start}--\eqref{linear-biot-generalized-gradient-flow-structure:end}, satisfying
 \begin{align}
 \label{lemma:poro-regularity-1}
  \u &\in L^\infty(0,T; H^1(\Omega)), \\
 \label{lemma:poro-regularity-2}
  \flux_{\int} &\in H^1(0,T;L^2(\Omega))\cap L^\infty(0,T;H(\mathrm{div};\Omega)),\\
 \label{lemma:poro-regularity-3}
  \DIV \flux_{\int} &\in H^1(0,T; H^{-1}(\Omega)) \cap L^\infty(0,T; L^2(\Omega)).
 \end{align}
 For the volumetric flux $\flux$, fluid content $\fluidmass$, fluid pressure $p$ and stress $\stress$, associated with the states by~\eqref{accumulated-darcy-flux},~\eqref{accumulated-mass-conservation},~\eqref{linear-biot-interpretation-m} and~\eqref{linear-biot-interpretation-stress}, respectively, it holds
 \begin{align}
 \label{lemma:poro-regularity-extra:start}
  \flux &\in L^2(0,T;L^2(\Omega)),\ \DIV \flux \in L^2(0,T;H^{-1}(\Omega)), \\
  \fluidmass &\in H^1(0,T;H^{-1}(\Omega)) \cap L^\infty(0,T;L^2(\Omega)), \\
 \label{lemma:poro-regularity-extra:pressure-regularity}
  p&\in L^\infty(0,T;L^2(\Omega)), \\
 \label{lemma:poro-regularity-extra:end}
  \stress &\in L^\infty(0,T;L^2(\Omega)).
 \end{align}
\end{lemma}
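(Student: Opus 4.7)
The plan is to apply Theorem~\ref{thm:abstract-well-posedness-of-dne} to the primal gradient flow formulation~\eqref{biot-generalized-gradient-flow-structure-U} with the identification $x_1 = \u$ and $x_2 = \flux_{\int}$. Following Remark~\ref{remark:time-independent-energy-dissipation}, I would first lift the time-dependent essential boundary data on $\Gamma_{\u}$ and $\Gamma_{\flux}$ into linear contributions to the external work rates, reducing the analysis to the time-independent linear spaces $\mathcal{V}_0$ and $\mathcal{Z}_0$ at the cost of a modified external loading that inherits the temporal regularity of the boundary data. Since the dissipation is quadratic and vanishes on the $\u$-component, conditions (P1) and (P3) are immediate with $p_\psi = 2$, and for (P2) I take $\mathcal{B}_2 = L^2(\Omega)^d$ equipped with the semi-norm $|\flux_{\int}|_{\mathcal{V}_2} := \|\DIV \flux_{\int}\|$, so that the $H(\mathrm{div};\Omega)$-norm decomposes as required with $p = 2$.

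For (P4), the energy naturally splits as $\mathcal{E}_1(\u) := \tfrac{1}{2}\llangle \mathbb{C}\eps{\u},\eps{\u}\rrangle$ plus $\mathcal{E}_2 \circ \Lambda$, with $\mathcal{E}_2(z) := \tfrac{M}{2}\|z\|^2$ on $\tilde{\mathcal{V}} = L^2(\Omega)$ and affine coupling $\Lambda(\u,\flux_{\int})(t) := \fluidmass_0 + \intmasssource(t) - \DIV\flux_{\int} - \alpha \DIV \u$, whose linear parts $\Lambda_1 \u = -\alpha\DIV\u$ and $\Lambda_2 \flux_{\int} = -\DIV\flux_{\int}$ are bounded into $L^2(\Omega)$. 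Strict convexity and the $H^1$-coercivity $\mathcal{E}_1(\u) \gtrsim \|\u\|_{\bm{H}^1}^2$ follow from uniform positivity of $\mathbb{C}$ together with Korn's inequality, which applies since $\Gamma_{\u}$ has positive measure. The second growth bound $\mathcal{E}(\u,\flux_{\int}) \gtrsim \|\DIV\flux_{\int}\|^2 - C$ is obtained by writing $\DIV\flux_{\int} = \fluidmass_0 + \intmasssource - \fluidmass$ and combining the triangle inequality with the bounds $\mathcal{E} \gtrsim \|\fluidmass - \alpha\DIV\u\|^2$ and $\mathcal{E} \gtrsim \|\DIV\u\|^2$ inherited from $\mathcal{E}_2$ and $\mathcal{E}_1$, respectively. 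The loading regularity (P5) and the initial compatibility (P6) are then direct translations of the hypotheses of the lemma.

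Once Theorem~\ref{thm:abstract-well-posedness-of-dne} delivers existence and the regularity~\eqref{lemma:poro-regularity-1}--\eqref{lemma:poro-regularity-2}, uniqueness follows unconditionally since both $\GRAD\Psi$ and $\GRAD\mathcal{E}$ are linear and self-adjoint. The remaining regularity claims~\eqref{lemma:poro-regularity-extra:start}--\eqref{lemma:poro-regularity-extra:end} follow from the algebraic relations and the conservation law: the definition $\flux = \dot{\flux}_{\int}$ immediately gives~\eqref{lemma:poro-regularity-extra:start}; the mass balance $\dot{\fluidmass} + \DIV\flux = \masssource$ combined with the energy bound on $\DIV\flux_{\int}$ yields~\eqref{lemma:poro-regularity-3} and the regularity of $\fluidmass$; the pressure bound~\eqref{lemma:poro-regularity-extra:pressure-regularity} follows from $p = M(\fluidmass - \alpha\DIV\u)$ since the energy identity precisely controls $\|\fluidmass - \alpha\DIV\u\|^2$; and~\eqref{lemma:poro-regularity-extra:end} then follows from $\stress = \mathbb{C}\eps{\u} - \alpha p\,\mathbf{I}$.

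The main technical obstacle I anticipate is the bookkeeping for the boundary lift: one has to verify that the lifted external work rates $\mathcal{P}_1$ and $\mathcal{P}_2$ indeed satisfy the $W^{1,p^\star}(0,T;\mathcal{V}_i^\star)$ regularity demanded by (P5), in particular that the boundary contributions generated by $\dot{\u}_\Gamma$ and $\int_0^t q_{\Gamma,n}$ pair correctly with elements of $\bm{H}^1(\Omega)$ and $H(\mathrm{div};\Omega)$ respectively, and that the lift can be chosen compatibly with the initial data. All other estimates are a careful but essentially routine combination of Korn's inequality, the structure of the affine coupling $\Lambda$, and the abstract decoupling argument already encoded in the proof of Theorem~\ref{thm:abstract-well-posedness-of-dne}.
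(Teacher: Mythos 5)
Your proposal is correct and follows essentially the same route as the paper: lift the essential boundary data into the external work rates, identify $\Psi=\mathcal{D}_\mathrm{fluid}$, $\mathcal{E}_1=\mathcal{E}_\mathrm{eff}$, $\mathcal{E}_2(m)=\tfrac{M}{2}\|m\|^2$ with the affine coupling $\Lambda$, verify (P1)--(P6) with $\mathcal{V}_1=\mathcal{V}_0$, $\mathcal{V}_2=\mathcal{Z}_0$, $\mathcal{B}_2=\bm{L}^2(\Omega)$, and recover the extra regularity from the mass balance and the constitutive relations. The only bookkeeping difference is that the paper also moves the source contribution $\intmasssource(t)$ (together with the boundary lifts) out of $\Lambda$ into the loadings $\mathcal{P}_1,\mathcal{P}_2$, so that the energy is genuinely time-independent as Theorem~\ref{thm:abstract-well-posedness-of-dne} requires; since $\mathcal{E}_2$ is quadratic this is the same elementary rearrangement you already perform for the boundary data.
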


\begin{proof}
The well-posedness result is a direct consequence of Thm.~\ref{thm:abstract-well-posedness-of-dne}, applied to poro-elasticity written as doubly non-linear evolution equation, cf.\ Sec.~\ref{section:abstract-dne}. For this, we reformulate~\eqref{biot-generalized-gradient-flow-structure-U} having two objectives in mind: (i) time-dependent contributions due to essential boundary conditions and external sources are required to impact only the external work rates; (ii) the final structure is required to match the abstract setting of Thm.~\ref{thm:abstract-well-posedness-of-dne}.

Due to objective~(i), the problem has to be formulated for homogeneous contributions of the generalized displacement. It will be denoted by 
\begin{align*}
 \left(\u_\mathrm{hom},\flux_\mathrm{\int,hom}\right) := \left(\u,\flux_\mathrm{\int}\right) - \left(\tilde{\u}_\Gamma,\tilde{\flux}_\mathrm{\int,\Gamma}\right) \in \mathcal{V}_0 \times \mathcal{Z}_0,
\end{align*}
where
\begin{align*}
 \tilde{\u}_\Gamma &\in C(0,T;\bm{H}^1(\Omega)) \cap H^1(0,T;\bm{H}^1(\Omega)), \\
 \tilde{\flux}_{\int,\Gamma} &\in C^1(0,T;H(\mathrm{div};\Omega)).
\end{align*}
are extensions of the essential boundary conditions onto $\Omega$, such that $\tilde{\u}(0)=\u_0$, ${\left.{\tilde{\u}_\Gamma}\right|}_{\Gamma_{\u}}(t) = \u_\Gamma(t)$ and ${\left.\tilde{\flux}_\Gamma \cdot \n\right|}_{\Gamma_{\flux}}(t) = \int_0^t q_\mathrm{\Gamma,n}\,dt$ for a.e.\ $t\in(0,T)$.

Using the notation of Thm.~\ref{thm:abstract-well-posedness-of-dne}, we define for $(\v,\z)\in\mathcal{V}_0\times\mathcal{Z}_0$
\begin{align*}
\Psi(\z) &:= \mathcal{D}_\mathrm{fluid}(\z) 
  & \mathcal{E}_1(\v) &:= \mathcal{E}_\mathrm{eff}(\v), \\
\mathcal{E}_2(m) &:= \tfrac{M}{2} \|m\|^2,
  & \Lambda(\v,\z) &:= \fluidmass_0 - \DIV \z - \alpha \DIV \v,\\
  &&\mathcal{E}(\v,\z) &:= \mathcal{E}_1(\v) + \mathcal{E}_2(\Lambda(\v,\z)),
\end{align*}
and in order to fulfill objective~(i), we set 
\begin{align*}
 \mathcal{P}_1(t,\v)&:=
 \mathcal{P}_\mathrm{ext,mech}(t,\v)
 -
 \llangle \mathbb{C} \eps{\tilde{\u}_\Gamma(t)}, \eps{\v} \rrangle \\
 &\quad+
 M \llangle \intmasssource(t) - \DIV \tilde{\flux}_{\int,\Gamma}(t) - \alpha \DIV \tilde{\u}_\Gamma(t), \alpha\DIV \v \rrangle, \\[1pt]
 \mathcal{P}_2(t,\z)&:= 
 \mathcal{P}_\mathrm{ext,fluid}(t,\z)
 -
 \llangle \permeability^{-1} \dot{\tilde{\flux}}_{\int,\Gamma}(t), \z \rrangle \\
 &\quad+
 M \llangle \intmasssource(t) - \DIV \tilde{\flux}_{\int,\Gamma}(t) - \alpha \DIV \tilde{\u}_\Gamma(t), \DIV \z \rrangle.
\end{align*}
One can simply verify that~\eqref{biot-generalized-gradient-flow-structure-U} is equivalent to 
\begin{align}
\label{linear-poro:proof:aux1}
 (\dot{\u}_\mathrm{hom},\dot{\flux}_\mathrm{\int,hom}) = \underset{(\v,\z)\in\mathcal{V}_0 \times \mathcal{Z}_0}{\mathrm{arg\,min}}\, 
 \bigg\{ 
 \Psi(\z) + \llangle \GRAD \mathcal{E}\left(\u_\mathrm{hom},\flux_\mathrm{\int,hom}\right), (\v,\z) \rrangle \\
 \nonumber
 - \mathcal{P}_1(t,\v) - \mathcal{P}_2(t,\z) 
 \bigg\}
\end{align}
together with zero initial conditions. Furthermore, it is simple to verify~(P1)--(P6); we just note that $\mathcal{V}_1=\mathcal{V}_0$, $\mathcal{V}_2=\mathcal{Z}_0$ and $\mathcal{B}_2=\bm{L}^2(\Omega)$ using the notation of Thm.~\ref{thm:abstract-well-posedness-of-dne}. Consequently, we obtain existence and uniqueness of a solution to~\eqref{linear-poro:proof:aux1}, and consequently of~\eqref{biot-generalized-gradient-flow-structure-U}, satisfying~\eqref{lemma:poro-regularity-1}--\eqref{lemma:poro-regularity-2}. Since
 \begin{align*}
  \int_0^T \frac{\llangle  \DIV \dot{\flux}_\mathrm{\int,hom}, \phi\rrangle}{\|\GRAD\phi\|}\,dt
  \leq
  \| \dot{\flux}_\mathrm{\int,hom} \|_{L^2(0,T;L^2(\Omega))}^2,
 \end{align*}
 and the regularity of $\tilde{\flux}_{\int,\Gamma}$, it follows~\eqref{lemma:poro-regularity-3}. Finally,~\eqref{lemma:poro-regularity-extra:start}--\eqref{lemma:poro-regularity-extra:end} follow directly using~\eqref{accumulated-darcy-flux},~\eqref{accumulated-mass-conservation} and~\eqref{linear-biot-interpretation-m}--\eqref{linear-biot-interpretation-stress}.
\end{proof}

\section{Linear poro-visco-elasticity as generalized gradient flow}\label{section:poro-visco-elasticity-wellposedness}  

Biot's consolidation model considers solely primary consolidation, which results in a  characteristic, quasi-static, mechanical response of the poro-elastic system. Modelling-wise, this originates from neglecting viscous dissipation due to mechanical deformation. However, in physical, deformable porous media viscous dissipation always occurs, and it leads to  partially non-instantaneous deformation, also called \textit{secondary consolidation}. The theory of poro-visco-elasticity incorporates such visco-elastic effects. 

Classical models for visco-elasticity consider a separation of the total strain into an elastic and a visco-elastic strain~\cite{Coussy2004}; the elastic contribution is instantaneously recovered during an unloading process, whereas the visco-elastic contribution is not. In the extreme case, the elastic behavior can be neglected and the total strain can be assumed to be identical to the visco-elastic strain~\cite{Showalter2000,Bociu2016}; the corresponding model is also referred to as linear, quasi-static poro-elasticity with secondary consolidation. Here, we treat the general case, including the elastic and visco-elastic strains.

Linear poro-visco-elasticity can be naturally formulated as generalized gradient flow by suitably enhancing the primal model for linear poro-elasticity.
As state we choose the generalized displacement $(\u,\flux_{\int},\viscostrain)$, incorporating now also the visco-elastic strain $\viscostrain$, living for a.e.\ $t\in (0,T)$ in 
\begin{align*}
 \mathcal{T}:=\{\testviscostrain \in L^2(\Omega)^{d\times d} \,|\, \testviscostrain\ \text{symmetric}\}.
\end{align*}

We distinguish between the standard elastic strain energy and a stored, visco-elastic energy. Additionally, we allow for different impacts of the elastic and visco-elastic strains on the energy corresponding to the fluid. The associated energy functional is consistently defined as by~\cite{Coussy2004}
\begin{align*}
 \mathcal{E}_\mathrm{v}(\u,\flux_{\int},\viscostrain) =& 
 \tfrac{1}{2} \langle \mathbb{C}(\eps{\u} - \viscostrain),\eps{\u} -\viscostrain\rangle + \tfrac{1}{2} \langle \mathbb{C}_\mathrm{v} \viscostrain, \viscostrain \rangle \\[-3pt]
 &+
 \tfrac{M}{2} \left\| \fluidmass_0 + \intmasssource - \DIV \flux_{\int} - \alpha_\mathrm{v} \, \mathrm{tr}\,\viscostrain - \alpha (\DIV \u - \mathrm{tr}\,\viscostrain) \right\|^2,
\end{align*}
where $\mathbb{C}_\mathrm{v}$ is a symmetric, uniformly positive definite, fourth-order tensor. 
 
Taking also into account the dissipation of the stored, visco-elastic energy, the dissipation potential is defined by
\begin{align*}
 \mathcal{D}_\mathrm{v}(\dot{\u},\dot{\flux}_{\int},\dotviscostrain) =& \tfrac{1}{2} \langle \permeability^{-1} \flux_{\int}, \flux_{\int} \rangle + \tfrac{1}{2} \langle \mathbb{C}_\mathrm{v}' \dotviscostrain, \dotviscostrain \rangle.
\end{align*}
For many materials a visco-elastic effect is only encountered, e.g., for the volumetric part~\cite{Coussy2004,Bociu2016}. Consequently, the symmetric, positive semi-definite, fourth-order tensor $\mathbb{C}_\mathrm{v}'$ may be singular. 

Again, restricting to isotropic materials, the fourth-order tensors $\mathbb{C}_\mathrm{v}$, $\mathbb{C}_\mathrm{v}'$ can be associated with Lam\'e parameters $\mu_\mathrm{v}> 0,\lambda_\mathrm{v}\geq0,\mu_\mathrm{v}'\geq 0,\lambda_\mathrm{v}'>0$, and corresponding bulk moduli $K_\mathrm{dr,v}= \tfrac{2\mu_\mathrm{v}}{d} + \lambda_\mathrm{v}$ and $K_\mathrm{dr,v}' = \tfrac{2\mu_\mathrm{v}'}{d} + \lambda_\mathrm{v}'$, via
\begin{align*}
 \mathbb{C}_\mathrm{v} \viscostrain &= 2\mu_\mathrm{v}\viscostrain + \lambda_\mathrm{v} \mathrm{tr}\,\viscostrain, \\
 \mathbb{C}_\mathrm{v}' \dotviscostrain &= 2\mu_\mathrm{v}'\dotviscostrain + \lambda_\mathrm{v}' \mathrm{tr}\,\dotviscostrain.
\end{align*}

Since $\viscostrain$ is interpreted as internal variable, the external work rates can be chosen as in the context of linear poro-elasticity
\begin{align*}
 \mathcal{P}_\mathrm{ext}(\dot{\u},\dot{\flux}_{\int},\dotviscostrain) = 
 \mathcal{P}_\mathrm{ext,mech}(\dot{\u})
 +
 \mathcal{P}_\mathrm{ext,fluid}(\dot{\flux}_{\int}).
\end{align*}

Finally, within the framework introduced in Sec.~\ref{section:general-model-biot}, the resulting evolution equation reads for current state $(\u,\flux_{\int},\viscostrain)$
\begin{align}
 \label{viscoporo-elasticity:gradient-flow-structure}
 (\dot{\u},\dot{\flux}_{\int}, \dotviscostrain) 
 \,=\, 
 \underset{(\v,\z,\testviscostrain)\in \dot{\mathcal{V}}\times\dot{\mathcal{Z}}_{\int}\times\mathcal{T}}{\mathrm{arg\,min}}\,
 \Big\{
 \mathcal{D}_\mathrm{v}(\v,\z,\testviscostrain) + \llangle \GRAD \mathcal{E}_\mathrm{v}(\u,\flux_{\int},\viscostrain), (\v,\z,\testviscostrain) \rrangle &\\[-1em]
 \nonumber
 - \mathcal{P}_\mathrm{ext}(\v,\z,\testviscostrain) \Big\}&.
\end{align}
The corresponding optimality conditions yield the model for linear poro-visco-elasticity as discussed by~\cite{Coussy2004}
\begin{align}
\label{visco-biot:mechanics}
\llangle \mathbb{C}(\eps{\u} - \viscostrain), \eps{\v} \rrangle - \alpha \llangle p, \DIV \v \rrangle &= \mathcal{P}_\mathrm{ext,mech}(\v) &&\forall\v\in\mathcal{V}_0,\\
\label{visco-biot:darcy}
\llangle \permeability^{-1}\flux, \z \rrangle - \llangle p, \DIV \z \rrangle &= \mathcal{P}_\mathrm{ext,fluid}(\z), &&\forall\z\in\mathcal{Z}_0,\\
\label{visco-biot:visco-elasticity}
\llangle \mathbb{C}_\mathrm{v}' \dotviscostrain + \mathbb{C}_\mathrm{v} \viscostrain - \alpha_\mathrm{v} p \, \mathbf{I}, \testviscostrain \rrangle 
&= \llangle \stress, \testviscostrain \rrangle &&\forall\testviscostrain\in\mathcal{T},\\
\label{visco-biot:mass-conservation}
\fluidmass- \fluidmass_0 + \DIV \flux_{\int} &= \intmasssource, &&\text{in }L^2(\Omega),
\end{align}
where we explicitly introduced the fluid pressure and total stress by
\begin{align}
 \label{poro-visco-interpretation-pressure}
 p
 &=\partial_m\mathcal{E}_\mathrm{v}
 = M\left(\fluidmass_0 + \intmasssource - \DIV \flux_{\int} - \alpha_\mathrm{v} \, \mathrm{tr}\,\viscostrain - \alpha (\DIV \u - \mathrm{tr}\,\viscostrain)\right), \\
 \label{poro-visco-interpretation-stress}
 \stress &= \partial_{\GRAD\u}\mathcal{E}_\mathrm{v} = \mathbb{C} (\eps{\u} - \viscostrain)  - \alpha p \, \mathbf{I}.
\end{align}
Well-posedness can be analyzed analogously to Lemma~\ref{lemma:well-posedness-poro}. 

\begin{lemma}[Well-posedness of linear poro-visco-elasticity]\label{lemma:poro-visco-elasticity-wellposedness}
 Let $\zeroviscostrain\in L^2(\Omega)^{d \times d}$, satisfying the compatibility condition for the deviatoric components
 \begin{align*}
  2\mu_\mathrm{v} \llangle \bm{\varepsilon}_\mathrm{v,0}^\mathrm{d}, \testviscostrain \rrangle = 2\mu \llangle \eps{\u_0}^\mathrm{d} - \bm{\varepsilon}_\mathrm{v,0}^\mathrm{d}, \testviscostrain \rrangle, \quad \forall \testviscostrain\in\mathcal{T},
 \end{align*}
 in case $\mu_\mathrm{v}'=0$, where the deviatoric components are defined according to~\eqref{definition:deviatoric-hydrostatic-visco-strain} and~\eqref{definition:deviatoric-hydrostatic-strain}. Then under the same regularity assumptions as in Lemma~\ref{lemma:well-posedness-poro} there exists a unique solution $(\u,\flux_{\int},\viscostrain)$ of~\eqref{visco-biot:mechanics}--\eqref{visco-biot:mass-conservation}, satisfying~\eqref{lemma:poro-regularity-1}--\eqref{lemma:poro-regularity-3} as for linear poro-elasticity. Additionally, for $\mu_\mathrm{v}'>0$ it holds 
 \begin{align}
  \viscostrain \in H^1(0,T;L^2(\Omega)),
 \end{align} 
 whereas for $\mu_\mathrm{v}'=0$, $\lambda_\mathrm{v}'>0$ it holds
 \begin{align}
  \viscostrain &\in  L^\infty(0,T;L^2(\Omega)), \\
  \mathrm{tr}\,\viscostrain &\in H^1(0,T;L^2(\Omega)).
 \end{align}
 For the flux $\flux$, mass $m$, pressure $p$ and stress $\stress$, associated with the states by~\eqref{accumulated-darcy-flux},~\eqref{accumulated-mass-conservation},~\eqref{poro-visco-interpretation-pressure} and~\eqref{poro-visco-interpretation-stress}, respectively, same regularity holds as for linear poro-elasticity, cf.~\eqref{lemma:poro-regularity-extra:start}--\eqref{lemma:poro-regularity-extra:end}.
\end{lemma}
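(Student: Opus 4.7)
The plan is to mirror the proof of Lemma~\ref{lemma:well-posedness-poro}: reformulate~\eqref{viscoporo-elasticity:gradient-flow-structure} as a doubly nonlinear evolution equation as in Sec.~\ref{section:abstract-dne}, homogenize the time-dependent essential boundary conditions and external sources into modified work functionals $\mathcal{P}_1,\mathcal{P}_2$, and invoke Thm.~\ref{thm:abstract-well-posedness-of-dne}. The additional ingredient compared with Lemma~\ref{lemma:well-posedness-poro} is the internal variable $\viscostrain$, and the two regimes $\mu_\mathrm{v}'>0$ versus $\mu_\mathrm{v}'=0$, $\lambda_\mathrm{v}'>0$ force different partitions of the primary variables into a non-dissipative block $x_1$ and a dissipative block $x_2$.

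In Case~1 ($\mu_\mathrm{v}'>0$), $\mathbb{C}_\mathrm{v}'$ is uniformly positive definite, so the full $\viscostrain$ carries dissipative cost; I would take $x_1:=\u$ and $x_2:=(\flux_{\int},\viscostrain)$ with $\mathcal{B}_2=L^2(\Omega)^d\times L^2(\Omega)^{d\times d}_\mathrm{sym}$. In Case~2 ($\mu_\mathrm{v}'=0$), the dissipation potential only controls $\mathrm{tr}\,\dotviscostrain$, so I would split $\viscostrain=\viscostraindeviatoric+\viscostrainhydrostatic\mathbf{I}$ and take $x_1:=(\u,\viscostraindeviatoric)$, $x_2:=(\flux_{\int},\viscostrainhydrostatic)$, so that $\Psi$ becomes coercive on $\mathcal{B}_2=L^2(\Omega)^d\times L^2(\Omega)$. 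After the same lifting of boundary and source data as in Lemma~\ref{lemma:well-posedness-poro}, the spaces $\mathcal{V}_i$ are the corresponding homogeneous translates.

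The key structural step is constructing the decomposition $\mathcal{E}_\mathrm{v}=\mathcal{E}_1(x_1)+\mathcal{E}_2(\Lambda(x_1,x_2))$ required by~(P4). The cross term $-\langle\mathbb{C}\eps{\u},\viscostrain\rangle$ in the stored mechanical energy prevents a naive split; the remedy is a Schur-type rearrangement. For any $\delta\in(0,1)$ one may write
\begin{align*}
 \tfrac{1}{2}\langle\mathbb{C}(\eps{\u}-\viscostrain),\eps{\u}-\viscostrain\rangle+\tfrac{1}{2}\langle\mathbb{C}_\mathrm{v}\viscostrain,\viscostrain\rangle
 \,=\, \tfrac{\delta}{2}\langle\mathbb{C}\eps{\u},\eps{\u}\rangle + R(\eps{\u},\viscostrain),
\end{align*}
where the block Hessian of $R$ in $(\eps{\u},\viscostrain)$ is positive semi-definite iff $(1-\delta)\mathbb{C}_\mathrm{v}\geq\delta\mathbb{C}$, which holds for $\delta$ small enough since $\mathbb{C}_\mathrm{v}$ is uniformly positive definite (and commutes with $\mathbb{C}$ under isotropy). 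In Case~1 I would then set $\mathcal{E}_1(\u):=\tfrac{\delta}{2}\langle\mathbb{C}\eps{\u},\eps{\u}\rangle$, which is strictly convex on $\mathcal{V}_0$ by Korn, and let $\mathcal{E}_2\circ\Lambda$ collect the convex remainder $R$ together with the $M$-quadratic from~\eqref{poro-visco-interpretation-pressure}, with $\Lambda$ an affine operator returning $\eps{\u}$, $\viscostrain$ and the mass argument of~\eqref{poro-visco-interpretation-pressure}. In Case~2 an analogous hydrostatic/deviatoric splitting of the same calculation yields a strictly convex $\mathcal{E}_1(\u,\viscostraindeviatoric)$; the stated algebraic condition on $\bm{\varepsilon}_\mathrm{v,0}^\mathrm{d}$ is precisely the first-order optimality condition defining $x_1(0)$ for fixed $x_2(0)$ in~(P6).

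Finally, I would verify~(P1)--(P6) and unpack the conclusion. Convexity, continuous differentiability and coercivity follow from the quadratic nature of the potentials and the positive-definiteness hypotheses on $\mathbb{C}$, $\mathbb{C}_\mathrm{v}$, $\permeability$ and $\mathbb{C}_\mathrm{v}'$ (on its effective range in Case~2); temporal regularity of $\mathcal{P}_1,\mathcal{P}_2$ follows from the data assumptions exactly as in Lemma~\ref{lemma:well-posedness-poro}; uniqueness follows because both $\GRAD\Psi$ and $\GRAD\mathcal{E}$ are linear and self-adjoint. Thm.~\ref{thm:abstract-well-posedness-of-dne} then yields $x_1\in L^\infty(0,T;\mathcal{V}_1)$ and $x_2\in H^1(0,T;\mathcal{B}_2)\cap L^\infty(0,T;\mathcal{V}_2)$, which unpacks to~\eqref{lemma:poro-regularity-1}--\eqref{lemma:poro-regularity-3} together with $\viscostrain\in H^1(0,T;L^2(\Omega))$ in Case~1, and with $\viscostrain\in L^\infty(0,T;L^2(\Omega))$, $\mathrm{tr}\,\viscostrain\in H^1(0,T;L^2(\Omega))$ in Case~2; the regularity of $\flux$, $\fluidmass$, $p$ and $\stress$ follows directly from~\eqref{accumulated-darcy-flux},~\eqref{accumulated-mass-conservation},~\eqref{poro-visco-interpretation-pressure} and~\eqref{poro-visco-interpretation-stress} as in Lemma~\ref{lemma:well-posedness-poro}. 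The main obstacle is Case~2, where one must promote $\viscostraindeviatoric$ to the non-dissipative block, check that the Schur construction still delivers a strictly convex $\mathcal{E}_1$ in $(\u,\viscostraindeviatoric)$, and recognize the stated algebraic compatibility on $\bm{\varepsilon}_\mathrm{v,0}^\mathrm{d}$ as exactly the condition required by~(P6).
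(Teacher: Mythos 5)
Your proposal is correct and follows essentially the same route as the paper's proof: the same reduction to Thm.~\ref{thm:abstract-well-posedness-of-dne} with homogenized data as in Lemma~\ref{lemma:well-posedness-poro}, the same partition $\{\u\}$ versus $\{\flux_{\int},\viscostrain\}$ for $\mu_\mathrm{v}'>0$, the same deviatoric/hydrostatic reformulation with $x_1=(\u,\viscostraindeviatoric)$, $x_2=(\flux_{\int},\viscostrainhydrostatic)$ for $\mu_\mathrm{v}'=0$, $\lambda_\mathrm{v}'>0$, and the correct identification of the deviatoric compatibility condition with~(P6). The only (harmless) deviation is your Schur-type $\delta$-extraction of a strictly convex $\mathcal{E}_1$: it usefully fills in the step the paper leaves implicit in the case $\mu_\mathrm{v}'>0$, whereas in the case $\mu_\mathrm{v}'=0$ the paper needs no such extraction, since it takes $\mathcal{E}_\mathrm{v,1}(\u,\viscostraindeviatoric)=\mu\|\epsdeviatoric{\u}-\viscostraindeviatoric\|^2+\mu_\mathrm{v}\|\viscostraindeviatoric\|^2$ directly, both arguments already lying in the non-dissipative block.
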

 
\begin{proof}
 Non-homogeneous boundary conditions $\u_\Gamma$, $q_{\Gamma,n}$ and a non-zero, external source term $\masssource$ can be discussed as in the proof of Lemma~\ref{lemma:well-posedness-poro}. In the following, the focus is exclusively on the visco-elastic contribution, and only the case of zero boundary data and source terms is discussed.  

 The case $\mu_\mathrm{v}'>0$ follows analogously to the proof of Lemma~\ref{lemma:well-posedness-poro}; employ Thm.~\ref{thm:abstract-well-posedness-of-dne} with the partition of state variables $\{\u\}$ and $\{\flux_{\int},\viscostrain\}$.

 The second case, $\mu_\mathrm{v}'=0$, $\lambda_\mathrm{v}'>0$, requires a problem reformulation before applying Thm.~\ref{thm:abstract-well-posedness-of-dne}. As $\mathbb{C}_\mathrm{v}'$ is singular, $\mathcal{D}_\mathrm{v}(\cdot)$ is not coercive on $\bm{L}^2(\Omega)\times \mathcal{T}$. This can be fixed by decomposing strains. We introduce an orthogonal decomposition of visco-elastic strains into their hydrostatic and deviatoric parts. Let
 \begin{align}
 \label{definition:deviatoric-hydrostatic-visco-strain}
  \viscostrainhydrostatic&:= \tfrac{1}{d} \mathrm{tr}\, \viscostrain\in \mathcal{T}^\mathrm{h}, \qquad
  \viscostraindeviatoric:=  \viscostrain - \viscostrainhydrostatic \, \mathbf{I}\in \mathcal{T}^\mathrm{d},\\
  \nonumber
  \mathcal{T}^\mathrm{h} &:= \left\{ t\,\mathbf{I} \,\left|\, t\in L^2(\Omega) \right.\right\},\\
  \nonumber
  \mathcal{T}^\mathrm{d} &:= \left\{ \left. \testviscostrain \in L^2(\Omega)^{d\times d} \, \right| \, \mathrm{tr}\, \testviscostrain = 0 \right\}.
 \end{align}
 such that $\llangle \mathbb{C}_\mathrm{v} \viscostrain, \viscostrain \rrangle = 2\mu_\mathrm{v} \| \viscostraindeviatoric \|^2 + d^2\left( \tfrac{2\mu_\mathrm{v}}{d} + \lambda_\mathrm{v} \right) \|\viscostrainhydrostatic \|^2$. Similarly, we introduce
 \begin{align} 
 \label{definition:deviatoric-hydrostatic-strain}
  \epshydrostatic{\u} &:= \tfrac{1}{d} \mathrm{tr}\, \eps{\u}, \qquad
  \epsdeviatoric{\u}  := \eps{\u} - \epshydrostatic{\u}\, \mathbf{I}.
 \end{align}
 We re-interpret the energy and dissipation potential as functions of $\u,\flux_{\int},\viscostraindeviatoric,\viscostrainhydrostatic$ and their temporal changes, respectively,
 \begin{align*}
  \mathcal{E}_\mathrm{v}(\u,\flux_{\int},\viscostraindeviatoric,\viscostrainhydrostatic) 
  &= 
  \tfrac{1}{2} \left( 2\mu \| \epsdeviatoric{\u} - \viscostraindeviatoric\|^2 + 2\mu_\mathrm{v} \| \viscostraindeviatoric \|^2 \right) \\
  &\quad
  + \tfrac{d^2}{2} \left( K_\mathrm{dr} \| \epshydrostatic{\u} - \viscostrainhydrostatic\|^2 +  K_\mathrm{dr,v} \| \viscostrainhydrostatic \|^2 \right) \\
  &\quad
  + \tfrac{M}{2} \left\| \fluidmass_0 - \DIV \flux_{\int} - d\alpha_\mathrm{v} \viscostrainhydrostatic - d\alpha \left( \epshydrostatic{\u} - \viscostrainhydrostatic \right) \right\|^2, \\
  \mathcal{D}_\mathrm{v}(\dot{\u},\dot{\flux}_{\int},\dotviscostraindeviatoric,\dotviscostrainhydrostatic) 
  &= \tfrac{1}{2} \llangle \permeability^{-1} \dot{\flux}_{\int}, \dot{\flux}_{\int} \rrangle + \tfrac{d^2 \lambda_\mathrm{v}'}{2} \| \dotviscostrainhydrostatic\|^2.
 \end{align*}
 where $K_\mathrm{dr}=\tfrac{2\mu}{d} + \lambda$ and $K_\mathrm{dr,v}=\tfrac{2\mu_\mathrm{v}}{d} + \lambda_\mathrm{v}$. The external work rate $\mathcal{P}_\mathrm{ext}$ is independent of $\dotviscostrain$, and hence, remains unaltered after re-interpretation.
  
 The dissipation potential $\mathcal{D}_\mathrm{v}$ defines a norm for $(\dot{\flux}_{\int},\dotviscostrainhydrostatic)$ and hence is coercive on $\dot{\mathcal{Z}}_{\int} \times \mathcal{T}^\mathrm{h}$. In order to apply Thm.~\ref{thm:abstract-well-posedness-of-dne}, we decompose $\mathcal{E}_\mathrm{v}$ into a strictly convex part, depending only on the complementary part, $(\u,\viscostraindeviatoric)$, and a convex remainder. Let
 \begin{align*}
  \Psi_\mathrm{v}(\dot{\flux}_{\int},\viscostrainhydrostatic) &:= \tfrac{1}{2} \llangle \permeability^{-1} \dot{\flux}_{\int}, \dot{\flux}_{\int} \rrangle + \tfrac{d^2 \lambda_\mathrm{v}'}{2} \| \dotviscostrainhydrostatic\|^2, \\
  \mathcal{E}_\mathrm{v,1}(\u,\viscostraindeviatoric)
  &:=
  \tfrac{1}{2} \left( 2\mu \| \epsdeviatoric{\u} - \viscostraindeviatoric\|^2 + 2\mu_\mathrm{v} \| \viscostraindeviatoric \|^2 \right), \\
  \mathcal{E}_\mathrm{v,2}(x_1, x_2, x_3)
  &:=
  \tfrac{d^2}{2} \left( K_\mathrm{dr} \|x_1 \|^2 +  K_\mathrm{dr,v} \| x_2 \|^2 \right) + \tfrac{M}{2} \left\| x_3 \right\|^2, \\
  \Lambda_\mathrm{v}(\u,\flux_{\int},\viscostraindeviatoric,\viscostrainhydrostatic)
  &:=
  \left[ \epshydrostatic{\u} - \viscostrainhydrostatic, \, \viscostrainhydrostatic, \, \fluidmass_0 - \DIV \flux_{\int} - d\alpha_\mathrm{v} \viscostrainhydrostatic - d\alpha \left( \epshydrostatic{\u} - \viscostrainhydrostatic \right) \right],
 \end{align*}
 satisfying
 \begin{align*}
  \mathcal{E}_\mathrm{v}\left(\u,\flux_{\int},\viscostraindeviatoric,\viscostrainhydrostatic\right) 
  =
  \mathcal{E}_\mathrm{v,1}\left(\u,\viscostraindeviatoric\right)
  +
  \mathcal{E}_\mathrm{v,2}\left(\Lambda_\mathrm{v}\left(\u,\flux_{\int},\viscostraindeviatoric,\viscostrainhydrostatic\right)\right).
 \end{align*}
 Finally,~\eqref{viscoporo-elasticity:gradient-flow-structure} takes the form
  \begin{align*}
  &(\dot{\u},\dot{\flux}_{\int}, \dotviscostraindeviatoric, \dotviscostrainhydrostatic) \,=\, 
 \underset{(\v,\z,\testviscostrain^\mathrm{d},t^\mathrm{h})\in \mathcal{V}_0 \times \mathcal{Z}_0 \times \mathcal{T}^\mathrm{d} \times \mathcal{T}^\mathrm{h}}{\mathrm{arg\,min}}\,
 \bigg\{
 \Psi_\mathrm{v}(\v,t^\mathrm{h}) \\
 &\qquad 
 + \Big\langle \GRAD \mathcal{E}_\mathrm{v}\left(\u,\flux_{\int},\viscostraindeviatoric,\viscostrainhydrostatic\right) , (\v,\z,\testviscostrain^\mathrm{d},t^\mathrm{h}) \Big\rangle -
 \mathcal{P}_\mathrm{ext,mech}(\u)
 -
 \mathcal{P}_\mathrm{ext,fluid}(\z)
 \bigg\}.
 \end{align*}
 complying with the abstract structure in Thm.~\ref{thm:abstract-well-posedness-of-dne}. Additionally, the properties~(P1)--(P6) are satisfied. Hence, by Thm.~\ref{thm:abstract-well-posedness-of-dne}, we obtain the existence of a unique solution to~\eqref{viscoporo-elasticity:gradient-flow-structure}. Regularity follows along the lines of Lemma~\ref{lemma:well-posedness-poro}.
\end{proof}

\begin{remark}[Purely visco-elastic deformation] As mentioned above, the mechanical displacement may be assumed to be purely visco-elastic, i.e., $\viscostrain=\eps{\u}$. This corresponds to $\mu,\lambda\rightarrow \infty$, while $\mathbb{C}_\mathrm{v}$ and $\mathbb{C}_\mathrm{v}'$ remain finite, i.e., $\mathbb{C}$ acts as penalty parameter. In the limit, following from Lemma~\ref{lemma:poro-visco-elasticity-wellposedness}, $\eps{\u}$ inherits the regularity of $\viscostrain$. This yields an alternative approach to~\cite{Showalter2000,Bociu2016} for the analysis of quasi-static, linear  poro-elasticity with secondary consolidation.	
\end{remark}

\section{Non-linear poro-elasticity under infinitesimal strains as generalized gradient flow}\label{section:non-linear-biot-linear-coupling}

In many applications linear constitutive laws are not sufficient in order to describe the physical behavior of a fluid-saturated, deformable porous medium -- even when restricted to the infinitesimal strain regime~\cite{vanDerKnaap1959,Biot1973}; similar to soil mechanics~\cite{Hardin1972} and solid mechanics~\cite{Temam2000}. The gradient flow modeling framework introduced in Sec.~\ref{section:general-model-biot} allows for involving a variety of non-linear relationships. In the following, we consider a non-linear stress-strain relationship together with a non-linear fluid compressibility, assuming infinitesimal strains. Based on the gradient flow structure of the resulting models, we analyze their well-posedness along the lines of Sec.~\ref{section:linear-biot-gradient-flow}. This setting has been also studied numerically by~\cite{Borregales2018}.

We generalize the primal formulation of linear poro-elasticity, cf.\ Sec.~\ref{section:linear-biot:primal-formulation}: We choose the same state variables, $(\u,\flux_{\int})$, living in the same function spaces as before. In the spirit of hyperelasticity~\cite{Temam2000}, we consider energies
\begin{align}
 \label{non-linear-poro-elasticity:energy:start}
 \mathcal{E}_\mathrm{nl}(\u,\flux_{\int}) &:= \mathcal{E}_\mathrm{nl,eff}(\u) + \mathcal{E}_\mathrm{nl,fluid}(\u,\flux_{\int}), \\
 \mathcal{E}_\mathrm{nl,eff}(\u) &:= \int_\Omega W(\eps{\u}) \, dx, \\
 \mathcal{E}_\mathrm{nl,fluid}(\u,\flux_{\int})
 & := \int_\Omega \int_0^{\fluidmass_0 + \intmasssource - \DIV \flux_{\int} - \alpha \DIV \u} b^{-1}(s)\, ds \, dx,
 \label{non-linear-poro-elasticity:energy:end}
\end{align}
for some convex strain energy density $W$ and an invertible function $b$. This choice results in the generalized, (implicit) definition of the fluid pressure and mechanical stress, cf.~\eqref{linear-biot-interpretation-m}--\eqref{linear-biot-interpretation-stress}, via
\begin{align}
\label{non-linear-biot-interpretation-m}
 m &= b(p) + \alpha \DIV \u,\\
\label{non-linear-biot-interpretation-stress}
 \stress &= \frac{\partial W(\eps{\u})}{\partial \bm{\varepsilon}(\u)} - \alpha p \,\mathbf{I},
\end{align}
which follows directly from~\eqref{thermodynamic-interpretation}. Examples for strain energy densities $W$ and the corresponding effective stress $\stress_\mathrm{eff}$, cf.~\eqref{thermodynamic-interpretation-2}, are given in Tab.~\ref{table:examples-non-linear-constitutive-laws}; in principle also non-convex potentials~\cite{Temam2000} could be employed, but are not considered here. For $b$ we allow arbitrary invertible, increasing, Lipschitz continuous functions with $b(0)=0$, generalizing the linear compressibility law $b(p)=\tfrac{1}{M}p$, employed in linear poro-elasticity. We refer to~\cite{Borregales2018} for possible choices.

\begin{table}[h!]
\centering
\def\arraystretch{2}
{\footnotesize
 \begin{tabular}{ll}
  \hline
  Linear elasticity (cf.\ Sec.~\ref{section:linear-biot-gradient-flow}) 
    & {\arraycolsep=1pt$\begin{array}{rcl}
       W(\eps{\u})          & = & \tfrac{1}{2} \left( 2\mu |\eps{\u}|^2 + \lambda (\DIV \u)^2 \right) \\[-0.75em]
       \stress_\mathrm{eff} & = & 2\mu \eps{\u} + \lambda (\DIV \u) \,\mathbf{I}
      \end{array}$}\\
  \hline
  Non-linear compressibility (cf.~\cite{Biot1973,Borregales2018}) 
    & {\arraycolsep=1pt$\begin{array}{rcl}
       W(\eps{\u}) & = & \tfrac{1}{2} \left( 2\mu |\eps{\u}|^2 + \int_0^{\DIV \u} l(s)\, ds \right),\ l\text{ increasing},\ l(0)=0 \\[-0.75em]
       \stress_\mathrm{eff} & = & 2\mu \eps{\u} + l(\DIV \u) \,\mathbf{I}
      \end{array}$}\\
  \hline
  Non-linear shear modulus  (cf.~\cite{Hardin1972,Barucq2005})
    & {\arraycolsep=1pt$\begin{array}{rcl}
       W(\eps{\u}) & = & \int_0^{|\eps{\u}|} sf(s)\, ds + \tfrac{\lambda}{2} (\DIV \u)^2,\ f\text{ unif. pos. and non-decr.}\\[-0.75em]
       \stress_\mathrm{eff} & = & f(|\eps{\u}|) \eps{\u} + \lambda (\DIV \u) \,\mathbf{I}
       \end{array}$} \\
  \hline
      Simple visco-elasto-plasticity (cf.~\cite{Temam2000})
    & {\arraycolsep=1pt$\begin{array}{rcl}
       W(\eps{\u}) & = & \int_0^{|\epsdeviatoric{\u}|} sf(s)\, ds + K_\mathrm{dr} (\DIV \u)^2, \\[0.25em]
       f(s) & = & \left\{ 
	{\arraycolsep=4pt\begin{array}{ll} 
	 \\[-3em] 
	 2\mu \epsdeviatoric{\u}, & 2\mu|\epsdeviatoric{\u}| < K \\[-0.75em]
	 2\mu + \tfrac{K}{|\epsdeviatoric{\u}|} \epsdeviatoric{\u}, & \mathrm{else}
	\end{array}} \right.\\[0.25em]
       \stress_\mathrm{eff} & = & \stress_\mathrm{eff}^\mathrm{d} + K_\mathrm{dr} (\DIV \u) \,\mathbf{I} \\[0.25em]
       \stress_\mathrm{eff}^\mathrm{d} & = & \left\{ 
	{\arraycolsep=4pt\begin{array}{ll} 
	 \\[-3em] 
	 2\mu \epsdeviatoric{\u}, & |\stress_\mathrm{eff}^\mathrm{d}|=2\mu|\epsdeviatoric{\u}| < K \\[-0.75em]
	 \left( 2\mu + \tfrac{K}{|\epsdeviatoric{\u}|} \right) \epsdeviatoric{\u}, & \mathrm{else, i.e., }\, |\stress_\mathrm{eff}^\mathrm{d}|\geq K
	\end{array}} \right.
      \end{array}$} \\ \\[-1.75em]
  \hline \\[-1em]
 \end{tabular}}
 \caption{\label{table:examples-non-linear-constitutive-laws} Examples for the strain energy density $W$ used for the definition of $\mathcal{E}_\mathrm{nl,eff}$ and the corresponding effective stress $\stress_\mathrm{eff}$.}
\end{table}
Other than that, we employ the external work rate, the dissipation potential as for linear poro-elasticity, cf.\ Sec.~\ref{section:linear-biot:primal-formulation}. Inserting all components into the gradient flow framework from Sec.~\ref{section:general-model-biot}, yields the final model
\begin{align}
 \label{non-linear-biot:gradient-flow-structure}
 (\dot{\u},\dot{\flux}_{\int})
 &= \underset{(\bm{v},\z)\in\dot{\mathcal{V}}\times\dot{\mathcal{Z}}_{\int}}{\text{arg\,min}}\, 
 \Big\{ 
 \mathcal{D}_\mathrm{fluid}(\z)
 + \llangle \GRAD \mathcal{E}_\mathrm{nl}(\u,\flux_{\int}), (\v,\z) \rrangle 
 - \mathcal{P}_\mathrm{ext}(\v,\z) \Big\}.
\end{align} 
Considering, e.g., constant shear modulus and non-linear compressibility, cf.\ Tab.~\ref{table:examples-non-linear-constitutive-laws}, and the fluid pressure as defined by~\eqref{non-linear-biot-interpretation-m}, the corresponding optimality conditions are consistent with the model considered by~\cite{Borregales2018}
\begin{align*}
 2\mu \llangle \eps{\u}, \eps{\v} \rrangle + \llangle l(\DIV \u), \DIV \v \rrangle -\alpha \llangle p, \DIV \v \rrangle &= \mathcal{P}_\mathrm{ext,mech}(\v) &&\forall\v\in\mathcal{V}_0,\\
 \llangle \permeability^{-1}\dot{\flux}_{\int}, \z \rrangle - \llangle p, \DIV \z \rrangle &= \mathcal{P}_\mathrm{ext,fluid}(\z), &&\forall\z\in\mathcal{Z}_0,\\
 b(p) + \alpha \DIV \u + \DIV \flux_{\int} &= \fluidmass_0 + \intmasssource, && \text{in } L^2(\Omega).
\end{align*}

Well-posedness of non-linear poro-elasticity described by the generalized gradient flow~\eqref{non-linear-biot:gradient-flow-structure} follows directly with same argumentation as in the case of linear poro-elasticity.

\begin{lemma}[Well-posedness for non-linear poro-elasticity]\label{lemma:non-linear-manuel:well-posedness-poro}
 Let $\mathcal{E}_\mathrm{nl}$, $W$ and $b$ as in~\eqref{non-linear-poro-elasticity:energy:start}--\eqref{non-linear-poro-elasticity:energy:end}. Furthermore, let $W$ be strongly convex in $\eps{\u}$ with $W(\eps{\bm{0}})=0$ and $\partial_{\bm{\varepsilon}}W(\eps{\bm{0}})=\bm{0}$, and let $b^{-1}$ be uniformly increasing with $b^{-1}(0)=0$. Consider homogeneous boundary conditions and conservation of mass, i.e., $\u_\Gamma = \bm{0}$, $q_\mathrm{\Gamma,n}=0$, and $\masssource=0$. Other than that assume regularity as in Lemma~\ref{lemma:well-posedness-poro}. And assume finite energy $\mathcal{E}_\mathrm{nl}(\u_0,\bm{0})<\infty$ and the compatibility condition
 \begin{align*}
  \llangle \partial_{\u} \mathcal{E}_\mathrm{nl}(\u_0,\bm{0}), \v \rrangle = \mathcal{P}_\mathrm{ext,mech}(\v),\quad \forall \v \in \mathcal{V}_0.
 \end{align*}
 Then there exists a unique solution $(\u,\flux_{\int})$ of~\eqref{non-linear-biot:gradient-flow-structure}. Its regularity is the same as in the case of linear poro-elasticity, cf.~\eqref{lemma:poro-regularity-1}--\eqref{lemma:poro-regularity-3}.
\end{lemma}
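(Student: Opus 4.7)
The plan is to apply Theorem~\ref{thm:abstract-well-posedness-of-dne} with exactly the same partition as in the proof of Lemma~\ref{lemma:well-posedness-poro}, namely $\x_1 = \u$ (the variable with vanishing dissipation) and $\x_2 = \flux_{\int}$, setting $\mathcal{V}_1 = \mathcal{V}_0$, $\mathcal{V}_2 = \mathcal{Z}_0$, and $\mathcal{B}_2 = L^2(\Omega)^d$. Since the essential boundary data and the source term are homogeneous, no extension or homogenisation is required and one may take $\mathcal{P}_1 := \mathcal{P}_\mathrm{ext,mech}$, $\mathcal{P}_2 := \mathcal{P}_\mathrm{ext,fluid}$ directly as the abstract external loads; assumptions (P1) and (P2) are then immediate.

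Next, I would verify the structural decomposition required by (P4) by setting
\begin{align*}
 \mathcal{E}_1(\v) &:= \int_\Omega W(\eps{\v})\,dx, \\
 \mathcal{E}_2(m)  &:= \int_\Omega \int_0^m b^{-1}(s)\,ds\,dx, \\
 \Lambda(\v,\z)    &:= \fluidmass_0 - \DIV \z - \alpha \DIV \v,
\end{align*}
so that $\mathcal{E}_\mathrm{nl} = \mathcal{E}_1 + \mathcal{E}_2 \circ \Lambda$ with $\Lambda$ affine and linear parts $\Lambda_1 = -\alpha\,\DIV$, $\Lambda_2 = -\DIV$ mapping into $\tilde{\mathcal{V}} = L^2(\Omega)$. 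Strong convexity of $W$ together with $W(\bm{0})=0$ and $\partial_{\bm{\varepsilon}}W(\bm{0})=\bm{0}$ gives $W(\eps{\v}) \geq \tfrac{c}{2}|\eps{\v}|^2$ pointwise, and Korn's inequality on $\mathcal{V}_0$ then furnishes both the growth bound $\mathcal{E}_1(\v) \gtrsim \|\v\|_{\bm{H}^1(\Omega)}^2$ and strict convexity of $\mathcal{E}_1$, fixing $p_1 = 2$. The assumption that $b^{-1}$ is uniformly increasing with $b^{-1}(0)=0$ makes its antiderivative convex with a quadratic lower bound in $m$, so $\mathcal{E}_\mathrm{nl}(\v,\z) \gtrsim \|\DIV \z\|^2 - C$, giving $p_\mathcal{E} = 2$. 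Since $\Psi := \mathcal{D}_\mathrm{fluid}$ is quadratic and coercive on $\mathcal{B}_2$ with $p_\psi = 2$, the compatible choice is $p = 2$, and (P3)--(P4) hold.

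Assumptions (P5) and (P6) are then immediate: the regularity hypotheses on $\fext$, $\stress_\mathrm{\Gamma,n}$, $\gext$ and $p_\Gamma$ inherited from Lemma~\ref{lemma:well-posedness-poro} ensure $\mathcal{P}_1 \in C(0,T;\mathcal{V}_1^\star) \cap H^1(0,T;\mathcal{V}_1^\star)$ and likewise for $\mathcal{P}_2$, matching $p_1^\star = p^\star = 2$; finite initial energy and the compatibility condition are imposed directly as hypotheses of the lemma. Theorem~\ref{thm:abstract-well-posedness-of-dne} then delivers a solution with the regularity announced, and uniqueness follows from its final clause, since $\Psi$ is quadratic so $\GRAD\Psi$ is linear and self-adjoint. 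The $H^1(0,T;H^{-1}(\Omega)) \cap L^\infty(0,T;L^2(\Omega))$ regularity of $\DIV \flux_{\int}$, and the regularity of the derived fields $\flux$, $\fluidmass$, $p$ and $\stress$, are read off exactly as in the closing paragraph of the proof of Lemma~\ref{lemma:well-posedness-poro}.

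The main obstacle, though essentially bookkeeping, is verifying that the minimal hypotheses on $W$ and $b^{-1}$ really do yield quadratic growth of $\mathcal{E}_1$ and $\mathcal{E}_2$ aligning with $p_\psi = 2$, so that the abstract machinery applies verbatim; any weakening (e.g., merely strict convexity of $W$, or non-quadratic behavior of $b^{-1}$ near infinity) would force separate exponents $p_1 \neq p_\mathcal{E}$ and a corresponding adjustment of the dual regularity demanded of $\mathcal{P}_1$ and $\mathcal{P}_2$ in (P5), which is not needed here because the hypotheses were chosen precisely to match the linear case.
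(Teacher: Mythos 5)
Your proposal follows essentially the same route as the paper's proof: the same partition $\{\u\}$, $\{\flux_{\int}\}$, the same choices $\Psi=\mathcal{D}_\mathrm{fluid}$, $\mathcal{E}_1=\mathcal{E}_\mathrm{nl,eff}$, $\mathcal{E}_2(m)=\int_\Omega\int_0^m b^{-1}(s)\,ds\,dx$, $\Lambda(\v,\z)=\fluidmass_0-\DIV\z-\alpha\DIV\v$, with strong convexity of $W$ and the uniform increase of $b^{-1}$ supplying the growth conditions in (P4), followed by an application of Thm.~\ref{thm:abstract-well-posedness-of-dne} and uniqueness from the quadratic (hence linear, self-adjoint gradient) dissipation. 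You in fact spell out the exponent bookkeeping ($p_1=p_\mathcal{E}=p_\psi=2$ via Korn's inequality and the quadratic lower bound on the antiderivative of $b^{-1}$) that the paper leaves as ``simple to verify'', so the argument is correct and matches the paper's.
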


\begin{proof} 
 The proof goes along the lines of Lemma~\ref{lemma:well-posedness-poro}. We choose the partition $\{\u\}$ and $\{\flux_{\int}\}$ and define
 \begin{align*}
  \Psi_\mathrm{nl}(\dot{\flux}_{\int}) &:= \mathcal{D}_\mathrm{fluid}(\dot{\flux}_{\int}) 
    & \mathcal{E}_\mathrm{nl,1}(\u) &:= \mathcal{E}_\mathrm{nl,eff}(\u), \\
  \mathcal{E}_\mathrm{nl,2}(m) &:= \int_\Omega \int_0^m b^{-1}(s) \, ds\, dx,
    & \Lambda_\mathrm{nl}(\u,\flux_{\int}) &:= \fluidmass_0 - \DIV \flux_{\int} - \alpha \DIV \u.
 \end{align*}
 Employing this notation,~\eqref{non-linear-biot:gradient-flow-structure} complies with the abstract structured discussed in Thm.~\ref{thm:abstract-well-posedness-of-dne}. Furthermore, $W$ strongly convex and $b$ uniformly increasing guarantee growth conditions for the energy functionals, cf.~(P4). The remaining properties~(P1)--(P6) are simple to verify. Finally, the result is a consequence of Thm.~\ref{thm:abstract-well-posedness-of-dne}
\end{proof}

\begin{remark}
 All models, listed in Tab.~\ref{table:examples-non-linear-constitutive-laws}, satisfy the assumptions from Lemma.~\ref{lemma:non-linear-manuel:well-posedness-poro}, assuming $\mu>0$.
\end{remark}


\section{Extensions of Darcy flow in poro-elastic media as generalized gradient flow}\label{section:non-newtonian-fluids}

In the presence of non-Newtonian, non-laminar or transitional flow between boundaries, the linear Darcy law is not appropriate anymore to relate the volumetric flux with the fluid pressure gradient. For that reason, extensions of Darcy's law have been established in the literature. In the following, we discuss extensions for non-Newtonian flow, Darcy-Forchheimer flow, and Darcy-Brinkman flow. We incorporate such in the gradient flow modelling framework by an adequate, alternative choice of a dissipation potential $\mathcal{D}_\mathrm{\star,fluid}$ corresponding to viscous flow. By keeping previous choices for energy functionals $\mathcal{E}$, external work rates $\mathcal{P}_\mathrm{ext}$ etc., and preserving the modelling ansatz
\begin{align}
 \label{non-newtonian-fluid:gradient-flow-structure}
 (\dot{\u},\dot{\flux}_{\int})
 &= \underset{(\bm{v},\z)\in\dot{\mathcal{V}}\times\dot{\mathcal{Z}}_{\int}}{\text{arg\,min}}\, 
 \Big\{ 
 \mathcal{D}_\mathrm{\star,fluid}(\z)
 + \llangle \GRAD \mathcal{E}(\u,\flux_{\int}), (\v,\z) \rrangle 
 - \mathcal{P}_\mathrm{ext}(\v,\z) \Big\},
\end{align}
the previously discussed poro-elasticity models get simply enhanced by the corresponding extension of Darcy's law.

\paragraph{Non-Newtonian flow:} Explicitly distinguishing between the permeability $\permeability$ and the fluid shear viscosity $\nu$ (not as in the previous sections), Darcy's law with potentially variable viscosity reads
\begin{align}
\label{non-newtonian:darcy}
 \nu(|\flux|)\flux = -\permeability (\GRAD p - \gext).
\end{align}
Common, constitutive shear viscosity models employed in the literature, cf., e.g.,~\cite{Owens2002}, are given in Tab.~\ref{table:non-linear-viscosity-models}. For non-constant viscosity we assume an isotropic, uniformly bounded permeability $\permeability=\kappa \,\mathbf{I}$ -- a commmon assumption in modelling non-Newtonian fluid flow in porous media, cf., e.g.,~\cite{Ambartsumyan2018}.
\begin{table}[h!]
\centering
\def\arraystretch{2.5}
{
\footnotesize
 \begin{tabular}{ll}
 \hline
  Newtonian fluid & $\nu(s) = \nu_\infty$ \\[0.2em]
 \hline 
  Carreau model   & $\nu(s) = \nu_\infty + \frac{\nu_0 - \nu_\infty}{(1+K_\mathrm{f} |s|^2)^{\frac{2-r}{2}}}$ \\[0.8em]
 \hline
  Cross model     & $\nu(s) = \nu_\infty + \frac{\nu_0 - \nu_\infty}{1 + K_\mathrm{f} |s|^{2-r}}$ \\[0.2em]
 \hline
  Power law       & $\nu(s) = \frac{1}{K_\mathrm{f} |s|^{2-r}}$ \\[0.2em]
 \hline
 \end{tabular}}
 \vspace{0.2cm}
 \caption{\label{table:non-linear-viscosity-models} Constitutive models for the fluid shear viscosity $\nu$~\cite{Owens2002}; let $0<\nu_\infty<\nu_0$, $r\in(1,2)$ and $K_\mathrm{f}>0$.}
\end{table}
The corresponding dissipation potential to be used in~\eqref{non-newtonian-fluid:gradient-flow-structure} is given by
\begin{align}
 \label{non-linear-darcy:dissipation-potential:non-newtonian}
 \mathcal{D}_\mathrm{\nu,fluid}(\flux) = \int_\Omega \kappa^{-1}\int_0^{|\flux|} s\nu(s)\, ds \, dx.
\end{align}

\paragraph{Darcy-Forchheimer flow:}
For flow in porous media with an elevated Reynolds number, Darcy's law is enhanced by the so-called \textit{Forchheimer term}, accounting for inertial effects. The resulting non-linear, constitutive relation reads
\begin{align}
\label{non-linear-darcy:forchheimer}
 \nu\flux + \permeability F |\flux|\flux = -\permeability (\GRAD p - \gext),
\end{align}
where $F\geq 0$ denotes the Forchheimer number~\cite{Forchheimer1901}. The corresponding dissipation potential to be used in~\eqref{non-newtonian-fluid:gradient-flow-structure} is given by
\begin{align}
 \label{non-linear-darcy:dissipation-potential:forchheimer}
 \mathcal{D}_\mathrm{F,fluid}(\flux) = \frac{\nu}{2}\int_\Omega \permeability^{-1}\flux \cdot \flux \, dx + \frac{F}{2} \int_\Omega |\flux|^3 \, dx.
\end{align}

\paragraph{Darcy-Brinkman flow:} 
For transitional flow between boundaries, Darcy's law may be enhanced by the so-called \textit{Brinkman term}. The resulting linear extension of Darcy's law reads
\begin{align}
\label{non-linear-darcy:brinkman}
 \nu\flux - \permeability \DIV (\nu_\mathrm{eff} \GRAD \flux)= -\permeability (\GRAD p - \gext),
\end{align}
where $\nu_\mathrm{eff}\geq 0$ denotes the effective viscosity related to the viscous drag effects~\cite{Brinkman1949}. The corresponding dissipation potential to be used in~\eqref{non-newtonian-fluid:gradient-flow-structure} is given by
\begin{align}
 \label{non-linear-darcy:dissipation-potential:brinkman}
 \mathcal{D}_\mathrm{B,fluid}(\flux) 
 = \frac{\nu}{2} \int_\Omega \permeability^{-1}\flux \cdot \flux \, dx 
 + \frac{\nu_\mathrm{eff}}{2} \int_\Omega |\GRAD \flux|^2 \, dx.
\end{align}

Independent of the specific choices of the energy functionals etc., well-posedness can be again discussed employing the abstract well-posedness result, cf.\ Thm.~\ref{thm:abstract-well-posedness-of-dne}.
 
\begin{lemma}[Well-posedness for extensions of linear Darcy flow in poro-elastic media]\label{lemma:existence-non-newtonian-fluids}
 For $p\geq 1$, let $H^p_\mathrm{div}(\Omega):= \left\{ \left.\z \in L^p(\Omega)^d \,\right|\, \DIV \z \in L^2(\Omega) \right\}$.
 Let the energy functional $\mathcal{E}$ be as in~\eqref{non-linear-poro-elasticity:energy:start} with $W$ and $b$ as in Lemma~\ref{lemma:non-linear-manuel:well-posedness-poro}.
 And assume finite energy $\mathcal{E}(\u_0,\bm{0})<\infty$ and the compatibility condition
 \begin{align*}
  \llangle \partial_{\u} \mathcal{E}(\u_0,\bm{0}), \v \rrangle = \mathcal{P}_\mathrm{ext,mech}(\v),\quad \forall \v \in \mathcal{V}_0.
 \end{align*}
 Consider homogeneous boundary conditions and conservation of mass, i.e., $\u_\Gamma = \bm{0}$, $q_\mathrm{\Gamma,n}=0$, and $\masssource=0$. Other than that assume regularity as in Lemma~\ref{lemma:well-posedness-poro}.
 
 \begin{itemize}
  \item[(1)]  Non-Newtonian fluid flow:
 Let $\nu=\nu(s)$ denote the fluid shear viscosity model. Let $s\mapsto s\nu(s)$ be non-decreasing, and assume there exists a $p\in(1,\infty)$ satisfying $\nu(s)\gtrsim s^{p-2}$, $s> 0$. Then there exists a solution $(\u,\flux_{\int})$  to~\eqref{non-newtonian-fluid:gradient-flow-structure} with the dissipation potential~\eqref{non-linear-darcy:dissipation-potential:non-newtonian}, satisfying 
 \begin{align*}
  \u &\in L^\infty(0,T;\bm{H}^1(\Omega)),\\
  \flux_{\int} &\in H^1(0,T;L^p(\Omega)^d)\cap L^\infty\left(0,T;H^p_\mathrm{div}(\Omega)\right).
 \end{align*}
 In case the energy $\mathcal{E}$ is quadratic, the solution is unique.\\
 
 \item[(2)] Darcy-Forchheimer flow:
 There exists a solution $(\u,\flux_{\int})$ to~\eqref{non-newtonian-fluid:gradient-flow-structure} with the dissipation potential~\eqref{non-linear-darcy:forchheimer}, satisfying
 \begin{align*}
  \u &\in L^\infty(0,T;\bm{H}^1(\Omega)),\\
  \flux_{\int} &\in H^1(0,T;L^3(\Omega)^d)\cap L^\infty\left(0,T;H^3_\mathrm{div}(\Omega)\right).
 \end{align*}
 In case the energy $\mathcal{E}$ is quadratic, the solution is unique.\\
 
 \item[(3)] Darcy-Brinkman flow:
 There exists a unique solution $(\u,\flux_{\int})$ to~\eqref{non-newtonian-fluid:gradient-flow-structure} with the dissipation potential~\eqref{non-linear-darcy:brinkman}, satisfying
 \begin{align*}
  \u &\in L^\infty(0,T;\bm{H}^1(\Omega)),\\
  \flux_{\int} &\in H^1(0,T;\bm{H}^1(\Omega)).
 \end{align*}

 \end{itemize}

\end{lemma}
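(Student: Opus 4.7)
The plan is to treat all three extensions in a unified way by applying the abstract well-posedness result, Thm.~\ref{thm:abstract-well-posedness-of-dne}, with exactly the same energy decomposition as in Lemma~\ref{lemma:non-linear-manuel:well-posedness-poro}: partition the primary variables into $x_1=\u$ (no dissipation) and $x_2=\flux_{\int}$, and set
\begin{align*}
 \mathcal{E}_1(\u) &:= \mathcal{E}_\mathrm{nl,eff}(\u), &
 \mathcal{E}_2(m) &:= \int_\Omega\!\int_0^m b^{-1}(s)\,ds\,dx, &
 \Lambda(\u,\flux_{\int}) &:= \fluidmass_0 - \DIV\flux_{\int} - \alpha\DIV\u.
\end{align*}
The strong convexity of $W$ and the uniform monotonicity of $b^{-1}$ give (P4) exactly as before, and (P6) is encoded in the compatibility hypothesis. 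Only (P2), (P3) and the matching part of (P5) are case-dependent, as they concern the dissipation potential and the choice of Banach spaces $\mathcal{V}_2\subset\mathcal{B}_2$ for the flux variable.

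For each case I would identify the pair $(\mathcal{V}_2,\mathcal{B}_2)$ and the coercivity exponent $p_\psi$ as follows. For the non-Newtonian model, set $\mathcal{B}_2=L^p(\Omega)^d$ and $\mathcal{V}_2=H^p_\mathrm{div}(\Omega)\cap\mathcal{Z}_0$. Convexity of $\mathcal{D}_{\nu,\mathrm{fluid}}$ follows because $r\mapsto \int_0^r s\nu(s)\,ds$ is convex and non-decreasing on $[0,\infty)$ (its derivative $r\nu(r)$ is non-decreasing by hypothesis), and $\flux\mapsto|\flux|$ is convex; continuous differentiability reproduces~\eqref{non-newtonian:darcy}. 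The growth $\nu(s)\gtrsim s^{p-2}$ yields $\int_0^{|\flux|}s\nu(s)\,ds\gtrsim |\flux|^p$, hence coercivity with $p_\psi=p$. For Darcy--Forchheimer, $\mathcal{B}_2=L^3(\Omega)^d$, $\mathcal{V}_2=H^3_\mathrm{div}(\Omega)\cap\mathcal{Z}_0$, and $p_\psi=3$ follows from the cubic term in~\eqref{non-linear-darcy:dissipation-potential:forchheimer}; convexity and $C^1$-regularity are immediate since both summands are convex and smooth in $\flux$. For Darcy--Brinkman, $\mathcal{B}_2=\mathcal{V}_2=\bm{H}^1(\Omega)\cap\mathcal{Z}_0$ with $p_\psi=2$; $\mathcal{D}_\mathrm{B,fluid}$ is a positive definite quadratic form, hence convex, $C^1$, and coercive on $\bm{H}^1(\Omega)$. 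In all three cases, the norm on $\mathcal{V}_2$ splits as $\|\z\|_{\mathcal{V}_2}^p=\|\z\|_{\mathcal{B}_2}^p+\|\DIV\z\|_{L^2}^p$, with semi-norm $|\z|_{\mathcal{V}_2}=\|\DIV\z\|_{L^2}$, matching the abstract (P2).

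The external work rate $\mathcal{P}_\mathrm{ext,fluid}(\z)=\llangle\gext,\z\rrangle$ (under the assumed homogeneous boundary data) lies in $\mathcal{V}_2^\star$ provided $\gext\in L^{p'}(\Omega)^d$, which is ensured by the regularity assumptions carried over from Lemma~\ref{lemma:well-posedness-poro}; (P5) for $\mathcal{P}_1$ is unchanged. Hence Thm.~\ref{thm:abstract-well-posedness-of-dne} produces a solution $(\u,\flux_{\int})$ with $\u\in L^\infty(0,T;\bm{H}^1(\Omega))$ and $\flux_{\int}\in W^{1,p_\psi}(0,T;\mathcal{B}_2)\cap L^\infty(0,T;\mathcal{V}_2)$, which matches the asserted regularity in each of the three cases. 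Uniqueness follows from the criterion in Thm.~\ref{thm:abstract-well-posedness-of-dne}: when $\mathcal{E}$ is quadratic, $\GRAD\mathcal{E}$ is linear and self-adjoint, covering cases~(1) and~(2); for Darcy--Brinkman, $\GRAD\mathcal{D}_\mathrm{B,fluid}$ is itself linear and self-adjoint, giving unconditional uniqueness.

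The main obstacle is the non-Newtonian case, where I must carefully verify the convexity of $\flux\mapsto\int_0^{|\flux|}s\nu(s)\,ds$ and identify the right exponent $p$ simultaneously in (P2) and (P3); the models of Tab.~\ref{table:non-linear-viscosity-models} need to be checked individually to confirm that $s\mapsto s\nu(s)$ is non-decreasing and satisfies the required lower bound (e.g.\ the power law gives $p=r$). A secondary technical point is compatibility of $\mathcal{P}_2$ with the dual of $\mathcal{V}_2$ when $p\ne 2$; this is where the assumption $\gext\in L^{p'}$ (implicit in the extended version of Lemma~\ref{lemma:well-posedness-poro}) is silently used.
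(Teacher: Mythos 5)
Your proposal is correct and follows essentially the same route as the paper: a direct application of Thm.~\ref{thm:abstract-well-posedness-of-dne} with the decomposition from Lemma~\ref{lemma:non-linear-manuel:well-posedness-poro}, reducing the work to checking (P3) (convexity and coercivity of each dissipation potential, with $p_\psi=p$, $3$, $2$ respectively) and invoking the linear self-adjoint criterion for uniqueness. Your extra bookkeeping on $(\mathcal{V}_2,\mathcal{B}_2)$, the norm splitting and the regularity of $\gext$ only makes explicit what the paper leaves implicit by reference to Lemmas~\ref{lemma:well-posedness-poro} and~\ref{lemma:non-linear-manuel:well-posedness-poro}.
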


\begin{proof}
 The result is a direct consequence of Thm.~\ref{thm:abstract-well-posedness-of-dne}, and the proof is analogous to the proofs of Lemma~\ref{lemma:well-posedness-poro} and Lemma~\ref{lemma:non-linear-manuel:well-posedness-poro}. It suffices to verify~(P3) for the different dissipation potentials $\mathcal{D}_\mathrm{\star,fluid}$.
 
 \paragraph{Non-Newtonian fluid flow:} The dissipation potential $\mathcal{D}_\mathrm{\nu,fluid}$ is coercive wrt.\ $L^p(\Omega)^d$ since it holds
 \begin{align*}
  \mathcal{D}_\mathrm{\nu,fluid}(\flux)
  \gtrsim \int_\Omega \int_0^{|\flux|} s\nu(s)\,ds\,dx 
  \gtrsim \int_\Omega \int_0^{|\flux|} s^{p-1} \,ds\,dx
  \gtrsim \| \flux \|_{L^p(\Omega)}^p.
 \end{align*}
 Furthermore, $\mathcal{D}_\mathrm{\nu,fluid}$ is convex as composition of two convex maps; indeed, $\flux\mapsto |\flux|$ is convex, and $x\mapsto \int_0^x s\nu(s) \, ds$ is convex, since $s\mapsto s\nu(s)$ is increasing. All in all, (P3) is fulfilled.
 
 \paragraph{Darcy-Forchheimer flow:} 
 The dissipation potential $\mathcal{D}_\mathrm{F,fluid}$ is by construction coercive wrt.\ $L^3(\Omega)^d$. As sum of convex functions, it is convex. Hence, (P3) is fulfilled. 
 
 \paragraph{Darcy-Brinkman flow:}
 The dissipation potential $\mathcal{D}_\mathrm{B,fluid}$ defines a norm on $\bm{H}^1(\Omega)$. Hence, (P3) is fulfilled. Furthermore, $\mathcal{D}_\mathrm{B,fluid}$ is quadratic, and uniqueness of solutions to~\eqref{non-newtonian-fluid:gradient-flow-structure} is guaranteed.
 
\end{proof}

\begin{remark}[Well-posedness for different viscosity models from Tab.~\ref{table:non-linear-viscosity-models}]
 All models mentioned in Tab.~\ref{table:non-linear-viscosity-models} satisfy the assumptions of Lemma~\ref{lemma:existence-non-newtonian-fluids}. For fluid shear viscosities modelled by the Carreau model,  the Cross model, as well as for Newtonian fluids, one can choose $p=2$, since it holds $\nu(s)\geq \nu_\infty $, $s>0$. For the power law, it holds $\nu(s)\gtrsim s^{r-2}$, $s>0$. Hence, only reduced regularity is obtained with $p=r\in(1,2)$.
\end{remark}

\section{Thermo-poro-elasticity as generalized gradient flow}\label{section:thermo-poro-elasticity-gradient-flow}

Non-isothermal fluid flow in deformable porous media has in general a strongly non-linear, coupled character, compared to linear poro-elasticity. Even under the hypothesis of infinitesimal strains, three non-linearities may occur, cf., e.g.~\cite{Coussy2004}: (i) thermal convection, coupled to the fluid problem; (ii) non-linear viscous dissipation, associated with Darcy's law, acting as a heat source; (iii) and a temperature weighted time derivative of the total entropy in the energy equation. In certain situations, those non-linearities can be neglected~\cite{Coussy2004}: (i) for a small P\'eclet number, which quantifies the heat convectively transported by the fluid in comparison with the heat supplied by diffusion through the porous medium; (ii) for small Brinkman number, which quantifies the order of magnitude of the heat source due to viscous dissipation in comparison with heat supplied by conduction; and (iii) small variations of temperature. Under assumptions~(ii) and~(iii), the model for linear thermo-poro-elasticity with non-linear convection has been  derived using homogenization~\cite{Brun2018,Vanduijn2017}. For a discussion of the general, fully non-linear model we refer to~\cite{Coussy2004}.

Assuming all three non-linear effects~(i)--(iii) can be neglected, allows  for linearizing the general thermo-poro-elasticity model. Using mechanical displacement $\u$, fluid pressure $p$ and temperature $T$ as primary variables, the linear, reduced thermo-poro-elasticity model including linearized fluid state equations reads
\begin{align}
\label{thermo-biot:mechanics}
 -\DIV \left[ \mathbb{C} \eps{\u} - \alpha p \,\mathbf{I} - 3\alpha_\mathrm{T} K_\mathrm{dr} T \,\mathbf{I} \right] &= \fext, \\
%
\label{thermo-biot:mass}
 \tfrac{1}{M} \dot{p} + \alpha \DIV \dot{\u} - 3 \alpha_\phi \dot{T} - \DIV \left(\permeability (\GRAD p - \gext)\right) &= \masssource,\\ 
\label{thermo-biot:energy}
 C_\mathrm{d} \dot{T} + 3\alpha_\mathrm{T} T_0 K_\mathrm{dr} \DIV \dot{\u} - 3\alpha_\phi T_0 \dot{p} -  \DIV \left(\conductivity \GRAD T\right) &= T_0 \entropysource,
\end{align}
subject to suitable boundary and initial conditions, cf., e.g.,~\cite{Coussy2004}. Here, $K_\mathrm{dr}$ denotes the bulk modulus, $\alpha_\mathrm{T}$ is the Biot coefficient associated with thermal effects, $\alpha_\phi$ governs the pressure-temperature coupling of the fluid, $C_\mathrm{d}$ is the total volumetric heat capacity, $T_0$ is a constant reference temperature, and $\conductivity$ denotes the thermal conductivity.
Other than that, same notation is used as in the previous sections.

The linearized thermo-poro-elasticity model~\eqref{thermo-biot:mechanics}--\eqref{thermo-biot:energy} has a similar structure as Biot's consolidation model. In the following, we provide a generalized gradient flow formulation of~\eqref{thermo-biot:mechanics}--\eqref{thermo-biot:energy}, which will be later exploited in the context of robust splitting schemes, cf.\ Sec.~\ref{section:splitting-thermo-poro-elasticity}.

In the context of the abstract gradient flow modelling framework introduced in Sec.~\ref{section:general-model-biot}, we choose $(\u,\fluidmass,S)$ as state variables, i.e., the mechanical displacement, the fluid content and the total entropy. Motivated by~\eqref{thermo-biot:mass}--\eqref{thermo-biot:energy}, the latter two will be related to $(\u,p,T)$ by
\begin{align}
\label{thermo-biot:effective-mass}
 \fluidmass &= \tfrac{1}{M} p  + \alpha \DIV \u - 3 \alpha_\phi T, \\ 
\label{thermo-biot:effective-entropy}
 S &= \tfrac{C_\mathrm{d}}{T_0} T + 3\alpha_\mathrm{T} K_\mathrm{dr} \DIV \u - 3\alpha_\phi p.
\end{align}

Changes of states are associated to $(\dot{\u},\flux,\entropyflux)$, i.e., the rate of mechanical displacement, the volumetric flux and the entropy flux, by conservation of volume and balance of entropy
\begin{align*}
 \dot{\fluidmass} + \DIV\flux&= \masssource, \\
 \dot{S} + \DIV\entropyflux &= \entropysource.
\end{align*}
Under above, linearizing assumptions, the entropy flux can be identified with the heat flux scaled by $T_0^{-1}$ such that due to Darcy's law and Fourier's law, we obtain
\begin{align}
\label{thermo-biot:darcy}
 \flux &= -\permeability(\GRAD p - \gext), \\
\label{thermo-biot:fourier}
 \entropyflux &= -\frac{\conductivity}{T_0} \GRAD T.
\end{align}
Focussing on the inherent gradient flow structure of linearized thermo-poro-elasticity, we omit specifying the regularity of all variables; we refer to the formal function spaces including essential boundary conditions as defined in Sec.~\ref{section:general-model-biot}. Generalizing linear poro-elasticity, a natural choice for the dissipation potential is 
\begin{align*}
 \mathcal{D}_\mathrm{th}(\flux,\entropyflux) &= \frac{1}{2} \llangle \permeability^{-1} \flux, \flux \rrangle + \frac{1}{2} \llangle T_0 \conductivity^{-1} \entropyflux, \entropyflux \rrangle.
\end{align*}

The Helmholtz free energy associated to linearized thermo-poro-elasticity for given state is defined by
\begin{align*}
 \mathcal{E}_\mathrm{th}(\u,\fluidmass,S) &:= \mathcal{E}_\mathrm{eff}(\u) + \mathcal{E}_\mathrm{th,fluid}(\u,\fluidmass,S)
\end{align*}
with $\mathcal{E}_\mathrm{eff}$ defined as for linear poro-elasticity and the fluid contribution
\begin{align*}
 \mathcal{E}_\mathrm{th,fluid}(\u,\fluidmass,S) &:= 
 \frac{1}{2} \llangle \begin{bmatrix} \tfrac{1}{M} & -3\alpha_\phi \\ -3\alpha_\phi & \tfrac{C_\mathrm{d}}{T_0} \end{bmatrix}^{-1} \begin{bmatrix} \fluidmass - \alpha \DIV \u \\ S - 3\alpha_\mathrm{T} K_\mathrm{dr} \DIV \u \end{bmatrix}, \begin{bmatrix} \fluidmass - \alpha \DIV \u \\ S - 3\alpha_\mathrm{T} K_\mathrm{dr} \DIV \u \end{bmatrix} \rrangle.
\end{align*}
Using~\eqref{thermo-biot:effective-mass}--\eqref{thermo-biot:effective-entropy}, $\mathcal{E}_\mathrm{th,fluid}$ can be also written as function of the primary variables
\begin{align*}
 \mathcal{E}_\mathrm{th,fluid}(\u,p,T) &= 
 \frac{1}{2} \llangle \begin{bmatrix} \tfrac{1}{M} & -3\alpha_\phi \\ -3\alpha_\phi & \tfrac{C_\mathrm{d}}{T_0} \end{bmatrix} \begin{bmatrix} p \\ T \end{bmatrix}, \begin{bmatrix} p \\ T  \end{bmatrix} \rrangle.
\end{align*}

Finally, we formulate the linearized thermo-poro-elasticity model as generalized gradient flow: Given the current state $(\u,\fluidmass,S)$, its change is described by
\begin{align}
\label{thermo-poro-elasticity:gradient-flow:start}
 \dot{\fluidmass} &= \masssource -\DIV\flux, \\
 \dot{S} &= \entropysource -\DIV\entropyflux, \\
 \nonumber
 (\dot{\u},\flux,\entropyflux) &= \underset{(\v,\z,\w)\in\dot{\mathcal{V}}\times\mathcal{Z}\times\mathcal{W}}{\text{arg\,min}}\, \Big\{
 \mathcal{D}_\mathrm{th}(\z,\w)  
 + \llangle \GRAD \mathcal{E}_\mathrm{th}(\u,\fluidmass,S), [\v,-\DIV \z,-\DIV\w] \rrangle \\[-1em]
 &\qquad\qquad\qquad\qquad\qquad- \mathcal{P}_\mathrm{ext,th}(\v,\z,\w) \Big\}.
\label{thermo-poro-elasticity:gradient-flow:end}
\end{align}
Using simple calculations, one can verify that the corresponding optimality conditions are equivalent to the original problem formulation~\eqref{thermo-biot:mechanics}--\eqref{thermo-biot:energy}. Well-posedness can be established analogously to linear poro-elasticity, exploiting that linearized thermo-poro-elasticity is essentially a vectorized version of linear poro-elasticity.

\begin{remark}[Including non-monotone perturbations]\label{remark:porothermo-perturbed}
For larger P\'eclet and Brink\-man numbers, the contributions 
\begin{alignat}{2}
\label{perturbations:start}
 W_{(i)}(\flux,\entropyflux) &= -c \llangle \conductivity^{-1}\entropyflux,\flux \rrangle,\ &&\left(\text{for some }c\in\mathbb{R}\right),\\
 W_{(ii)}(\flux)   &= \left(\tfrac{1}{T_0}-3\alpha_\phi\right)\llangle \permeability^{-1} \flux, \flux \rrangle,
\label{perturbations:end}
\end{alignat}
corresponding to thermal convection and the heat production due to viscous dissipation, respectively, are non-negligible and have to be incorporated in the energy equation~\eqref{thermo-biot:energy}, which then becomes
\begin{align*}
  \tfrac{C_\mathrm{d}}{T_0} \dot{T} + 3\alpha_\mathrm{T} K_\mathrm{dr} \DIV \dot{\u} - 3\alpha_\phi \dot{p} + W_{(i)}(\flux,\entropyflux) + \DIV \entropyflux &= \entropysource + W_{(ii)}(\flux).
\end{align*}
Eq.~\eqref{thermo-biot:mechanics}--\eqref{thermo-biot:mass} remain unchanged. Based on the above discussion, the resulting equations can be interpreted as perturbed gradient flow (or doubly non-linear evolution equation with non-monotone perturbations). In the context of operator splitting schemes, we will discuss possibilities to still exploit the part containing a gradient flow structure for deriving robust splitting schemes for the general problem, cf.\ Sec.~\ref{section:non-linear-thermo-poro}.
\end{remark}

\section*{Part II -- Efficient discrete approximation schemes for thermo-poro-visco-elasticity}
\addcontentsline{toc}{section}{Part II -- Efficient discrete approximation schemes for thermo-poro-visco-elasticity} 

The gradient flow structure of thermo-poro-visco-elasticity revealed in Part~I allows for a unified framework for deriving stable temporal and spatial discretizations, as well as the development and analysis of robust operator splitting schemes. The abstract workflow taken in this paper is visualized in Fig.~\ref{figure:procedure-derivation-splitting} can be summarized as follows: Given a time-continuous gradient flow formulation, a time-discrete approximation is introduced by applying the minimizing movement scheme, resulting in a (convex and hence well-posed) minimization problem for each time step. A corresponding dual problem is derived by applying the Legendre-Fenchel duality theory~\cite{Ekeland1999}. Provided that the resulting minimization problems are block-separable, alternating minimization is utilized, decoupling physical subproblems -- it comes with strong robustness under fairly weak assumptions and allows for an abstract convergence analysis~\cite{Grippof1999,Grippo2000}. Furthermore, the underlying minimization structure of the coupled problems enables simple acceleration of iterative solvers via cheap line search strategies. 

Although based on semi-discrete approximations, the results also immediately translate to fully-discrete approximations obtained by the Galerkin method, i.e., the well-posedness as convex minimization problems, and the efficient numerical solution by block-coordinate descent methods.

\begin{figure}[h!]    
\centering  
 \begin{overpic}[width=0.96\textwidth]{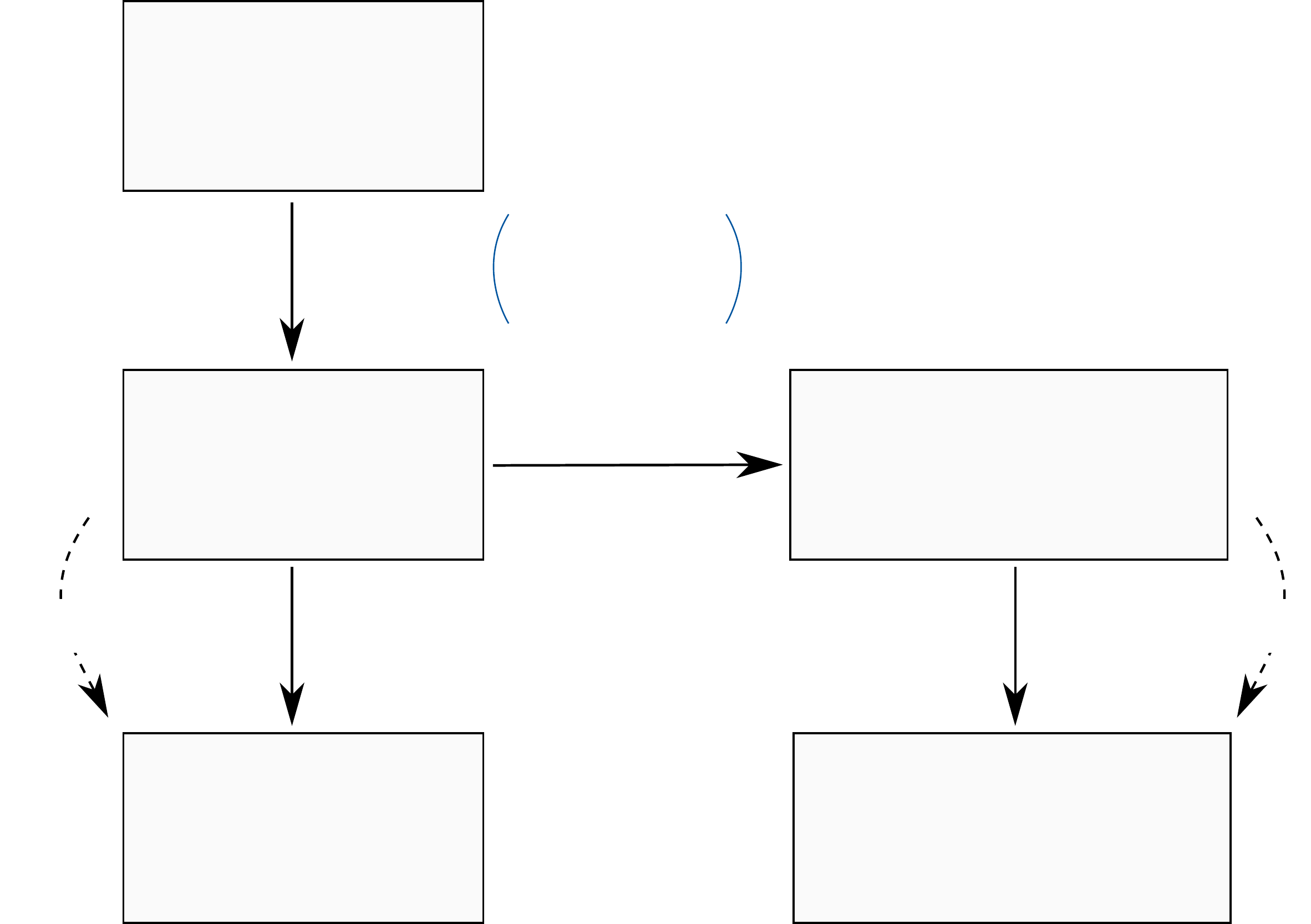}
  \put(39,32){\textcolor{grau}{Duality theory}}
  \put(1.1,22.15){\textcolor{grau}{Line search}}
  \put(89,22.15){\textcolor{grau}{Line search}}
  \put(23.5,51.5){\scalebox{1.02}{\textcolor{grau}{Minimizing}}}
  \put(23.5,49.4){\scalebox{1.02}{\textcolor{grau}{movement}}}
  \put(23.5,47.3){\scalebox{1.02}{\textcolor{grau}{scheme}}}
  \put(38.5,49.4){\scalebox{1.02}{\textcolor{grau}{+}}}
  \put(41,51.5){\scalebox{1.02}{\textcolor{grau}{Conforming}}} 
  \put(41,49.3){\scalebox{1.02}{\textcolor{grau}{Galerkin}}}
  \put(41,47.2){\scalebox{1.02}{\textcolor{grau}{method}}}
  \put(23.5,22.25){\textcolor{grau}{Alternating}}
  \put(23.5,20.25){\textcolor{grau}{minimization}} 
  \put(63,22.25){\textcolor{grau}{Alternating}}
  \put(61.25,20.25){\textcolor{grau}{minimization}} 
  \put(10.5,67.5){Generalized gradient}  
  \put(10.5,64.75){flow formulation} 
  \put(12,60){$\dot{x}+\GRAD \mathcal{E}(x)=f_\mathrm{ext}$}
  \put(10.5,39.25){\underline{Primal} semi-discrete}
  \put(10.5,36){approximation} 
  \put(11,31.5){$x^n= \underset{x}{\mathrm{arg\,min}} \, \mathcal{E}_\mathrm{tot}^{\Delta t}(x)$}
  \put(61.5,39.25){\underline{Dual} semi-discrete}
  \put(61.5,36){approximation} 
  \put(61.5,31.5){$(x^{\star})^{n}= \underset{x^\star}{\mathrm{arg\,min}} \, \mathcal{E}_\mathrm{tot}^{\star,\Delta t}(x^\star)$}
  \put(10.5,11.5){\underline{Undrained}-type split}
  \put(10.5,7.25){\scalebox{0.8}{$x_1^{i}= \underset{x_1}{\mathrm{arg\,min}} \, \mathcal{E}_\mathrm{tot}^{\Delta t}\Big(x_1,x_2^{i-1}\Big)$}}
  \put(10.5,2.5){\scalebox{0.8}{$x_2^{i}= \underset{x_2}{\mathrm{arg\,min}} \, \mathcal{E}_\mathrm{tot}^{\Delta t}\Big(x_1^i,x_2\Big)$}}
  \put(61.5,11.5){\underline{Fixed-stress}-type split}  
  \put(61.5,7.6){\scalebox{0.8}{$(x_1^{\star})^{i}= \underset{x_1^\star}{\mathrm{arg\,min}} \, \mathcal{E}_\mathrm{tot}^{\star,\Delta t}\Big(x_1^\star,(x_2^{\star})^{i-1}\Big)$}}
  \put(61.5,2.85){\scalebox{0.8}{$(x_2^{\star})^{i}= \underset{x_2^\star}{\mathrm{arg\,min}} \, \mathcal{E}_\mathrm{tot}^{\star,\Delta t}\Big((x_1^{\star})^{i},x_2^\star\Big)$}} 
 \end{overpic}
 \caption{\label{figure:procedure-derivation-splitting} Workflow for the derivation of splitting schemes for linear thermo-poro-visco-elasticity, illustrated for simplicity for classical gradient flows.}
\end{figure} 

By applying the workflow in particular to linear poro-elasticity, we derive the well-known undrained and fixed-stress splits. Thereby, we provide a novel interpretation of the widely used splitting schemes as inexact minimization. Motivated by that, the abstract approach is further applied to distinct representatives of three generalizations of linear poro-elasticity: poro-visco-elasticity, non-linear poro-elasticity under infinitesimal strains, and thermo-poro-elasticity. Ultimately, novel splitting schemes are derived for poro-visco-elasticity and nonlinear poro-elasticity, structurally similar to the undrained and fixed-stress splits. In the context of thermo-poro-elasticity, the recently proposed undrained-adiabatic and extended fixed-stress splits~\cite{Kim2018a} are derived and by that justified mathematically.

\section{Energy-driven time discretization via minimizing movements}\label{section:general-time-discretization}

Gradient flows allow for stable time discretization. Throughout this part of the paper, we apply the so-called \textit{minimizing movement} scheme~\cite{Ambrosio1995}, which is energy dissipating and closely related to the implicit Euler method, most often the first choice time discretization for poro-elasticity models. Utilizing a minimization formulation,  the minimizing movement scheme retains the structure and possible convexity properties of the problem. However, we note, it does not preserve a discrete energy identity analogous to~\eqref{appendix:well-posedness-dne:energy-identity}; for structure-preserving time-discretizations we refer, e.g., to~\cite{Jungel2018} and the references within. 
 
For simplicity, we consider an equidistant partition $0=t_0<t_1<...<t_N=T$ of the time interval $[0,T]$ with time step size $\Delta t$. Fields, functionals and function spaces evaluated at discrete time $t_n$ are enhanced by an exponent $n$, e.g., $x^n:=x(t_n,\cdot)\in\mathcal{X}^n:=\mathcal{X}(t_n)$ and $\mathcal{P}_\mathrm{ext}^n(\cdot):=\mathcal{P}_\mathrm{ext}(t_n)$.

The minimizing movement scheme applied to the abstract, generalized gradient flow~\eqref{generalized-gradient-flow-structure} is identical with a semi-implicit Euler method, where state-dependent functions are lagged in time. For time step $n$, it reads: Given $x^{n-1}\in\mathcal{X}^{n-1}$, find $x^n\in\mathcal{X}^n$ and $p^n\in\mathcal{P}^n$ satisfying
\begin{align}
\label{general-time-discrete:start}
 &\frac{x^n-x^{n-1}}{\Delta t} + \mathcal{T}\left(x^{n-1}\right)p^n = 0 \\
 &p^n \,=\, \underset{p\in \mathcal{P}^n}{\mathrm{arg\,min}}\,\left\{ \Delta t\,  \mathcal{D}\left(x^{n-1};p\right) + \mathcal{E}(x^{n}) - \Delta t  \mathcal{P}_\mathrm{ext}^n\left(x^{n-1};p\right) \right\}.
\label{general-time-discrete:end}
\end{align}
As the structure of the original problem is retained, solvability of the time-discrete problem is automatically inherited from the continuous problem. In this work, all dissipation potentials, external work rates and process operators are state-independent. Thus, we omit the explicit dependence from now on.

The coupled problem~\eqref{general-time-discrete:start}--\eqref{general-time-discrete:end} can be obviously decoupled by reducing~\eqref{general-time-discrete:end} to a minimization problem for process vectors
\begin{align}
\label{time-discrete-minimization-flux:start}
 &\frac{x^n-x^{n-1}}{\Delta t} + \mathcal{T}p^n = 0 \\
 &p^n \,=\, \underset{p\in \mathcal{P}^n}{\mathrm{arg\,min}}\,\left\{ \Delta t \, \mathcal{D}\left(p\right) + \mathcal{E}\left(x^{n-1} - \mathcal{T}p\right) - \Delta t \, \mathcal{P}_\mathrm{ext}^n\left(p\right) \right\}.
\label{time-discrete-minimization-flux:end}
\end{align}
Alternatively, provided that the change of state is directly associated to its rate, it is $\mathcal{T} = -Id$; in the context of poro-elasticity, this particular case occurs, e.g., for mechanical deformation. Consequently,
\eqref{time-discrete-minimization-flux:start}--\eqref{time-discrete-minimization-flux:end} 
becomes a minimization problem for the state itself
\begin{align}
\label{time-discrete-minimization-state} 
 &x^n \,=\, \underset{x \in \mathcal{X}^n}{\mathrm{arg\,min}}\,\left\{ \Delta t \, \mathcal{D}\left(\frac{x - x^{n-1}}{\Delta t}\right) + \mathcal{E}(x) - \Delta t \, \mathcal{P}_\mathrm{ext}^n\left(\frac{x-x^{n-1}}{\Delta t}\right) \right\}.
\end{align}
The Euler-Lagrange equation is indeed equivalent to the \textit{implicit Euler} scheme
\begin{align*}
 \GRAD \mathcal{D}\left(\frac{x^n - x^{n-1}}{\Delta t}\right) + \GRAD \mathcal{E}(x^n) = \GRAD \mathcal{P}_\mathrm{ext}^n\left(\frac{x^n - x^{n-1}}{\Delta t}\right),\quad \text{in }\mathcal{X}_0^\star,
\end{align*}
where $\mathcal{X}_0$ is the linear tangent space to $\mathcal{X}^n$.

For the thermo-poro-visco-elasticity models discussed in this paper, we employ a combination of~\eqref{time-discrete-minimization-flux:start}--\eqref{time-discrete-minimization-flux:end} and~\eqref{time-discrete-minimization-state}, depending on the nature of the particular variables and available process vectors. A fully-discrete approximation may then be obtained by the conforming Galerkin method; see Sec.~\ref{section:spatial-discretization} for an exemplary discussion in the context of linear poro-elasticity.

\section{Minimization formulations of discrete linear poro-elasticity and robust splitting schemes via alternating minimization}\label{section:splitting-linear-biot}

In the literature, various formulations of linear poro-elasticity are employed, differing in the choice of primal variables. In this spirit, we present various minimization formulations of time-discrete, linear poro-elasticity, after applying the minimizing movement scheme (Sec.~\ref{section:general-time-discretization}). In particular, we discuss the widely used two-, three-, and five-field formulations, as well as the primal and the dual formulations naturally arising from Part~I. The specific minimization formulation is relevant when applying a line search strategy for the acceleration of iterative solvers as splitting schemes, cf.~\ Sec.~\ref{section:minimization:line-search}. Fully-discrete approximations with same properties are obtained by the conforming Galerkin method. We also note that minimization formulations can be derived in the context of the least-squares finite element method, cf., e.g.,~\cite{Korsawe2005}; however, such usually do not stem naturally from a physical, gradient flow formulation but build directly on classical PDE-based models.

Following the workflow visualized in Fig.~\ref{figure:procedure-derivation-splitting}, we derive the widely used undrained split and fixed-stress split as alternating minimization applied to the primal and dual formulations, respectively. Splitting schemes for linear poro-elasticity have been well studied in recent literature, and as such, much of the material in this section represents a new perspective, and indeed also new proofs, of known results. However, even in this case the discussion in this section lays the foundation for the more advanced applications in the subsequent sections, wherein the gradient flow framework leads to new schemes not previously reported.

\subsection{Minimization formulations of time-discrete linear poro-elasticity}\label{section:poro:minimization-formulations}

In the following, we introduce various minimization formulations of time-discrete, linear poro-elasticity differing in the choice of the primary variables. We present the primal and dual formulations, as well as the more widely used two-, three-, and five-field formulations.

\subsubsection{Primal formulation of time-discrete linear poro-elasticity}\label{section:minimization:primal-two-field}

Time discretization of the continuous, primal formulation of linear poro-elasticity~\eqref{biot-generalized-gradient-flow-structure-U} via the minimizing movement scheme, yields the primal formulation of time-discrete, linear poro-elasticity. It is formulated as a series of minimization problems:
At time step $n\geq 1$, let
\begin{align}
\label{primal-discrete-poro-elasticity-spaces:start}
 \mathcal{V}^n &:= \left\{ \left.\v\in H^1(\Omega) \,\right|\, \v=\u_\Gamma^n \text{ on }\Gamma_{\u}\right\}, \\
 \mathcal{Z}^n &:= \left\{ \z\in H(\mathrm{div};\Omega) \,\left|\, \z\cdot\n = q_\mathrm{\Gamma,n}^n \text{ on }\Gamma_{\flux}\right.\right\}.
\label{primal-discrete-poro-elasticity-spaces:end}
\end{align}
Then given $\fluidmass^{n-1}$, define $(\u^n,\flux^n)\in\mathcal{V}^n\times\mathcal{Z}^n$ by
\begin{align}
\label{primal-two-field-minimization:start}
 (\u^n,\flux^n) :=& \underset{(\u,\flux)\in\mathcal{V}^n\times\mathcal{Z}^n}{\mathrm{arg\,min}}\,\mathcal{E}_\mathrm{tot}^{\Delta t}(\fluidmass^{n-1};\u,\flux), \quad \text{where }\\[0.5em]
 \nonumber
 \mathcal{E}_\mathrm{tot}^{\Delta t}(\fluidmass^{n-1};\u,\flux) 
 :=&
 \tfrac{1}{2} \llangle \mathbb{C} \eps{\u}, \eps{\u} \rrangle + \tfrac{\Delta t}{2} \llangle \permeability^{-1} \flux, \flux \rrangle 
 \\
\nonumber
 &+ \tfrac{M}{2} \left\| \fluidmass^{n-1} +\Delta t \, \masssource^n - \Delta t \DIV \flux - \alpha \DIV \u \right\|^2 \\
\nonumber
 &- \mathcal{P}^n_\mathrm{ext,mech}(\u)- \Delta t \, \mathcal{P}^n_\mathrm{ext,fluid}(\flux),
\end{align}
and set $\fluidmass^n:=\fluidmass^{n-1} +\Delta t \, \masssource^n- \Delta t \DIV \flux^n$. Since the energy $\mathcal{E}_\mathrm{tot}^{\Delta t}$ is strictly convex, existence and uniqueness of a solution to~\eqref{primal-two-field-minimization:start} follow by classical results from convex analysis, cf.\ Thm.~\ref{appendix:well-posedness:convex-minimization}.

\subsubsection{Dual formulation of time-discrete linear poro-elasticity}\label{section:minimization:dual-formulation}
We introduce a dual formulation of~\eqref{primal-two-field-minimization:start}. It can be derived using tools from convex analysis; or equivalently, by employing the minimizing movement scheme to the continuous, dual formulation~\eqref{generalized-gradient-flow:dual-formulation:linear-poro-elasticity}:
At time step $n\geq 1$, let
\begin{align*}
 \mathcal{S}^n &:= \left\{ \bm{\tau} \in H(\text{div};\Omega)^d \,\left|\, 
 \begin{array}{l}
  \bm{\tau}\bm{n} = \stress_\mathrm{\Gamma,n}^n \text{ on }\Gamma_{\stress}, \\[2pt]                                                                  
  \DIV \bm{\tau} + \fext^n = \bm{0} \text{ in } L^2(\Omega),\\[2pt]
  \llangle \bm{\tau},\bm{\gamma}\rrangle = 0 \ \forall \bm{\gamma}\in\bm{Q}_\mathrm{AS}
 \end{array} \right. \right\}, \\[2pt]
 \mathcal{Q}^n &:= \left\{\left. q \in H^1(\Omega) \,\right|\, q=p_\Gamma^n \text{ on }\Gamma_{p} \right\},
\end{align*} 
where $\bm{Q}_\mathrm{AS}$ as defined in~\eqref{definition:skew-symmetric-tensors}. Then given $(\stress^{n-1},p^{n-1})\in\mathcal{S}^{n-1}\times\mathcal{Q}^{n-1}$, set $\fluidmass^{n-1}:=\tfrac{1}{M}p^{n-1}+ \alpha \,\mathrm{tr}\left(\mathbb{A}(\stress^{n-1} + \alpha p^{n-1} \mathbf{I})\right)$, and define $(\stress^{n},p^{n})\in\mathcal{S}^n\times\mathcal{Q}^n$ to be the solution of the dual minimization problem
\begin{align}
\label{dual-two-field-minimization:start}
  (\stress^{n},p^{n}) :=& \underset{(\stress,p)\in\mathcal{S}^n\times\mathcal{Q}^n}{\mathrm{arg\,min}}\,\mathcal{E}_\mathrm{tot}^{\star,\Delta t}(\fluidmass^{n-1};\stress,p), \quad \text{where}\\[0.5em]
 \nonumber
 \mathcal{E}_\mathrm{tot}^{\star,\Delta t}(\fluidmass^{n-1};\stress,p) :=&\, 
 \tfrac{1}{2} \llangle \mathbb{A} (\stress+\alpha p \mathbf{I}),\stress + \alpha p \mathbf{I} \rrangle 
 \\ 
 \nonumber
 &+ \tfrac{1}{2M} \left\| p \right\|^2 + \tfrac{\Delta t}{2} \llangle \permeability (\GRAD p - \gext^n), \GRAD p - \gext^n \rrangle \\
 \nonumber
 &
 -\llangle  \u_\Gamma^n, \stress\bm{n} \rrangle_{\Gamma_{\u}} 
 -\llangle \fluidmass^{n-1} + \Delta t\, \masssource^n, p \rrangle 
 - \Delta t \llangle  q_\mathrm{\Gamma,n}^n, p \rrangle_{\Gamma_{\q}}.
\end{align}
Since the energy $\mathcal{E}_\mathrm{tot}^{\star,\Delta t}$ is strictly convex, and the feasible set $\mathcal{S}^n\times\mathcal{Q}^n$ is non-empty and convex, existence and uniqueness of a solution to~\eqref{dual-two-field-minimization:start} follow by classical results from convex analysis, cf.\ Thm.~\ref{appendix:well-posedness:convex-minimization}.

\subsubsection{Two-field saddle point formulation of time-discrete linear poro-elasticity}\label{section:minimization:classical-two-field}

In the literature, linear poro-elasticity is often studied both numerically and analytically based on a two-field saddle point formulation of the linear Biot equations. It employs the mechanical displacement $\u$ and the fluid pressure $p$ as primary variables, cf., e.g.,~\cite{Showalter2000,Mikelic2013,Kim2011,Kim2011b,Storvik2019}. Employing the implicit Euler method for time-discretization, time step $n\geq 1$ reads: Given $\fluidmass^{n-1} := \frac{1}{M}p^{n-1}+\alpha \DIV\u^{n-1}$, find $(\u^n,p^n)\in \mathcal{V}^n \times \mathcal{Q}^n$ satisfying for all $(\v,q)\in\mathcal{V}_0\times \mathcal{Q}_0$
\begin{align*}
 \llangle \mathbb{C}\eps{\u^n}, \eps{\v} \rrangle - \alpha \llangle p^n, \DIV \v \rrangle &= \llangle \fext^n, \v \rrangle + \llangle \stress_\mathrm{\Gamma,n}^n, \v \rrangle_{\Gamma_{\stress}},\\
 \frac{1}{M} \llangle p^n, q \rrangle + \alpha \llangle \DIV \u^n , q \rrangle + \Delta t \, \llangle \permeability (\GRAD p^n - \gext^n ) , \GRAD q \rrangle &= \llangle \fluidmass^{n-1} + \Delta t \, \masssource^n, q \rrangle + \Delta t \llangle q_\mathrm{\Gamma,n}^n, q \rrangle_{\Gamma_{\flux}},
\end{align*}
where $\mathcal{Q}_0 := \{ q \in H^1(\Omega) \,|\, q=0 \text{ on }\Gamma_{p} \}$.

Saddle point formulations can be in general transformed to a constrained minimization problem~\cite{Boffi2013}. In the context of linear poro-elasticity, the constraint has to explicitly impose one of the physical subproblems. In the following, we choose to impose the balance of linear momentum and define a suitable product space for each time step $n$
\begin{align*}
 \tilde{\mathcal{H}}^n
 :=
 \left\{
 (\u,p)\in\mathcal{V}^n \times\mathcal{Q}^n
 \ \left| \ 
 \begin{array}{rl}
 \llangle \mathbb{C} \eps{\u}, \eps{\v} \rrangle - \alpha \llangle p, \DIV \v \rrangle \ \ \ &\\
 = \llangle \fext^n, \v \rrangle + \llangle \stress_\mathrm{\Gamma,n}^n, \v \rrangle_{\Gamma_{\stress}}& \ \ \forall \v\in\mathcal{V}_0
 \end{array}
 \right.
 \right\}.
\end{align*}
The time-discrete, linear Biot equations at fixed time step $n$, formulated as constrained minimization problem, read: Given $(\u^{n-1},p^{n-1})\in\tilde{\mathcal{H}}^{n-1}$, set $\fluidmass^{n-1} := \frac{1}{M}p^{n-1}+\alpha \DIV\u^{n-1}$, and find $(\u^n, p^n)\in\tilde{\mathcal{H}}^n$, satisfying
\begin{align}
\label{two-field-up-minimization:start}
 (\u^n,p^n) :=& \underset{(\u,p)\in\tilde{\mathcal{H}}^n}{\mathrm{arg\,min}}\,\mathcal{E}_\mathrm{tot}^{\Delta t}(\fluidmass^{n-1};\u,p), \quad \text{where}\\[0.5em]
 \nonumber
 \mathcal{E}_\mathrm{tot}^{\Delta t}(\fluidmass^{n-1};\u,p) 
 :=&
 \tfrac{1}{2} \llangle \mathbb{C} \eps{\u}, \eps{\u} \rrangle
 + \tfrac{1}{2M} \left\| p \right\|^2 \\
\nonumber
 &+ \tfrac{\Delta t}{2} \llangle \permeability (\GRAD p - \gext^n), \GRAD p - \gext^n \rrangle \\
\nonumber
 &- \llangle \fluidmass^{n-1} + \Delta t \, \masssource^n, p \rrangle
 - \Delta t \llangle q_\mathrm{\Gamma,n}^n, p \rrangle_{\Gamma_{\q}}.
\end{align}
Since the energy $\mathcal{E}_\mathrm{tot}^{\Delta t}$ is strictly convex and the feasible set $\tilde{\mathcal{H}}^{n}$ is non-empty and convex, existence and uniqueness of a solution to~\eqref{dual-two-field-minimization:start} follow by classical results from convex analysis, cf.\ Thm.~\ref{appendix:well-posedness:convex-minimization}. We emphasize, that well-posedness also follows in the extreme case of an incompressible fluid and impermeable medium, as long as $\mathcal{V}_0 \times \mathcal{Q}_0$ is inf-sup stable such that $\tilde{\mathcal{H}}^n$ is essentially constrained by a one-dimensional relation. Finally, we note, compared to the primal and dual formulations,~\eqref{two-field-up-minimization:start} is not block-separable.

\begin{remark}[Mass conservation as constraint]
 Alternatively, mass conservation can imposed as constraint resulting in an alternative minimization formulation of the time-discrete, linear Biot equations. However, having splitting schemes accelerated by relaxation in mind, cf.\ Sec.~\ref{section:minimization:line-search}, the particular choice matters. The constraint has to be satisfied after each splitting iteration; consequently, the above formulation~\eqref{two-field-up-minimization:start} suits the fixed-stress split, whereas the use of mass conservation as constraint contrarily suits the undrained splitting scheme, cf.\ Sec.~\ref{section:splitting-schemes:alternating-minimization}.
\end{remark}

\subsubsection{Three-field formulation of time-discrete linear poro-elasticity}\label{section:minimization:three-field-formulation}

A conforming Galerkin finite element discretization of the classical two-field saddle point formulation is not locally mass conservative. Therefore, in the literature, often a mixed formulation of the fluid flow problem is employed, cf., e.g.,~\cite{Both2017,Both2018a,Borregales2018,Castelletto2016,Ferronato2010,Bause2017}; this results in a three-field formulation employing the mechanical displacement $\u$, the fluid pressure $p$ and the volumetric flux $\flux$ as primary variables. Based on the primal two-field minimization formulation of linear poro-elasticity (Sec.~\ref{section:minimization:primal-two-field}), we state an unconstrained minimization formulation corresponding to the three-field formulation of linear poro-elasticity. For this, we essentially modify slightly the energy used in~\eqref{primal-two-field-minimization:start} and define the pressure as a post-processed quantity. Consistent with~\eqref{linear-biot-interpretation-m}, we define for given $\u$ and $\fluidmass$ by
\begin{align}
\label{three-field:pressure-projection}
 p := \Pi_{\tilde{\mathcal{Q}}}\big(M(\fluidmass - \alpha \DIV \u) \big),
\end{align}
where $\Pi_{\tilde{\mathcal{Q}}}$ denotes the orthogonal projection onto $\tilde{\mathcal{Q}}:= L^2(\Omega)$; the particular choice for the pressure space $\tilde{\mathcal{Q}}$ originates from the expected regularity, cf.\ Lemma~\ref{lemma:well-posedness-poro}.

Finally, we define the minimization formulation for time step $n$: Given $(\u^{n-1},\flux^{n-1},p^{n-1})\in\mathcal{V}^{n-1}\times\mathcal{Z}^{n-1}\times\tilde{\mathcal{Q}}$, set $\fluidmass^{n-1} := \frac{1}{M}p^{n-1}+\alpha \DIV\u^{n-1}$, and define $(\u^n,\flux^n,p^n)\in\mathcal{V}^n\times\mathcal{Z}^n\times\tilde{\mathcal{Q}}$ as solution to
\begin{align}
 \label{three-field-minimization:start}
 (\u^n,\flux^n) :=& \underset{(\u,\flux)\in\mathcal{V}^n\times\mathcal{Z}^n}{\mathrm{arg\,min}}\,\mathcal{E}_\mathrm{tot}^{\Delta t}(\fluidmass^{n-1};\u,\flux), \\
 \label{three-field-minimization:end}
  p^n :=& \, \Pi_{\tilde{\mathcal{Q}}} \left(M(\fluidmass^{n-1} +\Delta t\, \masssource^n - \Delta t \, \DIV \flux^n - \alpha \DIV \u^n) \right),
\end{align}
where
\begin{align}
  \nonumber
 \mathcal{E}_\mathrm{tot}^{\Delta t}(\fluidmass^{n-1};\u,\flux) 
 :=&
 \, \tfrac{1}{2} \llangle \mathbb{C} \eps{\u}, \eps{\u} \rrangle + \tfrac{\Delta t}{2} \llangle \permeability^{-1} \flux, \flux \rrangle \\
\nonumber
 & + \tfrac{M}{2} \left\| \Pi_{\tilde{\mathcal{Q}}}( \fluidmass^{n-1} + \Delta t\, \masssource^n - \Delta t \DIV \flux - \alpha \DIV \u) \right\|^2 \\
\nonumber
 &- \mathcal{P}_\mathrm{ext,mech}^n(\u)- \Delta t \, \mathcal{P}_\mathrm{ext,fluid}^n(\flux).
\end{align}
The minimization problem is strictly convex and the projection is well-defined; existence and uniqueness of a solution to~\eqref{three-field-minimization:start}--\eqref{three-field-minimization:end} follow by classical results from convex analysis, cf.\ Thm.~\ref{appendix:well-posedness:convex-minimization}. Furthermore, it is simple to verify that the corresponding optimality conditions yield the classical three-field formulation of time-discrete, linear poro-elasticity, cf.\ Sec.~\ref{section:optimality-conditions:three-field}.


\begin{remark}[Constrained minimization formulation]\label{remark:3field-poro-constrained-minimization}
 Based on the inherent double saddle point structure of the three-field formulation, alternatively a non-block-separable minimization formulation, constrained by mass conservation, could be utilized, similar to Sec.~\ref{section:minimization:classical-two-field}. This would allow in particular for the discussion of the incompressible case $M=\infty$. However, unlike for $M\in(0,\infty)$, it becomes evident that inf-sup stability is required for the uniqueness of weak solutions.
\end{remark}

\subsubsection{Five-field formulation of time-discrete linear poro-elasticity}\label{section:minimization:five-field}

So far, no minimization formulation presented above lays a foundation for a fully structure-preserving, conforming Galerkin finite element discretization, which is  conserving locally both mass and linear momentum. In order to achieve this, a fully mixed five-field formulation can be used, i.e., mixed formulations for both the mechanical and the fluid flow subproblems, cf., e.g.,~\cite{Ahmed2019a,Ambartsumyan2018b,Keilegavlen2017}. Consequently, both independent subproblems incorporate themselves a saddle point structure; however, different to the two- and three-field formulations, the coupling of the two subproblems is symmetric. 

After all, we combine ideas from previous sections and state a minimization formulation corresponding to the five-field formulation. In particular, starting from the dual formulation~\eqref{section:minimization:dual-formulation}, we add the volumetric flux $\flux$ as variable and impose Darcy's law as constraint. For fixed each time step $n$, we define a suitable product space for the fluid flow variables
\begin{align*}
 \mathcal{F}^n := 
 \left\{ 
 (\flux,p)\in\mathcal{Z}^n \times \tilde{\mathcal{Q}} 
 \,\left|\, 
 \begin{array}{ll}
\llangle \permeability^{-1} \flux, \z \rrangle  - \llangle p, \DIV \z \rrangle \ \ \  &\\
 \ \ = \llangle \gext^n, \z \rrangle - \llangle p_\Gamma^n, \z\cdot\n \rrangle_{\Gamma_p} & \ \forall \z\in\mathcal{Z}_0
 \end{array}
 \right.\right\}.
\end{align*}
Finally, the minimization formulation reads: Given $(\stress^{n-1},\flux^{n-1},p^{n-1})\in\mathcal{S}^{n-1}\times\mathcal{F}^{n-1}$, set $\fluidmass^{n-1}:=\tfrac{1}{M}p^{n-1}+ \alpha \,\mathrm{tr}\left(\mathbb{A}(\stress^{n-1} + \alpha p^{n-1} \mathbf{I})\right)$, and define $(\stress^{n},\flux^n,p^{n}) \in \mathcal{S}^n\times\mathcal{F}^n$ to be the solution of the minimization problem
\begin{align}
\label{dual-five-field-minimization:start}
  (\stress^{n},\flux^n,p^{n}) :=& \underset{(\stress,\flux,p) \in \mathcal{S}^n \times \mathcal{F}^n}{\mathrm{arg\,min}}\,\mathcal{E}_\mathrm{tot}^{\star,\Delta t}(\fluidmass^{n-1};\stress,\flux,p), \quad \text{where}\\[0.5em]
  \nonumber
 \mathcal{E}_\mathrm{tot}^{\star,\Delta t}(\fluidmass^{n-1};\stress,\flux,p) :=& 
 \tfrac{1}{2} \llangle \mathbb{A} (\stress+\alpha p \mathbf{I}),\stress + \alpha p \mathbf{I} \rrangle 
 + \tfrac{1}{2M} \left\| p \right\|^2 \\ 
 \nonumber
 &+ \tfrac{\Delta t}{2} \llangle \permeability^{-1} \flux, \flux \rrangle -\llangle \fluidmass^{n-1} + \Delta t \, \masssource^n, p \rrangle -\llangle  \u_\Gamma^n, \stress\bm{n} \rrangle_{\Gamma_{\u}}.
\end{align}
Since the energy $\mathcal{E}_\mathrm{tot}^{\star,\Delta t}$ is strictly convex and the feasible set $\mathcal{S}^n\times\mathcal{F}^n$ is non-empty and convex, existence and uniqueness of a solution to~\eqref{dual-five-field-minimization:start} follow by classical results from convex analysis, cf.\ Thm.~\ref{appendix:well-posedness:convex-minimization}. We refer to Sec.~\ref{section:optimality-conditions:five-field} for the derivation of the corresponding optimality conditions. We emphasize, that well-posedness also follows in the extreme case of an incompressible fluid, as long as $\mathcal{Z}_0 \times \tilde{\mathcal{Q}}$ is inf-sup stable such that $\mathcal{F}^n$ is essentially constrained by a one-dimensional relation.

\subsection{Optimality conditions}\label{section:splitting-schemes:optimality-conditions}

For each of the minimization formulations of the linear Biot equations presented in Sec.~\ref{section:minimization:primal-two-field}--\ref{section:minimization:five-field}, the corresponding optimality conditions can be derived as the first variation. Constraints are handled via the method of Lagrange multipliers. For better illustration of the undrained and fixed-stress split in the following section, we derive the three-field and five-field formulation of the linear Biot equations below.

\subsubsection{Three-field formulation of the linear Biot equations derived from minimization}\label{section:optimality-conditions:three-field}

We consider the minimization formulation of the linear Biot equations from Sec.~\ref{section:minimization:three-field-formulation}. We recall that the mechanical displacement and volumetric flux $(\u^n,\flux^n)$ are determined by minimization and the pressure $p^n$ is defined using a post-processing. Hence, by definition of $p^n\in\tilde{\mathcal{Q}}$ and the orthogonal projection $\Pi_{\tilde{\mathcal{Q}}}$, it holds
\begin{alignat*}{3}
\llangle M \Pi_{\tilde{\mathcal{Q}}}( \fluidmass^{n-1} + \Delta t\, \masssource^n - \Delta t \DIV \flux^n - \alpha \DIV \u^n)\right.&,\left. \Pi_{\tilde{\mathcal{Q}}}( \alpha \DIV \v) \rrangle
&&=
\alpha \llangle p^n, \DIV \v \rrangle, \\
\llangle M \Pi_{\tilde{\mathcal{Q}}}( \fluidmass^{n-1} + \Delta t\, \masssource^n - \Delta t \DIV \flux^n - \alpha \DIV \u^n)\right.&,\left. \Pi_{\tilde{\mathcal{Q}}}( \Delta t \DIV \z) \rrangle
&&=
\Delta t \llangle p^n, \DIV \z \rrangle
\end{alignat*}
for all $(\v,\z)\in\mathcal{V}_0\times\mathcal{Z}_0$. From that, the optimality conditions for $(\u^n,\flux^n)$ and the definition of $p^n$ read for all $(\v,\z,q)\in\mathcal{V}_0 \times \mathcal{Z}_0 \times \tilde{\mathcal{Q}}$
\begin{alignat}{2}
\label{optimality-conditions:three-field-biot:start}
 \llangle \mathbb{C} \eps{\u^n}, \eps{\v} \rrangle - \alpha \llangle p^n , \DIV \v \rrangle &= \mathcal{P}_\mathrm{ext,mech}^n(\v), &&  \\    
 \label{optimality-conditions:three-field-biot:mid}
 \llangle \permeability^{-1} \flux^n, \z \rrangle - \llangle p^n, \DIV \z \rrangle &= \mathcal{P}_\mathrm{ext,fluid}^n(\z), && \\   
  \tfrac{1}{M} \llangle p^n, q \rrangle  + \alpha \llangle \DIV \u^n, q \rrangle + \Delta t \llangle \DIV \flux^n, q \rrangle &=\llangle \fluidmass^{n-1} + \Delta t \, \masssource^n, q \rrangle, &&
\label{optimality-conditions:three-field-biot:end}
\end{alignat}
which yields the classical three-field formulation of the linear Biot equations.

\subsubsection{Five-field formulation of the linear Biot equations derived as optimality conditions}\label{section:optimality-conditions:five-field}

We consider the minimization formulation of the linear Biot equations from Sec.~\ref{section:minimization:five-field}. First, we define function spaces for the stress variable
\begin{align*}
 \tilde{\mathcal{S}}^n &= \left\{ \stress \in H(\text{div};\Omega)^d \,\left|\, \stress\bm{n} = \stress_\mathrm{\Gamma,n}^n \text{ on }\Gamma_{\stress} \right.\right\}, \\
 \tilde{\mathcal{S}}_0 &= \left\{ \stress \in H(\text{div};\Omega)^d \,\left|\, \stress\bm{n} = \bm{0} \text{ on }\Gamma_{\stress} \right. \right\}.
\end{align*}
Additionally, we introduce the mechanical displacement $\u\in \bm{L}^2(\Omega)$, the rotation $\bm{\zeta}\in\bm{Q}_\mathrm{AS}$ and an artificial fluid flux $\tilde{\flux}\in \mathcal{Z}^n$ as Lagrange multipliers. A suitable Lagrangian, incorporating the balance of linear momentum, weak symmetry of the stress tensor and Darcy's law, is given by
\begin{align*}
 \mathcal{L}(\fluidmass^{n-1};\stress,\flux,p,\u,\bm{\zeta},\tilde{\flux})
 &:=
 \mathcal{E}_\mathrm{tot}^{\star,\Delta t}(\fluidmass^{n-1}; \stress, \flux, p) +
 \llangle \DIV \stress + \fext^n, \u \rrangle + \llangle \stress, \bm{\zeta} \rrangle \\
 &-
 \Delta t \left ( \llangle \permeability^{-1} \flux, \tilde{\flux} \rrangle - \llangle p, \DIV \tilde{\flux} \rrangle - \llangle \gext^n, \tilde{\flux} \rrangle + \llangle p_{\Gamma}^n, \tilde{\flux} \cdot \bm{n} \rrangle_{\Gamma_p}\right).
\end{align*}
For the isotropic case, the corresponding saddle point $(\stress,\u,\bm{\zeta}, p, \tilde{\flux}, \flux)\in\tilde{\mathcal{S}}^n\times\times\bm{L}^2(\Omega) \times \bm{Q}_\mathrm{AS} \times  \tilde{\mathcal{Q}}\times \mathcal{Z}^n \times \mathcal{Z}^n$ (omitting the index $^n$) is characterized by
\begin{align}
\label{optimality-conditions:five-field-biot:start}
 \llangle \mathbb{A} \stress, \bm{\tau} \rrangle + \llangle \u , \DIV \bm{\tau} \rrangle + \llangle \bm{\zeta}, \bm{\tau} \rrangle + \tfrac{\alpha}{d K_\mathrm{dr}} \llangle p, \text{tr}\, \bm{\tau} \rrangle &= \llangle \u_\Gamma^n, \bm{\tau}\n \rrangle_{\Gamma_{\u}}, \\
 \llangle \DIV \stress, \v \rrangle &=-\llangle \fext^n, \v\rrangle, \\
\label{optimality-conditions:five-field-biot:end-mechanics} 
 \llangle \stress, \bm{\gamma} \rrangle &=0, \\
 \label{optimality-conditions:five-field-biot:start-fluid}
 \left(\tfrac{1}{M} + \tfrac{\alpha^2}{K_\mathrm{dr}}\right) \llangle p, q \rrangle + \tfrac{\alpha}{d K_\mathrm{dr}} \llangle \text{tr}\,\stress, q \rrangle  + \Delta t \llangle \DIV \tilde{\flux}, q \rrangle &= \llangle \fluidmass^{n-1} + \Delta t\, \masssource^n, q \rrangle,\\
 \llangle \permeability^{-1} \flux, \tilde{\z} \rrangle - \llangle \permeability^{-1} \tilde{\flux}, \tilde{\z} \rrangle &= 0,\\
 \llangle \permeability^{-1} \flux, \z \rrangle - \llangle p, \DIV \z \rrangle &= \llangle \gext^n, \z \rrangle - \llangle p_\Gamma^n, \z \cdot \n \rrangle_{\Gamma_p}.
\label{optimality-conditions:five-field-biot:end}
\end{align}
for all variations $(\bm{\tau}, \v, \bm{\gamma}, q,  \tilde{\z},  \z) \in\tilde{\mathcal{S}}_0 \times \bm{L}^2(\Omega) \times \bm{Q}_\mathrm{AS} \times \tilde{\mathcal{Q}} \times \mathcal{Z}_0 \times \mathcal{Z}_0$. The artificial volumetric flux $\tilde{\flux}$ can be identified with the actual volumetric flux $\flux$, which finally yields the five-field formulation of the linear Biot equations.

\subsection{Consequences for fully-discrete approximations}\label{section:spatial-discretization}

Exemplarily for all sections in Part II, we comment: The various minimization formulations of semi-discrete linear poro-(visco-thermo-)elasticity can be discretized in space by the finite element method using the conforming Galerkin method. Well-posedness of the different fully-discrete formulations follows then by the same arguments as for their semi-discrete versions. For variants described by constrained minimization, the need for inf-sup stable finite element pairs immediately translates from the continuous to the discrete setting in order to ensure non-empty feasible sets subject to one-dimensional constraints, and thereby strict convexity of the energies.

We note, the discussion on efficient splitting schemes for linear poro-(visco-thermo-)elasticity can be equally based on the semi- as well as the corresponding fully-discrete approximations.

\subsection{Classical splitting schemes derived as alternating minimization}\label{section:splitting-schemes:alternating-minimization}

Summarizing the previous subsections: Time-discrete, linear poro-elasticity can be formulated as strictly convex, quadratic minimization problem, possibly subject to affine constraints depending on the choice of primary variables. By applying the conforming Galerkin method, the same translates to corresponding fully-discrete approximations. Various strategies can be applied to solve the resulting fully-discrete minimization problem numerically. We mention three: 

\begin{enumerate}[label=(\roman*)]

 \item The corresponding optimality conditions are derived as in  Sec.~\ref{section:splitting-schemes:optimality-conditions} and are solved in a monolithic fashion.

 \item Popular in the poro-elasticity community, the coupled optimality conditions are solved using an iterative splitting scheme, decoupling the mechanics and fluid flow subproblems, cf., e.g.,~\cite{Kim2011,Kim2011b,Mikelic2013,Castelletto2016,Both2017}.

 \item Based on the minimization formulation, some (inexact) minimization algorithm from the vast convex optimization literature, cf., e.g.,~\cite{Bertsekas1999,Temam2000} is applied. In the poro-elasticity literature, such an approach has not yet been pursued.
 
\end{enumerate}

Conforming simultaneously with options (ii) and (iii), we propose applying classical (exact) alternating minimization for solving linear poro-elasticity. Alternating minimization is equivalent to a two-block coordinate descent method as well as a successive two-subspace correction method for orthogonal space decompositions, alternating between minimizing the energy wrt.\ two different blocks of variables while constantly updating the complementary block. Partitioning the set of primal variables into a block of mechanical variables and a block of flow variables, yields a splitting scheme conforming with~(ii).

As resulting schemes, we in fact obtain the previously introduced and now widely used \textit{undrained split} and \textit{fixed-stress split}, cf., e.g.,~\cite{Kim2011,Kim2011b}. Originally, they have been physically motivated as predictor-corrector methods: For the undrained split, in the predictor step, the mechanical subproblem is solved under undrained conditions; in the corrector step, the unaltered fluid flow problem is solved with updated mechanical variables. Instead, for the fixed-stress split, in the predictor step, the fluid flow subproblem is solved under fixed volumetric stress; in the corrector step, the unaltered mechanics subproblem is solved. In order to explain their robustness and convergence properties, so far problem-specific analyses have been required, cf., e.g., ~\cite{Mikelic2013,Both2017,Borregales2018,Storvik2019}.

Originally, physically motivated, they are now endowed with a simple, mathematical intuition, providing an immediate understanding on their robust convergence properties. 
Alternating minimization exhibits guaranteed robustness under fairly weak assumptions; cf.~\cite{Luo1992,Luo1993,Grippof1999,Grippo2000} from the perspective of block coordinate descent methods, or~\cite{Tai1998,Tai2001,Xu2001} from the perspective of successive subspace correction methods. Furthermore, under stronger convexity and continuity assumptions on the energy, theoretical convergence rates can be analyzed using abstract theory~\cite{Beck2013}. By improving the abstract result from the aforementioned work to constrained minimization problems in infinite dimensions, cf.\ Appendix~\ref{appendix:section:alternating-minimization}, we establish theoretical convergence rates for the undrained split and fixed-stress split consistent with problem-specific analyses in the literature, cf., e.g., ~\cite{Mikelic2013,Both2017,Borregales2018,Storvik2019}.


\subsubsection{Derivation and analysis of the undrained split as alternating minimization}\label{section:undrained-split}

In this section, we identify the widely used undrained split~\cite{Kim2011} as alternating minimization applied to the primal two-field formulation of time-discrete, linear poro-elasticity, cf.\ Sec.~\ref{section:minimization:primal-two-field}. As the primal formulation is less frequently used in the literature, we illustrate the resulting scheme in the following with reference to the closely related three-field formulation, cf.\ Sec.~\ref{section:minimization:three-field-formulation}. We note the undrained split can be in fact equivalently derived based on the three-field formulation, but the analysis requires unnecessarily more involved notation.
\begin{algorithm}
 \caption{Single iteration of the undrained split}
 \label{algorithm:undrained-split}
 \SetAlgoLined
 \DontPrintSemicolon
 
  \vspace{0.5em}
 
  Input: $(\u^{n,i-1},\flux^{n,i-1})\in\mathcal{V}^n\times\mathcal{Z}^n$ \; \vspace{0.8em}
   
  Determine $\u^{n,i} := \underset{\u\in\mathcal{V}^n}{\mathrm{arg\,min}}\, \mathcal{E}_\mathrm{tot}^{\Delta t}(\fluidmass^{n-1};\u,\flux^{n,i-1})$\; \vspace{0.25em}

  Determine $\flux^{n,i} := \underset{\flux\in\mathcal{Z}^n}{\mathrm{arg\,min}}\, \mathcal{E}_\mathrm{tot}^{\Delta t}(\fluidmass^{n-1};\u^{n,i},\flux)$\; \vspace{0.2em}
\end{algorithm}

Alternating minimization is applied respecting the natural block structure of the problem, cf.\ Alg.\ref{algorithm:undrained-split} for a single iteration.
The first step is equivalent to solving a stabilized mechanics problem, cf.~\eqref{optimality-conditions:three-field-biot:start}: For given $(\u^{n,i-1},\flux^{n,i-1})\in \mathcal{V}^n\times\mathcal{Z}^n$, find $\u^{n,i}\in \mathcal{V}^n$ satisfying for all $\v\in\mathcal{V}_0$
\begin{align*}
 \llangle \mathbb{C}\eps{\u^{n,i}}, \eps{\v} \rrangle + \llangle M\alpha^2\  \mathrm{tr}\, \eps{\u^{n,i} - \u^{n,i-1}}, \mathrm{tr}\,\eps{\v} \rrangle \\
 -\alpha \llangle p^{n,i-1}, \DIV \v \rrangle = \mathcal{P}_\mathrm{ext,mech}^n(\v),
\end{align*}
where the pressure $p^{n,i-1}$ is formally defined, consistent with~\eqref{poro-visco-interpretation-pressure},
\begin{align*}
 p^{n,i-1} := M\left(\fluidmass^{n-1} + \Delta t\, \masssource^n - \Delta t \, \DIV \flux^{n,i-1} - \alpha \DIV \u^{n,i-1}\right).
\end{align*}
The second step is equivalent to solving the fluid flow problem~\eqref{optimality-conditions:three-field-biot:mid}--\eqref{optimality-conditions:three-field-biot:end} with updated mechanical variables. 

We highlight that the final iterative scheme is equivalent to the undrained split for linear poro-elasticity. We establish convergence employing an abstract convergence result for alternating minimization leading to consistent previous problem-specific discussions~\cite{Mikelic2013,Borregales2018}.
 
\begin{lemma}[Linear convergence of the undrained split]\label{lemma:undrained-split:convergence-rate}
 The undrained split converges linearly, independent of the initial guess. Let $\U^n:=(\u^{n},\flux^{n})$ denote the solution of the coupled problem~\eqref{primal-two-field-minimization:start} and let $\U^{n,i}:=(\u^{n,i},\flux^{n,i})$ denote the iterates defined by the undrained split, cf.\ Alg.~\ref{algorithm:undrained-split}. For $i\in\mathbb{N}$, define the errors $\erroru^{n,i}:=\u^{n,i}-\u^n$, $\errorflux^{n,i}:=\flux^{n,i} - \flux^n$. Let $|||\cdot|||$ denote the norm induced by the quadratic part of $\mathcal{E}_\mathrm{tot}^{\Delta t}$
 \begin{align*}
  \left|\left|\left|(\u,\flux)\right|\right|\right|^2 := 
  \tfrac{1}{2} \llangle \mathbb{C} \eps{\u}, \eps{\u} \rrangle +
  \tfrac{\Delta t}{2} \llangle \permeability^{-1} \flux, \flux \rrangle 
 + \tfrac{M}{2} \left\|  \Delta t \DIV \flux + \alpha \DIV \u \right\|^2.
 \end{align*}
 And let $K_\mathrm{dr}^\star\geq K_\mathrm{dr}$ be largest constant such that
 \begin{align}\label{proof:poro:undrained:kdr}
  K_\mathrm{dr}^\star \| \mathrm{tr}\,\eps{\v} \|^2 \leq  \llangle \mathbb{C} \eps{\v},  \eps{\v}\rrangle, \quad \forall\, \v\in\mathcal{V}_0.
\end{align}
 It holds the \textit{a priori} result
 \begin{align}
 \label{lemma:undrained-split:convergence-rate:result-2}
  &\left|\left|\left|(\erroru^{n,i},\errorflux^{n,i})\right|\right|\right| \leq \left(\frac{\tfrac{\alpha^2}{K_\mathrm{dr}^\star}}{\tfrac{1}{M} + \tfrac{\alpha^2}{K_\mathrm{dr}^\star}}\right)^i 
  \left( \mathcal{E}^{n,0} - \mathcal{E}^n \right)^{1/2},
 \end{align}
 and the \textit{a posteriori} result
 \begin{align}
 \label{lemma:undrained-split:convergence-rate:result-3}
  \left|\left|\left|(\erroru^{n,i},\errorflux^{n,i})\right|\right|\right| \leq 
  \left(\frac{\tfrac{\alpha^2}{K_\mathrm{dr}^\star}}{\tfrac{1}{M} + \tfrac{\alpha^2}{K_\mathrm{dr}^\star}}\right) \left[ 1 - \left(\frac{\tfrac{\alpha^2}{K_\mathrm{dr}^\star}}{\tfrac{1}{M} + \tfrac{\alpha^2}{K_\mathrm{dr}^\star}}\right)^2\right]^{- 1/2}
  \, \left( \mathcal{E}^{n,i-1} - \mathcal{E}^{n,i} \right)^{1/2},
 \end{align}
 where $\mathcal{E}^{n} := \mathcal{E}_\mathrm{tot}^{\Delta t}(\fluidmass^{n-1}; \U^{n})$, and $\mathcal{E}^{n,j} := \mathcal{E}_\mathrm{tot}^{\Delta t}(\fluidmass^{n-1}; \U^{n,j}),\ j\in\mathbb{N}$.

\end{lemma}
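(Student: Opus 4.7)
My plan is to invoke the abstract convergence theorem for alternating minimization stated in Appendix~\ref{appendix:section:alternating-minimization} (following~\cite{Beck2013}), specialized to the strictly convex, quadratic functional $\mathcal{E}_\mathrm{tot}^{\Delta t}(\theta^{n-1};\cdot,\cdot)$ defining~\eqref{primal-two-field-minimization:start}. The abstract theorem demands that one identify a block strong-convexity constant and a Lipschitz-type constant quantifying the cross-coupling; once these are pinned down explicitly in terms of the model parameters, the contraction factor reads off as their ratio, and~\eqref{lemma:undrained-split:convergence-rate:result-2} follows in one step.

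The first preparatory step is to notice that, because $\mathcal{E}_\mathrm{tot}^{\Delta t}$ differs from $|||\cdot|||^2$ (evaluated at the increment $\U-\U^n$) only by an affine term, the parallelogram/Taylor identity around the minimizer $\U^n$ gives
\begin{align*}
 \mathcal{E}_\mathrm{tot}^{\Delta t}(\theta^{n-1};\U^{n,i}) - \mathcal{E}_\mathrm{tot}^{\Delta t}(\theta^{n-1};\U^n) \,=\, |||\U^{n,i}-\U^n|||^2.
\end{align*}
Thus convergence of energies is equivalent to convergence in the norm $|||\cdot|||$, and it suffices to prove a linear contraction of $\mathcal{E}^{n,i}-\mathcal{E}^n$. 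Next, the $\u$-block Hessian of $\mathcal{E}_\mathrm{tot}^{\Delta t}$ is $\llangle \mathbb{C}\eps{\cdot},\eps{\cdot}\rrangle + M\alpha^2\llangle\mathrm{tr}\,\eps{\cdot},\mathrm{tr}\,\eps{\cdot}\rrangle$, and by~\eqref{proof:poro:undrained:kdr} this is bounded below by $(K_\mathrm{dr}^\star + M\alpha^2)\|\mathrm{tr}\,\eps{\cdot}\|^2$ in the critical volumetric direction. Simultaneously, the mixed Hessian between $\u$ and $\flux$ only couples through the term $M(\Delta t \DIV\flux + \alpha \DIV\u)$, so the entire coupling is concentrated in that single volumetric mode with strength $M\alpha^2$. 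The contraction factor produced by the abstract theorem is exactly the ratio of the cross-coupling strength to the total diagonal strength along the coupling mode, namely $q := \frac{M\alpha^2}{K_\mathrm{dr}^\star + M\alpha^2} = \frac{\alpha^2/K_\mathrm{dr}^\star}{1/M + \alpha^2/K_\mathrm{dr}^\star}$, yielding $\mathcal{E}^{n,i}-\mathcal{E}^n \leq q^{2i}(\mathcal{E}^{n,0}-\mathcal{E}^n)$ and hence~\eqref{lemma:undrained-split:convergence-rate:result-2}.

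For the \emph{a posteriori} bound~\eqref{lemma:undrained-split:convergence-rate:result-3}, I would use a standard telescoping trick on the per-step contraction $\mathcal{E}^{n,i}-\mathcal{E}^n \leq q^2 (\mathcal{E}^{n,i-1}-\mathcal{E}^n)$. Writing $\mathcal{E}^{n,i-1}-\mathcal{E}^n = (\mathcal{E}^{n,i-1}-\mathcal{E}^{n,i}) + (\mathcal{E}^{n,i}-\mathcal{E}^n)$ and substituting yields $(1-q^2)(\mathcal{E}^{n,i}-\mathcal{E}^n) \leq q^2 (\mathcal{E}^{n,i-1}-\mathcal{E}^{n,i})$, and taking square roots together with the energy-norm identity above delivers exactly the prefactor $q(1-q^2)^{-1/2}$ in~\eqref{lemma:undrained-split:convergence-rate:result-3}.

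The principal obstacle I anticipate is the careful verification that the abstract AM convergence result applies with the claimed constants on infinite-dimensional Hilbert spaces where the relevant semi-norm $\|\mathrm{tr}\,\eps{\cdot}\|$ controls only part of the $\mathcal{V}$-norm: one has to argue that all other directions in $\u$ are either uncoupled from $\flux$ or carry strictly larger diagonal mass than cross-mass, so that only the volumetric mode enters the worst-case ratio. Once this is granted, all remaining steps are purely algebraic manipulations of the quadratic energy.
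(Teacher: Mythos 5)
Your route is the paper's route: the same abstract alternating-minimization result (Lemma~\ref{appendix:lemma:alternating-minimization}), the identity $\mathcal{E}^{n,i}-\mathcal{E}^{n}=|||\U^{n,i}-\U^{n}|||^{2}$ (valid because the energy is quadratic and its gradient vanishes at the minimizer in all tangent directions), a per-iteration energy contraction by $q^{2}$ with $q=\tfrac{\alpha^{2}/K_\mathrm{dr}^{\star}}{1/M+\alpha^{2}/K_\mathrm{dr}^{\star}}$, and your splitting trick $\mathcal{E}^{n,i-1}-\mathcal{E}^{n}=(\mathcal{E}^{n,i-1}-\mathcal{E}^{n,i})+(\mathcal{E}^{n,i}-\mathcal{E}^{n})$ for the \textit{a posteriori} bound are all exactly what the paper does, so the surrounding algebra is fine.

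The step you defer, however, is not a routine check but the substance of the proof, and your ``ratio of cross-coupling to diagonal strength along the volumetric mode'' heuristic does not by itself satisfy the hypotheses of Lemma~\ref{appendix:lemma:alternating-minimization}: that lemma requires, for each block, an explicit semi-norm with respect to which the full energy is strongly convex and the corresponding block gradient is Lipschitz, and the contraction factor is $\prod_j(1-\sigma_j/L_j)$, not a spectral ratio read off from one coupling direction. For the displacement block the natural choice $\llangle \mathbb{C}\eps{\cdot},\eps{\cdot}\rrangle$ works and yields $L_{1}=1+M\alpha^{2}/K_\mathrm{dr}^{\star}$ via~\eqref{proof:poro:undrained:kdr}. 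For the flux block the naive choice $\Delta t\,\llangle \permeability^{-1}\cdot,\cdot\rrangle$ fails: the block Hessian contains $M\Delta t^{2}\|\DIV\flux\|^{2}$, which is not controlled by the $L^{2}$ norm of $\flux$, so no finite Lipschitz constant exists in that metric. The paper closes this by augmenting the flux semi-norm with $\Delta t^{2}\left(\tfrac{1}{M}+\tfrac{\alpha^{2}}{K_\mathrm{dr}^{\star}}\right)^{-1}\|\DIV\flux\|^{2}$ and proving strong convexity of $\mathcal{E}_\mathrm{tot}^{\Delta t}$ with respect to it through a Young inequality whose weights are balanced optimally using~\eqref{proof:poro:undrained:kdr}; that balancing is precisely where $q$ (equivalently $L_{2}=1+M\alpha^{2}/K_\mathrm{dr}^{\star}$) is produced. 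Until you exhibit these two semi-norms and carry out that estimate, the claimed constants in~\eqref{lemma:undrained-split:convergence-rate:result-2}--\eqref{lemma:undrained-split:convergence-rate:result-3} are asserted rather than proved; everything downstream of that point in your proposal is correct.
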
 

\begin{proof}
 We apply Lemma~\ref{appendix:lemma:alternating-minimization}. For this, we introduce two semi-norms
 \begin{align*}
  \left\| \U \right\|_{1,\Delta t}^2 &:= \llangle \mathbb{C} \eps{\u}, \eps{\u} \rrangle, \\
  \left\| \U \right\|_{2,\Delta t}^2 &:= \Delta t \llangle \permeability^{-1} \flux, \flux \rrangle + \Delta t^2
  \left( \tfrac{1}{M} + \tfrac{\alpha^2}{K_\mathrm{dr}^\star} \right)^{-1} \left\| \DIV \flux \right\|^2.
 \end{align*}
 We show that $\mathcal{E}_\mathrm{tot}^{\Delta t}$ is (i) strongly convex wrt.\ $\|\cdot\|_{1,\Delta t}$ and $\|\cdot\|_{2,\Delta t}$, (ii) $\GRAD_{\u}\mathcal{E}_\mathrm{tot}^{\Delta t}$ is Lipschitz continuous wrt.\ $\|\cdot\|_{1,\Delta t}$, and (iii) $\GRAD_{\flux}\mathcal{E}_\mathrm{tot}^{\Delta t}$ is Lipschitz continuous $\|\cdot\|_{2,\Delta t}$. Throughout the proof, for lighter notation, we omit the explicit dependence of $\mathcal{E}_\mathrm{tot}^{\Delta t}$ on $\fluidmass^{n-1}$.
 
\paragraph{(i) Strong convexity of $\mathcal{E}_\mathrm{tot}^{\Delta t}$.}
Let $\U_i=(\v_i,\z_i)\in\mathcal{V}^n\times\mathcal{Z}^n$, $i=1,2$. As $\mathcal{E}_\mathrm{tot}^{\Delta t}$ is quadratic, it holds
\begin{align*}
 &\llangle \GRAD \mathcal{E}_\mathrm{tot}^{\Delta t}(\U_1) - \GRAD \mathcal{E}_\mathrm{tot}^{\Delta t}(\U_2),\U_1 - \U_2 \rrangle 
 =
 2 \left|\left|\left| \U_1 - \U_2\right|\right|\right|^2.
\end{align*}
Clearly, $2\left|\left|\left| \U \right|\right|\right|^2 \geq \left\| \U \right\|_{1,\Delta t}^2$.
By applying Young's inequality with optimally balanced weights and using~\eqref{proof:poro:undrained:kdr}, one can show for all $(\u,\flux)\in\mathcal{V}_0\times\mathcal{Z}_0$
\begin{align}
 \label{proof:poro:undrained:balancing-young}
 \llangle \mathbb{C} \eps{\u}, \eps{\u} \rrangle
 + M \left\|  \Delta t \DIV \flux + \alpha \DIV \u \right\|^2
 \geq 
 -
 \frac{\tfrac{\alpha^2}{K_\mathrm{dr}^\star}}{\tfrac{1}{M} + \tfrac{\alpha^2}{K_\mathrm{dr}^\star}} \, 
 \left\|\Delta t \, \DIV \flux \right\|^2.
\end{align}
Hence, $2\,||| \U_1 - \U_2 |||^2 \geq \left\| \U_1 - \U_2 \right\|_{2,\Delta t}^2$. All in all, $\mathcal{E}_\mathrm{tot}^{\Delta t}$ is strongly convex wrt.\ $\|\cdot\|_{i,\Delta t}$ with constant $\sigma_i=1$, $i=1,2$.
 
\paragraph{(ii) Lipschitz continuity of $\GRAD_{\u}\mathcal{E}_\mathrm{tot}^{\Delta t}$.}
Let $\U=(\v,\z)\in\mathcal{V}^n\times\mathcal{Z}^n$. It holds
\begin{align*}
 &\underset{\bm{h}\in\mathcal{V}_0}{\mathrm{sup}}\, \frac{\llangle \GRAD_{\u} \mathcal{E}_\mathrm{tot}^{\Delta t}\left(\U + (\bm{h},\bm{0})\right) - \GRAD_{\u} \mathcal{E}_\mathrm{tot}^{\Delta t}(\U), (\bm{h},\bm{0})\rrangle}{\| (\bm{h},\bm{0}) \|_{1,\Delta t}^2} \\
 &\quad=
 \underset{\bm{h}\in\mathcal{V}_0}{\mathrm{sup}}\, \frac{\llangle \mathbb{C} \eps{\bm{h}}, \eps{\bm{h}}\rrangle + M\alpha^2 \| \DIV \bm{h} \|^2}{\llangle \mathbb{C} \eps{\bm{h}}, \eps{\bm{h}}\rrangle} \\
 &\quad\leq
 1 + \frac{M\alpha^2}{K_\mathrm{dr}^\star},
\end{align*}
where we used~\eqref{proof:poro:undrained:kdr}. All in all, $\GRAD_{\u}\mathcal{E}_\mathrm{tot}^{\Delta t}$ is Lipschitz continuous wrt.\ $\|\cdot\|_{1,\Delta t}$ with Lipschitz constant $L_1 = 1 + \frac{M\alpha^2}{K_\mathrm{dr}^\star}$.
 
\paragraph{(iii) Lipschitz continuity of $\GRAD_{\flux}\mathcal{E}_\mathrm{tot}^{\Delta t}$.}
Let $\U=(\v,\z)\in\mathcal{V}^n\times\mathcal{Z}^n$. It holds
\begin{align*}
 &\underset{\bm{h}\in\mathcal{Z}_0}{\mathrm{sup}}\, \frac{\llangle \GRAD_{\flux} \mathcal{E}_\mathrm{tot}^{\Delta t}\left(\U + (\bm{0} , \bm{h})\right) - \GRAD_{\flux} \mathcal{E}_\mathrm{tot}^{\Delta t}(\U), (\bm{0},\bm{h})\rrangle}{\| (\bm{0},\bm{h}) \|_{2,\Delta t}^2} \\
 &\quad=
 \underset{\bm{h}\in\mathcal{Z}_0}{\mathrm{sup}}\, \frac{ \Delta t \llangle \permeability^{-1} \bm{h}, \bm{h} \rrangle + \Delta t^2 M \| \DIV \bm{h} \|^2}{ \Delta t \llangle \permeability^{-1} \bm{h}, \bm{h} \rrangle + \Delta t^2 \left(\tfrac{1}{M} + \tfrac{\alpha^2}{K_\mathrm{dr}^\star} \right)^{-1} \| \DIV \bm{h} \|^2} \\
 &\quad\leq
 1 + \frac{M\alpha^2}{K_\mathrm{dr}^\star}.
\end{align*}
All in all, $\GRAD_{\flux}\mathcal{E}_\mathrm{tot}^{\Delta t}$ is Lipschitz continuous wrt.\ $\|\cdot\|_{2,\Delta t}$ with Lipschitz constant $L_2=1 + \frac{M\alpha^2}{K_\mathrm{dr}^\star}$.

\paragraph{Consequences.}
By Lemma~\ref{appendix:lemma:alternating-minimization}, it follows
\begin{align*}
  \mathcal{E}_\mathrm{tot}^{\Delta t}(\U^{n,i}) - \mathcal{E}_\mathrm{tot}^{\Delta t}(\U^n)
  &\leq
  \left(1 - \tfrac{1}{L_1}\right) \left(1 - \tfrac{1}{L_2}\right)  \left(\mathcal{E}_\mathrm{tot}^{\Delta t}(\U^{n,i-1}) - \mathcal{E}_\mathrm{tot}^{\Delta t}(\U^n) \right), \\
  \mathcal{E}_\mathrm{tot}^{\Delta t}(\U^{n,i}) - \mathcal{E}_\mathrm{tot}^{\Delta t}(\U^n)
  &\leq
  L_1 L_2
  \left(\mathcal{E}_\mathrm{tot}^{\Delta t}(\U^{n,i-1}) - \mathcal{E}_\mathrm{tot}^{\Delta t}(\U^{n,i}) \right).
\end{align*}
Moreover, as $\mathcal{E}_\mathrm{tot}^{\Delta t}$ is quadratic and $\GRAD \mathcal{E}_\mathrm{tot}^{\Delta t}(\U^n)=\bm{0}$, it holds
\begin{align*}
 \mathcal{E}_\mathrm{tot}^{\Delta t}(\U^{n,i}) - \mathcal{E}_\mathrm{tot}^{\Delta t}(\U^n) = \left|\left|\left| (\erroru^{n,i},\errorflux^{n,i})\right|\right|\right|^2.
\end{align*}
The results~\eqref{lemma:undrained-split:convergence-rate:result-2} and~\eqref{lemma:undrained-split:convergence-rate:result-3} follow directly. 
\end{proof}

\begin{remark} We emphasize that $K_\mathrm{dr}^\star$ in~\eqref{proof:poro:undrained:kdr} enters only the theoretical result, and exact knowledge is not required for practical use of the splitting scheme.
\end{remark}

\subsubsection{Derivation and analysis of the fixed-stress split as alternating minimization}

In this section, we identify the widely used fixed-stress split~\cite{Kim2011} as alternating minimization applied to the dual formulation of time-discrete, linear poro-elasticity, cf.\ Sec.~\ref{section:minimization:dual-formulation}. In the following, the resulting scheme is illustrated  with reference to the closely related five-field formulation, cf.\ Sec.~\ref{section:minimization:five-field}; in fact the five-field formulation (Sec.~\ref{section:minimization:five-field}) can be equally used as basis leading to the same scheme.

\begin{algorithm}
 \caption{Single iteration of the fixed-stress split}
 \label{algorithm:fixed-stress-split}
 \SetAlgoLined
 \DontPrintSemicolon
 
  \vspace{0.5em}
 
  Input: $(\stress^{n,i-1},p^{n,i-1})\in\mathcal{S}^n\times\mathcal{Q}^n$ \; \vspace{0.8em}
   
  Determine $p^{n,i} := \underset{p\in\mathcal{Q}^n}{\mathrm{arg\,min}}\, \mathcal{E}_\mathrm{tot}^{\star,\Delta t}(\fluidmass^{n-1};\stress^{n,i-1},p)$\; \vspace{0.25em}

  Determine $\stress^{n,i} := \underset{\stress\in\mathcal{S}^n}{\mathrm{arg\,min}}\, \mathcal{E}_\mathrm{tot}^{\star,\Delta t}(\fluidmass^{n-1};\stress,p^{n,i})$\; \vspace{0.2em}
\end{algorithm}

Alternating minimization is applied respecting the natural block structure of the dual problem, cf.\ Alg.~\ref{algorithm:fixed-stress-split} for a single iteration. It is important to note that the problem is block separable; constraints decouple into purely mechanical and fluid flow constraints. Hence, alternating minimization can be applied without violating constraints. The first step is equivalent to solving a stabilized flow problem, cf.~\eqref{optimality-conditions:five-field-biot:start-fluid}
--\eqref{optimality-conditions:five-field-biot:end}: For given $(\stress^{n,i-1},p^{n,i-1})\in \mathcal{S}^n\times\mathcal{Q}^n$, find $p^{n,i}\in \mathcal{Q}^n$ satisfying for all $q \in \mathcal{Q}_0$
\begin{align}
\label{fixed-stress-pressure-equation}
 &\tfrac{1}{M} \llangle p^{n,i}, q \rrangle + \llangle \alpha^2 \left(\mathbf{I} : \mathbb{A} : \mathbf{I} \right) \left(p^{n,i} - p^{n,i-1}\right), q \rrangle \\
 &\quad + \alpha \llangle \mathrm{tr}\, \bm{\varepsilon}_{\u}^{n,i-1}, q \rrangle 
 + \Delta t \llangle \DIV \flux^{n,i}, q \rrangle = \llangle \fluidmass^{n-1} + \Delta t \, \masssource^n, q \rrangle,
\end{align}
where we have formally abbreviated the mechanical strain and the volumetric flux, consistent with~\eqref{linear-biot-interpretation-stress} and Darcy's law
\begin{align*}
 \bm{\varepsilon}_{\u}^{n,i} &:= \mathbb{A}(\stress^{n,i} + \alpha p^{n,i} \,\mathbf{I}), \\
 \flux^{n,i} &:= -\permeability \left( \GRAD p^{n,i} - \gext^n \right).
\end{align*}
For an isotropic, homogeneous material, the stabilization term equals 
\begin{align*}
 \tfrac{\alpha^2}{K_\mathrm{dr}}\llangle p^{n,i} - p^{n,i-1} , q \rrangle.
\end{align*}
The second step is equivalent to solving the mechanics problem with updated fluid flow variables, cf.~\eqref{optimality-conditions:five-field-biot:start}--\eqref{optimality-conditions:five-field-biot:end-mechanics}.

We highlight that the resulting scheme is equivalent to the fixed-stress split for linear poro-elasticity. As for the undrained split, we establish convergence based on an abstract convergence result for alternating minimization. After all, the following convergence result is consistent with results based on previous problem-specific \textit{a priori} error analyses~\cite{Mikelic2013,Both2017,Storvik2019} and an \textit{a posteriori} error analysis based on Ostrowski-type estimates for contraction mappings~\cite{Kumar2018}.

\begin{lemma}[Linear convergence of the fixed-stress split]\label{lemma:fixed-stress:convergence-rate}
 The fixed-stress split converges linearly, independent of the initial guess. Let $\generalizedstress^n:=(\stress^{n},p^{n})$ denote the solution of the coupled problem~\eqref{dual-two-field-minimization:start} and let $\generalizedstress^{n,i}:=(\stress^{n,i},p^{n,i})$ denote the iterates defined by the fixed-stress split, cf.\ Alg.~\ref{algorithm:fixed-stress-split}. For $i\in\mathbb{N}$, define the errors $\errorstress^{n,i}:=\stress^{n,i}-\stress^n$, $\errorpressure^{n,i}:=p^{n,i} - p^n$. 
 Let $|||\cdot|||_\star$ denote the norm induced by the quadratic part of $\mathcal{E}_\mathrm{tot}^{\star,\Delta t}$
 \begin{align*}
  \left|\left|\left|(\stress,p)\right|\right|\right|_\star^2 :=
  \tfrac{1}{2} \llangle \mathbb{A} (\stress+\alpha p \mathbf{I}),\stress + \alpha p \mathbf{I} \rrangle 
 + \tfrac{1}{2M} \| p \|^2 + \tfrac{\Delta t}{2} \llangle \permeability \GRAD p, \GRAD p \rrangle.
 \end{align*}
 It holds the \textit{a priori} result
\begin{align}
 \label{lemma:fixed-stress:convergence-rate:result-2}
  \left|\left|\left| (\errorstress^{n,i},\errorpressure^{n,i}) \right|\right|\right|_\star 
  &\leq 
  \left(\frac{\tfrac{\alpha^2}{K_\mathrm{dr}}}{\tfrac{1}{M} + \tfrac{\Delta t\, \kappa_\mathrm{m}}{C_\Omega^2} + \tfrac{\alpha^2}{K_\mathrm{dr}}}\right)^i \left( \mathcal{E}^{n,0} - \mathcal{E}^n \right)^{1/2},
 \end{align}
 and the \textit{a posteriori} result
 \begin{align}
 \label{lemma:fixed-stress:convergence-rate:result-3}
  \left|\left|\left| (\errorstress^{n,i}, \errorpressure^{n,i})\right|\right|\right|_\star
  &\leq 
  \left(\frac{\tfrac{\alpha^2}{K_\mathrm{dr}}}{\tfrac{1}{M} + \tfrac{\Delta t\, \kappa_\mathrm{m}}{C_\Omega^2} + \tfrac{\alpha^2}{K_\mathrm{dr}}}\right)
  \left[1 - \left(\frac{\tfrac{\alpha^2}{K_\mathrm{dr}}}{\tfrac{1}{M} + \tfrac{\Delta t\, \kappa_\mathrm{m}}{C_\Omega^2} + \tfrac{\alpha^2}{K_\mathrm{dr}}}\right)^2 \right]^{-1/2}
  \left( \mathcal{E}^{n,i-1} - \mathcal{E}^{n,i} \right)^{1/2}
 \end{align}
 where $\mathcal{E}^n := \mathcal{E}_\mathrm{tot}^{\star,\Delta t}(\fluidmass^{n-1}; \generalizedstress^{n})$ and $\mathcal{E}^{n,j} := \mathcal{E}_\mathrm{tot}^{\star,\Delta t}(\fluidmass^{n-1}; \generalizedstress^{n,j}), \ j\in\mathbb{N}$, and $C_\Omega$ and $\kappa_m$ denote the Poincar\'e constant and the smallest eigenvalue of $\permeability$, respectively.
\end{lemma}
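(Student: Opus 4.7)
The plan is to mirror the proof of Lemma~\ref{lemma:undrained-split:convergence-rate}, now applying the abstract alternating minimization result Lemma~\ref{appendix:lemma:alternating-minimization} to the strictly convex quadratic dual energy $\mathcal{E}_\mathrm{tot}^{\star,\Delta t}(\fluidmass^{n-1};\cdot,\cdot)$ with the block partition $\{p\}$, $\{\stress\}$ dictated by Alg.~\ref{algorithm:fixed-stress-split}. A key structural observation is that $\mathcal{S}^n \times \mathcal{Q}^n$ decouples across the two blocks (equilibrium and weak-symmetry concern only $\stress$, Dirichlet pressure data only $p$), so that the alternating minimization iterates automatically remain feasible and the abstract framework is applicable.

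To verify the assumptions of Lemma~\ref{appendix:lemma:alternating-minimization}, I would introduce two block semi-norms. For block~1 (the $p$-block) set
\begin{align*}
 \|(\stress,p)\|_{1,\star,\Delta t}^2 := \tfrac{1}{M}\|p\|^2 + \Delta t\,\llangle\permeability\,\GRAD p,\GRAD p\rrangle.
\end{align*}
Strong convexity of $\mathcal{E}_\mathrm{tot}^{\star,\Delta t}$ wrt.\ $\|\cdot\|_{1,\star,\Delta t}$ with constant $\sigma_1=1$ is immediate, as dropping the nonnegative coupling term $\llangle\mathbb{A}(\stress+\alpha p\mathbf{I}),\stress+\alpha p\mathbf{I}\rrangle$ leaves exactly the semi-norm. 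For the Lipschitz estimate on $\GRAD_p\mathcal{E}_\mathrm{tot}^{\star,\Delta t}$ one computes the ratio of the partial Hessian (which produces $(\tfrac{1}{M}+\tfrac{\alpha^2}{K_\mathrm{dr}})\|h\|^2 + \Delta t\llangle\permeability\GRAD h,\GRAD h\rrangle$) to the semi-norm in the direction $h\in\mathcal{Q}_0$. This is the step where the Poincar\'e inequality $\|h\|^2 \leq C_\Omega^2\|\GRAD h\|^2$ on $\mathcal{Q}_0$, combined with $\permeability \succeq \kappa_\mathrm{m}\mathbf{I}$, enters to bound the denominator from below by $\big(\tfrac{1}{M}+\tfrac{\Delta t\kappa_\mathrm{m}}{C_\Omega^2}\big)\|h\|^2$, yielding $L_1 = 1 + \tfrac{\alpha^2/K_\mathrm{dr}}{1/M + \Delta t\kappa_\mathrm{m}/C_\Omega^2}$.

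The main obstacle lies in the choice of the semi-norm for block~2. The naive choice $\llangle \mathbb{A}\stress,\stress\rrangle$ is incompatible with strong convexity of $2|||(\stress,p)|||_\star^2$ (as seen by taking $\stress = -\alpha p\mathbf{I}$), so the coercivity loss due to the coupling must be absorbed into the semi-norm itself. I would use the rescaled version
\begin{align*}
\|(\stress,p)\|_{2,\star,\Delta t}^2 := \frac{1/M + \Delta t\kappa_\mathrm{m}/C_\Omega^2}{1/M + \Delta t\kappa_\mathrm{m}/C_\Omega^2 + \alpha^2/K_\mathrm{dr}}\,\llangle\mathbb{A}\stress,\stress\rrangle.
\end{align*}
Strong convexity with $\sigma_2=1$ is established by minimizing $|||(\stress,p)|||_\star^2$ over $p$ at fixed $\stress$, which amounts to inverting the operator $\big(\tfrac{1}{M}+\tfrac{\alpha^2}{K_\mathrm{dr}}\big)\mathbf{I} - \Delta t\,\DIV(\permeability\GRAD)$. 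The Poincar\'e bound again provides the spectral lower estimate $\tfrac{1}{M}+\tfrac{\Delta t\kappa_\mathrm{m}}{C_\Omega^2}+\tfrac{\alpha^2}{K_\mathrm{dr}}$, and the hydrostatic bound $\tfrac{1}{d^2 K_\mathrm{dr}}\|\mathrm{tr}\,\stress\|^2 \leq \llangle\mathbb{A}\stress,\stress\rrangle$ transfers the cross-term contribution into the desired multiple of $\llangle\mathbb{A}\stress,\stress\rrangle$. The Lipschitz constant $L_2 = L_1$ then follows immediately from the scaling, since $\GRAD_\stress\mathcal{E}_\mathrm{tot}^{\star,\Delta t}$ acts as $\mathbb{A}$ in the $\stress$ direction.

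Applying Lemma~\ref{appendix:lemma:alternating-minimization} delivers the per-iteration energy contraction factor
\begin{align*}
(1-1/L_1)(1-1/L_2) = \left(\frac{\alpha^2/K_\mathrm{dr}}{1/M + \Delta t\kappa_\mathrm{m}/C_\Omega^2 + \alpha^2/K_\mathrm{dr}}\right)^2,
\end{align*}
together with a matching a posteriori estimate of the form $\mathcal{E}^{n,i}-\mathcal{E}^n \leq L_1 L_2\,(\mathcal{E}^{n,i-1}-\mathcal{E}^{n,i})$. Since $\mathcal{E}_\mathrm{tot}^{\star,\Delta t}$ is quadratic with critical point $\generalizedstress^n$, one has $\mathcal{E}^{n,j}-\mathcal{E}^n = |||(\errorstress^{n,j},\errorpressure^{n,j})|||_\star^2$; iterating the contraction and taking square roots gives~\eqref{lemma:fixed-stress:convergence-rate:result-2}, while combining the contraction with the triangle-type identity $\mathcal{E}^{n,i-1}-\mathcal{E}^n = (\mathcal{E}^{n,i-1}-\mathcal{E}^{n,i})+(\mathcal{E}^{n,i}-\mathcal{E}^n)$ yields the a posteriori bound~\eqref{lemma:fixed-stress:convergence-rate:result-3} along the lines of the undrained-case derivation.
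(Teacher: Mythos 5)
Your proposal is correct and follows essentially the same route as the paper's proof: alternating minimization applied to the block-separable dual energy, the identical pressure-block semi-norm with the Poincar\'e-based Lipschitz constant $L_1$, and the abstract result of Lemma~\ref{appendix:lemma:alternating-minimization} converting the per-iteration energy contraction (together with the quadratic structure and affine constraints) into the stated \textit{a priori} and \textit{a posteriori} bounds. The only deviation is your stress-block semi-norm, a uniform rescaling of $\llangle \mathbb{A}\stress,\stress\rrangle$ instead of the paper's deflation of only the hydrostatic part; both are dominated by the coupled quadratic form with constant one and give the same $L_2=L_1$, hence identical rates.
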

 
\begin{proof}
 We apply Lemma~\ref{appendix:lemma:alternating-minimization}. For this, we introduce two semi-norms
 \begin{align*}
  \| \generalizedstress \|_{1,\star,\Delta t}^2 &:= \tfrac{1}{M} \|p\|^2 + \Delta t \llangle \permeability \GRAD p, \GRAD p \rrangle, \\
  \| \generalizedstress \|_{2,\star,\Delta t}^2 &:= 
  \llangle \mathbb{A} \stress, \stress\rrangle - \frac{\tfrac{\alpha^2}{K_\mathrm{dr}}}{\tfrac{1}{M} + \tfrac{\Delta t \kappa_\mathrm{m}}{C_\Omega^2}  + \tfrac{\alpha^2}{K_\mathrm{dr}}} \, \frac{1}{K_\mathrm{dr}}  \| \sigma^\mathrm{h} \|^2,
 \end{align*}
 where $\sigma^h$ denotes the hydrostatic component of $\stress$. Positive semi-definiteness of $\|\cdot\|_\mathrm{2,\star,\Delta t}$ follows from~\eqref{elasticity-compliance-tensor-quadratic}. We show that $\mathcal{E}_\mathrm{tot}^{\star,\Delta t}$ is (i) strongly convex wrt.\ $\|\cdot\|_{1,\star,\Delta t}$ and $\|\cdot\|_{2,\star,\Delta t}$, (ii) $\GRAD_{p}\mathcal{E}_\mathrm{tot}^{\star,\Delta t}$ is Lipschitz continuous wrt.\ $\|\cdot\|_{1,\star,\Delta t}$, and (iii) $\GRAD_{\stress}\mathcal{E}_\mathrm{tot}^{\star,\Delta t}$ is Lipschitz continuous $\|\cdot\|_{2,\star,\Delta t}$. Throughout the proof, for lighter notation, we omit the explicit dependence of $\mathcal{E}_\mathrm{tot}^{\star,\Delta t}$ on $\fluidmass^{n-1}$.

\paragraph{(i) Strong convexity of $\mathcal{E}_\mathrm{tot}^{\star,\Delta t}$.}
Let $\generalizedstress_i=(\stress_i,p_i)\in\mathcal{S}^n\times\mathcal{Q}^n$, $i=1,2$. As $\mathcal{E}_\mathrm{tot}^{\star,\Delta t}$ is quadratic, it holds
\begin{align*}
  &\llangle \GRAD \mathcal{E}_\mathrm{tot}^{\star,\Delta t}(\generalizedstress_1) - \GRAD \mathcal{E}_\mathrm{tot}^{\star,\Delta t}(\generalizedstress_2),\generalizedstress_1 - \generalizedstress_2 \rrangle 
 =
 2 \left|\left|\left| \generalizedstress_1 - \generalizedstress_2\right|\right|\right|_\star^2.
\end{align*}
It follows directly, that $2\left|\left|\left| \generalizedstress_1 - \generalizedstress_2 \right|\right|\right|_\star^2 \geq \| \generalizedstress_1 - \generalizedstress_2 \|_{1,\star,\Delta t}^2$. Furthermore, utilizing the Poincar\'e inequality and decomposing the stress into its deviatoric and hydrostatic components, we obtain for all $(\stress,p)\in\tilde{\mathcal{S}}_0\times \mathcal{Q}_0$
\begin{align}\label{proof:fixed-stress:strong-convexity-2}
2\left|\left|\left| \generalizedstress \right|\right|\right|_\star^2
&\geq 
 \llangle \mathbb{A} (\stress + \alpha p \,\mathbf{I}), \stress + \alpha p \,\mathbf{I} \rrangle + \left(\tfrac{1}{M} + \tfrac{\Delta t \kappa_\mathrm{m}}{C_\Omega^2} \right) \| p \|^2 \\
 \nonumber
&= 
 \llangle \mathbb{A} \stress, \stress \rrangle 
 + 2 \tfrac{\alpha}{K_\mathrm{dr}} \langle \sigma^\mathrm{h}, p \rangle + \left( \tfrac{1}{M} + \tfrac{\Delta t \kappa_\mathrm{m}}{C_\Omega^2} + \tfrac{\alpha^2}{K_\mathrm{dr}} \right) \| p \|^2 \\
 \nonumber
&\geq
\| \generalizedstress \|_{2,\star,\Delta t}^2.
\end{align}
By applying Young's inequality adequately and rearranging terms, we obtain $2\,\left|\left|\left| \generalizedstress_1 - \generalizedstress_2 \right|\right|\right|_\star^2 \geq \| \generalizedstress_1 - \generalizedstress_2 \|_{2,\star,\Delta t}^2$. All in all, $\mathcal{E}_\mathrm{tot}^{\star,\Delta t}$ is strongly convex wrt.\ $\|\cdot\|_{i,\star,\Delta t}$ with constant $\sigma_i=1$, $i=1,2$.

\paragraph{(ii) Lipschitz continuity of $\GRAD_{p}\mathcal{E}_\mathrm{tot}^{\star,\Delta t}$.}
Let $\generalizedstress=(\stress,p)\in\mathcal{S}^n\times\mathcal{Q}^n$. It holds
\begin{align*}
 &\underset{h\in\mathcal{Q}_0}{\mathrm{sup}}\, \frac{\llangle \GRAD_{p} \mathcal{E}_\mathrm{tot}^{\star,\Delta t}\left(\generalizedstress + (\bm{0},h)\right) - \GRAD_{p} \mathcal{E}_\mathrm{tot}^{\star,\Delta t}(\generalizedstress), (\bm{0},h) \rrangle}{\| (\bm{0},h) \|_{1,\star,\Delta}^2} \\
 &\quad=
 \underset{h\in\mathcal{Q}_0}{\mathrm{sup}}\, \frac{\tfrac{\alpha^2}{K_\mathrm{dr}} \| h \|^2 + \| (\bm{0},h) \|_{1,\star,\Delta}^2}{\| (\bm{0},h) \|_{1,\star,\Delta}^2}.
\end{align*}
Decomposing and bounding $\|h\|$ optimally by $\|h\|$ and $\|\GRAD h\|$, using the Poincar\'e inequality, we obtain
\begin{align*}
 \|h\|^2 &
 \leq \left(\frac{1}{M} + \frac{\Delta t \kappa_\mathrm{m}}{C_\Omega^2}\right)^{-1} \|(\bm{0},h)\|_{1,\star,\Delta t}^2.
\end{align*}
All in all, $\GRAD_{p}\mathcal{E}_\mathrm{tot}^{\star,\Delta t}$ is Lipschitz continuous wrt.\ $\|\cdot\|_{1,\star,\Delta t}$ with Lipschitz constant $L_1=1 + \frac{\alpha^2}{K_\mathrm{dr}} \, \left(\tfrac{1}{M} + \tfrac{\Delta t\kappa_\mathrm{m}}{C_\Omega^2 }\right)^{-1}$.

\paragraph{(iii) Lipschitz continuity of $\GRAD_{\stress}\mathcal{E}_\mathrm{tot}^{\star,\Delta t}$.}
Let $\generalizedstress=(\stress,p)\in\mathcal{S}^n\times\mathcal{Q}^n$. It holds
\begin{align*}
 &\underset{\bm{h}\in\tilde{\mathcal{S}}_0}{\mathrm{sup}}\, \frac{\llangle \GRAD_{\stress} \mathcal{E}_\mathrm{tot}^{\star,\Delta t}\left(\generalizedstress + (\bm{h},0)\right) - \GRAD_{\stress} \mathcal{E}_\mathrm{tot}^{\star,\Delta t}(\generalizedstress), (\bm{h},0)\rrangle}{\| (\bm{h},0) \|_{2,\star,\Delta}^2} \\
 &\quad=
 \underset{\bm{h}\in\tilde{\mathcal{S}}_0}{\mathrm{sup}}\, \frac{\tfrac{1}{2\mu} \| \bm{h}^\mathrm{d} \|^2 + \tfrac{1}{K_\mathrm{dr}} \| h^\mathrm{h} \|^2}{\tfrac{1}{2\mu} \|\bm{h}^\mathrm{d}\|^2 + \frac{1}{K_\mathrm{dr}} \left(1 - \frac{\tfrac{\alpha^2}{K_\mathrm{dr}}}{\tfrac{1}{M} + \tfrac{\Delta t \kappa_\mathrm{m}}{C_\Omega^2}  + \tfrac{\alpha^2}{K_\mathrm{dr}}} \right) \| h^\mathrm{h} \|^2}.
\end{align*}
We conclude, that $\GRAD_{\stress}\mathcal{E}_\mathrm{tot}^{\star,\Delta t}$ is Lipschitz continuous wrt.\ $\|\cdot\|_{2,\star,\Delta t}$ with Lipschitz constant $L_2=1 + \frac{\alpha^2}{K_\mathrm{dr}} \, \left(\tfrac{1}{M} + \tfrac{\Delta t\kappa_\mathrm{m}}{C_\Omega^2 }\right)^{-1}$.

\paragraph{Consequences.}
By Lemma~\ref{appendix:lemma:alternating-minimization}, it follows
\begin{align*}
  \mathcal{E}_\mathrm{tot}^{\star,\Delta t}(\generalizedstress^{n,i}) - \mathcal{E}_\mathrm{tot}^{\star,\Delta t}(\generalizedstress^n)
  &\leq
  \left(1 - \tfrac{1}{L_1}\right) \left(1 - \tfrac{1}{L_2}\right)
  \left(\mathcal{E}_\mathrm{tot}^{\star,\Delta t}(\generalizedstress^{n,i-1}) - \mathcal{E}_\mathrm{tot}^{\star,\Delta t}(\generalizedstress^n) \right), \\
  \mathcal{E}_\mathrm{tot}^{\star,\Delta t}(\generalizedstress^{n,i}) - \mathcal{E}_\mathrm{tot}^{\star,\Delta t}(\generalizedstress^n)
  &\leq
  L_1L_2 \left(\mathcal{E}_\mathrm{tot}^{\star,\Delta t}(\generalizedstress^{n,i-1}) - \mathcal{E}_\mathrm{tot}^{\star,\Delta t}(\generalizedstress^{n,i}) \right).
\end{align*}
Moreover, since $\mathcal{E}_\mathrm{tot}^{\star,\Delta t}$ is quadratic and $\llangle \GRAD \mathcal{E}_\mathrm{tot}^{\star,\Delta t}(\generalizedstress^n), \generalizedstress - \generalizedstress^n \rrangle \geq 0$ for all $\generalizedstress\in \tilde{\mathcal{S}}^n \times \mathcal{Q}^n$, it holds
\begin{align*}
 \mathcal{E}_\mathrm{tot}^{\star,\Delta t}(\generalizedstress^{n,i}) - \mathcal{E}_\mathrm{tot}^{\star,\Delta t}(\generalizedstress^n) \geq \left|\left|\left| (\errorstress^{n,i},\errorpressure^{n,i})\right|\right|\right|_\star^2.
\end{align*}
The results~\eqref{lemma:fixed-stress:convergence-rate:result-2} and~\eqref{lemma:fixed-stress:convergence-rate:result-3} follow directly. 
\end{proof}

\subsubsection{General remarks and implications}

We close the section on splitting schemes for linear poro-elasticity with general remarks. Most remain true for the subsequent sections.

\begin{enumerate}[label=(\roman*)]

 \item \textit{Order of minimization steps:}  The order of the steps within the alternating minimization algorithm is not relevant for robust convergence; however, we have chosen the specific orders as above to demonstrate the closer connection to the undrained and the fixed-stress splits.
 
 \item \textit{Splitting schemes for particular formulation of the semi-discrete Biot equations:} The different, presented minimization formulations in Sec.~\ref{section:minimization:primal-two-field}--\ref{section:minimization:five-field} are all equivalent. Hence, for each specific formulation a splitting scheme can be constructed by equivalent reformulation of the splitting schemes presented above. 
 
 \item \textit{Splitting schemes for fully-discrete linear Biot equations:} Fully-discrete Biot equations can be constructed by applying the conforming Galerkin method to the different minimization formulations from Sec.~\ref{section:minimization:primal-two-field}--\ref{section:minimization:five-field}. In contrast to~(ii), they are not equivalent. Hence, for a particular fully-discrete formulation, splitting schemes are derived from their corresponding semi-discrete versions, cf.~(ii).
  
 
 \item \textit{Stable spatial discretization under splitting:} 
 In practice it has been observed for the two-field saddle point formulation that inf-sup unstable pairs of finite elements are actually robust under the fixed-stress split~\cite{Yoon2018}. Given that the fixed-stress split is equivalent to a two-block coordinate descent method, which converges already provided that each of the subproblems is uniquely solvable~\cite{Grippof1999}, this observation can now be theoretically explained. After all, it is nevertheless noteworthy that inf-sup stability can be beneficial for the performance of the fixed-stress split when applied to problems with a saddle point structure; e.g., for the two-field saddle point formulation, inf-sup stability adds artificial compressibility~\cite{Storvik2019}.
 
 \item \textit{Different meshes for different subproblems:}  The discussion in~(v) also explains intuitively why splitting schemes allow the use of different meshes for different subproblems, without losing robustness~\cite{Dana2018}. In particular, the minimization structure allows for a natural development of specific two-mesh formulations retaining the symmetric character of the problem.
  
 \item \textit{Heterogeneous, anisotropic media:} The minimization structure of time-discrete poro-elasticity remains inherent for heterogeneous, anisotropic media. Consequently, alternating minimization can be again employed for constructing robust splitting schemes. In particular, convergence properties of the undrained split and fixed-stress split are retained, consistent with problem-specific analyses~\cite{Both2017,Dana2018b}. 
 
 \item \textit{Inexact alternating minimization:} Clearly, instead of employing exact alternating minimization, each step may be also solved inexactly. As long as the energy is sufficiently decreased, convergence is still guaranteed. This allows for a more efficient implementation of splitting schemes employing, e.g., iterative solvers with coarse stopping criteria for each subproblem. 
\end{enumerate}

We will return to points (ii), (iii), and (vii) in the numerical examples in Sec.~\ref{section:numerical-results}.

\section{Robust splitting schemes for discrete linear poro-visco-elasticity}\label{section:splitting-poro-visco-elasticity}

In the previous section, popular splitting schemes for linear poro-elasticity have been identified as alternating minimization applied to suitable minimization formulations of semi-discrete, linear poro-elasticity. In this section, we apply the same workflow, cf.\ Fig~\ref{figure:procedure-derivation-splitting}, and analogously derive novel extensions of the undrained and fixed-stress splits, now applicable to semi-discrete, linear poro-visco-elasticity. In this regard, we additionally establish for the first time guaranteed, linear convergence rates utilizing abstract optimization theory. After all, the key observation for the following efforts is the fact that semi-discrete, linear poro-visco-elasticity is a vectorized version of semi-discrete, linear poro-elasticity. Consequently, the subsequent discussion appears as a natural extension of Sec.~\ref{section:splitting-linear-biot}. To highlight the analogy, we attempt to employ visually related notation.

\subsection{Minimization formulations of time-discrete linear poro-visco-elasticity}
We introduce two minimization formulations of time-discrete, linear poro-visco-elasticity. We obtain the primal formulation by applying the minimizing movement scheme to the primal formulation of time-continuous, linear poro-visco-elasticity (Sec.~\ref{section:poro-visco-elasticity-wellposedness}). A dual formulation is then proposed based on the close, structural connection between poro-visco-elasticity and poro-elasticity. Both formulations will serve as bases for the development of robust splitting schemes.
 
\subsubsection{Primal formulation of time-discrete linear poro-visco-elasticity}
\label{section:poro-visco:primal}

The primal formulation of time-discrete, linear poro-visco-elasticity is directly obtained by applying the minimizing movement scheme to linear poro-visco-elasticity~\eqref{viscoporo-elasticity:gradient-flow-structure}. By gathering terms, the resulting formulation can be interpreted as vectorized version of the primal formulation of time-discrete, linear poro-elasticity with a tensorial stiffness matrix (of sixth order) and Biot coefficient
\begin{align*}
 \bm{\mathcal{C}}_\mathrm{v}&:= \begingroup \setlength\arraycolsep{4pt}\begin{bmatrix} 1 & -1 \\ -1  & 1 \end{bmatrix} \endgroup \otimes \mathbb{C} + \begin{bmatrix} 0 & 0\\ 0 & 1 \end{bmatrix} \otimes \left( \mathbb{C}_\mathrm{v} + \tfrac{1}{\Delta t}\mathbb{C}_\mathrm{v}'\right) ,\qquad 
 \bm{\alphamatrix}_\mathrm{v} := \begin{bmatrix} \alpha \\ \alpha_\mathrm{v} - \alpha  \end{bmatrix},
\end{align*}
such that for arbitrary $\bm{\varepsilon}_1$, $\bm{\varepsilon}_2\in \mathbb{R}^{d\times d}$, $\begin{bmatrix} \bm{\varepsilon}_1 \\ \bm{\varepsilon}_2 \end{bmatrix}$ is a third-order tensor and it holds
\begin{align*} 
 \bm{\mathcal{C}}_\mathrm{v}:\begin{bmatrix} \bm{\varepsilon}_1 \\ \bm{\varepsilon}_2 \end{bmatrix} 
 &= \begin{bmatrix} \mathbb{C} \left( \bm{\varepsilon}_1 - \bm{\varepsilon}_2 \right) \\ - \mathbb{C} \left( \bm{\varepsilon}_1 - \bm{\varepsilon}_2 \right) + \left(\mathbb{C}_\mathrm{v} + \tfrac{1}{\Delta t}\mathbb{C}_\mathrm{v}'\right) \bm{\varepsilon}_2 \end{bmatrix},\\
 \left(\bm{\alphamatrix}_\mathrm{v} \otimes \mathbf{I}\right) : \begin{bmatrix} \bm{\varepsilon}_1 \\ \bm{\varepsilon}_2 \end{bmatrix} &= \bm{\alphamatrix}_\mathrm{v}^\top \begin{bmatrix} \mathrm{tr}\, \bm{\varepsilon}_1 \\ \mathrm{tr}\, \bm{\varepsilon}_2 \end{bmatrix}.
\end{align*}
Let the spaces $\mathcal{V}^n$ and $\mathcal{Z}^n$ be as defined in~\eqref{primal-discrete-poro-elasticity-spaces:start}--\eqref{primal-discrete-poro-elasticity-spaces:end}, and define additionally $\mathcal{T}^n:=\mathcal{T}$. We obtain the time-discrete, primal formulation: For time step $n\geq 1$, given $\fluidmass^{n-1},\viscostrain^{n-1}$, define $(\u^n,\viscostrain^n,\flux^n)\in\mathcal{V}^n\times\mathcal{T}^n\times\mathcal{Z}^n$ to be the solution of the minimization problem
\begin{align}
\label{primal-porovisco-minimization:start}
 (\u^n,\viscostrain^n,\flux^n) :=& \underset{(\u,\viscostrain,\flux)\in\mathcal{V}^n\times\mathcal{T}^n\times\mathcal{Z}^n}{\mathrm{arg\,min}}\,\mathcal{E}_\mathrm{v,tot}^{\Delta t}(\fluidmass^{n-1},\viscostrain^{n-1};\u,\viscostrain,\flux),
\end{align}
where
\begin{align*}
  &\mathcal{E}_\mathrm{v,tot}^{\Delta t}(\fluidmass^{n-1},\viscostrain^{n-1};\u,\viscostrain,\flux) \\
 \nonumber
 &\quad:=
 \tfrac{1}{2} \llangle  \bm{\mathcal{C}}_\mathrm{v} : \begin{bmatrix} \eps{\u} \\ \viscostrain \end{bmatrix},  \begin{bmatrix} \eps{\u} \\ \viscostrain \end{bmatrix} \rrangle + \tfrac{\Delta t}{2} \llangle \permeability^{-1} \flux, \flux \rrangle \\
\nonumber
 &\qquad+ \tfrac{M}{2} \left\| \fluidmass^{n-1}  + \Delta t\, \masssource^n - \Delta t \, \DIV \flux - \left( \bm{\alphamatrix}_\mathrm{v} \otimes \mathbf{I} \right) : \begin{bmatrix} \eps{\u} \\ \viscostrain\end{bmatrix} \right\|^2 \\
\nonumber
 &\qquad- \mathcal{P}^n_\mathrm{ext,mech}(\u) 
 - \llangle \tfrac{1}{\Delta t} \mathbb{C}_\mathrm{v}' \viscostrain^{n-1}, \viscostrain \rrangle
 - \Delta t \, \mathcal{P}^n_\mathrm{ext,fluid}(\flux),
\end{align*}
and set $\fluidmass^n:=\fluidmass^{n-1} +\Delta t\, \masssource^n - \Delta t\, \DIV \flux^n$. Since the minimization problem is strictly convex and coercive, existence and uniqueness of a solution to~\eqref{primal-porovisco-minimization:start} follow by classical results from convex analysis, cf.\ Thm.~\ref{appendix:well-posedness:convex-minimization}.

\begin{remark}[Explicit reduction to linear poro-elasticity]
 The first variation of $\mathcal{E}_\mathrm{v,tot}^{\Delta t}(\fluidmass^{n-1},\viscostrain^{n-1};\u,\viscostrain,\flux) $ wrt.\ $\viscostrain$ can be locally inverted for $\viscostrain$. Consequently, the coupled problem~\eqref{primal-porovisco-minimization:start} can be easily reduced to a problem for $(\u,\flux)$. This allows in particular for reusing code written for linear poro-elasticity.
\end{remark}

\subsubsection{Dual formulation of time-discrete linear poro-visco-elasticity}
\label{section:poro-visco:dual}

By adopting the analogy between the primal and dual formulations of time-discrete, linear poro-elasticity (Sec.~\ref{section:splitting-linear-biot}) to its vectorized form, we introduce a natural dual minimization formulation of time-discrete, linear poro-visco-elasticity.
Based on the corresponding primal formulation written as vectorized Biot equations (Sec.~\ref{section:poro-visco:primal}), we introduce natural dual variables $(\stress,\stress_\mathrm{v},p)$: a pair of stresses $(\stress,\stress_\mathrm{v})$, consisting of the total stress $\stress$ and a stress-type field $\stress_\mathrm{v}$ enforcing the visco-elastic strain, and a fluid pressure $p$, formally related to the primal variables by
\begin{align}
\label{poro-visco:dual:dual-variables:start}
\begin{bmatrix} \stress \\ \stress_\mathrm{v} \end{bmatrix}
&=  \bm{\mathcal{C}}_\mathrm{v} : \left( 
\begin{bmatrix} \eps{\u} \\ \viscostrain \end{bmatrix}
  -\left(\bm{\alphamatrix}_\mathrm{v} \otimes \mathbf{I}\right)\, p \right), \\
  p &= M \left(\fluidmass^{n-1}  + \Delta t\, \masssource^n - \Delta t \, \DIV \flux - \left(\bm{\alphamatrix}_\mathrm{v} \otimes \mathbf{I} \right) : \begin{bmatrix} \eps{\u} \\ \viscostrain\end{bmatrix}  \right).
\label{poro-visco:dual:dual-variables:end}
\end{align}

Analogous to linear poro-elasticity, constrained function spaces for the stress variables are used, with the constraints dictated by the primal formulation. Formally, it holds $\stress_\mathrm{v} = \tfrac{1}{\Delta t}\mathbb{C}_\mathrm{v}' \viscostrain^{n-1}$. Hence, given $\viscostrain^{n-1}$, we set 
\begin{align}
\label{porovisco-visco-elastic-stress-space}
 \mathcal{S}_\mathrm{v}^n := \left\{ \tfrac{1}{\Delta t} \mathbb{C}_\mathrm{v}' \viscostrain^{n-1} \right\}.
\end{align}
Then $\mathcal{S}^n \times \mathcal{S}_\mathrm{v}^n \times \mathcal{Q}^n$ is a suitable function space for the dual variables $(\stress,\stress_\mathrm{v},p)$.
  
Let $\bm{\mathcal{A}}_\mathrm{v}:=\bm{\mathcal{C}}_\mathrm{v}^{-1}$ denote the generalized compliance tensor. It satisfies for all $\stress,\stress_\mathrm{v}$ with deviatoric components $\stress^\mathrm{d},\stress_\mathrm{v}^\mathrm{d}$ and hydrostatic components $\sigma^\mathrm{h},\sigma_\mathrm{v}^\mathrm{h}$
\begin{align}\label{poro-visco:generalized-compliance-tensor}
 \llangle \bm{\mathcal{A}}_\mathrm{v} : \begin{bmatrix} \stress \\ \stress_\mathrm{v} \end{bmatrix},  \begin{bmatrix} \stress \\ \stress_\mathrm{v} \end{bmatrix} \rrangle
 =
 \llangle (2\bm{\mathcal{M}})^{-1}  : \begin{bmatrix} \stress^\mathrm{d} \\ \stress_\mathrm{v}^\mathrm{d} \end{bmatrix},  \begin{bmatrix} \stress^\mathrm{d} \\ \stress_\mathrm{v}^\mathrm{d} \end{bmatrix} \rrangle
 +
 \llangle \mathbf{K}^{-1} \begin{bmatrix} \sigma^\mathrm{h} \\ \sigma_\mathrm{v}^\mathrm{h} \end{bmatrix},  \begin{bmatrix} \sigma^\mathrm{h} \\ \sigma_\mathrm{v}^\mathrm{h} \end{bmatrix} \rrangle
\end{align}
analogous to~\eqref{elasticity-compliance-tensor-quadratic},
where $\mathbbm{1}$ is the fourth-order identity tensor, and
\begin{align*} 
 \bm{\mathcal{M}} &:= 
 \mu \begingroup \setlength \arraycolsep{4pt}\begin{bmatrix} 1 &-1 \\ -1 & 1 \end{bmatrix} \endgroup \otimes \mathbbm{1} 
 +
 \left( \mu_\mathrm{v} + \tfrac{1}{\Delta t} \mu_\mathrm{v}'\right)\begin{bmatrix} 0 & 0 \\ 0  & 1 \end{bmatrix} \otimes \mathbbm{1},
 \\
 \mathbf{K} &:= 
  K_\mathrm{dr} \begingroup \setlength \arraycolsep{4pt}\begin{bmatrix} 1 &-1 \\ -1 & 1 \end{bmatrix} \endgroup + \left( K_\mathrm{dr,v} + \tfrac{1}{\Delta t} K_\mathrm{dr,v}'\right)\begin{bmatrix} 0 & 0 \\ 0 & 1 \end{bmatrix}.
\end{align*}

Then the dual formulation of time-discrete, linear poro-visco-elasticity for time step $n\geq1$ reads: Given $(\stress^{n-1},\stress_\mathrm{v}^{n-1},p^{n-1})\in \mathcal{S}^{n-1} \times \mathcal{S}_\mathrm{v}^{n-1}\times\mathcal{Q}^{n-1}$, set 
\begin{align}
\nonumber
 \fluidmass^{n-1}&:=\tfrac{1}{M}p^{n-1} 
 +
 \left(\bm{\alphamatrix}_\mathrm{v} \otimes \mathbf{I} \right) :  \bm{\mathcal{A}}_\mathrm{v}:
 \left( \begin{bmatrix} \stress^{n-1} \\ \stress_\mathrm{v}^{n-1} \end{bmatrix} +\left(\bm{\alphamatrix}_\mathrm{v} \otimes \mathbf{I}\right)\, p^{n-1} \right),\\ 
 \label{porovisco-visco-elastic-strain-definition}
 \viscostrain^{n-1}&:=\begin{bmatrix} \bm{0}, & \mathbf{I} \end{bmatrix}\, \left(\bm{\mathcal{A}}_\mathrm{v} :\left( \begin{bmatrix} \stress^{n-1} \\ \stress_\mathrm{v}^{n-1} \end{bmatrix} +\left(\bm{\alphamatrix}_\mathrm{v} \otimes \mathbf{I}\right)\, p^{n-1} \right)\right),
\end{align}
and define $(\stress^{n},\stress_\mathrm{v}^n,p^{n})\in \mathcal{S}^n\times\mathcal{S}_\mathrm{v}^n\times\mathcal{Q}^n$ to be the solution of the block-separable, constrained minimization problem
\begin{align}
\label{structure-preserving-porovisco-minimization:start}
  (\stress^{n},\stress_\mathrm{v}^n,p^{n}) :=& \underset{(\stress,\stress_\mathrm{v},p)\in \mathcal{S}^n \times \mathcal{S}_\mathrm{v}^n \times \mathcal{Q}^n}{\mathrm{arg\,min}}\,\mathcal{E}_\mathrm{v,tot}^{\star,\Delta t}(\fluidmass^{n-1};\stress,\stress_\mathrm{v},p),
\end{align}
where
\begin{align*}
 \mathcal{E}_\mathrm{v,tot}^{\star,\Delta t}(\fluidmass^{n-1};\stress,\stress_\mathrm{v},p) 
 :=& 
 \tfrac{1}{2} \llangle \bm{\mathcal{A}}_\mathrm{v} : \left( \begin{bmatrix} \stress \\ \stress_\mathrm{v} \end{bmatrix} +\left(\bm{\alphamatrix}_\mathrm{v} \otimes \mathbf{I}\right) \, p \right), \begin{bmatrix} \stress \\ \stress_\mathrm{v} \end{bmatrix} + \left(\bm{\alphamatrix}_\mathrm{v} \otimes  \mathbf{I}\right) \, p\rrangle \\
 &+ \tfrac{1}{2M} \| p \|^2 + \tfrac{\Delta t}{2} \llangle \permeability \left( \GRAD p -\gext^n\right) , \GRAD p - \gext^n \rrangle \\
 &- \llangle  \u_\Gamma^n, \stress\bm{n} \rrangle_{\Gamma_{\u}}
 -\llangle \fluidmass^{n-1} + \Delta t \, \masssource^n , p \rrangle 
 - \llangle q_\mathrm{\Gamma,n}, p \rrangle_{\Gamma_{\flux}}.
\end{align*}
The minimization problem is strictly convex and the feasible set is non-empty and convex; existence and uniqueness of a solution to~\eqref{structure-preserving-porovisco-minimization:start} follow by classical results from convex analysis, cf.\ Thm.~\ref{appendix:well-posedness:convex-minimization}.

\begin{remark}[Relations to previous formulations]\label{poro-visco-elasticity-formulations}
Similar to the primal formulation, we highlight the vectorized character of~\eqref{structure-preserving-porovisco-minimization:start} compared to the dual formulation of time-discrete, linear poro-elasticity~\eqref{dual-two-field-minimization:start}. Furthermore, it is evident, that including $\stress_\mathrm{v}$ as variable is redundant as it is determined beforehand. Hence, in practice, an equivalent formulation can be obtained by simple modification of the dual formulation of time-discrete poro-elasticity. Finally, along the lines of the five-field formulation of time-discrete, linear poro-elasticity (Sec.~\ref{section:minimization:five-field}), a fully structure-preserving formulation can be also obtained for poro-visco-elasticity; for this essentially a flux variable has to be included as primary variable, and Darcy's law has to be enforced.
\end{remark}

\subsection{Physical splitting schemes for time-discrete linear poro-visco-elasticity}  

Since time-discrete, linear poro-visco-elasticity is simply a vectorized generalization of time-discrete, linear poro-elasticity, the robust undrained split and fixed-stress split for poro-elasticity can be generalized to poro-visco-elasticity in a natural fashion. Again the detailed construction and analysis of the splitting schemes utilizes the natural interpretation as alternating minimization.

\subsubsection{Undrained split for poro-visco-elasticity}

We derive a robust splitting scheme by applying alternating minimization to the primal formulation of time-discrete poro-visco-elasticity~\eqref{primal-porovisco-minimization:start}. As before, we choose to minimize successively in the directions of the mechanical and fluid flow variables, cf.\ Alg.~\ref{algorithm:undrained-split-porovisco}. The resulting scheme can be identified as undrained split for poro-visco-elasticity. 

\begin{algorithm}
 \caption{Single iteration of the undrained split for poro-visco-elasticity}
 \label{algorithm:undrained-split-porovisco}
 \SetAlgoLined
 \DontPrintSemicolon
 
  \vspace{0.5em}
 
  Input: $(\u^{n,i-1},\viscostrain^{n,i-1},\flux^{n,i-1})\in\mathcal{V}^n\times\mathcal{T}^n\times\mathcal{Z}^n$ \; \vspace{0.8em}
   
  Determine $(\u^{n,i},\viscostrain^{n,i}) := \underset{(\u,\viscostrain)\in\mathcal{V}^n\times\mathcal{T}^n}{\mathrm{arg\,min}}\, \mathcal{E}_\mathrm{v,tot}^{\Delta t}(\fluidmass^{n-1},\viscostrain^{n-1};\u,\viscostrain,\flux^{n,i-1})$\; \vspace{0.25em}

  Determine $\flux^{n,i} := \underset{\flux\in\mathcal{Z}^n}{\mathrm{arg\,min}}\, \mathcal{E}_\mathrm{v,tot}^{\Delta t}(\fluidmass^{n-1},\viscostrain^{n-1};\u^{n,i},\viscostrain^{n,i},\flux)$\; \vspace{0.2em}
\end{algorithm}

As for linear poro-elasticity, the first step is equivalent to solving a stabilized mechanics problem: For given $(\u^{n,i-1},\viscostrain^{n,i-1},\flux^{n,i-1})\in \mathcal{V}^n\times\mathcal{T}^n\times \mathcal{Z}^n$, find $(\u^{n,i},\viscostrain^{n,i})\in \mathcal{V}^n \times \mathcal{T}^n$ satisfying for all $(\v,\testviscostrain)\in\mathcal{V}_0 \times \mathcal{T}$
\begin{align*}
 \llangle \bm{\mathcal{C}}_\mathrm{v} : \begin{bmatrix} \eps{\u^{n,i}} \\ \viscostrain^{n,i} \end{bmatrix}, \begin{bmatrix} \eps{\v} \\ \testviscostrain \end{bmatrix} \rrangle + \llangle M \bm{\alphamatrix}_\mathrm{v}\bm{\alphamatrix}_\mathrm{v}^\top \begin{bmatrix} \mathrm{tr}\, \eps{\u^{n,i} - \u^{n,i-1}} \\ \mathrm{tr}\left(\viscostrain^{n,i} - \viscostrain^{n,i-1}\right) \end{bmatrix} , \begin{bmatrix} \mathrm{tr}\,\eps{\v} \\ \mathrm{tr}\,\testviscostrain \end{bmatrix} \rrangle \\
 -\llangle \left(\bm{\alphamatrix}_\mathrm{v} \otimes \mathbf{I}\right) \, p^{n,i-1}, \begin{bmatrix} \eps{\v} \\ \testviscostrain \end{bmatrix} \rrangle = \mathcal{P}_\mathrm{ext,mech}^n(\v) + \llangle \tfrac{1}{\Delta t}\mathbb{C}_\mathrm{v}' \viscostrain^{n-1}, \testviscostrain \rrangle
\end{align*}
where the pressure $p^{n,i-1}$ is formally defined, consistent with~\eqref{poro-visco-interpretation-pressure},
\begin{align*}
 p^{n,i-1} &:= M \left(\fluidmass^{n-1}  + \Delta t\, \masssource^n - \Delta t \, \DIV \flux^{n,i-1} - \left(\bm{\alphamatrix}_\mathrm{v} \otimes \mathbf{I} \right) : \begin{bmatrix} \eps{\u^{n,i-1}} \\ \viscostrain^{n,i-1} \end{bmatrix}  \right).
\end{align*}
We highlight a characteristic property: Tensorial stabilization is applied naturally. For instance, the stabilization term equals
\begin{align*}
 \llangle \begingroup \setlength \arraycolsep{3pt} M \begin{bmatrix} \alpha^2 & \alpha (\alpha_\mathrm{v} - \alpha) \\ \alpha (\alpha_\mathrm{v} - \alpha) & (\alpha_\mathrm{v} - \alpha)^2 \end{bmatrix}\endgroup \begin{bmatrix} \mathrm{tr}\, \eps{\u^{n,i} - \u^{n,i-1}} \\ \mathrm{tr}\left(\viscostrain^{n,i} - \viscostrain^{n,i-1}\right) \end{bmatrix} , \begin{bmatrix} \mathrm{tr}\,\eps{\v} \\ \mathrm{tr}\,\testviscostrain \end{bmatrix} \rrangle.
\end{align*}
The second step is equivalent to solving the corresponding fluid flow problem with updated mechanical variables. 

Global convergence follows immediately by abstract analysis on the two-block coordinate descent method. Furthermore, theoretical convergence rates can be derived as for linear poro-elasticity.

\begin{lemma}[Linear convergence of the undrained split for poro-visco-elasticity]
The undrained split converges linearly, independent of the initial guess. Let $(\u^{n},\viscostrain^n,\flux^{n})$ denote the solution of the coupled problem~\eqref{primal-porovisco-minimization:start} and let $(\u^{n,i},\viscostrain^{n,i},\flux^{n,i})$ denote the iterates defined by the undrained split, cf.\ Alg.~\ref{algorithm:undrained-split-porovisco}. For all $i\in\mathbb{N}$, define the errors $\erroru^{n,i}:=\u^{n,i}-\u^n$, $\errorviscostrain^{n,i}:=\viscostrain^{n,i} - \viscostrain^n$, $\errorflux^{n,i}:=\flux^{n,i} - \flux^n$. Let $|||\cdot|||$ denote the norm induced by the quadratic part of $\mathcal{E}_\mathrm{v,tot}^{\Delta t}$
 \begin{align*}
  \left|\left|\left|(\u,\viscostrain,\flux)\right|\right|\right|^2 
  &:= 
  \tfrac{1}{2} \llangle \bm{\mathcal{C}}_\mathrm{v} : \begin{bmatrix} \eps{\u} \\ \viscostrain \end{bmatrix}, \begin{bmatrix} \eps{\u} \\ \viscostrain \end{bmatrix} \rrangle + 
  \tfrac{\Delta t}{2} \llangle \permeability^{-1} \flux, \flux \rrangle  \\
 &+ \tfrac{M}{2} \left\|  \Delta t \DIV \flux + \left(\bm{\alphamatrix}_\mathrm{v} \otimes \mathbf{I} \right) : \begin{bmatrix} \eps{\u} \\  \viscostrain \end{bmatrix} \right\|^2.
 \end{align*}
 Let $K_\mathrm{dr}^\star$ as in~\eqref{proof:poro:undrained:kdr}, and $A_\mathrm{K,\star}^2:= \frac{\alpha_\mathrm{v}^2}{K_\mathrm{dr,v} + \Delta t^{-1} K_\mathrm{dr,v}'} 
  + \frac{\alpha^2}{K_\mathrm{dr}^\star}$. It holds the \textit{a priori} result
 \begin{align}
 \label{lemma:undrained-porovisco:convergence-rate:result-1}
  &\left|\left|\left|(\erroru^{n,i},\errorviscostrain^{n,i},\errorflux^{n,i})\right|\right|\right|
  \leq 
  \left( \frac{A_\mathrm{K,\star}^2}{\tfrac{1}{M} + A_\mathrm{K,\star}^2} \right)^i  
  \left( \mathcal{E}^{n,0} - \mathcal{E}^{n} \right)^{1/2},
 \end{align}
 and the \textit{a posteriori} result
  \begin{alignat}{2}
 \label{lemma:undrained-porovisco:convergence-rate:result-2}
  \left|\left|\left|(\erroru^{n,i},\errorviscostrain^{n,i},\errorflux^{n,i})\right|\right|\right| &\leq 
  \left( \frac{A_\mathrm{K,\star}^2}{\tfrac{1}{M} + A_\mathrm{K,\star}^2} \right)
  \left[1 - \left( \frac{A_\mathrm{K,\star}^2}{\tfrac{1}{M} + A_\mathrm{K,\star}^2} \right)^2 \right]^{-1/2}
  \left( \mathcal{E}^{n, i-1} - \mathcal{E}^{n,i} \right)^{1/2},
 \end{alignat}
 where
 \begin{align*}
 \mathcal{E}^{n}&:= \mathcal{E}_\mathrm{v,tot}^{\Delta t}(\fluidmass^{n-1},\viscostrain^{n-1}; \u^{n}, \viscostrain^{n,j}, \flux^{n}),\\
  \mathcal{E}^{n,j}&:= \mathcal{E}_\mathrm{v,tot}^{\Delta t}(\fluidmass^{n-1},\viscostrain^{n-1}; \u^{n,j}, \viscostrain^{n,j}, \flux^{n,j}),\ j\in\mathbb{N}.
 \end{align*}

\end{lemma}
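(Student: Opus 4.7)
The plan is to mirror the proof of Lemma~\ref{lemma:undrained-split:convergence-rate} and apply the abstract alternating minimization result (Lemma~\ref{appendix:lemma:alternating-minimization}) to the primal formulation~\eqref{primal-porovisco-minimization:start}, exploiting that poro-visco-elasticity is a vectorized version of linear poro-elasticity. Accordingly, I would partition the primary variables into the mechanical block $(\u,\viscostrain)\in\mathcal{V}^n\times\mathcal{T}^n$ and the flow block $\flux\in\mathcal{Z}^n$, and introduce the two semi-norms
\begin{align*}
 \| (\u,\viscostrain) \|_{1,\Delta t}^2 &:= \llangle \bm{\mathcal{C}}_\mathrm{v} : \begin{bmatrix} \eps{\u} \\ \viscostrain \end{bmatrix}, \begin{bmatrix} \eps{\u} \\ \viscostrain \end{bmatrix} \rrangle, \\
 \| \flux \|_{2,\Delta t}^2 &:= \Delta t \llangle \permeability^{-1} \flux, \flux \rrangle + \Delta t^2 \left( \tfrac{1}{M} + A_\mathrm{K,\star}^2 \right)^{-1} \| \DIV \flux \|^2.
\end{align*}
Since $\mathcal{E}_\mathrm{v,tot}^{\Delta t}$ is quadratic, strong convexity wrt.\ $\|\cdot\|_{1,\Delta t}$ follows immediately, and Lipschitz continuity of the two partial gradients is a direct computation along the lines of parts~(ii)--(iii) of the proof of Lemma~\ref{lemma:undrained-split:convergence-rate}, yielding Lipschitz constants equal to $1+M A_\mathrm{K,\star}^2$.

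The main work, and the real obstacle, is the tensorial analogue of the optimally balanced Young inequality~\eqref{proof:poro:undrained:balancing-young}, needed for strong convexity wrt.\ $\|\cdot\|_{2,\Delta t}$. Concretely, I need to establish that for all $(\v,\testviscostrain,\z)\in\mathcal{V}_0 \times \mathcal{T} \times \mathcal{Z}_0$
\begin{align*}
 \llangle \bm{\mathcal{C}}_\mathrm{v} : \begin{bmatrix} \eps{\v} \\ \testviscostrain \end{bmatrix}, \begin{bmatrix} \eps{\v} \\ \testviscostrain \end{bmatrix} \rrangle
 + M \left\| \Delta t \DIV \z + \left(\bm{\alphamatrix}_\mathrm{v} \otimes \mathbf{I}\right) : \begin{bmatrix} \eps{\v} \\ \testviscostrain \end{bmatrix} \right\|^2
 \geq - \frac{A_\mathrm{K,\star}^2}{\tfrac{1}{M} + A_\mathrm{K,\star}^2} \, \left\| \Delta t \DIV \z \right\|^2.
\end{align*}
To obtain this, I would decompose $\bm{\mathcal{C}}_\mathrm{v}$ into deviatoric and hydrostatic contributions (analogous to~\eqref{poro-visco:generalized-compliance-tensor}); the deviatoric part gives a non-negative remainder, so it suffices to control the hydrostatic part. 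Using~\eqref{proof:poro:undrained:kdr} for the elastic contribution and the explicit bulk moduli $K_\mathrm{dr,v}+\Delta t^{-1} K_\mathrm{dr,v}'$ for the visco-elastic contribution, the problem reduces to bounding from below the scalar-valued quadratic form
\begin{align*}
 Q(a,b) := K_\mathrm{dr}^\star (a-b)^2 + \left(K_\mathrm{dr,v} + \Delta t^{-1} K_\mathrm{dr,v}'\right) b^2 + M(\Delta t\, c + \alpha a + (\alpha_\mathrm{v}-\alpha) b)^2
\end{align*}
in $a=\mathrm{tr}\,\eps{\v}$, $b=\mathrm{tr}\,\testviscostrain$ pointwise, for fixed $c=\DIV\z$. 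Minimizing $Q$ over $(a,b)\in\mathbb{R}^2$ and computing the Schur complement yields exactly the effective drained bulk modulus $(A_\mathrm{K,\star}^2)^{-1}$ appearing in the statement, so that $\min_{a,b} Q(a,b) = -\tfrac{A_\mathrm{K,\star}^2}{1/M + A_\mathrm{K,\star}^2} (\Delta t\, c)^2$ after combining with the $M$-term. Integrating the pointwise bound over $\Omega$ gives the desired inequality.

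Given these two semi-norms and the Lipschitz constants $L_1=L_2=1+MA_\mathrm{K,\star}^2$, Lemma~\ref{appendix:lemma:alternating-minimization} yields the contraction estimate
\begin{align*}
 \mathcal{E}^{n,i} - \mathcal{E}^n \leq \left(1-\tfrac{1}{L_1}\right)\left(1-\tfrac{1}{L_2}\right) \left( \mathcal{E}^{n,i-1} - \mathcal{E}^n \right)
\end{align*}
and, analogously, the \textit{a posteriori} counterpart. The identification $\mathcal{E}^{n,i}-\mathcal{E}^n = |||(\erroru^{n,i},\errorviscostrain^{n,i},\errorflux^{n,i})|||^2$ follows from the quadratic character of $\mathcal{E}_\mathrm{v,tot}^{\Delta t}$ and the optimality condition $\GRAD \mathcal{E}_\mathrm{v,tot}^{\Delta t}(\u^n,\viscostrain^n,\flux^n)=\bm{0}$, which together with the explicit value of the contraction factor $\tfrac{A_\mathrm{K,\star}^2}{1/M+A_\mathrm{K,\star}^2}$ delivers~\eqref{lemma:undrained-porovisco:convergence-rate:result-1} and~\eqref{lemma:undrained-porovisco:convergence-rate:result-2}.
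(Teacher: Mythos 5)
Your proposal matches the paper's proof in all essentials: the same partition into the mechanical block $(\u,\viscostrain)$ and the flux block, the same two semi-norms, the same Lipschitz constants $L_1=L_2=1+M A_\mathrm{K,\star}^2$, and the same conclusion via Lemma~\ref{appendix:lemma:alternating-minimization} together with the quadratic structure of $\mathcal{E}_\mathrm{v,tot}^{\Delta t}$; the paper obtains the key coercivity through the weighted Cauchy--Schwarz bound~\eqref{proof:undrained-porovisco:aux-1} followed by an optimally balanced Young inequality, which is computationally equivalent to your pointwise Schur-complement minimization of $Q$ (for fixed $w=\alpha u+\alpha_\mathrm{v}v$ the constrained minimum is $w^2/A_\mathrm{K,\star}^2$). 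One small correction: $\min_{a,b}Q(a,b)=(\Delta t\,c)^2\big(\tfrac{1}{M}+A_\mathrm{K,\star}^2\big)^{-1}\ge 0$, so the estimate should be phrased as this positive lower bound on the full quadratic form (which is exactly what the strong convexity wrt.\ $\|\cdot\|_{2,\Delta t}$ requires) rather than as a negative minimum -- the same presentational slip already present in the paper's display~\eqref{proof:poro:undrained:balancing-young}.
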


\begin{proof}
 We follow the same strategy as in the proof of Lemma~\ref{lemma:undrained-split:convergence-rate}. Due to the similarities, we present only the main steps; we stress notation is attempted to look alike. We define two semi-norms
 \begin{align*}
  \| (\u,\viscostrain,\flux) \|_{v,1,\Delta t}^2 &:= \llangle \bm{\mathcal{C}}_\mathrm{v} : \begin{bmatrix} \eps{\u} \\ \viscostrain \end{bmatrix}, \begin{bmatrix} \eps{\u} \\ \viscostrain \end{bmatrix} \rrangle, \\
  \| (\u,\viscostrain,\flux) \|_{v,2,\Delta t}^2 &:= \Delta t \llangle \permeability^{-1} \flux, \flux \rrangle + \Delta t^2  \left( \tfrac{1}{M} + A_\mathrm{K,\star}^2 \right)^{-1} \left\| \DIV \flux \right\|^2.
 \end{align*}
 
\paragraph{(i) Strong convexity of $\mathcal{E}_\mathrm{v,tot}^{\Delta t}$.}
The semi-norms $\| \cdot \|_{v,i,\Delta t}$ are chosen such that $\mathcal{E}_\mathrm{v,tot}^{\Delta t}$ is strongly convex with constant $\sigma_i=1$, $i=1,2$. This is trivial for $\| \cdot \|_{v,1,\Delta t}$. For $\| \cdot \|_{v,2,\Delta t}$, one has to apply Young's inequality and balance weights optimally, similar to~\eqref{proof:poro:undrained:balancing-young}; for this,~\eqref{proof:poro:undrained:kdr} has to be generalized: It holds
 \begin{align}\label{proof:undrained-porovisco:aux-1}
  &\left\| \left(\bm{\alphamatrix}_\mathrm{v} \otimes \mathbf{I} \right) : \begin{bmatrix} \eps{\u} \\ \viscostrain \end{bmatrix} \right\|^2 
  = 
  \left\| \alpha_\mathrm{v} \, \mathrm{tr}\,\viscostrain + \alpha \, \mathrm{tr}(\eps{u} - \viscostrain) \right\|^2 \\
  \nonumber
  &\quad \leq 
  A_\mathrm{K,\star}^2\left( \llangle \left(\tfrac{1}{\Delta t} \mathbb{C}_\mathrm{v}' + \mathbb{C}_\mathrm{v} \right) \viscostrain, \viscostrain \rrangle + \llangle \mathbb{C} (\eps{\u} - \viscostrain), \eps{\u} - \viscostrain \rrangle \right) \\
  \nonumber
  &\quad =
  A_\mathrm{K,\star}^2 \llangle \bm{\mathcal{C}}_\mathrm{v} : \begin{bmatrix} \eps{\u} \\ \viscostrain \end{bmatrix}, \begin{bmatrix} \eps{\u} \\ \viscostrain \end{bmatrix} \rrangle,\qquad \forall (\u,\viscostrain)\in\mathcal{V}_0\times\mathcal{T}.
 \end{align}

\paragraph{(ii) Lipschitz continuity of $\GRAD_{(\u,\viscostrain)}\mathcal{E}_\mathrm{v,tot}^{\Delta t}$ and $\GRAD_{\flux}\mathcal{E}_\mathrm{v,tot}^{\Delta t}$.}
By applying analogous steps as in the proof of Lemma~\ref{lemma:undrained-split:convergence-rate}, one can show that $\GRAD_{(\u,\viscostrain)}\mathcal{E}_\mathrm{v,tot}^{\Delta t}$ and $\GRAD_{\flux}\mathcal{E}_\mathrm{v,tot}^{\Delta t}$ are Lipschitz continuous wrt.\ $\| \cdot \|_{v,1,\Delta t}$ and $\| \cdot \|_{v,2,\Delta t}$, respectively, with Lipschitz constants $L_1=L_2=1 + M A_\mathrm{K,\star}^2$. For the first, utilize~\eqref{proof:undrained-porovisco:aux-1}.

\paragraph{Consequences.}
The final thesis follows from the abstract convergence result Lemma~\ref{appendix:lemma:alternating-minimization}, and the fact that $\mathcal{E}_\mathrm{v,tot}^{\Delta t}$ is quadratic.
\end{proof}

  
\subsubsection{Fixed-stress split for poro-visco-elasticity}\label{section:poro-visco:fixed-stress}

We derive a second, robust splitting scheme by applying alternating minimization to the dual formulation of time-discrete, linear poro-visco-elasticity~\eqref{structure-preserving-porovisco-minimization:start}. As before, we choose to minimize successively in the directions of mechanical and fluid flow variables, cf.\ Alg.~\ref{algorithm:fixed-stress-porovisco}. The resulting scheme can be interpreted as an fixed-stress split for poro-visco-elasticity. 

\begin{algorithm}
 \caption{Single iteration of the fixed-stress split for poro-visco-elasticity}
 \label{algorithm:fixed-stress-porovisco} 
 \SetAlgoLined
 \DontPrintSemicolon
 
  \vspace{0.5em}
 
  Input: $(\stress^{n,i-1}, \stress_\mathrm{v}^{n,i-1}, p^{n,i-1}) \in \mathcal{S}^n\times\mathcal{S}_\mathrm{v}^n \times\mathcal{Q}^n$ \; \vspace{0.8em}
   
  Determine $ p^{n,i} := \underset{p\in\mathcal{Q}^n}{\mathrm{arg\,min}}\, \mathcal{E}_\mathrm{v,tot}^{\star,\Delta t}(\fluidmass^{n-1};\stress^{n,i-1},\stress_\mathrm{v}^{n,i-1},p)$\; \vspace{0.25em}

  Determine $(\stress^{n,i},\stress_\mathrm{v}^{n,i}) = \underset{(\stress,\stress_\mathrm{v})\in \mathcal{S}^n\times\mathcal{S}_\mathrm{v}^n}{\mathrm{arg\,min}}\, \mathcal{E}_\mathrm{v,tot}^{\star,\Delta t}(\fluidmass^{n-1};\stress,\stress_\mathrm{v},p^{n,i})$\; \vspace{0.2em}
\end{algorithm}

The first step is equivalent to solving a stabilized flow problem: For given $(\stress^{n,i-1},\stress_\mathrm{v}^{n,i-1},p^{n,i-1})\in \mathcal{S}^n\times\tilde{\mathcal{S}}^n\times\mathcal{Q}^n$, find $p^{n,i}\in \mathcal{Q}^n$ satisfying for all $q \in \mathcal{Q}_0$
\begin{align*}
&\tfrac{1}{M} \llangle p^{n,i}, q \rrangle 
+ \llangle 
\left(\bm{\alphamatrix}_\mathrm{v} \otimes \mathbf{I} \right) : \bm{\mathcal{A}}_\mathrm{v} : \left(\bm{\alphamatrix}_\mathrm{v} \otimes \mathbf{I} \right)
\left(p^{n,i} - p^{n,i-1}\right), q \rrangle \\
&\quad + \llangle \left(\bm{\alphamatrix}_\mathrm{v} \otimes \mathbf{I} \right) : \begin{bmatrix}  \viscostrain^{n,i-1} \\ \bm{\varepsilon}_{\u}^{n,i-1}\end{bmatrix}, q \rrangle
+ \Delta t \llangle \permeability \left(\GRAD p^{n,i} - \gext^n\right), \GRAD q \rrangle 
= \llangle \fluidmass^{n-1} + \Delta t\, \masssource^n, q \rrangle,
\end{align*}
where we formally abbreviate the total and visco-elastic strains at the previous iteration
\begin{align*}
\begin{bmatrix} \bm{\varepsilon}_{\u}^{n,i-1} \\ \viscostrain^{n,i-1} \end{bmatrix} :=
\bm{\mathcal{A}}_\mathrm{v}:
 \left( \begin{bmatrix} \stress^{n,i-1} \\ \stress_\mathrm{v}^{n,i-1} \end{bmatrix} +\left(\bm{\alphamatrix}_\mathrm{v} \otimes \mathbf{I}\right)\, p^{n,i-1} \right).
\end{align*}
For homogeneous, isotropic materials, the stabilization term equals
\begin{align*}
\llangle \bm{\alphamatrix}_\mathrm{v}^\top \mathbf{K}^{-1} \bm{\alphamatrix}_\mathrm{v} \left(p^{n,i} - p^{n,i-1} \right), q \rrangle
=
\left(\tfrac{\alpha^2}{K_\mathrm{dr}} + \tfrac{\alpha_\mathrm{v}^2}{K_\mathrm{dr,v} + \Delta t^{-1} K_\mathrm{dr,v}'}\right) \llangle p^{n,i} - p^{n,i-1}, q \rrangle.
\end{align*}
The second step is equivalent to solving the mechanics problem with updated fluid flow variables.

Linear convergence can be established based on an abstract convergence result for alternating minimization. Due to the structural similarities of semi-discrete, linear poro-visco-elasticity and poro-elasticity, the following lemma reads as corollary of Lemma~\ref{lemma:fixed-stress:convergence-rate}.

\begin{lemma}[Linear convergence of the fixed-stress split for poro-visco-elasticity]\label{lemma:porovisco:fixed-stress:convergence-rate}
 The fixed-stress split for poro-visco-elasticity converges linearly, independent of the initial guess. Let $(\stress^{n},\stress_\mathrm{v}^n, p^{n})$ denote the solution of the coupled problem~\eqref{structure-preserving-porovisco-minimization:start}
 and let $(\stress^{n,i},\stress_\mathrm{v}^{n,i},p^{n,i})$ denote the iterates defined by the fixed-stress split, cf.\ Alg.~\ref{algorithm:fixed-stress-porovisco}. For $i\in\mathbb{N}$, define the errors $\errorstress^{n,i}:=\stress^{n,i}-\stress^n$, $\bm{e}_{\stress_\mathrm{v}}^{n,i}:=\stress_\mathrm{v}^{n,i} - \stress_\mathrm{v}^n$, $\errorpressure^{n,i}:=p^{n,i} - p^n$. Let $|||\cdot|||$ denote the norm induced by the quadratic part of $\mathcal{E}_\mathrm{v,tot}^{\star,\Delta t}$
 \begin{align*}
  \left|\left|\left|(\stress,\stress_\mathrm{v},p)\right|\right|\right|_\star^2 &:=
  \tfrac{1}{2} \llangle \bm{\mathcal{A}}_\mathrm{v} :\left(\begin{bmatrix} \stress \\ \stress_\mathrm{v} \end{bmatrix} +\left( \bm{\alphamatrix}_\mathrm{v} \otimes \mathbf{I} \right) \,p \right),\begin{bmatrix} \stress \\ \stress_\mathrm{v} \end{bmatrix} +\left( \bm{\alphamatrix}_\mathrm{v} \otimes \mathbf{I} \right) \,p  \rrangle \\
  &\qquad+ \tfrac{1}{2M} \| p \|^2 + \tfrac{\Delta t}{2} \llangle \permeability \GRAD p, \GRAD p\rrangle.
 \end{align*}
 Let $A_\mathrm{K}^2:= \frac{\alpha_\mathrm{v}^2}{K_\mathrm{dr,v} + \Delta t^{-1} K_\mathrm{dr,v}'} 
  + \frac{\alpha^2}{K_\mathrm{dr}}$. It holds the \textit{a priori} result
\begin{align*}
  &\left|\left|\left| (\errorstress^{n,i},\bm{e}_{\stress_\mathrm{v}}^{n,i},\errorpressure^{n,i}) \right|\right|\right|_\star 
  \leq 
  \left(\frac{A_\mathrm{K}^2}{\tfrac{1}{M} + \tfrac{\Delta t \kappa_m}{C_\Omega^2} + A_\mathrm{K}^2}\right)^i 
  \left( \mathcal{E}^{n,0} - \mathcal{E}^n \right)^{1/2},
 \end{align*}
 and the \textit{a posteriori} result
 \begin{align*}
  \left|\left|\left|(\errorstress^{n,i},\bm{e}_{\stress_\mathrm{v}}^{n,i},\errorpressure^{n,i})\right|\right|\right|_\star
  &\leq 
  \left(\frac{A_\mathrm{K}^2}{\tfrac{1}{M} + \tfrac{\Delta t \kappa_m}{C_\Omega^2} + A_\mathrm{K}^2}\right)
  \left[ 1 - \left(\frac{A_\mathrm{K}^2}{\tfrac{1}{M} + \tfrac{\Delta t \kappa_m}{C_\Omega^2} + A_\mathrm{K}^2}\right)^2 \right]^{-1/2}
  \left( \mathcal{E}^{n,i-1} - \mathcal{E}^{n,i} \right)^{1/2},
 \end{align*}
 where
 \begin{align*}
  \mathcal{E}^n &:=\mathcal{E}_\mathrm{v,tot}^{\star,\Delta t}(\fluidmass^{n-1}; \stress^{n}, \stress_\mathrm{v}^{n}, p^{n}),\\
  \mathcal{E}^{n,j}&:=
  \mathcal{E}_\mathrm{v,tot}^{\star,\Delta t}(\fluidmass^{n-1}; \stress^{n,j}, \stress_\mathrm{v}^{n,j}, p^{n,j}),\ j\in\mathbb{N},
 \end{align*}
 and $C_\Omega$ denotes a Poincar\'e-like constant and $\kappa_m$ is the smallest eigenvalue of $\permeability$.
\end{lemma}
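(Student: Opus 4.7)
The plan is to mirror the proof of Lemma~\ref{lemma:fixed-stress:convergence-rate}, exploiting that the dual formulation of semi-discrete, linear poro-visco-elasticity is a vectorized version of the poro-elastic one: the scalar coupling $\alpha/K_\mathrm{dr}$ is replaced by the vector $\bm{\alphamatrix}_\mathrm{v}$ together with the matrix $\mathbf{K}^{-1}$, and everything else is structurally identical. Consequently, I would apply Lemma~\ref{appendix:lemma:alternating-minimization} for the block partition $\{p\}$ versus $\{\stress,\stress_\mathrm{v}\}$. Because $\mathcal{S}_\mathrm{v}^n$ is a singleton by~\eqref{porovisco-visco-elastic-stress-space}, the second block in effect contains only $\stress$; in particular $\bm{e}_{\stress_\mathrm{v}}^{n,i}\equiv\bm{0}$, and block separability of $\mathcal{S}^n\times\mathcal{S}_\mathrm{v}^n\times\mathcal{Q}^n$ is preserved.

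The two semi-norms I would introduce, mimicking those in Lemma~\ref{lemma:fixed-stress:convergence-rate}, are
\begin{align*}
 \|(\stress,\stress_\mathrm{v},p)\|_{v,1,\star,\Delta t}^2 &:= \tfrac{1}{M}\|p\|^2 + \Delta t\llangle \permeability\GRAD p,\GRAD p\rrangle,\\
 \|(\stress,\stress_\mathrm{v},p)\|_{v,2,\star,\Delta t}^2 &:= \llangle \bm{\mathcal{A}}_\mathrm{v}:\begin{bmatrix}\stress\\ \stress_\mathrm{v}\end{bmatrix},\begin{bmatrix}\stress\\ \stress_\mathrm{v}\end{bmatrix}\rrangle - \frac{A_\mathrm{K}^2}{\tfrac{1}{M}+\tfrac{\Delta t\kappa_m}{C_\Omega^2}+A_\mathrm{K}^2}\llangle \mathbf{K}^{-1}\begin{bmatrix}\sigma^\mathrm{h}\\ \sigma_\mathrm{v}^\mathrm{h}\end{bmatrix},\begin{bmatrix}\sigma^\mathrm{h}\\ \sigma_\mathrm{v}^\mathrm{h}\end{bmatrix}\rrangle,
\end{align*}
where $\sigma^\mathrm{h}$, $\sigma_\mathrm{v}^\mathrm{h}$ are the hydrostatic components of $\stress$, $\stress_\mathrm{v}$; positive semi-definiteness of the second expression follows from the decomposition~\eqref{poro-visco:generalized-compliance-tensor} together with the fact that the prefactor is strictly smaller than $1$. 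The algebraic identity underpinning the whole estimate is
\begin{align*}
 \bm{\alphamatrix}_\mathrm{v}^\top\mathbf{K}^{-1}\bm{\alphamatrix}_\mathrm{v} = A_\mathrm{K}^2,
\end{align*}
which I would verify by a short $2\times 2$ inversion; this is precisely the stabilization coefficient that appears in the first step of Alg.~\ref{algorithm:fixed-stress-porovisco}, and it is what makes the $A_\mathrm{K}^2$ in the stated rates emerge naturally.

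The three conditions of Lemma~\ref{appendix:lemma:alternating-minimization} would then be checked as follows. Strong convexity of $\mathcal{E}_\mathrm{v,tot}^{\star,\Delta t}$ with constant $\sigma_1=\sigma_2=1$ reduces, since the energy is quadratic, to bounding twice the quadratic part from below by each semi-norm; for $\|\cdot\|_{v,1,\star,\Delta t}$ this is trivial, while for $\|\cdot\|_{v,2,\star,\Delta t}$ one uses Poincar\'e's inequality on $\GRAD p$, the decomposition~\eqref{poro-visco:generalized-compliance-tensor}, and applies Young's inequality to the hydrostatic cross term with weights optimally balanced through $\tfrac{1}{M}+\tfrac{\Delta t\kappa_m}{C_\Omega^2}$, as in~\eqref{proof:fixed-stress:strong-convexity-2}; the identity above forces the residual penalty to collapse to the subtracted term in $\|\cdot\|_{v,2,\star,\Delta t}^2$. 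The Lipschitz bounds for $\GRAD_p\mathcal{E}_\mathrm{v,tot}^{\star,\Delta t}$ and $\GRAD_{(\stress,\stress_\mathrm{v})}\mathcal{E}_\mathrm{v,tot}^{\star,\Delta t}$ follow from standard Rayleigh-quotient computations and both yield $L_1=L_2=1+A_\mathrm{K}^2\bigl(\tfrac{1}{M}+\tfrac{\Delta t\kappa_m}{C_\Omega^2}\bigr)^{-1}$.

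I expect the main technical obstacle to lie in the bookkeeping for the strong-convexity step with respect to $\|\cdot\|_{v,2,\star,\Delta t}$: after the deviatoric/hydrostatic splitting, the mixed hydrostatic term involves the vector $\bm{\alphamatrix}_\mathrm{v}$ and the matrix $\mathbf{K}^{-1}$, and one must organize the Young estimates so that the scalar combination $\bm{\alphamatrix}_\mathrm{v}^\top\mathbf{K}^{-1}\bm{\alphamatrix}_\mathrm{v}=A_\mathrm{K}^2$ emerges exactly. Once the three properties are established, Lemma~\ref{appendix:lemma:alternating-minimization} yields the contraction factor $(1-L_1^{-1})(1-L_2^{-1})$ per iteration for the energy gap and the amplification factor $L_1L_2$ for the \textit{a posteriori} counterpart; combined with the lower bound $|||\generalizedstress-\generalizedstress^n|||_\star^2\leq \mathcal{E}_\mathrm{v,tot}^{\star,\Delta t}(\fluidmass^{n-1};\generalizedstress)-\mathcal{E}_\mathrm{v,tot}^{\star,\Delta t}(\fluidmass^{n-1};\generalizedstress^n)$, which follows from quadraticity and the variational inequality at the constrained minimizer as in Lemma~\ref{lemma:fixed-stress:convergence-rate}, this delivers the two stated rates.
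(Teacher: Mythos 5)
Your proposal is correct and follows essentially the same route as the paper's proof: the same block partition, the same two semi-norms (with the hydrostatic components in the subtracted term, which is indeed what the paper's estimate uses), the same strong-convexity and Lipschitz constants $L_1=L_2=1+A_\mathrm{K}^2\bigl(\tfrac{1}{M}+\tfrac{\Delta t\,\kappa_m}{C_\Omega^2}\bigr)^{-1}$, and the same conclusion via the abstract alternating-minimization lemma together with quadraticity of $\mathcal{E}_\mathrm{v,tot}^{\star,\Delta t}$. Your explicit identification of $\bm{\alphamatrix}_\mathrm{v}^\top\mathbf{K}^{-1}\bm{\alphamatrix}_\mathrm{v}=A_\mathrm{K}^2$ and the observation that $\mathcal{S}_\mathrm{v}^n$ is a singleton are consistent with, and merely make explicit, what the paper uses implicitly.
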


\begin{proof}
 We follow the same strategy as in the proof of Lemma~\ref{lemma:fixed-stress:convergence-rate}. Due to the similarities, we present only the main steps; we stress notation is attempted to like alike. We define two semi-norms
 \begin{align*}
  \| (\stress,\stress_\mathrm{v}, p) \|_{1,\mathrm{v},\star,\Delta t}^2 &:= \tfrac{1}{M} \|p\|^2 + \Delta t \llangle \permeability \GRAD p, \GRAD p\rrangle, \\
  \| (\stress,\stress_\mathrm{v}, p)  \|_{2,\mathrm{v},\star,\Delta t}^2 &:= 
  \llangle \bm{\mathcal{A}}_\mathrm{v} : \begin{bmatrix} \stress \\ \stress_\mathrm{v} \end{bmatrix},\begin{bmatrix} \stress \\ \stress_\mathrm{v} \end{bmatrix} \rrangle
  -  \frac{A_\mathrm{K}^2}{\tfrac{1}{M} + \tfrac{\Delta t \kappa_\mathrm{m}}{C_\Omega^2} + A_\mathrm{K}^2}
  \llangle \mathbf{K}^{-1} \begin{bmatrix} \sigma^\mathrm{d} \\ \sigma_\mathrm{v}^\mathrm{d} \end{bmatrix},\begin{bmatrix} \sigma^\mathrm{d} \\ \sigma_\mathrm{v}^\mathrm{d} \end{bmatrix} \rrangle.
 \end{align*}
 Positive semi-definiteness of $\|\cdot\|_\mathrm{2,v,\star,\Delta t}$ holds due to~\eqref{poro-visco:generalized-compliance-tensor}.

\paragraph{(i) Strong convexity of $\mathcal{E}_\mathrm{v,tot}^{\star,\Delta t}$.}

The semi-norms $\| \cdot \|_{i,\mathrm{v},\star,\Delta t}$ are chosen such that $\mathcal{E}_\mathrm{v,tot}^{\star,\Delta t}$ is strongly convex with constant $\sigma_i=1$, $i=1,2$. This is trivial for $\| \cdot \|_{1,\mathrm{v},\star,\Delta t}$. For $\| \cdot \|_{2,\mathrm{v},\star,\Delta t}$, we employ an argument analogous to~\eqref{proof:fixed-stress:strong-convexity-2}. Employing the Poincar\'e inequality, expanding the quadratic terms, and applying the Cauchy inequality and Young's inequality, yields for all $(\stress,stress_\mathrm{v},p)$
\begin{align*}
 &2 \left|\left|\left| (\stress,\stress_\mathrm{v},p) \right|\right|\right|_\mathrm{\star}^2\\
 &\quad\geq 
 \llangle \bm{\mathcal{A}}_\mathrm{v} : \begin{bmatrix} \stress \\ \stress_\mathrm{v} \end{bmatrix},\begin{bmatrix} \stress \\ \stress_\mathrm{v} \end{bmatrix} \rrangle
 +
 2 \llangle \bm{\alphamatrix}_\mathrm{v}^\top (d\mathbf{K})^{-1} \begin{bmatrix} \sigma^\mathrm{h} \\ \sigma_\mathrm{v}^\mathrm{h} \end{bmatrix}, p\rrangle
 +
 \left( \tfrac{1}{M} + \tfrac{\Delta t\kappa_m}{C_\Omega^2} + A_\mathrm{K}^2 \right) \| p\|^2 \\
 &\quad \geq
 \llangle \bm{\mathcal{A}}_\mathrm{v} : \begin{bmatrix} \stress \\ \stress_\mathrm{v} \end{bmatrix},\begin{bmatrix} \stress \\ \stress_\mathrm{v} \end{bmatrix} \rrangle
 -
 \frac{\bm{\alphamatrix}_\mathrm{v}^\top \mathbf{K}^{-1} \bm{\alphamatrix}_\mathrm{v}}{\tfrac{1}{M} + \tfrac{\Delta t\kappa_m}{C_\Omega^2} + A_\mathrm{K}^2 } \llangle \mathbf{K}^{-1} \begin{bmatrix} \sigma^\mathrm{h} \\ \sigma_\mathrm{v}^\mathrm{h} \end{bmatrix}, \begin{bmatrix} \sigma^\mathrm{h} \\ \sigma_\mathrm{v}^\mathrm{h} \end{bmatrix}\rrangle\\
 &\quad =\left\| (\stress,\stress_\mathrm{v},p ) \right\|_\mathrm{2,v,\star,\Delta t}^2.
\end{align*}
 
\paragraph{(ii) Lipschitz continuity of $\GRAD_{p}\mathcal{E}_\mathrm{v,tot}^{\star,\Delta t}$ and $\GRAD_{(\stress,\stress_\mathrm{v})}\mathcal{E}_\mathrm{tot}^{\star,\Delta t}$.}
Analogously to the proof of Lemma~\ref{lemma:fixed-stress:convergence-rate} it can be showed that $\GRAD_{p}\mathcal{E}_\mathrm{v,tot}^{\star,\Delta t}$ and $\GRAD_{(\stress,\stress_\mathrm{v})}\mathcal{E}_\mathrm{tot}^{\star,\Delta t}$ are Lipschitz continuous wrt.\ $\| \cdot \|_{1,\mathrm{v},\star,\Delta t}$ and $\| \cdot \|_{2,\mathrm{v},\star,\Delta t}$, respectively, with Lipschitz constants $L_1=L_2 = 1 +  A_\mathrm{K}^2 \left(\tfrac{1}{M} + \tfrac{\kappa_m \Delta t}{C_\Omega^2}\right)^{-1}$.

\paragraph{Consequences.}
The thesis follows by Lemma~\ref{appendix:lemma:alternating-minimization} and the fact that $\mathcal{E}_\mathrm{v,tot}^{\star,\Delta t}$ is quadratic.

\end{proof}

\section{Robust splitting schemes for discrete non-linear poro-elasticity under infinitesimal strains}\label{section:splitting-non-linear-poro}

So far, part II dealt with quadratic minimization problems related to linear thermo-poro-visco-elasticity. In the following, we briefly demonstrate that the workflow illustrated in Fig.~\ref{figure:procedure-derivation-splitting} can be likewise utilized for discussing non-linear poro-elasticity originating from convex minimization. As an example for non-quadratic, convex minimization problems, we consider non-linear poro-elasticity under infinitesimal strain (Sec.~\ref{section:non-linear-biot-linear-coupling}), and provide the first mathematically justified derivation of a fixed-stress split. Contrary to the previous sections, the coupled problem is decoupled into non-linear subproblems. Therefore, considering inexact solutions of those, different linearization techniques effectively lead to different splitting schemes. In the course of this work, we mention Newton's method and the so-called L-scheme, employing constant approximations of derivatives.

We just remark, a corresponding undrained split can be easily derived within the general framework, based on the primal formulation in Sec.~\ref{section:non-linear-biot-linear-coupling}. Employing inexact solution of the resulting non-linear subproblems by single L-scheme iterations, yields essentially a specific splitting scheme recently derived and analyzed by~\cite{Borregales2018}.

\subsection{Minimization formulation for the three-field formulation}
We recall the primal formulation~\eqref{non-linear-biot:gradient-flow-structure} of non-linear poro-elasticity under infinitesimal strains, allowing for non-linear mechanics and fluid compressibility
\begin{align*}
 (\dot{\u},\dot{\flux}_{\int})
 &= \underset{(\bm{v},\z)\in\dot{\mathcal{V}}\times\dot{\mathcal{Z}}_{\int}}{\text{arg\,min}}\, 
 \Big\{ 
 \mathcal{D}_\mathrm{fluid}(\z)
 + \llangle \GRAD \mathcal{E}_\mathrm{nl}(\u,\flux_{\int}), (\v,\z) \rrangle 
 - \mathcal{P}_\mathrm{ext}(\v,\z) \Big\}.
\end{align*} 
A semi-discrete approximation is directly obtained by applying the minimizing movement schemes (Sec.~\ref{section:general-time-discretization}). By explicit introduction of the fluid pressure consistent with~\eqref{thermodynamic-interpretation}, we consider the more common three-field saddle point formulation, incorporating the structural displacement, volumetric flux and fluid pressure as primary variables. All in all, we obtain a generalization of the three-field formulation of linear poro-elasticity (Sec.~\ref{section:minimization:three-field-formulation}).

Reusing notation, we define the minimization formulation for time step $n$: Given $(\u^{n-1},\flux^{n-1},p^{n-1})\in\mathcal{V}^{n-1}\times\mathcal{Z}^{n-1}\times\tilde{\mathcal{Q}}$, set $\fluidmass^{n-1} := b(p^{n-1})+\alpha \DIV\u^{n-1}$, and define $(\u^n,\flux^n,p^n)\in\mathcal{V}^n\times\mathcal{Z}^n\times\tilde{\mathcal{Q}}$ as solution to
\begin{align}
 \label{three-field-minimization:start}
 (\u^n,\flux^n) :=& \underset{(\u,\flux)\in\mathcal{V}^n\times\mathcal{Z}^n}{\mathrm{arg\,min}}\,\mathcal{E}_\mathrm{nl,tot}^{\Delta t}(\fluidmass^{n-1};\u,\flux), \\
 \label{three-field-minimization:end}
  p^n :=& \, \Pi_{\tilde{\mathcal{Q}}}\left(b^{-1}\left(\Pi_{\tilde{\mathcal{Q}}}(\fluidmass^{n-1} +\Delta t\, \masssource^n - \Delta t \, \DIV \flux^n - \alpha \DIV \u^n) \right)\right),
\end{align}
where
\begin{align}
  \nonumber
 \mathcal{E}_\mathrm{tot}^{\Delta t}(\fluidmass^{n-1};\u,\flux) 
 :=&
 \, \int_\Omega W(\eps{\u})\,dx + \tfrac{\Delta t}{2} \llangle \permeability^{-1} \flux, \flux \rrangle \\
\nonumber
 & + \int_\Omega \int_0^{\Pi_{\tilde{\mathcal{Q}}}( \fluidmass^{n-1} + \Delta t\, \masssource^n - \Delta t \DIV \flux - \alpha \DIV \u) } b^{-1}(s)\,ds \, dx \\
\nonumber
 &- \mathcal{P}_\mathrm{ext,mech}^n(\u)- \Delta t \, \mathcal{P}_\mathrm{ext,fluid}^n(\flux).
\end{align}
Provided that $W$ is strictly convex and $b$ is Lipschitz continuous, the minimization problem is strictly convex. Since also the projection is well-defined; existence and uniqueness of a solution to~\eqref{three-field-minimization:start}--\eqref{three-field-minimization:end} follow by classical results from convex analysis, cf.\ Thm.~\ref{appendix:well-posedness:convex-minimization}. Following Sec.~\ref{section:optimality-conditions:three-field}, the corresponding optimality conditions are given by
\begin{align}
\label{non-linear-biot:semi-discrete:monolithic:start}
 \llangle \GRAD W(\eps{\u^n}), \eps{\v} \rrangle -\alpha \llangle p^n, \DIV \v \rrangle &= \mathcal{P}^n_\mathrm{ext,mech}(\v) &&\forall\v\in\mathcal{V}_0,\\
 \label{non-linear-biot:semi-discrete:monolithic:mid}
 \llangle \permeability^{-1}\flux^n, \z \rrangle - \llangle p^n, \DIV \z \rrangle &= \mathcal{P}^n_\mathrm{ext,fluid}(\z), &&\forall\z\in\mathcal{Z}_0,\\
 \llangle b(p^n) + \alpha \DIV \u + \Delta t \, \DIV \flux^n, q \rrangle &= \llangle \fluidmass^{n-1} + \Delta t \masssource^n, q \rrangle, && \forall q \in \tilde{\mathcal{Q}}.
 \label{non-linear-biot:semi-discrete:monolithic:end}
\end{align}

\subsection{Foundation for an exact fixed-stress split for the dual formulation}\label{section:non-linear-fixed-stress:derivation-dual}

For the derivation of a fixed-stress split for the three-field formulation~\eqref{non-linear-biot:semi-discrete:monolithic:start}--\eqref{non-linear-biot:semi-discrete:monolithic:end}, we utilize a natural dual formulation, generalizing the dual formulation for linear poro-elasticity (Sec.~\ref{section:minimization:dual-formulation}). For this, we first note that $\GRAD W$ is invertible for strictly convex $W$, and there exists a dual scalar potential $U:\,\mathbb{R}^{d \times d} \rightarrow \mathbb{R}$, which by the inverse function theorem satisfies for all $\stress\in\mathbb{R}^{d \times d}$ 
\begin{align*}
\GRAD U(\stress) &= (\GRAD W)^{-1}(\stress),\\ 
\GRAD^2 U (\stress) &= \GRAD^2 W \left( \left(\GRAD W\right)^{-1} (\stress)\right)^{-1}.
\end{align*}
Similarly, let $B:\,\mathbb{R} \rightarrow \mathbb{R}$ a primitive of $b$, satisfying $B'=b$. 

Then the dual minimization formulation reads: Given $(\stress^{n-1},p^{n-1})\in\mathcal{S}^{n-1}\times\mathcal{Q}^{n-1}$, set $\fluidmass^{n-1}:=b(p^{n-1})+ \alpha \,\mathrm{tr}\, \GRAD U\left(\stress^{n-1} + \alpha p^{n-1} \mathbf{I}\right)$, and define $(\stress^{n},p^{n})\in\mathcal{S}^n\times\mathcal{Q}^n$ to be the solution of the dual minimization problem
\begin{align}
\label{non-linear:dual-two-field-minimization:start}
  (\stress^{n},p^{n}) :=& \underset{(\stress,p)\in\mathcal{S}^n\times\mathcal{Q}^n}{\mathrm{arg\,min}}\,\mathcal{E}_\mathrm{nl,tot}^{\star,\Delta t}(\fluidmass^{n-1};\stress,p), \quad \text{where}\\[0.5em]
 \nonumber
 \mathcal{E}_\mathrm{tot}^{\star,\Delta t}(\fluidmass^{n-1};\stress,p) :=&\, 
 \int_\Omega U(\stress+\alpha p \mathbf{I})\, dx\\
 \nonumber
 &+
 \int_\Omega B(p) \, dx + \tfrac{\Delta t}{2} \llangle \permeability (\GRAD p - \gext^n), \GRAD p - \gext^n \rrangle \\
 \nonumber
 &
 -\llangle  \u_\Gamma^n, \stress\bm{n} \rrangle_{\Gamma_{\u}} 
 -\llangle \fluidmass^{n-1} + \Delta t\, \masssource^n, p \rrangle 
 - \Delta t \llangle  q_\mathrm{\Gamma,n}^n, p \rrangle_{\Gamma_{\q}}.
\end{align}

The \textit{exact fixed-stress split} is then defined as (exact) alternating minimization applied to~\eqref{non-linear:dual-two-field-minimization:start}, cf.\ Alg.~\ref{algorithm:fixed-stress-split-non-linear}. Convergence follows directly, and theoretical convergence rates can be studied as previously. When employing inexact minimization in one of the steps, we refer to an \textit{inexact fixed-stress split}.
\begin{algorithm}
 \caption{Single iteration of the exact fixed-stress split for non-linear poro-elasticity under infinitesimal strain}
 \label{algorithm:fixed-stress-split-non-linear}
 \SetAlgoLined
 \DontPrintSemicolon
 
  \vspace{0.5em}
 
  Input: $(\stress^{n,i-1},p^{n,i-1})\in\mathcal{S}^n\times\mathcal{Q}^n$ \; \vspace{0.8em}
   
  Determine $p^{n,i} := \underset{p\in\mathcal{Q}^n}{\mathrm{arg\,min}}\, \mathcal{E}_\mathrm{nl,tot}^{\star,\Delta t}(\fluidmass^{n-1};\stress^{n,i-1},p)$\; \vspace{0.25em}

  Determine $\stress^{n,i} := \underset{\stress\in\mathcal{S}^n}{\mathrm{arg\,min}}\, \mathcal{E}_\mathrm{nl,tot}^{\star,\Delta t}(\fluidmass^{n-1};\stress,p^{n,i})$\; \vspace{0.2em}
\end{algorithm}

\subsection{Fixed-stress splits for the three-field formulation of non-linear poro-elasticity under infinitesimal strains}\label{section:fixed-stress-non-linear-three-field}

Pursuing the previous philosophy, the fixed stress split for the three-three field formulation~\eqref{non-linear-biot:semi-discrete:monolithic:start}--\eqref{non-linear-biot:semi-discrete:monolithic:end} is equivalent with solving first a pressure-stabilized version of the flow problem~\eqref{non-linear-biot:semi-discrete:monolithic:mid}--\eqref{non-linear-biot:semi-discrete:monolithic:end}, and second the mechanics problem~\eqref{non-linear-biot:semi-discrete:monolithic:start} with updated fluid flow variables. The stabilization term can be concluded from the discussion in Sec.~\ref{section:non-linear-fixed-stress:derivation-dual}. In particular, the first step of the exact fixed-stress split for the dual problem, cf.\ Alg.~\ref{algorithm:fixed-stress-split-non-linear}, reads: Find $p^{n,i}\in\mathcal{Q}^n$, satisfying
\begin{align}
\label{non-linear-fixed-stress:pressure-equation}
 &\llangle b(p^{n,i}), q \rrangle  
 + \alpha \llangle \mathbf{I} : \GRAD U(\stress^{n,i-1} + \alpha p^{n,i} \mathbf{I}), q \rrangle \\
 &\quad + \Delta t\, \llangle \permeability \GRAD (p^{n,i} - \gext), \GRAD q \rrangle 
 = \llangle \fluidmass^{n-1} + \Delta t\, \masssource^n, q \rrangle\ \ \forall q\in\mathcal{Q}_0.
 \nonumber
\end{align}
Utilizing the natural linearization of the non-linear coupling term
\begin{align}
\nonumber
 &\alpha \llangle \mathbf{I} : \GRAD U(\stress^{n,i-1} + \alpha p^{n,i} \mathbf{I}), q \rrangle \\
\nonumber
 &\quad \approx
 \alpha \llangle \underbrace{\mathbf{I} : \GRAD U(\stress^{n,i-1} + \alpha p^{n,i-1} \mathbf{I})}_{\hat{=} \mathrm{tr}\, \eps{\u^{n,i-1}}}, q \rrangle \\
 &\qquad + \alpha^2 \llangle \underbrace{\left(\mathbf{I} : \GRAD^2 U(\stress^{n,i-1} + \alpha p^{n,i-1} \mathbf{I}) : \mathbf{I}\right)}_{\hat{=} \mathbf{I} : \GRAD^2 W \left( \eps{\u^{n,i-1}} \right)^{-1} :\mathbf{I} =: K_\mathrm{dr}(\eps{\u^{n,i-1}})^{-1}} (p^{n,i} - p^{n,i-1}), q \rrangle
 \label{definition:kdr-u}
\end{align}
combined with different linearization techniques, we propose, two versions of the exact fixed-stress split for the three-field formulation~\eqref{non-linear-biot:semi-discrete:monolithic:start}--\eqref{non-linear-biot:semi-discrete:monolithic:end}. For direct comparison, we define natural residuals; for $(\v,q)\in\mathcal{V}_0 \times \tilde{\mathcal{Q}}$, let
\begin{align*}
 R^n_{\u}(\u,\flux,p;\v) &:= \llangle \GRAD W(\eps{\u}), \eps{\v} \rrangle -\alpha \llangle p, \DIV \v \rrangle - \mathcal{P}^n_\mathrm{ext,mech}(\v),\\
 R^n_{p}(\u,\flux,p;q) &:= \llangle b(p) + \alpha \DIV \u + \Delta t \, \DIV \flux - \fluidmass^{n-1} - \Delta t \masssource^n, q \rrangle.
\end{align*}

\paragraph{Newton-based fixed-stress split.} In the first step, set $(\flux^{n,i,0},p^{n,i,0})=(\flux^{n,i-1},p^{n,i-1})$, and iterate over $j\geq 1$ until convergence: Given $\u^{n,i-1}\in \mathcal{V}^n$ and $(\flux^{n,i,j-1},p^{n,i,j-1})\in \mathcal{Z}^n \times \tilde{\mathcal{Q}}$, find $(\flux^{n,i,j},p^{n,i,j}) \in \mathcal{Z}^n \times \tilde{\mathcal{Q}}$, satisfying for all $(\z,q)\in\mathcal{Z}_0 \times \tilde{\mathcal{Q}}$
\begin{align*}
&\llangle \permeability^{-1}\flux^{n,i,j}, \z \rrangle - \llangle p^{n,i,j}, \DIV \z \rrangle = \mathcal{P}^n_\mathrm{ext,fluid}(\z),\\
&\llangle  \left[b'(p^{n,i,j-1}) + \frac{\alpha^2}{K_\mathrm{dr}(\eps{\u^{n,i-1}})} \right](p^{n,i,j} - p^{n,i,j-1}), q \rrangle \\
&\quad  + \Delta t\, \llangle \DIV (\flux^{n,i,j} - \flux^{n,i,j-1}), q \rrangle
 = R_p^n(\u^{n,i-1}, \flux^{n,i,j-1}, p^{n,i,j-1}; q).
\end{align*}
In the second step, set $\u^{n,i,0} = \u^{n,i-1}$, and iterate over $k\geq 1$ until convergence: Given $\u^{n,i,k-1}\in \mathcal{V}^n$ and $(\flux^{n,i},p^{n,i})\in \mathcal{Z}^n \times \tilde{\mathcal{Q}}$, find $\u^{n,i,k}\in \mathcal{V}^n$, satisfying for all $\v \in \mathcal{V}_0$
\begin{align*}
 \llangle \GRAD^2 W(\eps{\u^{n,i,k-1}}) \eps{\u^{n,i,k} - \u^{n,i,k-1}}, \eps{\v} \rrangle = R_u^n(\u^{n,i,k-1}, \flux^{n,i}, p^{n,i}; \v).
\end{align*}

\paragraph{L-scheme-based fixed-stress split.} Having in mind the inexact solution of non-linear subproblems, and motivated by the fact, that any fixed-stress split at most is linearly convergent, we disregard Newton's method and choose a very simple linearization instead -- the so-called L-scheme, which employs a constant Jacobian. Let $L_b$, $L_\mathrm{FS}\geq 0$ and $\mathbb{L}\in\mathbb{R}^{d \times d \times d \times d}$ symmetric positive definite (in the same sense as $\mathbb{C}$). In the first step, set $(\flux^{n,i,0},p^{n,i,0})=(\flux^{n,i-1},p^{n,i-1})$, and iterate over $j\geq 1$ until convergence: Given $\u^{n,i-1}\in \mathcal{V}^n$ and $(\flux^{n,i,j-1},p^{n,i,j-1})\in \mathcal{Z}^n \times \tilde{\mathcal{Q}}$, find $(\flux^{n,i,j},p^{n,i,j}) \in \mathcal{Z}^n \times \tilde{\mathcal{Q}}$, satisfying for all $(\z,q)\in\mathcal{Z}_0 \times \tilde{\mathcal{Q}}$
\begin{align*}
&\llangle \permeability^{-1}\flux^{n,i,j}, \z \rrangle - \llangle p^{n,i,j}, \DIV \z \rrangle = \mathcal{P}^n_\mathrm{ext,fluid}(\z),\\
&\llangle  (L_b + L_\mathrm{FS}) (p^{n,i,j} - p^{n,i,j-1}), q \rrangle + \Delta t\, \llangle \DIV (\flux^{n,i,j} - \flux^{n,i,j-1}), q \rrangle\\
&\qquad = R_p^n(\u^{n,i-1}, \flux^{n,i,j-1}, p^{n,i,j-1}; q).
\end{align*}
In the second step, set $\u^{n,i,0} = \u^{n,i-1}$, and iterate over $k\geq 1$ until convergence: Given $\u^{n,i,k-1}\in \mathcal{V}^n$ and $(\flux^{n,i},p^{n,i})\in \mathcal{Z}^n \times \tilde{\mathcal{Q}}$, find $\u^{n,i,k}\in \mathcal{V}^n$, satisfying for all $\v \in \mathcal{V}_0$
\begin{align*}
 \llangle \mathbb{L} \eps{\u^{n,i,k} - \u^{n,i,k-1}}, \eps{\v} \rrangle = R_u^n(\u^{n,i,k-1}, \flux^{n,i}, p^{n,i}; \v).
\end{align*}
Following previous studies on the L-scheme, cf., e.g.~\cite{Borregales2018,List2016}, choosing $L_b,L_\mathrm{FS},\mathbb{L}$ sufficiently large may be expected to yield robust convergence. For instance, for Lipschitz continuous non-linearities, the Lipschitz constants are suitable candidates; or solution-dependent choices as $L_b = \underset{x,t}{\mathrm{max}}\, |b'(p(x,t))|$ and $L_\mathrm{FS}=\tfrac{\alpha^2}{\underset{x,t}{\mathrm{min}}\, K_\mathrm{dr}(\eps{\u(x,t)})}$.

\begin{remark}[Inexact fixed-stress splits] By choosing coarse tolerances or applying only a fixed amount of non-linear iterations in each of the two steps yields an inexact version of the fixed-stress split. In particular, for strongly coupled problems, one can expect that inexact fixed-stress splits are potentially more efficient than the exact fixed-stress split.
\end{remark}

\section{Robust splitting schemes for discrete thermo-poro-elasticity}\label{section:splitting-thermo-poro-elasticity}

As a result of the gradient flow structure of linear thermo-poro-elasticity, cf.\ Sec.~\ref{section:thermo-poro-elasticity-gradient-flow}, robust iterative splitting schemes can be derived for the implicit Euler time-discrete approximation employing the workflow visualized in Fig.~\ref{figure:procedure-derivation-splitting}. We in particular observe that semi-discrete, linear thermo-poro-elasticity can be formulated as vectorized, semi-discrete, linear poro-elasticity, similar to linear poro-visco-elasticity (Sec.~\ref{section:splitting-poro-visco-elasticity}). Thus, technicalities can be immediately adopted from linear poro-elasticity including the construction of a dual problem, the derivation of two-stage splitting schemes and their analyses. After all, we identify the recently proposed undrained-adiabatic and extended fixed-stress splits proposed for non-linear thermo-poro-elasticity~\cite{Kim2018a} as alternating minimization. This new perspective endows the originally physically motivated schemes with mathematical justification. Motivated by the three-way coupling of thermo-poro-elasticity, we also derive a novel, robust three-stage splitting scheme by applying a cyclic three-block coordinate descent method. Finally, we close the section, commenting on possible applications of the splitting schemes to non-linear thermo-poro-elasticity including for instance thermal convection.



\subsection{Minimization formulations for time-discrete linear thermo-poro-elasticity}

Following the abstract workflow visualized in Fig.~\ref{figure:procedure-derivation-splitting}, we introduce a primal and a dual formulation for time-discrete linear thermo-poro-elasticity. In the second part of this section, both formulations will serve as bases for the derivation of practical operator splitting schemes. 

\subsubsection{Primal formulation of time-discrete linear thermo-poro-elasticity}

The primal formulation of time-discrete, linear thermo-poro-elasticity is obtained by applying the minimizing movement scheme to the time-continuous model~\eqref{thermo-poro-elasticity:gradient-flow:start}--\eqref{thermo-poro-elasticity:gradient-flow:end}. Similar to the case of semi-discrete poro-visco-elasticity, the resulting formulation can be interpreted as vectorized version of the primal formulation of time-discrete, linear poro-elasticity, but now with a vectorized flow problem --  a key characteristic which will be utilized throughout the entire section. For this, we introduce a tensorial diffusion, compressibility and Biot coefficient, respectively, by
\begin{align*}
 \mathbf{K}_\mathrm{T} &:= \begin{bmatrix} \permeability & \bm{0} \\ \bm{0} & \frac{\conductivity}{T_0} \end{bmatrix}, \qquad \mathbf{M}_\mathrm{T}^{-1} := \begin{bmatrix} \tfrac{1}{M} & -3\alpha_\phi \\ -3\alpha_\phi & \tfrac{C_\mathrm{d}}{T_0} \end{bmatrix}, \qquad
 \bm{\alphamatrix}_\mathrm{T} := \begin{bmatrix} \alpha \\ 3 \alpha_\mathrm{T} K_\mathrm{dr} \end{bmatrix}.
\end{align*}
Let the spaces $\mathcal{V}^n$ and $\mathcal{Z}^n$ be as defined in~\eqref{primal-discrete-poro-elasticity-spaces:start}--\eqref{primal-discrete-poro-elasticity-spaces:end}, and define additionally
\begin{align*}
 \mathcal{W}^n &:= \left\{ \w \in H(\mathrm{div}; \Omega) \,\left|\, \w\cdot\n = j_\Gamma^n\text{ on }\Gamma_{\entropyflux} \right. \right\}.
\end{align*}
Finally, we state the time-discrete, primal formulation for time step $n\geq 1$: Given $\fluidmass^{n-1}$ and $S^{n-1}$, define $(\u^n,\flux^n,\entropyflux^n)\in\mathcal{V}^n\times\mathcal{Z}^n\times\mathcal{W}^n$ to be the solution of the minimization problem
\begin{align}
 \label{primal-minimization:thermoporo:start}
 &(\u^n,\flux^n,\entropyflux^n) 
 := \underset{(\u,\flux,\entropyflux)\in\mathcal{V}^n\times\mathcal{Z}^n\times\mathcal{W}^n}{\mathrm{arg\,min}}\,\mathcal{E}_\mathrm{th,tot}^{\Delta t}(\fluidmass^{n-1},S^{n-1};\u,\flux,\entropyflux),
\end{align}
where
\begin{align}
 \nonumber
 &\mathcal{E}_\mathrm{th,tot}^{\Delta t}(\fluidmass^{n-1},S^{n-1};\u,\flux,\entropyflux) \\
\nonumber
&\quad :=
 \frac{1}{2} \llangle \mathbb{C} \eps{\u}, \eps{\u} \rrangle
 + \frac{\Delta t}{2} \llangle \mathbf{K}_\mathrm{T}^{-1} \begin{bmatrix} \flux \\ \entropyflux \end{bmatrix}, \begin{bmatrix} \flux \\ \entropyflux \end{bmatrix} \rrangle \\
\nonumber
 &\quad+ \frac{1}{2} \llangle \mathbf{M}_\mathrm{T} \left(\begingroup\setlength\arraycolsep{0pt}\begin{bmatrix} \fluidmass^{n-1} &+ \Delta t\, \masssource^n \\ S^{n-1} &+ \Delta t\, \entropysource^n \end{bmatrix}\endgroup - \Delta t\, \begin{bmatrix} \DIV\flux \\ \DIV\entropyflux \end{bmatrix}  - \left(\bm{\alphamatrix}_\mathrm{T} \otimes \mathbf{I} \right) : \eps{\u} \right), \right.\\
\nonumber
 &\quad \qquad \qquad \quad\,\hspace{0.025cm} \left.
\begingroup\setlength\arraycolsep{0pt}\begin{bmatrix} \fluidmass^{n-1} &+ \Delta t\, \masssource^n \\ S^{n-1} &+ \Delta t\, \entropysource^n \end{bmatrix}\endgroup - \Delta t\,  \begin{bmatrix} \DIV\flux \\ \DIV\entropyflux \end{bmatrix}  - \left(\bm{\alphamatrix}_\mathrm{T} \otimes \mathbf{I} \right) : \eps{\u}\rrangle \\
\nonumber
 &\quad- \mathcal{P}_\mathrm{ext,mech}^n(\u)- \Delta t \, \mathcal{P}_\mathrm{ext,fluid}^n(\flux) - \Delta t \, \mathcal{P}_\mathrm{ext,temp}^n(\entropyflux),
\end{align}
and set 
\begin{align*}
 \begin{bmatrix}\fluidmass^n \\ S^n \end{bmatrix}  &:= \begingroup \setlength\arraycolsep{0pt}\begin{bmatrix} \fluidmass^{n-1} &+ \Delta t\, \masssource^n \\ S^{n-1} &+ \Delta t\,\entropysource^n \end{bmatrix} \endgroup - \Delta t \, \begin{bmatrix} \DIV \flux^n \\ \DIV \entropyflux^n \end{bmatrix}.
\end{align*}
For $\mathbf{K}_\mathrm{T}$ and $\mathbf{M}_\mathrm{T}$ positive definite, the resulting minimization problem is strictly convex and coercive; existence and uniqueness of a solution to~\eqref{primal-minimization:thermoporo:start} follow by classical results from convex analysis, cf.\ Thm.~\ref{appendix:well-posedness:convex-minimization}.

\subsubsection{Dual formulation of time-discrete linear thermo-poro-elasticity}

Given the primal formulation of time-discrete, linear thermo-poro-elasticity in vectorized form, we utilize the insights gained from linear poro-elasticity and poro-visco-elasticity and impose a corresponding dual formulation. First, we introduce natural dual variables: the total stress $\stress$, the fluid pressure $p$ and the temperature of the bulk $T$, formally related to the primal variables by
\begin{align*}
 \stress &= \mathbb{C} \left( \eps{\u} - \left(\mathbf{I} \otimes \bm{\alphamatrix}_\mathrm{T} \right) : \begin{bmatrix} p \\ T \end{bmatrix}\right),\\
 \begin{bmatrix} p \\ T \end{bmatrix} &= \mathbf{M}_\mathrm{T} \left(\begingroup \setlength\arraycolsep{0pt} \begin{bmatrix} \fluidmass^{n-1} & +\Delta t\, \masssource^n \\ S^{n-1} &+\Delta t\, \entropysource^n \end{bmatrix}\endgroup - \Delta t \begin{bmatrix} \DIV \flux \\ \DIV \entropyflux \end{bmatrix} - \left( \bm{\alphamatrix}_\mathrm{T} \otimes \mathbf{I} \right) : \eps{\u} \right).
\end{align*}
For fixed time step $n$, we introduce suitable trial and test function spaces
\begin{align*}
 \mathcal{R}^n &:= \left\{ r\in H^1(\Omega) \,\left|\, r=T_\Gamma^n \text{ on }\Gamma_T \right. \right\}, \\
 \mathcal{R}_0 &:= \left\{ r\in H^1(\Omega) \,\left|\, r=0 \text{ on }\Gamma_T \right. \right\},
\end{align*}
corresponding to the temperature variable. Then $\mathcal{S}^n \times \mathcal{Q}^n \times \mathcal{R}^n$ yields a suitable function space for the dual variables $(\stress,p,T)$.

Finally, the dual formulation of time-discrete, linear thermo-poro-elasticity for time step $n\geq 1$ reads: Given $(\stress^{n-1},p^{n-1},T^{n-1})\in\mathcal{S}^{n-1}\times\mathcal{Q}^{n-1}\times\mathcal{R}^{n-1}$, set 
\begin{align*}
 \bm{\varepsilon}_{\u}^{n-1} &:= \mathbb{A}\left(\stress^{n-1} + \left(\mathbf{I} \otimes \bm{\alphamatrix}_\mathrm{T}\right) : \begin{bmatrix} p^{n-1} \\ T^{n-1} \end{bmatrix}\right), \\
 \begin{bmatrix}\fluidmass^{n-1} \\ S^{n-1} \end{bmatrix} &:= \mathbf{M}_\mathrm{T}^{-1} \begin{bmatrix} p^{n-1} \\ T^{n-1} \end{bmatrix} + \left(\bm{\alphamatrix}_\mathrm{T} \otimes \mathbf{I} \right) : \eps{\u^{n-1}},
\end{align*}
and define $(\stress^{n},p^{n},T^n)\in\mathcal{S}^n\times\mathcal{Q}^n\times\mathcal{R}^n$ to be the solution of the block-separable, constrained minimization problem
\begin{align}
\label{dual-minimization:thermoporo:start}
  &(\stress^{n},p^{n},T^n) := \underset{(\stress,p,T)\in\mathcal{S}^n\times\mathcal{Q}^n\times\mathcal{R}^n}{\mathrm{arg\,min}}\,\mathcal{E}_\mathrm{th,tot}^{\star,\Delta t}(\fluidmass^{n-1}, S^{n-1};\stress,p,T), \quad \text{where}\\[0.5em]
  \nonumber
 &\mathcal{E}_\mathrm{th,tot}^{\star,\Delta t}(\fluidmass^{n-1},S^{n-1};\stress,p,T) \\
 \nonumber
 &\quad :=
 \frac{1}{2} \llangle \mathbb{A} \left(\stress + \left(\mathbf{I} \otimes \bm{\alphamatrix}_\mathrm{T}\right) : \begin{bmatrix} p \\ T \end{bmatrix}\right),\stress + \left(\mathbf{I} \otimes \bm{\alphamatrix}_\mathrm{T}\right) : \begin{bmatrix} p \\ T \end{bmatrix} \rrangle \\
 \nonumber
 &\quad+ \frac{1}{2} \llangle \mathbf{M}_\mathrm{T}^{-1} \begin{bmatrix} p \\ T \end{bmatrix}, \begin{bmatrix} p \\ T  \end{bmatrix} \rrangle 
 + \frac{\Delta t}{2} \llangle \mathbf{K}_\mathrm{T} \begin{bmatrix} \GRAD p - \gext^n \\ \GRAD T \end{bmatrix} , \begin{bmatrix} \GRAD p - \gext^n \\ \GRAD T \end{bmatrix}  \rrangle \\
 \nonumber
 &\quad
 - \llangle  \u_\Gamma^n, \stress\bm{n} \rrangle_{\Gamma_{\u}}
 -\llangle \begingroup \setlength\arraycolsep{0pt}\begin{bmatrix}\fluidmass^{n-1} &+ \Delta t\, \masssource^n \\ S^{n-1} &+ \Delta t \, \entropysource^n \end{bmatrix}\endgroup, \begin{bmatrix} p \\ T \end{bmatrix} \rrangle  - \Delta t \llangle  q_\mathrm{\Gamma,n}^n, p \rrangle_{\Gamma_{\flux}} - \Delta t \llangle j_\mathrm{F,\Gamma}^n, T \rrangle_{\Gamma_{\entropyflux}}.
\end{align}
The minimization problem is strictly convex and the feasible set is non-empty and convex; existence and uniqueness of a solution to~\eqref{dual-minimization:thermoporo:start} follow by classical results from convex analysis, cf.\ Thm.~\ref{appendix:well-posedness:convex-minimization}.

\subsection{Splitting schemes for linear thermo-poro-elasticity derived as alternating minimization}

Due to the convexity properties, any cyclic block coordinate descent method applied to either the primal or the dual formulation, which respects the block structure of the problem, is globally convergent~\cite{Grippof1999,Grippo2000}; in particular two- and three-block coordinate descent methods, decoupling the fully-coupled problem into its physical subproblems. Based on that fact, we derive the undrained-adiabatic and extended fixed-stress splits~\cite{Kim2018a} as two-block coordinate descent methods,  following the abstract workflow, cf.\ Fig.~\ref{figure:procedure-derivation-splitting}, and additionally propose a robust three-block coordinate descent method for linear thermo-poro-elasticity. Theoretical convergence can be showed by adjusting the proofs for the corresponding results in the context of linear poro-elasticity.

\subsubsection{Undrained-adiabatic split based on primal thermo-poro-elasticity}

Applying alternating minimization to the primal formulation of semi-discrete thermo-poro-elasticity yields a generalized undrained split, decoupling the mechanics problem from the rest. For this, the primal variables corresponding to the fluid flow and thermal subproblems are considered a single block, cf.\ Alg.~\ref{algorithm:undrained-adiabatic-split} for a single iteration of the resulting scheme.
\begin{algorithm}
 \caption{Single iteration of undrained-adiabatic split} 
 \label{algorithm:undrained-adiabatic-split}
 \SetAlgoLined
 \DontPrintSemicolon
 
  \vspace{0.5em}
 
  Input: $(\u^{n,i-1},\flux^{n,i-1},\entropyflux^{n,i-1})\in\mathcal{V}^n\times\mathcal{Z}^n\times\mathcal{W}^n$ \; \vspace{0.8em}
   
  Determine $\u^{n,i} := \underset{\u\in\mathcal{V}^n}{\mathrm{arg\,min}}\, \mathcal{E}_\mathrm{th,tot}^{\Delta t}(\fluidmass^{n-1},S^{n-1};\u,\flux^{n,i-1},\entropyflux^{n,i-1})$\; \vspace{0.25em}

  Determine $(\flux^{n,i},\entropyflux^{n,i}) := \underset{(\flux,\entropyflux)\in\mathcal{Z}^n\times\mathcal{W}^n}{\mathrm{arg\,min}}\, \mathcal{E}_\mathrm{th,tot}^{\Delta t}(\fluidmass^{n-1},S^{n-1};\u^{n,i},\flux,\entropyflux)$\; \vspace{0.2em}
\end{algorithm}

By construction, the resulting splitting scheme is equivalent to a predictor-corrector method, solving the mechanics problem under undrained and adiabatic conditions in the predictor step. This is equivalent to the stabilized mechanics problem: Find $\u^{n,i}\in\mathcal{V}^n$ satisfying for all $\v \in \mathcal{V}_0$
\begin{align}
\nonumber
  \llangle \mathbb{C}\eps{\u^{n,i}}, \eps{\v} \rrangle 
  + \llangle \bm{\alphamatrix}_\mathrm{T}^\top \mathbf{M}_\mathrm{T} \bm{\alphamatrix}_\mathrm{T} \  \mathrm{tr}\, \eps{\u^{n,i} - \u^{n,i-1}}, \mathrm{tr}\, \eps{\v} \rrangle &\\
  \nonumber
  - \llangle \bm{\alphamatrix}_\mathrm{T}^\top \begin{bmatrix} p^{n,i-1} \\ T^{n,i-1} \end{bmatrix}, \DIV \v \rrangle
 = \mathcal{P}_\mathrm{ext,mech}^n(\v),
\end{align}
where we formally abbreviated the fluid pressure and temperature by
\begin{align*}
 \begin{bmatrix}
  p^{n,i-1} \\
  T^{n,i-1} 
 \end{bmatrix}
 :=
 \mathbf{M}_\mathrm{T} \left(\begingroup \setlength\arraycolsep{0pt} \begin{bmatrix} \fluidmass^{n-1}&+\Delta t\, \masssource^n \\ S^{n-1} &+\Delta t\, \entropysource^n \end{bmatrix} \endgroup - \Delta t \begin{bmatrix} \DIV \flux^{n,i-1} \\ \DIV \entropyflux^{n,i-1} \end{bmatrix} - \left( \bm{\alphamatrix}_\mathrm{T} \otimes \mathbf{I} \right) : \eps{\u^{n,i-1}} \right).
\end{align*}
For homogeneous, isotropic materials, the stabilization equals
\begin{align*}
  \left(M\alpha^2 + 9 \tfrac{\left(\alpha_\mathrm{T} K_\mathrm{dr} + M\alpha\alpha_\phi \right)^2}{\tfrac{C_\mathrm{d}}{T_0} - 9M\alpha_\phi^2} \right)\, \llangle \DIV (\u^{n,i} - \u^{n,i-1}),\DIV \v \rrangle.
\end{align*} 
The second step of Alg.~\ref{algorithm:undrained-adiabatic-split} (the corrector step) is equivalent to solving the unmodified, coupled fluid flow and thermal subproblems with updated displacement. 
After all, the resulting stabilization term is identical with that employed within the undrained-adiabatic split for thermo-poro-elasticity including thermal convection~\cite{Kim2018a}. 

By adopting the ideas of the proof for the undrained split for poro-elasticity, cf.\ Lemma~\ref{lemma:undrained-split:convergence-rate}, to vectorized poro-elasticity, analogous convergence results can be deduced for the undrained-adiabatic split for linear thermo-poro-elasticity. 

\begin{corollary}[Linear convergence of the undrained-adiabatic split]
 The undrained-adiabatic split for linear thermo-poro-elasticity converges linearly, independent of the initial guess . Let $\errorentropyflux^{n,i}:=\entropyflux^{n,i} - \entropyflux^n$, $n,i\in\mathbb{N}$, and let $|||\cdot|||$ denote the norm induced by the quadratic part of $\mathcal{E}_\mathrm{th,tot}^{\Delta t}$. Let $K_\mathrm{dr}^\star$ as in~\eqref{proof:poro:undrained:kdr}. It holds the \textit{a priori} result
\begin{align*}
  \left|\left|\left| (\erroru^{n,i},\errorflux^{n,i},\errorentropyflux^{n,i}) \right|\right|\right| 
  &\leq  
  \left(\frac{\tfrac{|\bm{\alpha}_\mathrm{T}|^2}{K_\mathrm{dr}^\star}}{\tfrac{|\bm{\alpha}_\mathrm{T}|^2}{\bm{\alpha}_\mathrm{T}^\top \mathbf{M}_\mathrm{T} \bm{\alpha}_\mathrm{T}} + \tfrac{|\bm{\alpha}_\mathrm{T}|^2}{K_\mathrm{dr}^\star}}\right)^i 
  \left( \mathcal{E}^{n,0} - \mathcal{E}^n \right)^{1/2},
 \end{align*}
 where $\mathcal{E}^{n,0}$ and $\mathcal{E}^n$ are the energies of the initial iterate and the solution, resp.
\end{corollary}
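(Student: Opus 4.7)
The plan is to mirror the proof of Lemma~\ref{lemma:undrained-split:convergence-rate} almost verbatim, exploiting that the undrained-adiabatic split is two-block alternating minimization on the primal energy $\mathcal{E}_\mathrm{th,tot}^{\Delta t}$ with the blocks $\{\u\}$ and $\{\flux,\entropyflux\}$. Accordingly, I would invoke the abstract result Lemma~\ref{appendix:lemma:alternating-minimization} and reduce the corollary to checking strong convexity of $\mathcal{E}_\mathrm{th,tot}^{\Delta t}$ and Lipschitz continuity of its partial gradients in two suitable semi-norms on $\mathcal{V}_0$ and $\mathcal{Z}_0 \times \mathcal{W}_0$, respectively.

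Concretely, I would introduce
\begin{align*}
 \|(\u,\flux,\entropyflux)\|_{T,1,\Delta t}^2 &:= \llangle \mathbb{C}\eps{\u},\eps{\u}\rrangle, \\
 \|(\u,\flux,\entropyflux)\|_{T,2,\Delta t}^2 &:= \Delta t \llangle \mathbf{K}_\mathrm{T}^{-1} \begin{bmatrix}\flux\\\entropyflux\end{bmatrix}, \begin{bmatrix}\flux\\\entropyflux\end{bmatrix}\rrangle + \Delta t^2 \left(\tfrac{|\bm{\alphamatrix}_\mathrm{T}|^2}{\bm{\alphamatrix}_\mathrm{T}^\top \mathbf{M}_\mathrm{T} \bm{\alphamatrix}_\mathrm{T}} + \tfrac{|\bm{\alphamatrix}_\mathrm{T}|^2}{K_\mathrm{dr}^\star}\right)^{-1} \left\|\DIV\begin{bmatrix}\flux\\\entropyflux\end{bmatrix}\right\|^2,
\end{align*}
constructed so that the effective scalar compressibility $1/M$ in Lemma~\ref{lemma:undrained-split:convergence-rate} is replaced by $|\bm{\alphamatrix}_\mathrm{T}|^2/(\bm{\alphamatrix}_\mathrm{T}^\top \mathbf{M}_\mathrm{T} \bm{\alphamatrix}_\mathrm{T})$ and the effective Biot coefficient by $|\bm{\alphamatrix}_\mathrm{T}|^2$. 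Strong convexity wrt.\ $\|\cdot\|_{T,1,\Delta t}$ is immediate, while strong convexity wrt.\ $\|\cdot\|_{T,2,\Delta t}$ follows by expanding the coupling term $\|\mathbf{M}_\mathrm{T}^{1/2}(\Delta t\,\DIV[\flux,\entropyflux]^\top + (\bm{\alphamatrix}_\mathrm{T} \otimes \mathbf{I}):\eps{\u})\|^2$, bounding the cross term by weighted Young's inequality, and using the vectorized variant of~\eqref{proof:poro:undrained:kdr}, namely
\begin{align*}
 \|(\bm{\alphamatrix}_\mathrm{T} \otimes \mathbf{I}):\eps{\u}\|^2 = |\bm{\alphamatrix}_\mathrm{T}|^2 \|\DIV \u\|^2 \leq \tfrac{|\bm{\alphamatrix}_\mathrm{T}|^2}{K_\mathrm{dr}^\star}\llangle \mathbb{C}\eps{\u},\eps{\u}\rrangle.
\end{align*}
Lipschitz continuity of the partial gradients then reduces to verifying the ratios, as in Lemma~\ref{lemma:undrained-split:convergence-rate}, yielding Lipschitz constants $L_1=L_2 = 1 + \tfrac{\bm{\alphamatrix}_\mathrm{T}^\top \mathbf{M}_\mathrm{T} \bm{\alphamatrix}_\mathrm{T}}{K_\mathrm{dr}^\star}$. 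Plugging these into Lemma~\ref{appendix:lemma:alternating-minimization} and using that $\mathcal{E}_\mathrm{th,tot}^{\Delta t}$ is quadratic with $\GRAD\mathcal{E}_\mathrm{th,tot}^{\Delta t}(\u^n,\flux^n,\entropyflux^n)=\bm{0}$ produces the asserted geometric decay.

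The main obstacle I anticipate is getting the precise balancing constants in Young's inequality to reproduce exactly the rate quoted in the corollary; the scalar algebra from Lemma~\ref{lemma:undrained-split:convergence-rate} has to be carried out with the symmetric positive-definite matrix $\mathbf{M}_\mathrm{T}$ and the vector $\bm{\alphamatrix}_\mathrm{T}$, treating $\bm{\alphamatrix}_\mathrm{T}^\top \mathbf{M}_\mathrm{T} \bm{\alphamatrix}_\mathrm{T}$ as the effective scalar storativity and $|\bm{\alphamatrix}_\mathrm{T}|$ as the effective Biot coupling. Once this identification is made, all steps are direct transcriptions of the corresponding estimates in Lemma~\ref{lemma:undrained-split:convergence-rate}, and no new functional-analytic difficulty arises since $\mathbf{M}_\mathrm{T}$ is positive definite by the physical assumption $C_\mathrm{d}/T_0 > 9M\alpha_\phi^2$ implicit in the modelling.
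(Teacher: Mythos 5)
Your overall strategy is exactly the one the paper intends: the paper gives no detailed proof of this corollary, it only points to the proof of Lemma~\ref{lemma:undrained-split:convergence-rate} "adopted to vectorized poro-elasticity", and your plan (two-block alternating minimization, two semi-norms, strong convexity plus blockwise Lipschitz continuity, then Lemma~\ref{appendix:lemma:alternating-minimization}) is that transcription. Your first semi-norm and the bound $\|(\bm{\alphamatrix}_\mathrm{T}\otimes\mathbf{I}):\eps{\u}\|^2\leq \tfrac{|\bm{\alphamatrix}_\mathrm{T}|^2}{K_\mathrm{dr}^\star}\llangle\mathbb{C}\eps{\u},\eps{\u}\rrangle$ are fine, and $L_1=1+\tfrac{\bm{\alphamatrix}_\mathrm{T}^\top\mathbf{M}_\mathrm{T}\bm{\alphamatrix}_\mathrm{T}}{K_\mathrm{dr}^\star}$ indeed yields $1-1/L_1$ equal to the factor in the corollary.

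The gap is in your second semi-norm. You weight the divergence pair isotropically by the scalar $\bigl(\tfrac{|\bm{\alphamatrix}_\mathrm{T}|^2}{\bm{\alphamatrix}_\mathrm{T}^\top\mathbf{M}_\mathrm{T}\bm{\alphamatrix}_\mathrm{T}}+\tfrac{|\bm{\alphamatrix}_\mathrm{T}|^2}{K_\mathrm{dr}^\star}\bigr)^{-1}$, but the flux-block part of $\GRAD\mathcal{E}_\mathrm{th,tot}^{\Delta t}$ produces the term $\Delta t^2\llangle\mathbf{M}_\mathrm{T}\,[\DIV\flux;\DIV\entropyflux],[\DIV\flux;\DIV\entropyflux]\rrangle$, which is only controlled by $\lambda_{\max}(\mathbf{M}_\mathrm{T})$ times the Euclidean norm. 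The claimed $L_2=1+\tfrac{\bm{\alphamatrix}_\mathrm{T}^\top\mathbf{M}_\mathrm{T}\bm{\alphamatrix}_\mathrm{T}}{K_\mathrm{dr}^\star}$ would require $\lambda_{\max}(\mathbf{M}_\mathrm{T})\leq \bm{\alphamatrix}_\mathrm{T}^\top\mathbf{M}_\mathrm{T}\bm{\alphamatrix}_\mathrm{T}/|\bm{\alphamatrix}_\mathrm{T}|^2$, i.e.\ that $\bm{\alphamatrix}_\mathrm{T}$ be a dominant eigenvector of $\mathbf{M}_\mathrm{T}$, which fails in general (e.g.\ $\mathbf{M}_\mathrm{T}=\mathrm{diag}(1,100)$, $\bm{\alphamatrix}_\mathrm{T}=(1,0)^\top$ gives a ratio of order $100$ instead of $2$). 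The correct vectorization of the scalar weight $(\tfrac{1}{M}+\tfrac{\alpha^2}{K_\mathrm{dr}^\star})^{-1}$ is the \emph{matrix} weight
\begin{align*}
 \left(\mathbf{M}_\mathrm{T}^{-1}+\frac{\bm{\alphamatrix}_\mathrm{T}\bm{\alphamatrix}_\mathrm{T}^\top}{K_\mathrm{dr}^\star}\right)^{-1}
 =
 \mathbf{M}_\mathrm{T}-\frac{\mathbf{M}_\mathrm{T}\bm{\alphamatrix}_\mathrm{T}\bm{\alphamatrix}_\mathrm{T}^\top\mathbf{M}_\mathrm{T}}{K_\mathrm{dr}^\star+\bm{\alphamatrix}_\mathrm{T}^\top\mathbf{M}_\mathrm{T}\bm{\alphamatrix}_\mathrm{T}},
\end{align*}
applied to $\Delta t\,[\DIV\flux;\DIV\entropyflux]$. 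With this choice the Young-balanced strong-convexity estimate (the analogue of~\eqref{proof:poro:undrained:balancing-young}, with $\epsilon=K_\mathrm{dr}^\star+\bm{\alphamatrix}_\mathrm{T}^\top\mathbf{M}_\mathrm{T}\bm{\alphamatrix}_\mathrm{T}$) gives $\sigma_2=1$, and a Rayleigh-quotient computation on $\mathbf{M}_\mathrm{T}^{1/2}\bigl(\mathbf{I}-\tfrac{\bm{w}\bm{w}^\top}{K_\mathrm{dr}^\star+|\bm{w}|^2}\bigr)\mathbf{M}_\mathrm{T}^{1/2}$ with $\bm{w}=\mathbf{M}_\mathrm{T}^{1/2}\bm{\alphamatrix}_\mathrm{T}$ gives exactly $L_2=1+\tfrac{\bm{\alphamatrix}_\mathrm{T}^\top\mathbf{M}_\mathrm{T}\bm{\alphamatrix}_\mathrm{T}}{K_\mathrm{dr}^\star}$; the rest of your argument then goes through and yields the stated rate, since $1-1/L=\tfrac{|\bm{\alphamatrix}_\mathrm{T}|^2/K_\mathrm{dr}^\star}{|\bm{\alphamatrix}_\mathrm{T}|^2/(\bm{\alphamatrix}_\mathrm{T}^\top\mathbf{M}_\mathrm{T}\bm{\alphamatrix}_\mathrm{T})+|\bm{\alphamatrix}_\mathrm{T}|^2/K_\mathrm{dr}^\star}$.
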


\subsubsection{Extended fixed-stress split based on dual thermo-poro-elasticity}

A generalized fixed-stress split for thermo-poro-elasticity is derived by applying alternating minimization to the dual formulation of time-discrete thermo-poro-elasticity~\eqref{dual-minimization:thermoporo:start}. For this, the energy is successively minimized for fixed total stress, and simultaneously fixed fluid pressure and temperature variables; cf.\ Alg.~\ref{algorithm:extended-fixed-stress-split} for a single iteration of the resulting scheme. 
\begin{algorithm}
 \caption{Single iteration of the extended fixed-stress split}
 \label{algorithm:extended-fixed-stress-split}
 \SetAlgoLined
 \DontPrintSemicolon
 
  \vspace{0.5em}
 
  Input: $(\stress^{n,i-1},p^{n,i-1},T^{n,i-1})\in\mathcal{S}^n\times\mathcal{Q}^n\times\mathcal{R}^n$ \; \vspace{0.8em}
   
  Determine $(p^{n,i},T^{n,i}) := \underset{(p,T)\in\mathcal{Q}^n\times\mathcal{R}^n}{\mathrm{arg\,min}}\, \mathcal{E}_\mathrm{th,tot}^{\star, \Delta t}(\fluidmass^{n-1},S^{n-1};\stress^{n,i-1}, p, T)$\; \vspace{0.25em}

  Determine $\stress^{n,i} := \underset{\stress\in\mathcal{S}^n}{\mathrm{arg\,min}}\, \mathcal{E}_\mathrm{th,tot}^{\star, \Delta t}(\fluidmass^{n-1},S^{n-1};\stress, p^{n,i}, T^{n,i})$\; \vspace{0.2em}

\end{algorithm}

By construction the generalized fixed-stress split is equivalent to a predictor-corrector method, simultaneously solving the coupled fluid flow and thermal subproblems under fixed stress conditions in the predictor step. This is equivalent to the stabilized problem: Find $(p^{n,i},T^{n,i})\in\mathcal{Q}^n\times\mathcal{R}^n$ satisfying for all $(q,r)\in \mathcal{Q}_0 \times \mathcal{R}_0$
\begin{align*}
 &\llangle \mathbf{M}_\mathrm{T}^{-1}\begin{bmatrix} p^{n,i} \\ T^{n,i} \end{bmatrix} , \begin{bmatrix} q \\ r \end{bmatrix} \rrangle 
 +
 \llangle \bm{\alphamatrix}_\mathrm{T}\bm{\alphamatrix}_\mathrm{T}^\top\ \left(\mathbf{I} : \mathbb{A} : \mathbf{I}\right) \, \begin{bmatrix} p^{n,i} - p^{n,i-1} \\ T^{n,i} - T^{n,i-1} \end{bmatrix}, \begin{bmatrix} q \\ r \end{bmatrix} \rrangle\\
 &\quad 
 + \llangle \left( \bm{\alphamatrix}_\mathrm{T} \otimes \mathbf{I} \right) : \bm{\varepsilon}_{\u}^{n,i-1}, \begin{bmatrix} q \\r \end{bmatrix} \rrangle
 +
 \Delta t \llangle \mathbf{K}_\mathrm{T} \begin{bmatrix} \GRAD p^{n,i} - \gext^n \\ \GRAD T^{n,i} \end{bmatrix}, \begin{bmatrix} \GRAD q \\ \GRAD r \end{bmatrix} \rrangle\\
 &\quad 
 =
 \llangle \begingroup \setlength\arraycolsep{0pt} \begin{bmatrix} \fluidmass^{n-1} & + \Delta t\, \masssource^n \\ S^{n-1} &+\Delta t \, \entropysource^n \end{bmatrix} \endgroup, \begin{bmatrix} q \\ r \end{bmatrix}\rrangle 
 + \Delta t \llangle q_\mathrm{\Gamma,n}^n, q \rrangle_{\Gamma_{\flux}}
 + \Delta t \llangle j_\mathrm{\Gamma,n}^n, r \rrangle_{\Gamma_{\entropyflux}},
\end{align*}
where we used the formal abbreviation of the mechanical strain
\begin{align*}
 \bm{\varepsilon}_{\u}^{n,i-1} := \mathbb{A}\left(\stress^{n,i-1} + \left(\mathbf{I} \otimes \bm{\alphamatrix}_\mathrm{T} \right) : \begin{bmatrix} p^{n,i-1} \\ T^{n,i-1} \end{bmatrix} \right).
\end{align*}
A characteristic property: Tensorial stabilization is applied. For instance, for a homogeneous, isotropic material, the stabilization term equals
\begin{align*}
 \llangle \begingroup \setlength \arraycolsep{3pt} \begin{bmatrix} \tfrac{\alpha^2}{K_\mathrm{dr}} & 3\alpha\alpha_\mathrm{T} \\ 3\alpha\alpha_\mathrm{T} & 9 \alpha_\mathrm{T}^2 K_\mathrm{dr} \end{bmatrix} \endgroup \begin{bmatrix} p^{n,i} - p^{n,i-1} \\ T^{n,i} - T^{n,i-1} \end{bmatrix}, \begin{bmatrix} q \\ r \end{bmatrix} \rrangle.
 \end{align*}
The second step of Alg.~\ref{algorithm:extended-fixed-stress-split} (the corrector step) is equivalent to solving the unmodified, mechanical problem with updated pressure and temperature. The resulting stabilization terms are identical with those employed within the extended fixed-stress split for thermo-poro-elasticity with thermal convection~\cite{Kim2018a}.

By adopting the ideas of the proof for the undrained split for poro-elasticity, cf.\ Lemma~\ref{lemma:undrained-split:convergence-rate}, to vectorized poro-elasticity, analogous convergence results can be deduced for the undrained-adiabatic split for linear thermo-poro-elasticity.

As for the undrained-adiabatic split, the structural similarities to poro-elasticity allows for adopting the convergence results for the standard fixed-stress split, cf. Lemma~\ref{lemma:fixed-stress:convergence-rate}, and deduce analogous results for the extended fixed-stress split. For instance, without presenting the analogous proof, we state the generalized \textit{a priori} convergence result.

\begin{corollary}[Linear convergence of the extended fixed-stress split]
 The extended fixed-stress split for linear thermo-poro-elasticity converges linearly, independent of the initial guess. Let $\errortemperature^{n,i}:=T^{n,i} - T^n$, $n,i\in\mathbb{N}$, and let $|||\cdot|||_\star$ denote the norm induced by the quadratic part of $\mathcal{E}_\mathrm{th,tot}^{\star,\Delta t}$. Assume for brevity, $\permeability=\kappa \mathbf{I}$ and $\conductivity = \kappa_\mathrm{F} \mathbf{I}$ constant in space. It holds the \textit{a priori} result
\begin{align*}
  \left|\left|\left| (\errorstress^{n,i},\errorpressure^{n,i},\errortemperature^{n,i}) \right|\right|\right|_\star 
  &\leq  
  \left(\frac{\tfrac{|\bm{\alpha}_\mathrm{T}|^2}{K_\mathrm{dr}}}{\tfrac{\bm{\alpha}_\mathrm{T}^\top \left(\mathbf{M}_\mathrm{T}^{-1} + \Delta t\, C_\Omega^{-2} \begingroup \footnotesize \begin{bmatrix} \kappa & 0 \\ 0 & \tfrac{\kappa_\mathrm{F}}{T_0}\end{bmatrix}\endgroup \right)\bm{\alpha}_\mathrm{T}}{|\bm{\alpha}_\mathrm{T}|^2} + \tfrac{|\bm{\alpha}_\mathrm{T}|^2}{K_\mathrm{dr}}}\right)^i 
  \left( \mathcal{E}^{n,0} - \mathcal{E}^n \right)^{1/2},
 \end{align*}
 where $\mathcal{E}^{n,0}$ and $\mathcal{E}^n$ are the energies of the initial iterate and the solution, resp.
\end{corollary}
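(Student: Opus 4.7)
The plan is to mirror the proof of Lemma~\ref{lemma:fixed-stress:convergence-rate} (and its poro-visco-elastic extension, Lemma~\ref{lemma:porovisco:fixed-stress:convergence-rate}), exploiting the fact that linear thermo-poro-elasticity is structurally a vectorized version of linear poro-elasticity, in which the scalar fluid variable is replaced by the pair $(p,T)$ and the scalar Biot coefficient $\alpha$ by the vector $\bm{\alphamatrix}_\mathrm{T}$. Concretely, I will invoke Lemma~\ref{appendix:lemma:alternating-minimization} on the dual energy $\mathcal{E}_\mathrm{th,tot}^{\star,\Delta t}$ with the block splitting $\{(p,T)\}$ versus $\{\stress\}$, identified through alternating minimization with Alg.~\ref{algorithm:extended-fixed-stress-split}.

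First I would introduce two semi-norms on the respective blocks, generalizing those from the proof of Lemma~\ref{lemma:fixed-stress:convergence-rate}:
\begin{align*}
 \| (\stress,p,T) \|_{1,\mathrm{th},\star,\Delta t}^2
 &:= \llangle \mathbf{M}_\mathrm{T}^{-1} \begin{bmatrix} p \\ T \end{bmatrix}, \begin{bmatrix} p \\ T \end{bmatrix} \rrangle
 + \Delta t \llangle \mathbf{K}_\mathrm{T} \begin{bmatrix} \GRAD p \\ \GRAD T \end{bmatrix}, \begin{bmatrix} \GRAD p \\ \GRAD T \end{bmatrix} \rrangle, \\
 \| (\stress,p,T) \|_{2,\mathrm{th},\star,\Delta t}^2
 &:= \llangle \mathbb{A}\stress,\stress \rrangle
 - \gamma^\star\, \tfrac{1}{K_\mathrm{dr}}\| \sigma^\mathrm{h} \|^2,
\end{align*}
where $\gamma^\star$ is the target contraction factor appearing in the statement. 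Positive semi-definiteness of $\|\cdot\|_{2,\mathrm{th},\star,\Delta t}$ follows from the decomposition~\eqref{elasticity-compliance-tensor-quadratic} as in the poro-elastic case; positive definiteness of $\mathbf{M}_\mathrm{T}^{-1}$ (which is needed for $\|\cdot\|_{1,\mathrm{th},\star,\Delta t}$ to be coercive on $\mathcal{Q}_0\times\mathcal{R}_0$) is ensured by the thermodynamic consistency condition $\tfrac{C_\mathrm{d}}{MT_0}>9\alpha_\phi^2$.

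Next I would verify the two hypotheses of Lemma~\ref{appendix:lemma:alternating-minimization}: strong convexity of $\mathcal{E}_\mathrm{th,tot}^{\star,\Delta t}$ and Lipschitz continuity of its partial gradients with respect to the new semi-norms. Strong convexity with constant one in $\|\cdot\|_{1,\mathrm{th},\star,\Delta t}$ is immediate since the quadratic part of the dual energy dominates this semi-norm. The strong convexity in $\|\cdot\|_{2,\mathrm{th},\star,\Delta t}$ requires a vector-valued analog of~\eqref{proof:fixed-stress:strong-convexity-2}: expand the quadratic cross term $2\langle \mathbb{A}\stress, (\mathbf{I}\otimes \bm{\alphamatrix}_\mathrm{T})[p,T]^\top\rangle$, group it with the hydrostatic stress contribution and with the full quadratic form in $(p,T)$ augmented by the Poincar\'e-based diffusion term $\Delta t \,C_\Omega^{-2}\,\mathrm{diag}(\kappa,\kappa_\mathrm{F}/T_0)$, and apply a vectorial Cauchy--Young inequality to absorb the cross term optimally. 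The optimal weight precisely produces the tensorial quantity
\begin{align*}
 \bm{\alphamatrix}_\mathrm{T}^\top \left(\mathbf{M}_\mathrm{T}^{-1} + \tfrac{\Delta t}{C_\Omega^2}\begin{bmatrix} \kappa & 0 \\ 0 & \kappa_\mathrm{F}/T_0 \end{bmatrix}\right) \bm{\alphamatrix}_\mathrm{T}
\end{align*}
appearing in the denominator of the contraction rate. Lipschitz continuity of the two partial gradients with the same constant $L_1=L_2=1+\gamma^\star/(1-\gamma^\star)$ follows by the same ratio computation as in Lemma~\ref{lemma:fixed-stress:convergence-rate}, now with the tensorial coupling matrix $\bm{\alphamatrix}_\mathrm{T}\bm{\alphamatrix}_\mathrm{T}^\top$ replacing $\alpha^2$.

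The main obstacle I anticipate is the bookkeeping in the vectorial Young inequality: while in the scalar case a single weight balanced $\alpha p$ against $\sigma^\mathrm{h}$, here one must couple the two-component vector $\bm{\alphamatrix}_\mathrm{T}$ against the scalar $\sigma^\mathrm{h}$ in a way that is simultaneously optimal for extracting the contraction rate and preserves positivity of $\|\cdot\|_{2,\mathrm{th},\star,\Delta t}$. The correct choice is to view $\mathbf{M}_\mathrm{T}^{-1}+\tfrac{\Delta t}{C_\Omega^2}\,\mathrm{diag}(\kappa,\kappa_\mathrm{F}/T_0)$ as a symmetric positive definite matrix defining a weighted inner product on $\mathbb{R}^2$, and use its induced norm to bound $|\bm{\alphamatrix}_\mathrm{T}^\top[p,T]^\top|$; the anisotropy assumption $\permeability=\kappa\mathbf{I}$, $\conductivity=\kappa_\mathrm{F}\mathbf{I}$ in the statement is exactly what makes this inner product spatially constant. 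Once this vectorial balancing is carried out, application of Lemma~\ref{appendix:lemma:alternating-minimization} yields the energy contraction $\mathcal{E}^{n,i}-\mathcal{E}^n \leq \gamma^\star(\mathcal{E}^{n,i-1}-\mathcal{E}^n)$ and, via quadraticity of $\mathcal{E}_\mathrm{th,tot}^{\star,\Delta t}$ at its minimizer, the claimed bound on $|||(\errorstress^{n,i},\errorpressure^{n,i},\errortemperature^{n,i})|||_\star$. The corresponding \textit{a posteriori} estimate would follow by exactly the manipulation used at the end of Lemma~\ref{lemma:fixed-stress:convergence-rate}.
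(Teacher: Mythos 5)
Your overall strategy coincides with what the paper intends: the paper gives no proof of this corollary (it only remarks that the result is obtained by adopting the argument of Lemma~\ref{lemma:fixed-stress:convergence-rate} to the vectorized problem), and your plan -- apply Lemma~\ref{appendix:lemma:alternating-minimization} to $\mathcal{E}_\mathrm{th,tot}^{\star,\Delta t}$ with the blocks $(p,T)$ and $\stress$, using semi-norms that generalize those of the poro-elastic proof -- is exactly that route. The gap sits in the single step that carries the quantitative content. Carry out your ``vectorial Cauchy--Young balancing'' explicitly: with $B:=\mathbf{M}_\mathrm{T}^{-1}+\Delta t\,C_\Omega^{-2}\,\mathrm{diag}(\kappa,\kappa_\mathrm{F}/T_0)$ and $v=(p,T)$, one has to bound from below, pointwise, $\tfrac{2}{K_\mathrm{dr}}\sigma^\mathrm{h}(\bm{\alpha}_\mathrm{T}\cdot v)+\tfrac{1}{K_\mathrm{dr}}(\bm{\alpha}_\mathrm{T}\cdot v)^2+v^\top B v$. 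Minimizing over \emph{all} $v\in\mathbb{R}^2$ gives the minimizer $v=-\tfrac{1}{K_\mathrm{dr}}(\sigma^\mathrm{h}+\bm{\alpha}_\mathrm{T}\cdot v)\,B^{-1}\bm{\alpha}_\mathrm{T}$ and the value $-\tfrac{1}{K_\mathrm{dr}}\tfrac{c}{1+c}|\sigma^\mathrm{h}|^2$ with $c=K_\mathrm{dr}^{-1}\bm{\alpha}_\mathrm{T}^\top B^{-1}\bm{\alpha}_\mathrm{T}$; correspondingly the Lipschitz constants are $L_1=L_2=1+K_\mathrm{dr}^{-1}\bm{\alpha}_\mathrm{T}^\top B^{-1}\bm{\alpha}_\mathrm{T}$. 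In the normalized form of the statement this produces the denominator term $|\bm{\alpha}_\mathrm{T}|^2/(\bm{\alpha}_\mathrm{T}^\top B^{-1}\bm{\alpha}_\mathrm{T})$, \emph{not} the Rayleigh quotient $\bm{\alpha}_\mathrm{T}^\top B\,\bm{\alpha}_\mathrm{T}/|\bm{\alpha}_\mathrm{T}|^2$ of the corollary. By Cauchy--Schwarz, $|\bm{\alpha}_\mathrm{T}|^4\le(\bm{\alpha}_\mathrm{T}^\top B\bm{\alpha}_\mathrm{T})(\bm{\alpha}_\mathrm{T}^\top B^{-1}\bm{\alpha}_\mathrm{T})$, so the contraction factor your argument actually delivers is in general strictly \emph{larger} than the stated one; the two agree only when $\bm{\alpha}_\mathrm{T}$ is an eigenvector of $B$, which fails generically because of the off-diagonal entry $-3\alpha_\phi$ of $\mathbf{M}_\mathrm{T}^{-1}$ and the unequal diffusion entries.

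Concretely, two of your intermediate claims are therefore unjustified as written. First, strong convexity (with constant one) of $\mathcal{E}_\mathrm{th,tot}^{\star,\Delta t}$ with respect to your $\|\cdot\|_{2,\mathrm{th},\star,\Delta t}$ built from the target factor $\gamma^\star$ of the statement fails whenever $\gamma^\star<c/(1+c)$: testing with $(p,T)$ parallel to $B^{-1}\bm{\alpha}_\mathrm{T}$ violates the inequality. Second, the asserted Lipschitz constants $1+\gamma^\star/(1-\gamma^\star)$ underestimate the true suprema, which again involve $\bm{\alpha}_\mathrm{T}^\top B^{-1}\bm{\alpha}_\mathrm{T}$. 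The Rayleigh-quotient expression of the corollary is what you get if you restrict the perturbation $(p,T)$ to the span of $\bm{\alpha}_\mathrm{T}$, but that restriction is not admissible in the strong-convexity/Lipschitz steps since $(p,T)$ ranges over all of $\mathcal{Q}_0\times\mathcal{R}_0$. So your sketch, executed correctly, proves linear convergence with the $B^{-1}$-weighted factor -- a weaker per-iteration bound than the one stated -- and the assertion that the optimal weight ``precisely produces'' the stated tensorial quantity is exactly the missing (and, along the outlined route, unobtainable) step; to close it you must either supply a genuinely different argument for the stated constant or replace the constant by the $B^{-1}$-weighted one.
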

By the Cauchy-Schwarz inequality, the convergence rate of the extended fixed-stress split is lower than for the undrained-adiabatic split -- even for $\mathbf{K}_\mathrm{T}=\bm{0}$.


\subsubsection{Three-block coordinate descent for thermo-poro-elasticity}

By definition thermo-hydro-mechanical models couple three processes. Thus, in the context of splitting schemes, it is a natural ambition to decouple all three subproblems from each other -- with a potential benefit increase of the same kind as two-stage decoupling methods. Three-stage decoupling methods for thermo-poro-elasticity with thermal convection have been recently proposed by~\cite{Brun2019}, including a rigorous convergence analysis. In the following, we briefly demonstrate that similar methods can be derived by applying three-block coordinate descent methods, a natural generalization of alternating minimization.

Since both the primal and the dual formulations of linear thermo-poro-elasticity are block-separable and convex, any cyclic three-block coordinate descent is globally convergent which respects the block structure of the coupled problem, cf.~\cite{Grippof1999,Grippo2000}. We exemplarily state one candidate of six possible combinations based on the dual problem -- we solve successively for pressure, temperature and stress, cf.\ Alg.~\ref{algorithm:thermoporo:3block-dual} for a single iteration. Similarly, the primal problem can serve as basis; we choose an algorithm closer to the extended fixed-stress split expecting better performance.

\begin{algorithm}
 \caption{Single iteration of the three-block coordinate descent for dual thermo-poro-elasticity}
 \label{algorithm:thermoporo:3block-dual}
 \SetAlgoLined
 \DontPrintSemicolon
 
  \vspace{0.5em}
 
  Input: $(\stress^{n,i-1},p^{n,i-1},T^{n,i-1})\in\mathcal{S}^n\times\mathcal{Q}^n\times\mathcal{R}^n$ \; \vspace{0.8em}

  Determine $p^{n,i} := \underset{p\in\mathcal{Q}^n}{\mathrm{arg\,min}}\, \mathcal{E}_\mathrm{th,tot}^{\star,\Delta t}(\fluidmass^{n-1},S^{n-1};\stress^{n,i-1},p,T^{n,i-1})$\; \vspace{0.25em}
  
  Determine $T^{n,i} := \underset{T\in\mathcal{R}^n}{\mathrm{arg\,min}}\, \mathcal{E}_\mathrm{th,tot}^{\star,\Delta t}(\fluidmass^{n-1},S^{n-1};\stress^{n,i-1},p^{n,i},T)$\; \vspace{0.25em}
  
  Determine $\stress^{n,i} := \underset{\stress\in\mathcal{S}^n}{\mathrm{arg\,min}}\, \mathcal{E}_\mathrm{th,tot}^{\star,\Delta t}(\fluidmass^{n-1},S^{n-1};\stress,p^{n,i},T^{n,i})$\; \vspace{0.2em}

\end{algorithm}

The first step of Alg.~\ref{algorithm:thermoporo:3block-dual} is equivalent to solving a fluid flow problem with fixed-stress type pressure stabilization: Find $p^{n,i}\in\mathcal{Q}^n$ satisfying for all $q\in\mathcal{Q}_0$
\begin{align*}
 \tfrac{1}{M} \llangle p^{n,i}, q \rrangle 
 - 3\alpha_\phi \llangle T^{n,i-1}, q \rrangle
 + \tfrac{\alpha^2}{K_\mathrm{dr}} \llangle p^{n,i} - p^{n,i-1}, q \rrangle
 + \alpha \llangle \mathrm{tr}\, \bm{\varepsilon}_{\u}^{n,i-1}, q \rrangle & \\
 + \Delta t \llangle \permeability (\GRAD p^{n,i} - \gext^n), \GRAD q \rrangle 
 =
 \llangle \fluidmass^{n-1} + \Delta t\, \masssource^n , q \rrangle + \Delta t \llangle q_\mathrm{\Gamma,n}^n, q \rrangle_{\Gamma_{\flux}}.
\end{align*}
The second step of Alg.~\ref{algorithm:thermoporo:3block-dual} is equivalent to solving a thermal problem with fixed-stress type temperature stabilization: Find $T^{n,i}\in\mathcal{R}^n$ satisfying 
\begin{align*}
 \tfrac{C_\mathrm{d}}{T_0} \llangle T^{n,i}, r \rrangle 
 - 3\alpha_\phi \llangle p^{n,i}, r \rrangle
 + 9\alpha_\mathrm{T}^2 K_\mathrm{dr} \llangle T^{n,i} - T^{n,i-1}, r \rrangle 
 + 3\alpha_\mathrm{T} K_\mathrm{dr} \llangle \mathrm{tr}\, \bm{\varepsilon}_{\u}^{n,i-1/2}, r \rrangle& \\
 + \Delta t \llangle \tfrac{\conductivity}{T_0} \GRAD T^{n,i}, \GRAD r \rrangle
 =
 \llangle S^{n-1} + \Delta t \, \entropysource^n, r \rrangle + \Delta t \llangle j_\mathrm{F,\Gamma}^n, r \rrangle_{\Gamma_{\entropyflux}},
\end{align*}
for all $r\in\mathcal{R}_0$, where we formally abbreviated the updated mechanical strain
\begin{align*}
 \bm{\varepsilon}_{\u}^{n,i-1/2} := \mathbb{A} \left( \stress^{n,i-1} + \left(\mathbf{I} \otimes \bm{\alphamatrix}_\mathrm{T} \right) : \begin{bmatrix} p^{n,i} \\  T^{n,i-1} \end{bmatrix}\right).
\end{align*}
The final step of Alg.~\ref{algorithm:thermoporo:3block-dual} is identical with solving the pure mechanical problem for updated pressure and temperature. All in all, the main difference of the resulting scheme to the extended fixed-stress split is the diagonal instead of tensorial stabilization due to further decoupling.

\subsection{Comments on splitting schemes for non-linear thermo-poro-elasticity}\label{section:non-linear-thermo-poro}

The splitting schemes derived in this section are in first place only guaranteed to be robust for semi-discrete thermo-poro-elasticity models with an underlying convex minimization structure. As discussed in the modelling section, general thermo-poro-elasticity models do only satisfy a perturbed gradient flow structure, cf.\ Remark~\ref{remark:porothermo-perturbed}. Therefore the minimizing movement scheme does not apply immediately, and implicit semi-discrete thermo-poro-elasticity models do generally not stem from convex minimization. Evidently, by explicitly lagging the perturbations in time, the symmetric character of linear thermo-poro-elasticity can be retained, and the above splitting schemes are robust.
 
Nonetheless, the splitting schemes derived for the simplified, linear case above may as well assist in the construction of splitting schemes for the fully non-linear problem. We mention two possible strategies:

\begin{enumerate}[label=(\roman*)]
 
 \item After decomposing the time-continuous, coupled problem into a sum of a linear and parabolic, and a convective problem, an operator splitting~\cite{Holden2010}, e.g., Strang splitting, is utilized. Then the parabolic problem, essentially identical to linear thermo-poro-elasticity, may be solved efficiently using the above splitting schemes, and the convective problem may be solved by a separate, tailored scheme.
 
 \item Consider the semi-discrete problem obtained after applying the implicit Euler method. Provided that the perturbations and the time step size are sufficiently small, the semi-discrete problem exhibits a non-symmetric but elliptic character. Under that assumptions iterative two- and three-stage splitting schemes with sufficient diagonal stabilization have been rigorously showed to be linearly convergent~\cite{Brun2019}. Consequently, robust convergence may be also expected for stabilization terms replaced by those resulting from the above discussions, i.e., effectively by applying the undrained-adiabatic and extended fixed-stress split as recently proposed by~\cite{Kim2018a}. Numerically, this has been demonstrated by the aforementioned work.
\end{enumerate}

\section{Acceleration of splitting schemes by optimal relaxation}\label{section:minimization:line-search}

Due to the minimization character of the fully coupled, semi-discrete thermo-poro-visco-elasticity equations, the convergence of splitting schemes for such can be effectively improved by relaxation. Alg.~\ref{algorithm:line-search} formulates relaxation by exact line search for a general, inexact minimization algorithm for solving semi-discrete generalized gradient flows discretized by the minimizing movement scheme (Sec.~\ref{section:general-time-discretization}). For quadratic minimization problems with affine constraints (i.e., e.g., linear thermo-poro-visco-elasticity), optimal relaxation in the sense of a classical, exact line search strategy is feasible; minimizing the quadratic interpolation of three energy values is sufficient for computing the optimal weight. However, also for nonlinear thermo-poro-visco-elasticity stemming from non-quadratic, but convex minimization under affine constraints, we propose the same simple (now inexact) line search strategy.

\begin{algorithm}[h!]
 \caption{Relaxation of inexact minimization $\mathcal{IM}$ by exact line search for solving time-discrete generalized gradient flows~\eqref{time-discrete-minimization-state}}
 \label{algorithm:line-search}
 \SetAlgoLined
 \DontPrintSemicolon
  
 \vspace{0.2em}
  
 Given $\mathcal{X}^n$ affine, $x^{n-1}\in\mathcal{X}^{n-1}$, define $\mathcal{E}^\Delta(x) := \Delta t \, \mathcal{D}\left(\frac{x - x^{n-1}}{\Delta t}\right) + \mathcal{E}(x) - \mathcal{P}_\mathrm{ext}^n(x)$ \;\vspace{0.25em}
 Let $\mathcal{IM}:\mathcal{X}^n \rightarrow \mathcal{X}^n$ such that $\mathcal{E}^\Delta(\mathcal{IM}(x)) < \mathcal{E}^\Delta(x)$, where wlog.\ $x$ is not the minimizer \;\vspace{0.5em}
 $x^{n,0} \gets x^{n-1}$, $i\gets 1$ \;\vspace{0.5em}
 \While {\textit{'stopping criterion not satisfied'}}{ \vspace{0.5em}
  Compute $x^{n,i-1/2} \gets \mathcal{IM}(x^{n,i-1})\in\mathcal{X}^n$ \; \vspace{0.5em}
  Obtain descent direction $\Delta x^{n,i} \gets x^{n,i-1/2} - x^{n,i-1}$  \vspace{0.5em} \;
  Solve $\alpha^{n,i} \gets \underset{\alpha}{\mathrm{arg\,min}} \, \mathcal{E}^\Delta\left(x^{n,i-1/2} + \alpha \Delta x^{n,i}\right)$ \;
  Update $x^{n,i} \gets x^{n,i-1/2} + \alpha^{n,i} \Delta x^{n,i}\in\mathcal{X}^n$ \vspace{0.5em} \;
  $i \gets i+1$ \vspace{0.5em}
 }
\end{algorithm}

\section{Numerical examples -- Performance of the relaxed fixed-stress split for a 3D footing problem}\label{section:numerical-results}
 
Splitting schemes for solving thermo-hydro-mechanical processes have been numerically studied from various angles in the literature. In the following, we focus on three of the main new contributions obtained from the gradient flow analysis, not previously reported in literature, and study: (i) the impact of relaxation of splitting schemes by exact line search also put in context to the optimization of splitting schemes, (ii) the performance of splitting schemes for poro-visco-elasticity, and (iii) the performance of splitting schemes for nonlinear poro-elasticity. Due to its larger popularity, we restrict the study to fixed-stress-type splits.

All in all, we consider four test cases based on the same geometry but with slightly differing mechanical material behavior -- a unit cube, cf.\ Fig.~\ref{figure:footing-geometry}, fixed at the bottom and subject to a ramped up, normal force at the top, i.e., 
\begin{align*}
 \u_\Gamma=\bm{0}\text{ on }[0,1]^2\times\{0\},\quad \stress_\mathrm{\Gamma,n}(t)= 10^9t\text{\,[N/m$^2$s]\,}\bm{e}_3,\text{ on }[0.25,0.75]^2\times\{1\}.
\end{align*}
No-flow is imposed on the same parts of the boundary. No-stress and zero-pressure boundary conditions are applied on the remaining boundary. Body forces are absent. Similar setups have been considered by~\cite{Gaspar2008,Storvik2019,Adler2019}.

If not mentioned otherwise, the geometry is discretized by a structured $16 \times 16 \times 16$ hexahedral mesh, and $5$ time steps of constant time step size $\Delta t=0.1$ [s] are simulated. For the numerical solution the plain and the relaxed fixed-stress splits are applied. The performance of those is measured in terms of the average number of iterations per time step required for convergence and run times, where as stopping criterion a relative $L^2(\Omega)$ error with tolerance $\epsilon_{r} = 10^{-6}$ is employed. For the implementation of the numerical examples, we use the DUNE project~\cite{dune2016}, with extensive use of the DUNE-functions module~\cite{dunefunctions2015,dunefunctions2018}.
\begin{figure}
\centering
\subfloat[Geometry\label{figure:footing-geometry}]{\begin{overpic}[width=0.35\textwidth]{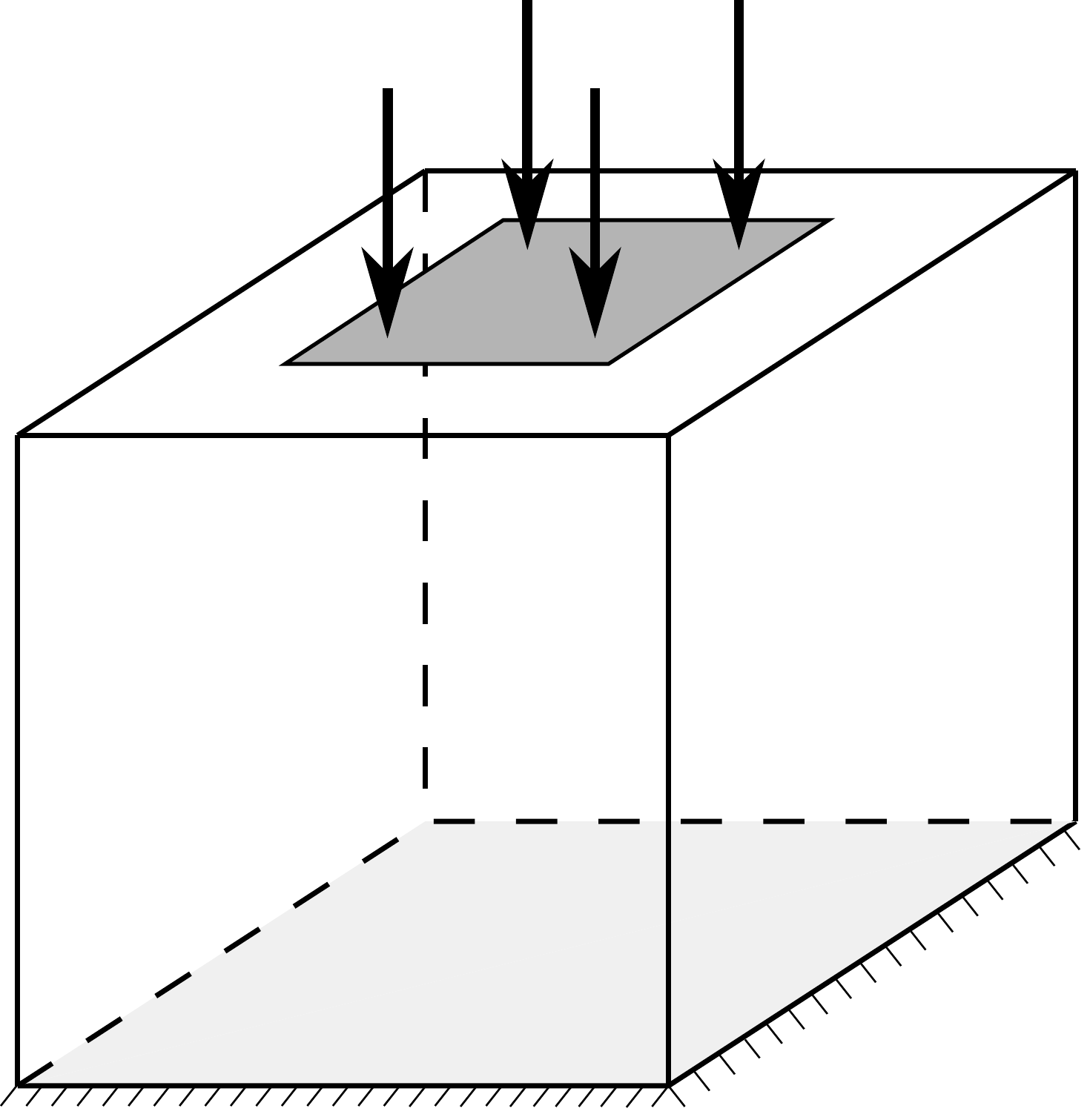}
                     \put(48,96){ $\stress_\mathrm{\Gamma,n}$}
                    \end{overpic}
 }
 \hspace{0.75cm}
\subfloat[Simulation result for test case I\label{figure:footing-result}]
 {\includegraphics[width=0.55\textwidth]{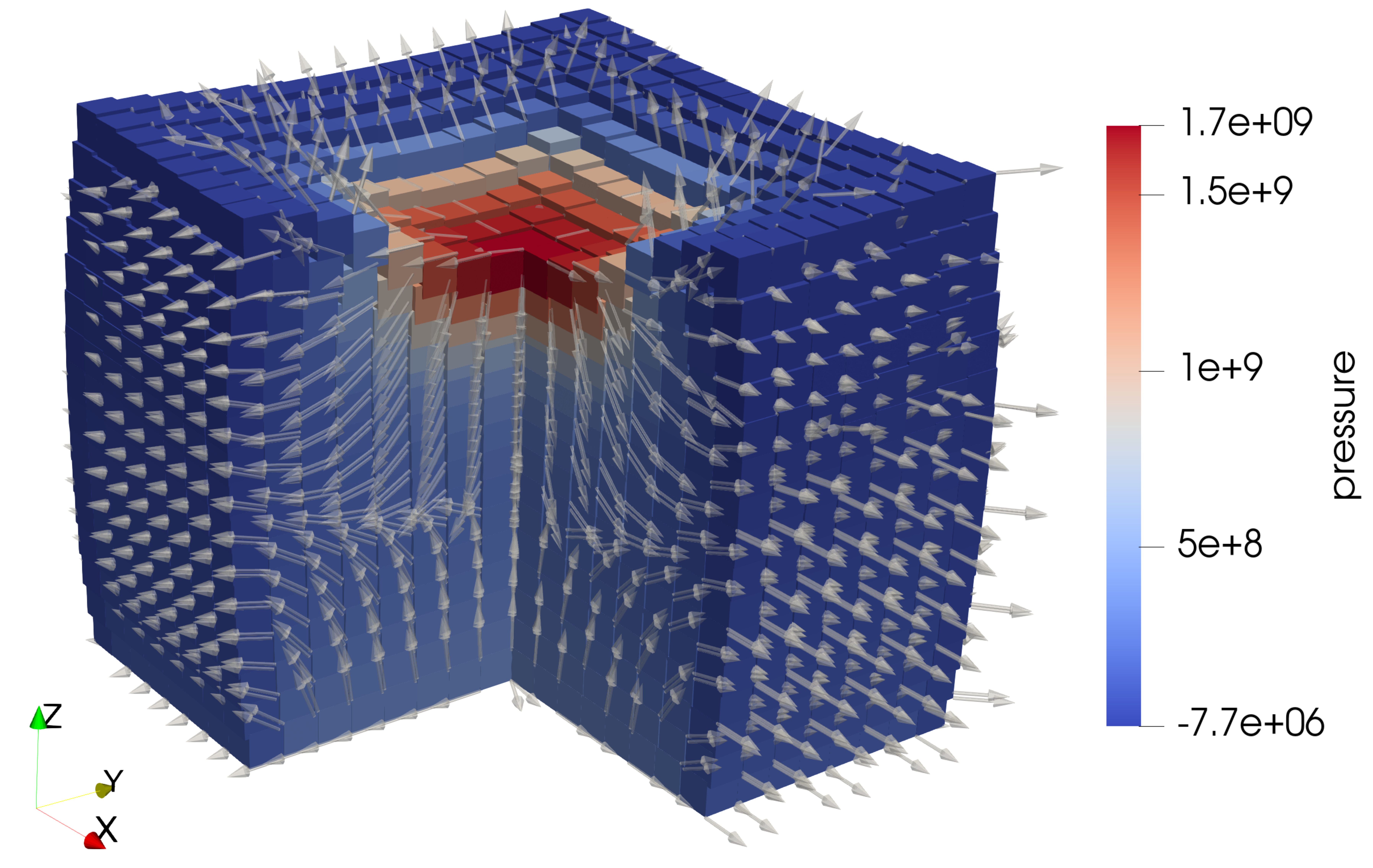}
 }
 \caption{Initial configuration incl.\ boundary conditions for the 3D footing problem; deformed, poro-elastic configuration after 5 time steps for $E=10^{11}$ [Pa], incl. pressure profile and outflow; the deformation is scaled by factor 15.}
 \vspace{-0.3cm}
\end{figure}

\subsection{Poro-elastic test case I -- Line search under varying coupling strength}
The material is assumed to be poro-elastic, homogeneous and isotropic with material parameters as in Tab.~\ref{table:parameters:poro}. In this first part, we study the impact of the relaxation by line search of the fixed-stress split under varying coupling strength. For this, we vary the Young's modulus $E$, which is inversely proportional to the coupling strength. A simulation result for $E=10^{11}$~[Pa] is visualized in Fig.~\ref{figure:footing-result}.
\begin{table}[h]
\begin{center}
\def\arraystretch{1.5}
\begin{tabular}{ l  c  r r l}
\hline
 Name & Symbol & Value& Value & Unit\\[-5pt]
 & & (Test case I--III) & (Test case IV) & \\
\hline
Young's modulus & $E$ & $10^9..10^{12}$ & $10^{10}$ & Pa\\
Poisson's ratio & $\nu$ &  $0.2$ & $0.2,0.495$ & --\\
Biot-Willis constant &$\alpha$ &  $1$ & $1$ & --\\
Compressibility coefficient & M & $10^{11}$ & $10^{11}$ & Pa \\
Permeability & $\kappa$ & $10^{-13}$  & $10^{-11}$ & $\mathrm{m}^2$\\
\hline
\end{tabular}
\caption{\label{table:parameters:poro} Poro-elasticity-specific material parameters for the 3D footing problem, used in test cases I--IV.}
\vspace{-0.8cm}
\end{center}
\end{table}

By applying the Galerkin method to the five-field formulation of the semi-discrete, linear Biot equations, cf.\ Sec.~\ref{section:minimization:five-field}, a fully structure-preserving spatial discretization is employed. As conforming finite element spaces for the mechanical problem, we utilize lowest order Brezzi-Douglas-Marini elements, cf., e.g.,~\cite{Boffi2013}, for the (unsymmetric) stress tensor, piecewise constants for the mechanical displacements and piecewise constant, skew-symmetric tensors for the rotation. For the fluid flow problem, we employ lowest order Brezzi-Douglas-Marini elements for the volumetric flux and piecewise constant elements for the fluid pressure. However, we note, the subsequent results are not strongly depending on the particular formulation or spatial discretization. 

The performance of the plain and the relaxed fixed-stress splits is displayed in Fig.~\ref{figure:performance:testcase-I}.
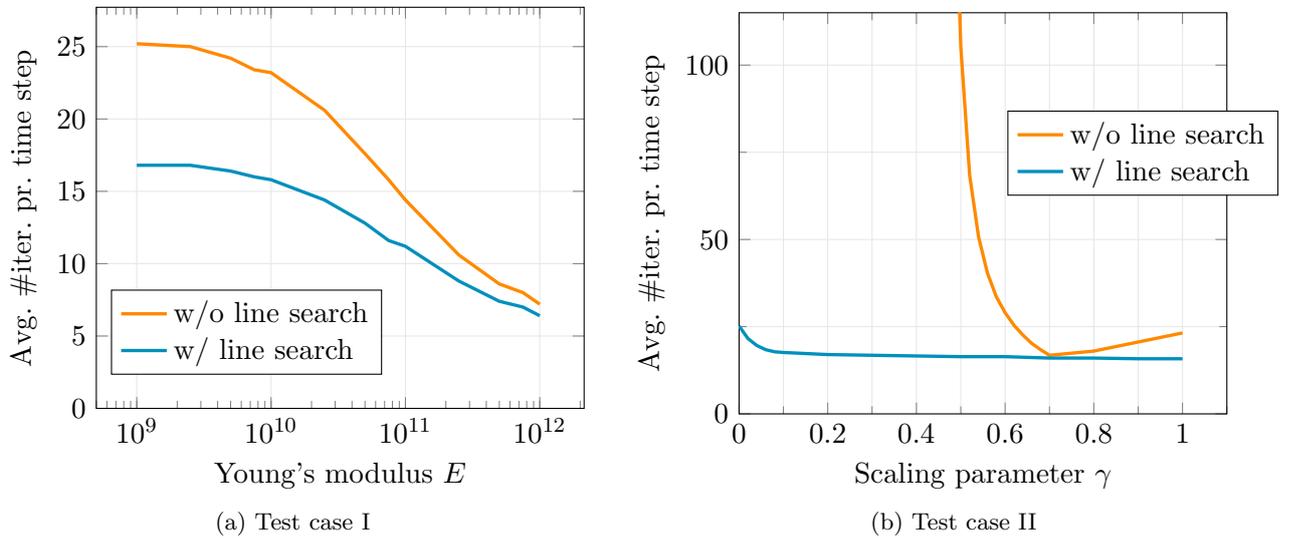
\begin{figure}[h!] 
\subfloat[Test case I\label{figure:performance:testcase-I}]{\centering\begin{tikzpicture}
\begin{semilogxaxis}[
    ylabel={Avg.\ \#iter.\ pr.\ time step},
    width = 0.5\textwidth,
    xlabel={Young's modulus $E$},
    legend entries={{{w/o line search},{w/ line search}}},
    legend cell align=left,
    legend columns=1,
    legend style={at={(0.03,0.19)},anchor=west},
    xmin = 5e8,
    ymin = 0,
    ymajorgrids=true, major grid style={line width=.2pt, draw=gray!20},
    xmajorgrids=true
] 
 
\addplot [solid, line width=1.25pt, color=grun] coordinates { (1,0) };

\addplot [solid, line width=1.25pt, color=blau] coordinates { (1,0) };

\addplot [solid, line width=1.25pt, color=grun]   coordinates {
(10e8, 126/5)
(25e8, 125/5)
(50e8, 121/5)
(75e8, 117/5)
(100e8, 116/5)
(250e8, 103/5)
(500e8, 88/5)
(750e8, 79/5)
(1000e8, 72/5)
(2500e8, 53/5)
(5000e8, 43/5)
(7500e8, 40/5)
(10000e8, 36/5)
}; 

\addplot [solid, line width=1.25pt, color=blau]   coordinates {
(10e8, 84/5)
(25e8, 84/5)
(50e8, 82/5)
(75e8, 80/5)
(100e8, 79/5)
(250e8, 72/5)
(500e8, 64/5)
(750e8, 58/5)
(1000e8, 56/5)
(2500e8, 44/5)
(5000e8, 37/5)
(7500e8, 35/5)
(10000e8, 32/5)
};

%

\end{semilogxaxis}
\end{tikzpicture}}
\hspace{0.5cm}
\subfloat[Test case II\label{figure:performance:testcase-II}]{\centering
\begin{tikzpicture}
\begin{axis}[
    ylabel={Avg.\ \#iter.\ pr.\ time step},
    width = 0.5\textwidth,
    xlabel={Scaling parameter $\gamma$},
    legend entries={{{w/o line search},{w/ line search}}},
    legend cell align=left,
    legend columns=1,
    legend style={at={(0.55,0.65)},anchor=west},
    xmin = 0,
    ymin = 0,
    ymax = 115, 
    ytick={0,50,100},
    grid=both, grid style={line width=.2pt, draw=gray!20}, minor tick num=1
] 
 
\addplot [solid, line width=1.25pt, color=grun] coordinates { (-1,0) };
\addplot [solid, line width=1.25pt, color=blau] coordinates { (-1,0) };

\addplot [solid, line width=1.25pt, color=grun]   coordinates {
(40e-2, 500)
(50e-2, 526/5)
(52e-2, 342/5)
(54e-2, 254/5)
(56e-2, 202/5)
(58e-2, 168/5)
(60e-2, 145/5)
(62e-2, 127/5)
(64e-2, 113/5)
(66e-2, 101/5)
(68e-2, 92/5)
(70e-2, 84/5)
(80e-2, 90/5)
(90e-2, 103/5)
(100e-2, 116/5)
};


\addplot [solid, line width=1.25pt, color=blau]   coordinates {
(0e-2, 126/5)
(2e-2, 108/5)
(4e-2, 98/5)
(6e-2, 92/5)
(8e-2, 89/5)
(10e-2, 88/5)
(20e-2, 85/5)
(30e-2, 84/5)
(40e-2, 83/5)
(50e-2, 82/5)
(60e-2, 82/5)
(70e-2, 80/5)
(80e-2, 80/5)
(90e-2, 79/5)
(100e-2, 79/5)
}; 


\end{axis}
\end{tikzpicture}}
\caption{Number of (poro-elasticity) fixed-stress split iterations for varying coupling strength (Test case I) and varying stabilization parameter (Test case II).}
\vspace{-0.3cm}
\end{figure}
We observe that the relaxation by line search allows for reducing the number of iterations up to a factor of 30\%. A greater impact can be observed for more strongly coupled problems. On the other hand, only small improvement is observed for weakly coupled problems. This is related to the result of the following test case.

\subsection{Poro-elastic test case II -- Line search vs. stabilization tuning}

It has been previously emphasized~\cite{Kim2011b,Mikelic2013} that the fixed-stress split can be tuned by appropriate weighting of the stabilization parameter $\tfrac{\alpha^2}{K_\mathrm{dr}}$ in the fluid flow problem, cf.~\eqref{fixed-stress-pressure-equation}. \textit{A priori} knowledge on optimal tuning however is lacking due to a strong dependence on the specific geometry, material parameters and applied boundary conditions~\cite{Castelletto2015}. It has been numerically demonstrated that optimal weighting may differ substantially from test case to test case~\cite{Both2018a}. Hence, in general, it is difficult to tune the parameter in practice; in~\cite{Storvik2019} the authors discuss a brute-force optimization strategy utilizing a coarse mesh.

In the following, we demonstrate that the application of exact line search yields a flexible, black box-type alternative to tuning the stabilization parameter. For this, we replace the stabilization parameter by $\gamma \tfrac{\alpha^2}{K_\mathrm{dr}}$ with $\gamma\in[0,1]$ and apply again both plain and relaxed fixed-stress splits in order to solve the 3D footing problem. Here, we choose the same parameters as in test case I, but with fixed $E=10^{10}$ [Pa]. The number of iterations required for convergence for varying $\gamma$ are displayed in Fig.~\ref{figure:performance:testcase-II}. We make two observations:
  
\begin{itemize}

\item[$\bullet$] For the plain fixed-stress split we observe practical convergence only for $\gamma\in[0.5,1]$. This is consistent with theoretical considerations, cf., e.g.,~\cite{Mikelic2013,Both2017,Storvik2019}. The line-search enhanced fixed-stress split however shows very robust behavior wrt.\ $\gamma$; despite the strong coupling, convergence is even observed for lacking stabilization ($\gamma=0$). 
 
\item[$\bullet$] For optimally chosen weighting ($\gamma \approx 0.7$) there is no difference in the number of iterations between the plain and the relaxed fixed-stress splits. 

\end{itemize}
Altogether, line search acts here as black-box tuning of the stabilization parameter. However, we note, there is no theoretical guarantee for the optimality of relaxed splitting schemes compared to optimized splitting schemes.

\subsection{Poro-visco-elastic test case III -- Line search under varying coupling strength}

In the following test case, we demonstrate the convergence of the fixed-stress split for poro-visco-elasticity. For this, we re-consider test case I now for a poro-visco-elastic material, and enhance the poro-elastic material parameters (Tab.~\ref{table:parameters:poro}) by visco-elasticity-specific parameters displayed in Tab.~\ref{table:parameters:porovisco}. A simulation result for $E=10^{11}$~[Pa] is visualized in Fig.~\ref{figure:porovisco-footing-result}.
\begin{table}[h!]
\begin{center}
\def\arraystretch{1.5}
\begin{tabular}{ l  c  r l}
\hline
 Name & Symbol & Value& Unit\\
\hline
Young's modulus & $E_\mathrm{v}$ & $10^{10}$ & Pa\\
Poisson's ratio & $\nu_\mathrm{v}$ &  $0.3$ & --\\
Shear modulus   & $\mu_\mathrm{v}'$ & $0$ & Pa\\
Lam\'e constant & $\lambda_\mathrm{v}'$ &  $10^9$ & Pa \\
Biot-Willis constant &$\alpha_\mathrm{v}$ &  $0.8$ & --\\
\hline
\end{tabular}
\caption{\label{table:parameters:porovisco}Poro-visco-elasticity-specific material parameters for the 3D footing problem.}
\vspace{-0.8cm}
\end{center}
\end{table}

For the spatial discretization, we again utilize a fully-structure preserving formulation based on the dual formulation, cf. Remark~\ref{poro-visco-elasticity-formulations}. In particular, the visco-elastic stress $\stress_\mathrm{v}$ is explicitly introduced, cf.~\eqref{porovisco-visco-elastic-stress-space}, with the visco-elastic strain computed from~\eqref{porovisco-visco-elastic-strain-definition} by projection onto piecewise constant, symmetric tensors. Hence, the resulting, spatial discretization has the same complexity as in the case of poro-elasticity. 

The number of iterations for the plain and relaxed fixed-stress splits required for convergence is displayed in Fig.~\ref{figure:performance:testcase-III}.
\begin{figure}[h!] 
\centering 
\subfloat[\label{figure:porovisco-footing-result} Simulation result]{\includegraphics[width=0.475\textwidth]{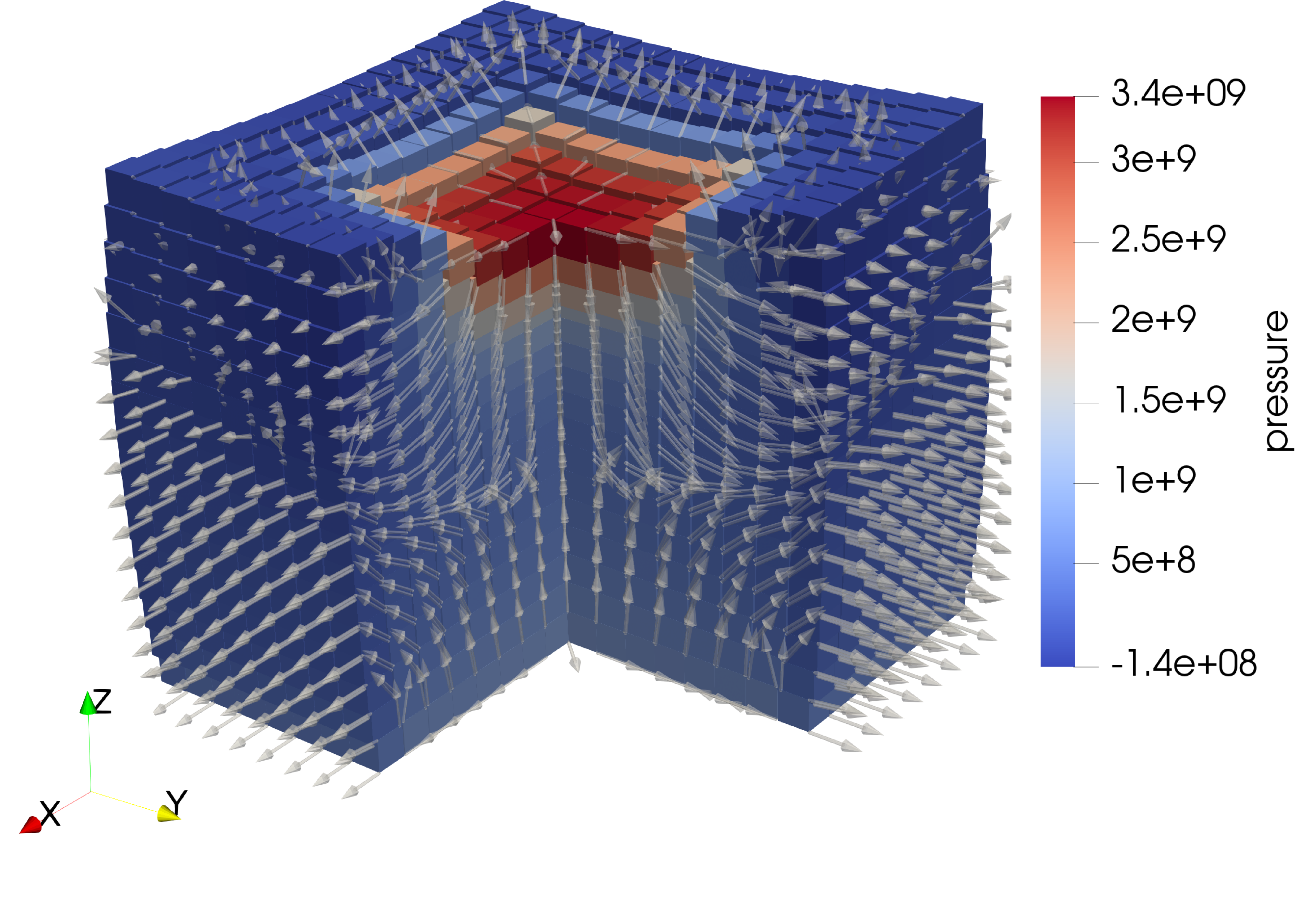}} 
\hspace{0.1cm}
\subfloat[Performance result \label{figure:performance:testcase-III}]{
\begin{tikzpicture}
\begin{semilogxaxis}[
    ylabel={Avg.\ \#iter.\ pr.\ time step},
    width = 0.5\textwidth,
    xlabel={Young's modulus $E$},
    legend entries={{{w/o line search},{w/ line search}}},
    legend cell align=left,
    legend columns=1,
    legend style={at={(0.03,0.19)},anchor=west},
    xmin = 5e8,
    ymin = 0,
    ymajorgrids=true, major grid style={line width=.2pt, draw=gray!20},
    xmajorgrids=true
] 
 

\addplot [solid, line width=1.25pt, color=grun] coordinates { (1,0) };
\addplot [solid, line width=1.25pt, color=blau] coordinates { (1,0) };

\addplot [solid, line width=1.25pt, color=grun]   coordinates {
(10e8, 120/5)
(25e8, 111/5)
(50e8, 99/5)
(75e8, 92/5)
(100e8, 86/5)
(125e8, 82/5)
(250e8, 68/5)
(375e8, 61/5)
(500e8, 57/5)
(635e8, 56/5)
(750e8, 57/5)
(825e8, 58/5)
(1000e8, 61/5)
(2500e8, 67/5)
(5000e8, 71/5)
(7500e8, 72/5)
(10000e8, 73/5)
};

\addplot [solid, line width=1.25pt, color=blau]   coordinates {
(10e8, 82/5)
(25e8, 78/5)
(50e8, 73/5)
(75e8, 70/5)
(100e8, 67/5)
(250e8, 60/5)
(500e8, 56/5)
(750e8, 55/5)
(1000e8, 53/5)
(2500e8, 51/5)
(5000e8, 51/5)
(7500e8, 51/5)
(10000e8, 51/5)
};

%
%
\end{semilogxaxis}
\end{tikzpicture}}
\caption{Test case III:
The deformed, poro-visco-elastic configuration after 5 time steps for $E=10^{11}$ [Pa], incl. pressure profile and outflow. And the number of (poro-visco-elasticity) fixed-stress split iterations for varying coupling strength.}
\vspace{-0.3cm}
\end{figure}
At first glance, the performances of both splitting schemes look qualitatively differently. The relaxed fixed-stress split exhibits a monotone relation between its performance and the coupling strength, consistent with the theoretical convergence result, cf.\ Lemma~\ref{lemma:porovisco:fixed-stress:convergence-rate}. In contrast, the plain fixed-stress split reveals a worsening of the performance for weaker coupling. This can be explained by the findings from test case II.  For varying Young's modulus, the overall, structural behavior of the material alters due to $\nu\neq \nu_\mathrm{v}$. As a consequence, considering the optimized fixed-stress split, the optimal tuning parameter changes with $E$. For smaller and larger $E$, it is further off the natural stabilization parameter employed within the plain fixed-stress split; for intermediate Young's modulus ($E\approx 5\cdot 10^{11}$ [Pa]), both parameters are relatively close. This can be justified by the fact that for that configuration line search relaxation does not yield any improvement of the convergence.

After all, if the optimal tuning parameter had been employed for each Young's modulus, the plain fixed-stress split would exhibit the same monotone behavior as under relaxation. Again, line search relaxation proves successful as black box tuning without \textit{a priori} knowledge of the physical behavior of the medium.

\subsection{Non-linear poro-elastic test case IV -- Acceleration and robustness increase of splitting schemes by line search relaxation}

In the final numerical test case, we demonstrate the convergence of the fixed-stress split for nonlinear poro-elasticity under infinitesimal strains (Sec.~\ref{section:non-linear-biot-linear-coupling}). In particular, we study the impact of line search relaxation for various, inexact fixed-stress splits (Sec.~\ref{section:fixed-stress-non-linear-three-field}) in comparison to the exact fixed-stress split.

For this, we re-consider test case I now for a nonlinearly elastic material. Differently from before, for the spatial discretization, we consider a structured $32 \times 32 \times 32$ hexahedral mesh, inducing a greater challenge to the nonlinear and linear solvers. Furthermore, a three-field formulation, consistent with Sec.~\ref{section:splitting-non-linear-poro}, is considered. We employ linear elements for the structural displacement, lowest order Raviart-Thomas elements for the volumetric flux and piecewise constant elements for the fluid pressure.

In order to pinpoint the impact of the non-linear character of the equations, we introduce only a single non-linearity compared to test case I -- a non-linear (effective) stress-strain relationship corresponding to the non-quadratic p-Laplacian-type energy~\cite{Biot1973,Barucq2005}
\begin{align*}
\mathcal{E}_\mathrm{nl,eff}(\u) = \int_\Omega  W(\eps{\u}) \, dx = \int_\Omega \left( \mu | \eps{\u}|^2 + \frac{\lambda}{3} | \DIV \u |^3 \right)\, dx.
\end{align*}
Apart from that we choose the same model as in test case I, with material parameters from Tab.~\ref{table:parameters:poro}. We consider two setups with two different Poisson ratios: Setup A with $\nu=0.2$, and Setup B with $\nu=0.495$, inducing a comparatively stronger coupling strength and stronger non-linearity, respectively. The simulation result for $\nu=0.2$ is illustrated in Fig.~\ref{figure:testcase-iv}; the qualitative difference in the flow field compared to test case I, cf.\ Fig~\ref{figure:footing-result}, originates from significantly different permeability fields.

The agenda is similar as before. We apply the fixed-stress split in order to solve the coupled problem, and study the impact of line search relaxation. The non-linear character of the problem allows for choosing various exact or inexact non-linear solvers for solving the mechanics subproblems. In addition, we point out, that the exact fixed-stress split (Sec.~\ref{section:fixed-stress-non-linear-three-field}) introduces a displacement-dependent pressure stabilization of the flow equation via the solution-dependent bulk modulus $K_\mathrm{dr}(\eps{u}) = \tfrac{2\mu}{d} + 2\lambda|\DIV \u|$, cf.~\eqref{definition:kdr-u}. Hence, despite the linear character of the flow equation, the exact Jacobian of the stabilized pressure equation alters with each fixed-stress iteration. 

In the following, we apply Newton- and L-scheme-based fixed-stress splits, as introduced in Sec.~\ref{section:fixed-stress-non-linear-three-field}, with the latter chosen due to the low computational cost per iteration; however, the L-scheme requires choosing $L_b,\ L_\mathrm{FS}$ and $\mathbb{L}$. Given user-defined parameters $0\leq |\DIV \u|_\mathrm{min}\leq |\DIV \u|_\mathrm{max} < \infty$, we set
\begin{align}
\label{l-scheme-choices-numerical-testcase}
 L_b=\frac{1}{M},
 \quad
 L_\mathrm{FS}=\frac{\alpha^2}{\tfrac{2\mu}{d} + 2\lambda |\DIV \u|_\mathrm{min}},
 \quad
 \mathbb{L} = 2\mu \mathbb{I} + 2\lambda |\DIV \u|_\mathrm{max} \, \mathbf{I} \otimes \mathbf{I}.
\end{align}
Detailed descriptions of the non-linear solvers used in this section are given in Tab.~\ref{table:numerical-example-non-linear-solvers}. In addition, we apply three relaxation techniques, cf.\ Table~\ref{table:numerical-example-accelerations}. In particular, we also consider applying line search after each non-linear iteration.

\begin{table}[h!]
\centering
\def\arraystretch{2}
{\footnotesize
 \begin{tabular}{|l|l|}
 \hline
  Abbreviation & Description \\
  \hline\hline
  N$_\mathrm{max}$ 
    & Newton's method until convergence, i.e., $\| r^i \| / \| r^0 \| < 10^{-5}$, \\[-1em]
    & where $r^i$ is the residual of the subproblem in the $i$-th Newton\\[-1em]
    & iteration; the Jacobian of the flow equation is reassembled. \\
  \hline 
  N$_1$   
    & As N$_\mathrm{max}$ but employing only a single Newton iteration. \\
  \hline 
  L$_m^\mathrm{ex}$
    & $m$ L-scheme iterations if convergence is not met before (see N$_\mathrm{max}$),\\[-1em]
    & with $L_b$, $L_\mathrm{FS}$ and $\mathbb{L}$ as in~\eqref{l-scheme-choices-numerical-testcase} with $|\DIV \u|_\mathrm{min}= \underset{x,t}{\mathrm{min}}\,|\DIV \u|$\\[-1em]
    & and $|\DIV \u|_\mathrm{max}=\underset{x,t}{\mathrm{max}}\,|\DIV \u|$ \\
  \hline
  & \\[-17pt]
  L$_m^\mathrm{opt}$ & As L$_m^\mathrm{ex}$ but with $|\DIV \u|_\mathrm{min}= |\DIV \u|_\mathrm{max}=\tfrac{1}{10} \underset{x,t}{\mathrm{max}}\,|\DIV \u|$ \\[-17pt]
  & \\
 \hline
 \end{tabular}
}
\caption{\label{table:numerical-example-non-linear-solvers} Non-linear solvers employed in test case IV.}
\end{table}

\begin{table}[h!]
\centering
\def\arraystretch{2}
{\footnotesize
 \begin{tabular}{|l|l|}
 \hline
  Abbr. & Description of the relaxation strategy \\
  \hline\hline
  LS$_\mathrm{-}$ 
    & Plain splitting scheme and non-linear solver without any relaxation.\\
  \hline
  LS$_\mathrm{s}$
    & Line search based on quadratic interpolation applied for the splitting solver.\\
  \hline
  LS$_\mathrm{s/m}$
    & Same as LS$_\mathrm{s}$, but with the same strategy also applied on the inner non-linear\\[-1em]
    & solver for the mechanics subproblem. \\
  \hline
 \end{tabular}
}
\caption{\label{table:numerical-example-accelerations} Relaxation strategies employed in test case IV.}
\end{table}

The solver performances of various relevant combinations of non-linear solvers and relaxation strategies for Setup A and Setup B are displayed in Fig.~\ref{figure:testcase-iv}. Those include the plain number of outer fixed-stress iterations and potential inner extra non-linear iterations if more than one iteration has been applied; in addition, total run times are displayed for Setup B, including run times for assembling matrices and right hand sides, as well as the application of linear solvers. We stress, we use serial, direct solvers. Hence, the Jacobian employed for L-scheme-based splittings is factorized only once, but not for Newton-based splits. Moreover, we mention observations not indicated in the figures:



\pgfplotstableread{
1 13.8
2 14
3 13.2
4 14
}\datatableFinePlainFSIterationsA

\pgfplotstableread{
1 0
2 4
3 0
4 14
}\datatableFinePlainExtraIterationsA

\pgfplotstableread{
1 9.6
2 10
3 12.4
4 10
}\datatableFineLSFSIterationsA

\pgfplotstableread{
1 0
2 4
3 0
4 10
}\datatableFineLSExtraIterationsA

\pgfplotstableread{
1 9.8
2 10
3 10.2
4 10
}\datatableFineLSTwoFSIterationsA

\pgfplotstableread{
1 0
2 10.2
3 0
4 10
}\datatableFineLSTwoExtraIterationsA

\pgfplotstableread{
1  -10
2  -10 
3  -10
4  -10 
}\datatableFineEmptyA


\pgfplotstableread{
1  0
2  0
3  0
4  0
5  0
6  1000
7  1000
8  1000
}\datatableFinePlainNoConvergence

\pgfplotstableread{
1  0
2  0
3  0
4  0
5  0
6  0
7  1000
8  1000
}\datatableFineLSNoConvergence

\pgfplotstableread{
1  0
2  0
3  0
4  0
5  0
6  0
7  0
8  0
}\datatableFineLSTwoNoConvergence

\pgfplotstableread{
1 10.2
2 9.4
3 90.2
4 45.8
5 21.2
6  1000
7  1000
8  1000
}\datatableFinePlainFSIterations

\pgfplotstableread{
1 0
2 8
3 0
4 45.8
5 84.8
6  1000
7  1000
8  1000
}\datatableFinePlainExtraIterations

\pgfplotstableread{
1 9
2 7.4
3 64.2
4 40.8
5 18
6 22.4
7  1000
8  1000
}\datatableFineLSFSIterations

\pgfplotstableread{
1 0
2 8.8
3 0
4 40.8
5 72
6 0
7  1000
8  1000
}\datatableFineLSExtraIterations

\pgfplotstableread{
1 8
2 7.4
3 45.6
4 15.6
5 12.8
6 20.8
7 11.4
8 8.2
}\datatableFineLSTwoFSIterations

\pgfplotstableread{
1 0
2 12
3 0
4 15.6
5 51.2
6 0
7 11.4
8 32.8
}\datatableFineLSTwoExtraIterations

\pgfplotstableread{
8   -10
16  -10 
32  -10
64  -10 
128 -10 
6  -23
7  -23
8  -23
}\datatableFineEmpty


\pgfplotstableread{
1  0
2  0
3  0
4  0
5  0
6  5000
7  5000
8  5000
}\datatableFineTimePlainNoConvergence

\pgfplotstableread{
1  0
2  0
3  0
4  0
5  0
6  0
7  5000
8  5000
}\datatableFineTimeLSNoConvergence

\pgfplotstableread{
1  0
2  0
3  0
4  0
5  0
6  0
7  0
8  0
}\datatableFineTimeLSTwoNoConvergence

\pgfplotstableread{
1 2.406955e+03
2 3.865953e+03
3 1.409206e+03
4 1.127245e+03
5 1.093227e+03
6  1000
7  1000
8  1000
}\datatableFinePlainTime

\pgfplotstableread{
1 2.162559e+03
2 3.579376e+03
3 1.386549e+03
4 1.245197e+03
5 1.031258e+03
6 5.057925e+02
7  1000
8  1000
}\datatableFineLSTime

\pgfplotstableread{
1 1.979742e+03
2 4.345097e+03
3 1.224495e+03
4 6.497442e+02
5 1.060864e+03
6 5.745208e+02
7 4.874249e+02
8 6.959304e+02
}\datatableFineLSTwoTime

\pgfplotstableread{
8   -10
16  -10 
32  -10
64  -10 
128 -10 
6  -23
7  -23
8  -23
}\datatableFineTimeEmpty

\begin{figure} 
\includegraphics[width=\textwidth]{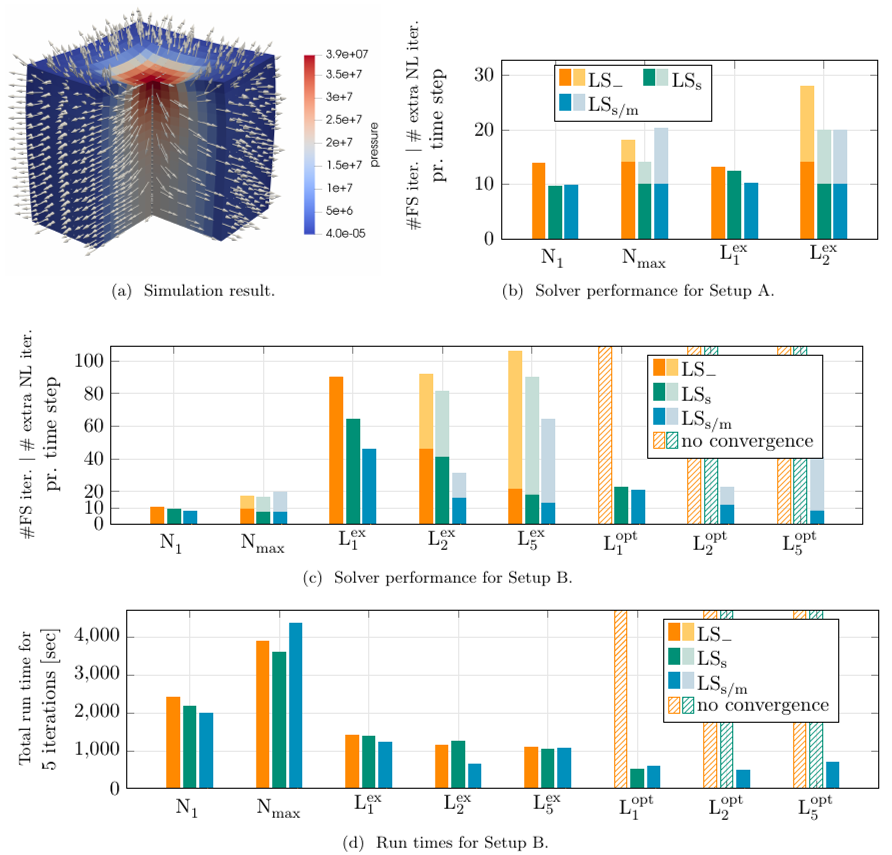}
\caption{\label{figure:testcase-iv} Test case IV: (a) Deformed, poro-elastic configuration after 5 time steps for $\nu=0.2$, incl. pressure profile and outflow; the deformation is scaled by factor 5. (b) and (c): Performance of different non-linear solvers (Tab.~\ref{table:numerical-example-non-linear-solvers}) combined with different relaxation strategies (Tab.~\ref{table:numerical-example-accelerations}), measured in average number of fixed-stress (FS) iterations and extra non-linear (NL) iterations per time step used for solving the mechanics problem, if more than one non-linear iterations per fixed-stress iteration is utilized; they are displayed on top of each other, illustrating the total amount of non-linear iterations required. (d) Total run times (incl.\ assembly and solver) for five time steps corresponding to (c).}
\end{figure}

\begin{itemize}
%
 
 \item[$\bullet$] For Setup A, the number of fixed-stress iterations per time step is approximately the same for all schemes, indicating a dominant coupling strength.
 
 \item[$\bullet$] For Setup B, the number of fixed-stress iterations per time step decreases for the Newton- and $L^\mathrm{ex}$-type methods; it increases for the more optimistic choice  $L^\mathrm{opt}$. Under relaxation on both levels, the iteration counts are practically constant for all methods. 
 
\end{itemize}

\noindent
We conclude, most importantly, inexact alternating minimization can outperform exact alternating minimization. The number of outer fixed-stress iterations might decrease the more accurately the non-linear problems are solved, but on the other hand, the total amount of required inner non-linear iterations increases much more. This makes relaxation by (inexact) line search attractive, which allows for improved solution of the non-linear subproblems and the overall performance of the splitting scheme without requiring to solve a linear system. We observe, relaxation does not only accelerate convergence but it also increases the robustness; similar effects have been previously observed for relaxation by Anderson acceleration for the fixed-stress split~\cite{Both2018b}. 

Finally, if applicable, simple linearizations as the L-scheme might outperform Newton-based linearization techniques. In particular, when combining them with relaxation. The main drawback of the L-scheme is that it includes tuning parameters. Optimal choices may lead to good performance, whereas bad choices might even lead to no convergence. Suitable choices being solution-dependent, makes the final choice rather difficult; however, line search may allow convergence for a wide range of parameters, potentially even faster than more conservative choices of the tuning parameters, for which the plain scheme converges. In the present example, by choosing an L-scheme-based fixed-stress split with optimistic tuning parameters and full line search relaxation, run times 1/8 of those for the non-relaxed, exact Newton-based fixed-stress split have been achieved. The finer the mesh the more drastic the difference as direct solvers are employed in this study.

\section{Concluding remarks and discussion}

The aim of the present work was to examine the inherent gradient flow structures of thermo-hydro-mechanical processes in porous media with focus on consequences for the well-posedness analysis and construction of numerical approximations and solvers. A major finding was that various, existing PDE models from the literature can be formulated as generalized gradient flows utilizing thermodynamic interpretation of energies and dissipation potentials -- for instance, linear poro-elasticity, linear poro-visco-elasticity, non-linear poro-elasticity in the infinitesimal strain regime, non-Newtonian Darcy and non-Darcy flows in poro-elastic media, and thermo-poro-elasticity without thermal convection. Moreover, well-posedness has been established for those models utilizing a unified framework introduced for doubly non-linear evolution equations.

One further significant finding to emerge from this work is that robust, physically based operator splitting schemes for time-discrete approximations are a consequence of a suitable \textit{choice} of primary variables (in fact dictated by the gradient flow structure) and a simple application of plain alternating minimization. Robustness is then an immediate consequence of the naturally underlying minimization structure of the semi-discrete problem arising from suitable time-discretization of gradient flows; in that light, e.g., the undrained and the fixed-stress splits appear to be the natural splitting schemes for linear poro-elasticity. Moreover, abstract convergence theory allows to quantify the energy decrease for each iteration of the splitting schemes only utilizing convexity and Lipschitz continuity properties of the problem -- a fairly simple machinery compared to previous analyses in the literature and also immediately applicable to heterogeneous, anisotropic materials. We derive novel splitting schemes and establish \textit{a priori} and \textit{a posteriori} convergence results in the context of linear poro-elasticity, linear poro-visco-elasticity and linear thermo-poro-elasticity.

The results of this work support the idea that splitting schemes for models with a vector structure ought to utilize tensorial stabilization instead of diagonal stabilization; such has been previously proposed either based on physical intuition or rather more ad hoc, cf., e.g.,~\cite{Kim2018a} in the context of thermo-poro-elasticity or~\cite{Hong2018} in the context of multiple-network poro-elastic theory. The latter has not been covered in this work, but it is essentially a generalization of linear thermo-poro-elasticity; our results can be immediately extrapolated.

Additionally, we highlight the known and simple fact that a minimization formulation enables relaxation of iterative solvers by line search strategies. Such have not been utilized before in the field of poro-elasticity. Our numerical experiments suggest that line search acts as black box optimization of the stabilization and possibly linearization in the context of optimized splitting schemes~\cite{Storvik2019}, which is especially practical for problems with changing geometries or boundary conditions.

Throughout the entire work, we utilize linear poro-elasticity as proof of concept and verify that the provided framework yields consistent results with the literature, but from a new perspective. After all, it seems promising for handling further models as also demonstrated for various extensions of linear poro-elasticity. 


The most important limitation lies in the fact that, evidently, not all thermo-hydro-mechanical processes are suitably modelled by gradient flows, e.g., convective-dominated processes, or materials with limit behavior as incompressible fluids or solids. However, at least in the context of the numerical solution, non-monotone perturbations of gradient flows may be discussed using operator splitting techniques as Strang's splitting or semi-implicit time-discretization, and limit cases may be handled employing duality theory. After all, the provided theory may still assist in various situations -- to what extent is topic of future research. Moreover, in this sense, interesting areas of applications and model extensions include finite strain poro-elasticity, poro-elasticity for fractured media, poroplasticity, and compositional and multi-phase flow in poro-elastic media. In terms of numerical solvers, for strongly non-linear and possibly non-convex problems, a further study could assess the need for more advanced optimization algorithms as primal-dual methods, alternating direction method of multipliers, or proximal alternating minimization for deriving robust linearization or non-linear preconditioners. This would be a fruitful area for further work.

\appendix

\section{Abstract well-posedness results}\label{appendix:classic-theorems} 

The theoretical results in this work are mostly deduced by application of abstract results from literature; we recall two results for doubly non-linear evolution equations and convex optimization.

The following well-posedness result for doubly non-linear evolution equation can be understood as a corollary or refined discussion of previous classical results, e.g.,~\cite{Colli1992}. The main improvement to previous results is a weaker regularity assumption on the external loading. This is compensated with stronger, structural assumptions on the functions spaces, as well as the dissipation potential and energy functional. Here, we consider an energy functional which does not explicitly depend on time. In order to incorporate time-dependent energy functionals, assumptions and proof techniques as, e.g., by~\cite{Mielke2013}, can be additionally applied.

\begin{theorem}[Well-posedness for doubly evolution equations with weakly regular load]\label{preliminaries:thm:well-posedness:gradient-flows}
 Consider the doubly non-linear evolution equation
 \begin{align}
 \label{preliminaries:gradient-flow-theorem:gradient-flow}
 \GRAD\Psi(\dot{x}(t)) + \GRAD \mathcal{E}(t,x(t)) = f(t) \text{ in }\mathcal{V}^\star \text{ a.e.\ in }(0,T);\ \ x(0)=x_0.
 \end{align}
 where 
 \begin{itemize}
 
  \item[$\bullet$] $p_\psi,p_\mathcal{E}\in(1,\infty)$; $p:=\mathrm{min}\,\{ p_\psi, p_\mathcal{E} \}$; $p^\star\in(1,\infty)$ such that $\tfrac{1}{p} + \tfrac{1}{p^\star} = 1$.

  \item[$\bullet$] $\mathcal{B}$ is a separable, reflexive Banach space with norm $\| \cdot \|_\mathcal{B}$.

  \item[$\bullet$] $\mathcal{V}$ is a separable, reflexive Banach space with a semi-norm $| \cdot |_\mathcal{V}$, such that
  \begin{align}
   \| x \|_{\mathcal{V}} := \left( \| x \|_\mathcal{B}^{p} + | x |_\mathcal{V}^{p} \right)^{1/p},\quad x\in\mathcal{V}.
  \end{align}
  defines a norm on $\mathcal{V}$. Furthermore, $\mathcal{V}$ is dense and compactly embedded in $\mathcal{B}$. 

  \item[$\bullet$] $\Psi:\mathcal{B} \rightarrow [0,\infty)$ is convex and continuously differentiable. There exists a constant $C>0$ such that
  \begin{align*}
   \Psi(x) \geq C \| x \|_{\mathcal{B}}^{p_\psi},\quad x\in\mathcal{B}.
  \end{align*}

  \item[$\bullet$] $\mathcal{E}:[0,T] \times \mathcal{V} \rightarrow [0,\infty)$, such that there exist constants $C_1>0$, $C_2\geq 0$, satisfying
  \begin{align*}\mathcal{E}(t,x) \geq C_1 |x|_\mathcal{V}^{p_\mathcal{E}} - C_2 \quad \text{for all }(t,x)\in[0,T]\times \mathcal{V}.
  \end{align*} 
  Furthermore, $\mathcal{E}(t,\cdot):\mathcal{V} \rightarrow (-\infty,\infty)$ is convex, lower-semicontinuous, and continuously differentiable for all $t\in[0,T]$; and $\mathcal{E}(\cdot,x):[0,T]\rightarrow (-\infty,\infty)$ is differentiable a.e.\ for all $x\in\mathcal{V}$ such that there exists a constant $C>0$, satisfying for a.e.\ $t\in(0,T)$
  \begin{align*}
   | \partial_t \mathcal{E}(t,x)| \leq C(1+\mathcal{E}(t,x))\quad \text{for all }x\in \mathcal{V}.
  \end{align*}

  \item[$\bullet$] $f\in C(0,T;\mathcal{V}^\star)\cap W^{1,p^\star}(0,T;\mathcal{V}^\star)$.
  
  \item[$\bullet$] $x_0\in \mathcal{V}$ such that $\mathcal{E}(0,x_0) < \infty$.
  
 \end{itemize}
 Then there exists a solution $x\in W^{1,p}(0,T;\mathcal{B})\cap L^\infty(0,T;\mathcal{V})$ of~\eqref{preliminaries:gradient-flow-theorem:gradient-flow}, satisfying $\mathcal{E}(x)\in L^\infty(0,T)$ and the energy identity
 \begin{align}
 \label{appendix:well-posedness-dne:energy-identity}
  &\int_0^T \Psi(\dot{x}(t))\,dt + \mathcal{E}(x(T)) - \llangle f(T),x(T) \rrangle \\
  \nonumber
  &\quad = \mathcal{E}(x_0) - \llangle f(0), x(0) \rrangle + \int_0^T \partial_t \mathcal{E}(t,x(t))\, dt - \int_0^T \llangle \dot{f}(t), x(t) \rrangle \, dt.
 \end{align}
 If $\GRAD\Psi$ or $\GRAD\mathcal{E}$ is linear and self-adjoint, the solution is unique.
\end{theorem}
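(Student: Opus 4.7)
My plan is to prove existence by a time-incremental minimizing-movement scheme, derive uniform a priori bounds using the $W^{1,p^\star}(0,T;\mathcal{V}^\star)$ regularity of $f$ via a discrete integration by parts, extract a compactness-based limit, and finally close the scheme through monotonicity arguments. For a uniform partition $0=t^0<\dots<t^N=T$ with step $\tau=T/N$ and $x^0=x_0$, I would define recursively
\begin{align*}
 x^k \,\in\, \underset{x\in\mathcal{V}}{\mathrm{arg\,min}}\, \Big\{\tau\Psi\!\left(\tfrac{x-x^{k-1}}{\tau}\right) + \mathcal{E}(t^k,x) - \llangle f(t^k),x\rrangle\Big\},
\end{align*}
whose existence follows from the direct method applied using the coercivity of $\Psi$ on $\mathcal{B}$, the growth condition on $\mathcal{E}$ on $\mathcal{V}$, the continuity of $f(t^k)$ on $\mathcal{V}^\star$, and the convex lower-semicontinuity of the integrand (invoke Thm.~\ref{appendix:well-posedness:convex-minimization}). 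The optimality condition reproduces the discretized evolution equation $\GRAD\Psi(\tfrac{x^k-x^{k-1}}{\tau}) + \GRAD\mathcal{E}(t^k,x^k) = f(t^k)$ in $\mathcal{V}^\star$.

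Next, I would test minimality of $x^k$ against $x^{k-1}$ to obtain the discrete energy estimate
\begin{align*}
\tau\Psi\!\left(\tfrac{x^k-x^{k-1}}{\tau}\right) + \mathcal{E}(t^k,x^k) - \llangle f(t^k),x^k\rrangle
\,\le\, \tau\Psi(0) + \mathcal{E}(t^k,x^{k-1}) - \llangle f(t^k),x^{k-1}\rrangle,
\end{align*}
and then sum in $k$. The $\mathcal{E}(t^k,x^{k-1})-\mathcal{E}(t^{k-1},x^{k-1})$ differences are controlled by the growth bound $|\partial_t\mathcal{E}(t,x)|\lesssim 1+\mathcal{E}(t,x)$; the $f$-terms are handled through summation-by-parts that transforms $\sum \llangle f(t^k), x^k-x^{k-1}\rrangle$ into boundary contributions plus $\sum \tau \llangle \tfrac{f(t^k)-f(t^{k-1})}{\tau}, x^{k-1}\rrangle$, which by Young and the coercivity $\mathcal{E}(t,x)\ge C_1|x|_\mathcal{V}^{p_\mathcal{E}}-C_2$ (with the conjugate exponent applied to $\dot f\in L^{p^\star}(0,T;\mathcal{V}^\star)$) can be absorbed into the energy up to a Grönwall iteration. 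This yields $\mathcal{E}(\cdot,x_\tau)\in L^\infty(0,T)$ uniformly in $\tau$, a uniform bound of $\sum\tau\Psi(\tfrac{x^k-x^{k-1}}{\tau})$, hence a uniform $W^{1,p_\psi}(0,T;\mathcal{B})$ bound (and therefore $W^{1,p}$) on the piecewise-affine interpolant $\hat x_\tau$, together with a uniform $L^\infty(0,T;\mathcal{V})$ bound on the piecewise-constant interpolant $\bar x_\tau$.

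With these estimates, compactness is standard: the compact embedding $\mathcal{V}\hookrightarrow\hookrightarrow\mathcal{B}$ together with Aubin-Lions gives a subsequence such that $\hat x_\tau\to x$ strongly in $C(0,T;\mathcal{B})$ and in $L^p(0,T;\mathcal{B})$, $\dot{\hat x}_\tau\rightharpoonup \dot x$ weakly in $L^{p_\psi}(0,T;\mathcal{B})$, and $\bar x_\tau \overset{\ast}\rightharpoonup x$ in $L^\infty(0,T;\mathcal{V})$. The main obstacle is closing the limit in the nonlinear terms $\GRAD\Psi(\dot{\hat x}_\tau)$ and $\GRAD\mathcal{E}(t,\bar x_\tau)$: I will handle this by a Minty-type monotonicity argument based on the discrete energy inequality. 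Passing to the $\liminf$ in the discrete version of~\eqref{appendix:well-posedness-dne:energy-identity} via the lower semi-continuity of $\mathcal{E}$ and $\Psi$ shows that the candidate limit $x$ satisfies an energy inequality; combined with the weak convergence of $\GRAD\Psi(\dot{\hat x}_\tau)$ and strong convergence of $\bar x_\tau$ (so that $\GRAD\mathcal{E}(t,\bar x_\tau)$ converges, using continuity of $\GRAD\mathcal{E}$ together with the uniform bound), Minty's lemma identifies the weak limit as $\GRAD\Psi(\dot x)$, yielding~\eqref{preliminaries:gradient-flow-theorem:gradient-flow}. The chain rule for convex functionals then upgrades the inequality to the energy identity~\eqref{appendix:well-posedness-dne:energy-identity}, which in particular implies $\mathcal{E}(x(\cdot))\in L^\infty(0,T)$.

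Uniqueness under linearity and self-adjointness of $\GRAD\Psi$ or $\GRAD\mathcal{E}$ follows from testing the difference of two solutions $x_1,x_2$ against $x_1-x_2$ (respectively $\dot x_1-\dot x_2$) and integrating in time. The self-adjoint, linear operator produces an exact square $\tfrac{1}{2}\tfrac{d}{dt}\|\cdot\|^2$, while the remaining term is monotone (since $\GRAD\Psi$ and $\GRAD\mathcal{E}$ are monotone by convexity), so together with $x_1(0)=x_2(0)$ a Grönwall argument forces $x_1=x_2$. I expect the technical heart of the proof to lie in the discrete summation-by-parts for $f$ combined with the Grönwall closure, since this is what allows the weaker $\mathcal{V}^\star$-regularity of $f$ to be compensated by its stronger temporal regularity $f\in W^{1,p^\star}$, a point not covered directly by the classical theorems of~\cite{Colli1990,Colli1992}.
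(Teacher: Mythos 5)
Your proposal follows essentially the same route as the paper's proof: a minimizing-movement time discretization whose incremental problems are solved by Thm.~\ref{appendix:well-posedness:convex-minimization}, a discrete energy estimate obtained by testing minimality against the previous iterate, with summation by parts on the load term, Young's inequality, the bound on $\partial_t\mathcal{E}$ and a Gr\"onwall closure (precisely the step the paper identifies as the new ingredient compensating the weak $\mathcal{V}^\star$-regularity of $f$ by its $W^{1,p^\star}$ temporal regularity), followed by compactness and limit identification along the lines of Colli and Francfort--Mielke. The only cosmetic differences are that the paper minimizes over $\mathcal{B}$ with the energy extended by $+\infty$ off $\mathcal{V}$ and uses local time averages $f^n$ of the load rather than pointwise values $f(t^k)$, neither of which changes the argument.
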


\begin{proof}
 The proof is analogous to the proof of Thm.~1 by~\cite{Colli1992}, enhanced by discussions of the time-dependence of the energy functional by~\cite{Francfort2006}: First, the doubly non-linear evolution equation~\eqref{preliminaries:gradient-flow-theorem:gradient-flow} is discretized in time by consecutive convex minimization problems, and second, stability bounds are derived, and finally, compactness arguments are employed in order to pass to the limit, obtaining a solution to the time-continuous problem. Due to the weaker regularity assumptions on the load term, the second step of~\cite{Colli1992} is not applicable here. In the following, we derive stability for the time-discrete approximation under the above assumptions.
 
 As in~\cite{Colli1992,Francfort2006}, we use the minimizing movement scheme to discretize~\eqref{preliminaries:gradient-flow-theorem:gradient-flow} in time. Let $0=t_0<t_1<...<t_N=T$ of $[0,T]$ denote a partition of $[0,T]$ with constant time step size $\Delta t$. Set $x^0 = x(0)$ and define consecutively
 \begin{align*}
  x^n := \underset{x\in\mathcal{B}}{\mathrm{arg\,min}} \, \left\{\Delta t \, \Psi\left( \frac{x - x^{n-1}}{\Delta t} \right) + \tilde{\mathcal{E}}^n(x)\right\}
 \end{align*}
 where $f^n:= \tfrac{1}{\Delta t} \int_{t_{n-1}}^{t_n} f(t)\, dt$, and $\tilde{\mathcal{E}}^n :\mathcal{B} \rightarrow (-\infty,\infty]$ defined by
 \begin{align*}
  \tilde{\mathcal{E}}^n(x) = \left\{ \begin{array}{ll} \mathcal{E}(t_n,x) - \llangle f^n, x \rrangle & x \in \mathcal{V}, \\ \infty, &\mathrm{otherwise}, \end{array} \right.
 \end{align*}
 is a proper, convex, lower-semicontinuous function. By Thm.~\ref{appendix:well-posedness:convex-minimization}, $x^n$ is well-defined. Furthermore, for all $n$, it holds
 \begin{align*}
  \Delta t \, \Psi\left( \frac{x^n - x^{n-1}}{\Delta t} \right) + \tilde{\mathcal{E}}^n(x^n) \leq \tilde{\mathcal{E}}^n\left(x^{n-1}\right)
 \end{align*}
 and hence, by induction $x^n \in \mathcal{V}$ for all $n$, since $x^0\in\mathcal{V}$. Summing over all time steps, employing the definition of $\tilde{\mathcal{E}}$ and manipulating the sum over the load terms, yields
 \begin{align}
 \label{appendix:well-posedness-dne-aux:1}
  &\sum_n \Delta t \, \Psi\left( \frac{x^n - x^{n-1}}{\Delta t} \right) + \mathcal{E}(t_N,x^N) \\
  \nonumber
  &\quad\leq \mathcal{E}(0,x(0)) + \sum_n \int_{t_{n-1}}^{t_n} \partial_t \mathcal{E}(t,x^{n-1})\, dt \\
  \nonumber
  &\qquad+ \llangle f^0, x(0) \rrangle
  - \llangle f^N, x^N \rrangle 
  - \sum_n \Delta t \llangle \frac{f^n - f^{n-1}}{\Delta t}, x^{n-1} \rrangle.
 \end{align}
 As in~\cite{Francfort2006}, we employ the bound on $\partial_t\mathcal{E}$ together with a Gr\"onwall inequality and obtain
 \begin{align*}
  \sum_n \int_{t_{n-1}}^{t_n} \partial_t \mathcal{E}(t,x^{n-1})\, dt 
  \leq 
  C\left(1+\sum_n \Delta t \, \mathcal{E}(t_{n-1},x^{n-1}) \right),
 \end{align*}
 where $C>0$ depends on $T$ and the stability bound on $\partial_t\mathcal{E}$. Inserting into~\eqref{appendix:well-posedness-dne-aux:1}, and, furthermore, utilizing the assumptions on $\Psi$, $\mathcal{E}$ and $f$ yields for arbitrary $\delta >0$
 \begin{align*}
  &\sum_n \Delta t \left\| \frac{x^n - x^{n-1}}{\Delta t} \right\|_{\mathcal{B}}^{p_\psi} + \left| x^N \right|_\mathcal{V}^{p_\mathcal{E}} + \mathcal{E}(t_N,x^N) \\
  &\quad \leq C\left(1+\sum_n \Delta t \, \mathcal{E}(t_{n-1},x^{n-1}) \right) + \delta \left( 
  \left\| x^N \right\|_{\mathcal{V}}^{p} + \sum_n \Delta t \left\| x^{n-1} \right\|_{\mathcal{V}}^{p} 
  \right).
 \end{align*}
 where $C>0$ depends on $T$, $\delta$, the initial data, and regularity of the loading. Using Young's inequality and the definition of $\| \cdot \|_\mathrm{\mathcal{V}}$, it holds
 \begin{align*}
  &\sum_n \Delta t \left\| \frac{x^n - x^{n-1}}{\Delta t} \right\|_{\mathcal{B}}^{p_\psi} + \left| x^N \right|_\mathcal{V}^{p_\mathcal{E}} + \mathcal{E}(t_N,x^N) \\
  &\leq C\left(1+\sum_n \Delta t \, \mathcal{E}(t_{n-1},x^{n-1}) \right) \\
  &\qquad+ \delta \left(
  \left\| x^N \right\|_{\mathcal{B}}^{p_\psi} + \sum_n \Delta t \left\| x^{n-1} \right\|_{\mathcal{B}}^{p_\psi} + \left| x^N \right|_{\mathcal{V}}^{p_\mathcal{E}} + \sum_n \Delta t \left| x^{n-1} \right|_{\mathcal{V}}^{p_\mathcal{E}} 
  \right).
 \end{align*}
 By constructing a telescope sum, exploiting the convexity of $x\mapsto x^{p_\psi}$ and applying H\"older inequalities, we obtain
 \begin{align*}
  \left\| x^N \right\|_{\mathcal{B}}^{p_\psi} + \sum_n \Delta t \left\| x^{n-1} \right\|_{\mathcal{B}}^{p_\psi} 
  \leq 
  C \left(1 + \sum_n \Delta t \left\| \frac{x^n - x^{n-1}}{\Delta t} \right\|_{\mathcal{B}}^{p_\psi} \right)
 \end{align*}
 for $C>0$ depending on $p_\psi$, $x_0$ and $T$. Hence, for $\delta$ sufficiently small it holds
 \begin{align*}
  &\sum_n \Delta t \left\| \frac{x^n - x^{n-1}}{\Delta t} \right\|_{\mathcal{B}}^{p_\psi} + \left| x^N \right|_\mathcal{V}^{p_\mathcal{E}} + \mathcal{E}(t_N,x^N) \\
  &\quad\leq C\left(1+\sum_n \Delta t \, \mathcal{E}(t_{n-1},x^{n-1}) + \sum_n \Delta t \left| x^{n-1} \right|_{\mathcal{V}}^{p_\mathcal{E}} \right).
 \end{align*}
 Finally, by employing a Gr\"onwall inequality, we obtain uniform stability for the left hand side. Based on that, the proof can be continued along the lines of~\cite{Colli1992,Francfort2006}, utilizing compactness arguments in order to pass to the limit $\Delta t \rightarrow 0$ and obtain a solution to the time-continuous doubly non-linear evolution equation, that in particular satisfies the energy identity~\eqref{appendix:well-posedness-dne:energy-identity}.
\end{proof}

\begin{theorem}[Well-posedness for convex minimization~\cite{Ekeland1999}]\label{appendix:well-posedness:convex-minimization}
 Consider the problem
 \begin{align}
 \label{preliminaries:convex-minimization-problem}
  &\text{minimize }f(\x)\\
 \nonumber
  &\text{subject to }\x\in \mathcal{C}, 
 \end{align}
 where $f:\mathcal{X}\rightarrow \mathbb{R}$ is a proper, convex, lower semi-continuous function, and $\mathcal{C}\subset\mathcal{X}$ is non-empty, closed, convex subset of $\mathcal{X}$, a reflexive Banach space. If $\mathcal{C}$ is bounded or $f$ is coercive over $\mathcal{C}$, i.e., $f(x)\rightarrow \infty$ for $x\in\mathcal{C}$ with $\|x\|\rightarrow \infty$, then~\eqref{preliminaries:convex-minimization-problem} has a solution. It is unique if $f$ is strictly convex. 
\end{theorem}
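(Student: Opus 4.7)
The approach is the classical direct method of the calculus of variations, which here is particularly clean because every hypothesis we have is precisely one of the ingredients the direct method calls for: convexity, lower semi-continuity, reflexivity, and a compactness source (either boundedness of the feasible set or coercivity).

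First, I would set $m := \inf_{x \in \mathcal{C}} f(x) \in [-\infty, +\infty)$ and pick a minimizing sequence $\{x_n\} \subset \mathcal{C}$ with $f(x_n) \to m$. Boundedness of $\{x_n\}$ in $\mathcal{X}$ follows immediately in the first case from boundedness of $\mathcal{C}$, and in the second case from the fact that $f(x_n)$ is eventually at most $m+1$, so coercivity of $f$ over $\mathcal{C}$ forces $\|x_n\|$ to remain bounded. Reflexivity of $\mathcal{X}$ then yields, via the Eberlein--\v{S}mulian theorem, a subsequence (still indexed by $n$) and some $x^\star \in \mathcal{X}$ with $x_n \rightharpoonup x^\star$.

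Next, since $\mathcal{C}$ is convex and strongly closed, Mazur's lemma gives that $\mathcal{C}$ is weakly closed, so $x^\star \in \mathcal{C}$. The same Mazur argument upgrades the strong lower semi-continuity of the convex function $f$ to weak lower semi-continuity: every weak limit can be approximated in norm by a sequence of convex combinations of tails of $\{x_n\}$, on which $f$ is controlled by convexity. Consequently $f(x^\star) \le \liminf_n f(x_n) = m$, and combined with $x^\star \in \mathcal{C}$ this forces $f(x^\star) = m$. In particular $m \in \mathbb{R}$ because $f$ is real-valued, and $x^\star$ is a minimizer.

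Uniqueness under strict convexity is then a one-line argument: if $x^\star \ne y^\star$ were both minimizers, then $\tfrac{1}{2}(x^\star + y^\star) \in \mathcal{C}$ by convexity of $\mathcal{C}$, and strict convexity would give $f\bigl(\tfrac{1}{2}(x^\star+y^\star)\bigr) < \tfrac{1}{2}(f(x^\star) + f(y^\star)) = m$, contradicting the definition of $m$. The only step that is not essentially trivial is the passage from strongly lsc to weakly lsc for convex $f$; this is the standard "Mazur trick", and it is the main technical obstacle only in the sense that every other ingredient is immediate once this step is accepted.
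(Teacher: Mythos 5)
Your proof is correct: the paper itself states this theorem as a classical result imported from Ekeland--Temam and gives no proof of its own, and your direct-method argument (minimizing sequence, boundedness via bounded $\mathcal{C}$ or coercivity, weak sequential compactness from reflexivity, Mazur's lemma for weak closedness of $\mathcal{C}$ and weak lower semi-continuity of the convex lsc functional, strict convexity for uniqueness) is exactly the standard proof in that reference. The only cosmetic point is the phrase ``eventually at most $m+1$'', which should be read as ``eventually bounded above by some fixed real number'' to cover the a priori possibility $m=-\infty$, a case your final step then excludes since $f(x^\star)\le m$ with $f$ real-valued forces $m\in\mathbb{R}$.
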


\section{Alternating minimization for block-separable constrained convex minimization in infinitely dimensional Hilbert spaces}\label{appendix:section:alternating-minimization}

In~\cite{Beck2013}, the authors establish an abstract convergence result for alternating minimization, applied to a constrained, strongly convex minimization problem in finite dimensions. Furthermore, convexity and Lipschitz continuity are solely considered wrt.\ Euclidean norms. We generalize the abstract result, allowing for a constrained minimization problem in infinitely dimensional Hilbert spaces. Convexity and Lipschitz continuity are considered wrt.\ the semi-norms.

\paragraph{Hilbert space structure.}
Let $\mathcal{X} = \mathcal{X}_1 \times \mathcal{X}_2$ be a product of Hilbert spaces, equipped with an inner product $\llangle\cdot,\cdot\rrangle$. Assume it is induced by separate inner products $\llangle\cdot,\cdot\rrangle_{1}$ and $\llangle\cdot,\cdot\rrangle_{2}$ on $\mathcal{X}_1$ and $\mathcal{X}_2$, respectively, such that
\begin{align*}
 \llangle (x_1,x_2), (y_1,y_2) \rrangle = \llangle x_1, y_1 \rrangle_{1} + \llangle x_2,y_2 \rrangle_{2}, && (x_1,x_2),(y_1,y_2)\in\mathcal{X}_1\times\mathcal{X}_2.
\end{align*}
The inner product $\llangle\cdot,\cdot\rrangle$ acts naturally also as duality pairing on $\mathcal{X}^\star \times \mathcal{X}$. Additionally, let $|\cdot|_\star$ on $\mathcal{X}$ denote some semi-norm on $\mathcal{X}$.

\paragraph{Function properties.}
Let $f:\mathcal{X}\rightarrow \mathbb{R}$ be differentiable. We introduce two properties:
\begin{enumerate}[label=(\roman*)]
 \item We call $f$ \textit{strongly convex wrt.\ $|\cdot|_\star$} if there exists a constant $\sigma>0$ such that
 \begin{align}
 \label{appendix:strong-convexity}
  f(\bm{y}) \geq f(\bm{x}) + \llangle \GRAD f(\bm{x}),\bm{y} - \bm{x} \rrangle + \frac{\sigma}{2} | \bm{y} - \bm{x} |_\star^2,\quad \forall\bm{x},\bm{y}\in\mathcal{X}.
 \end{align}
 which is equivalent to (see, e.g.,~\cite{Nesterov2004})
 \begin{align*}
  \llangle \GRAD f(\bm{y}) - \GRAD f(\bm{x}),\bm{y} - \bm{x} \rrangle \geq \sigma | \bm{y} - \bm{x} |_\star^2,\quad \forall\bm{x},\bm{y}\in\mathcal{X}.
 \end{align*}

 \item We call the $k$-th block $\GRAD_k f$ of the gradient of $f$ \textit{Lipschitz continuous wrt.\ $|\cdot|_\star$} if there exists a constant $L_k<\infty$ such that for all $\bm{x}\in\mathcal{X}$ and $\bm{h}_k\in\mathcal{X}$, it holds
 \begin{align}
  \label{appendix:lipschitz-continuity}
  \llangle \GRAD_k f(\bm{x}+\bm{h}_k) - \GRAD_k f(\bm{x}), \bm{h}_k \rrangle \leq L_k | \bm{h}_k |_{\star,1}^2,
 \end{align}
 where $\bm{h}_1=(\tilde{h}_1,0)$ and $\bm{h}_2=(0,\tilde{h}_2)$ for some $\tilde{h}_k\in\mathcal{X}_k$, $k=1,2$. The condition~\eqref{appendix:lipschitz-continuity} is equivalent to (see, e.g.,~\cite{Nesterov2004})
 \begin{align}
  \label{appendix:lipschitz-continuity-2}
  f(\bm{x}+\bm{h}_k) \leq f(\bm{x}) + \llangle \GRAD_k f(\bm{x}), \bm{h}_k \rrangle + \frac{L_k}{2} | \bm{h}_k |_{\star,1}^2.
 \end{align}

\end{enumerate}

\paragraph{Alternating minimization.}
Let $\mathcal{X}$ as above and $f:\mathcal{X}\rightarrow \mathbb{R}$. Furthermore, let $\tilde{\mathcal{X}}_1\subset\mathcal{X}_1$ and $\tilde{\mathcal{X}}_2\subset\mathcal{X}_2$ be non-empty, convex subsets. We consider the constrained minimization problem
\begin{align}\label{appendix:minimization-problem}
 \underset{(x_1,x_2)\in\tilde{\mathcal{X}}_1\times\tilde{\mathcal{X}}_2}{\mathrm{inf}}\, f(x_1,x_2).
\end{align}
Under certain assumptions on $f$ and $|\cdot|_\star$, we can show global, linear convergence for alternating minimization, cf.\ Alg.~\ref{algorithm:alternating-minimization}.

\begin{algorithm}
 \caption{Single iteration of alternating minimization}
 \label{algorithm:alternating-minimization}
 \SetAlgoLined
 \DontPrintSemicolon
 
  \vspace{0.5em}
 
  Input: $\bm{x}^{i-1}=(x_1^{i-1},x_2^{i-1})\in\tilde{\mathcal{X}}_1\times\tilde{\mathcal{X}}_2$ \; \vspace{0.8em}
   
  Determine $x_1^{i} := \underset{x_1\in\tilde{\mathcal{X}}_1}{\mathrm{arg\,min}}\, f(x_1,x_2^{i-1})$\; \vspace{0.25em}

  Determine $x_2^{i} := \underset{x_2\in\tilde{\mathcal{X}}_2}{\mathrm{arg\,min}}\, f(x_1^{i},x_2)$\; \vspace{0.2em}
\end{algorithm}

\begin{lemma}[Linear convergence of alternating minimization]\label{appendix:lemma:alternating-minimization}
 Let $\tilde{\mathcal{X}}:=\tilde{\mathcal{X}}_1\times\tilde{\mathcal{X}}_2\subset\mathcal{X}$ as above, and let $f:\mathcal{X}\rightarrow \mathbb{R}$ be a differentiable function. Furthermore, let $|\cdot|_{\star,1}$ denote semi-norm on $\mathcal{X}$ satisfying:
 \begin{itemize}
  \item[$\bullet$] $|(x_1,x_2)|_{\star,1} \geq | (x_1,0) |_{\star,1}$ for all $(x_1,x_2)\in\mathcal{X}$,
  \item[$\bullet$] $f$ is strongly convex wrt.\ $|\cdot |_{\star,1}$ with constant $\sigma_1$,
  \item[$\bullet$] $\GRAD_1 f$ is Lipschitz continuous wrt.\ $|\cdot|_{\star,1}$ with Lipschitz constant $L_1$. 
 \end{itemize}
 Then the alternating minimization Alg.~\ref{algorithm:alternating-minimization} is globally, linearly convergent. In particular, let $(\bm{x}^i)_i\subset\tilde{\mathcal{X}}_1\times\tilde{\mathcal{X}}_2$ be the sequence generated by the alternating minimization Alg.~\ref{algorithm:alternating-minimization} for given initial value $\bm{x}^0\in\mathcal{X}$. And let $\bm{x}^\star\in\mathcal{X}$ denotes the unique solution of~\eqref{appendix:minimization-problem}. It holds for all $i\in\mathbb{N}$
 \begin{align*}
  f(\bm{x}^i) - f(\bm{x}^\star) &\leq \left(1 - \frac{\sigma_1}{L_1} \right) \,\left(f(\bm{x}^{i-1}) - f(\bm{x}^\star)\right) \leq \left( 1 - \frac{\sigma_1}{L_1} \right)^i \left( f(\bm{x}^0 ) - f(\bm{x}^\star)\right),\\
  f(\bm{x}^i) - f(\bm{x}^\star) &\leq \frac{L_1}{\sigma_1} \left( f(\bm{x}^{i-1}) - f(\bm{x}^i) \right).
 \end{align*}
 Assume there additionally exists a second semi-norm $|\cdot|_{\star,2}$ on $\mathcal{X}$ satisfying:
 \begin{itemize}
  \item[$\bullet$] $|(x_1,x_2)|_{\star,2} \geq | (0,x_2) |_{\star,2}$ for all $(x_1,x_2)\in\mathcal{X}$,
  \item[$\bullet$] $f$ is strongly convex wrt.\ $|\cdot |_{\star,2}$ with constant $\sigma_2$,
  \item[$\bullet$] $\GRAD_2 f$ is Lipschitz continuous wrt.\ $|\cdot|_{\star,2}$ with Lipschitz constant $L_2$. 
 \end{itemize}
 Then it holds for all $i\in\mathbb{N}$
 \begin{align*}
  f(\bm{x}^i) - f(\bm{x}^\star) &\leq \prod_{j=1}^2 \left(1 - \frac{\sigma_j}{L_j} \right)\,\left(f(\bm{x}^{i-1}) - f(\bm{x}^\star)\right) \leq \prod_{j=1}^2 \left( 1 - \frac{\sigma_j}{L_j} \right)^i \left( f(\bm{x}^0 ) - f(\bm{x}^\star)\right),\\
  f(\bm{x}^i) - f(\bm{x}^\star) &\leq \frac{\prod_{j=1}^2 \left(1 - \frac{\sigma_j}{L_j} \right)}{ 1 - \prod_{j=1}^2 \left(1 - \frac{\sigma_j}{L_j} \right) } \left( f(\bm{x}^{i-1}) - f(\bm{x}^{i}) \right).
 \end{align*}
\end{lemma}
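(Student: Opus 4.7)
The plan is to extend the two-block analysis of Beck and Tetruashvili~\cite{Beck2013} to the present semi-normed, infinite-dimensional, constrained setting by establishing a per-sub-step contraction of the energy gap $\phi_i := f(\bm x^i) - f(\bm x^\star)$ and chaining as appropriate. The structural ingredients are joint strong convexity~\eqref{appendix:strong-convexity} and blockwise Lipschitz descent~\eqref{appendix:lipschitz-continuity-2}, together with first-order optimality of each block sub-step, which couple through the semi-norm dominance assumptions $|(x_1, x_2)|_{\star,1} \geq |(x_1, 0)|_{\star,1}$ and (for the two-block case) $|(x_1, x_2)|_{\star,2} \geq |(0, x_2)|_{\star,2}$.

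For the first assertion, I would analyze the half-cycle from $\bm x^{i-1}$ (post previous block-$2$ update) to the intermediate $x^{i-1/2} := (x_1^i, x_2^{i-1})$. Joint strong convexity in $|\cdot|_{\star,1}$ at $\bm x^{i-1}$ tested against $\bm x^\star$, after dropping the block-$2$ gradient contribution via the variational optimality $\llangle \GRAD_2 f(\bm x^{i-1}), y_2 - x_2^{i-1}\rrangle_2 \geq 0$ (inherited from the previous iteration's block-$2$ sub-step), yields an upper bound on $\phi_{i-1}$ involving only $\GRAD_1 f(\bm x^{i-1})$ and $|\bm x^\star - \bm x^{i-1}|_{\star,1}$. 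Cauchy--Schwarz in the dual semi-norm and Young's inequality, exploiting the dominance assumption on $|\cdot|_{\star,1}$, reduce this to the error-bound estimate $\phi_{i-1} \leq \tfrac{1}{2\sigma_1}\| \GRAD_1 f(\bm x^{i-1}) \|_{\star,1,*}^2$. The Lipschitz descent~\eqref{appendix:lipschitz-continuity-2} applied at the exact block-$1$ minimizer furnishes the complementary bound $\| \GRAD_1 f(\bm x^{i-1}) \|_{\star,1,*}^2 \leq 2 L_1 (\phi_{i-1} - \phi_i^{1/2})$, with $\phi_i^{1/2} := f(x_1^i, x_2^{i-1}) - f(\bm x^\star)$. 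Substitution and rearrangement produce $\phi_i^{1/2} \leq (1 - \sigma_1/L_1) \phi_{i-1}$; the monotonic descent $\phi_i \leq \phi_i^{1/2}$ from the block-$2$ sub-step yields the per-iteration contraction. The a posteriori bound follows by writing $\phi_{i-1} = \phi_i + (\phi_{i-1} - \phi_i)$ in the contraction and rearranging.

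The \emph{main obstacle} is twofold. First, the Lipschitz descent inequality at the exact constrained minimizer must be adapted from its Euclidean projected-gradient form to the present semi-normed setting, which is handled via a three-point-lemma-type argument applied to the quadratic upper bound on $f$; the dual semi-norm $\|\cdot\|_{\star,1,*}$ must be shown to be well-defined on the range of $\GRAD_1 f(\bm x^{i-1})$, which follows because joint strong convexity forces $\GRAD f$ to annihilate the null space of $|\cdot|_{\star,1}$. Second, the block-$2$ variational optimality of $\bm x^0$ is not automatic for the initial iteration $i = 1$ and requires either a preliminary warmup block-$2$ update or a mild relaxation of the first-step bound. For the two-block assertion, the symmetric argument applied to the block-$1$-to-block-$2$ half-cycle (using block-$1$ optimality at $x^{i-1/2}$, joint strong convexity in $|\cdot|_{\star,2}$, and block-$2$ Lipschitz descent) yields $\phi_i \leq (1 - \sigma_2/L_2)\,\phi_i^{1/2}$; chaining delivers the product rate $\prod_j (1 - \sigma_j/L_j)$, and the refined a posteriori constant $\kappa/(1-\kappa)$ with $\kappa := \prod_j (1 - \sigma_j/L_j)$ follows by the identical rearrangement.
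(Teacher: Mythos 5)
Your skeleton (strong convexity plus the previous block-2 optimality to reduce to a block-1 model, block-1 Lipschitz descent for the progress, symmetrization for the product rate, rearrangement for the a posteriori bound) matches the paper's, but the route through the dual semi-norm of the partial gradient breaks down precisely where the constrained, semi-normed setting matters. First, the descent estimate $\|\GRAD_1 f(\bm{x}^{i-1})\|_{\star,1,*}^2 \leq 2L_1\bigl(\phi_{i-1}-\phi_i^{1/2}\bigr)$ is false when constraints are active: exact minimization over $\tilde{\mathcal{X}}_1$ only decreases $f$ by at least the best decrease of the \emph{feasible} quadratic model, not by $\tfrac{1}{2L_1}\|\GRAD_1 f\|_{\star,1,*}^2$ (take $f$ affine in $x_1$ on a bounded $\tilde{\mathcal{X}}_1$ with the iterate near the optimal face: the decrease is arbitrarily small while the gradient dual norm is fixed). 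Second, the dual semi-norm itself need not be finite, and your proposed fix is incorrect: strong convexity with respect to a semi-norm, \eqref{appendix:strong-convexity}, only yields monotonicity of $\GRAD f$ along null directions of $|\cdot|_{\star,1}$, not that $\GRAD f$ (let alone $\GRAD_1 f$ at a non-optimal iterate) annihilates that null space; e.g.\ $f(x_1,x_2)=x_1^2+x_2$ with $|(x_1,x_2)|_{\star}=|x_1|$ is strongly convex in this semi-norm yet its gradient has a fixed nonzero component along the degenerate direction, so the dual semi-norm is $+\infty$. Since the whole point of the lemma is to allow genuine semi-norms and affine constraints (this is what the applications to the undrained and fixed-stress splits require), this is a genuine gap, not a technicality.

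The paper's proof avoids dual norms altogether. From \eqref{appendix:strong-convexity} tested over $\tilde{\mathcal{X}}$ and the block-2 optimality of $x_2^i$, one bounds $f(\bm{x}^i)-f(\bm{x}^\star)$ by minus the infimum over feasible block-1 directions of $\llangle \GRAD_1 f(\bm{x}^i),h\rrangle_1+\tfrac{\sigma_1}{2}|(h,0)|_{\star,1}^2$; then, using convexity of $\tilde{\mathcal{X}}_1$ and $\sigma_1\le L_1$, the substitution $h\mapsto \tfrac{L_1}{\sigma_1}h$ shows this infimum is controlled by $\tfrac{L_1}{\sigma_1}$ times the corresponding infimum with weight $\tfrac{L_1}{2}$, which in turn is bounded by the actual decrease $f(\bm{x}^i)-f(x_1^{i+1},x_2^i)$ via \eqref{appendix:lipschitz-continuity-2} evaluated at the exact constrained minimizer. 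This scaling comparison of the two feasible model infima is the key idea missing from your plan; if you replace the dual-norm detour by it, the rest of your argument (chaining the two half-steps and the a posteriori rearrangement) goes through as you describe. Your remark about the very first iteration is fair but minor and shared with the paper: the contraction estimate uses block-2 optimality of the preceding iterate, so the bound is cleanly available from the first completed cycle onward (or after a warm-up block-2 solve), which does not affect the stated global linear convergence.
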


\begin{proof}
 The proof follows the same line of argumentation as the proof of Theorem~5.2~\cite{Beck2013}, but carefully tailored to the more general setting used above.
 
 \paragraph{Consequence from strong convexity.}
  
 Consider~\eqref{appendix:strong-convexity} for $\bm{x}=\bm{x}^i$. Minimizing both sides wrt.\ $\bm{y}\in\tilde{\mathcal{X}}$ yields
 \begin{align}
  \label{appendix:proof-1:auxiliary-1}
    f(\bm{x}^i) - f(\bm{x}^\star) 
    &\leq - \underset{\bm{y}\in\tilde{\mathcal{X}}}{\mathrm{inf}} \left( \llangle \GRAD f(\bm{x}^i),\bm{y} - \bm{x}^i \rrangle + \frac{\sigma_1}{2} | \bm{y} - \bm{x}^i |_{\star,1}^2 \right).
 \end{align}
 By definition of alternating minimization it holds
 \begin{align*}
  \langle \GRAD_2 f(\bm{x}^i), y_2 - x_2^i \rangle_2 \geq 0\quad \forall y_2\in\tilde{\mathcal{X}}_2.
 \end{align*} 
 Hence, together with~(A1),~\eqref{appendix:proof-1:auxiliary-1} becomes
  \begin{align}
  \label{appendix:proof-1:auxiliary-1b}
    f(\bm{x}^i) - f(\bm{x}^\star) 
    &\leq - \underset{y_1\in\tilde{\mathcal{X}}_1}{\mathrm{inf}} \left( \llangle \GRAD_1 f(\bm{x}^i),y_1 - x_1^i \rrangle_1 + \frac{\sigma_1}{2} \left| (y_1 - x_1^i,0) \right|_{\star,1}^2 \right).
 \end{align}
 Since $\tilde{\mathcal{X}}_1$ is convex and $0<\sigma_1\leq L_1$ by definition, it holds for all $x_1\in\tilde{\mathcal{X}}_1$
\begin{align*}
  \mathcal{B}_{L_1/\sigma_1}(x_1) 
  := 
  \left\{ h \in \mathcal{X}_1 \, \left| \, x_1 + \frac{L_1}{\sigma_1} h \in \tilde{\mathcal{X}}_1 \right.\right\} 
  &\subset
  \left\{ h \in \mathcal{X}_1 \, \left| \, x_1 + h \in \tilde{\mathcal{X}}_1 \right.\right\} 
  =:
  \mathcal{B}_{1}(x_1)
 \end{align*}
 Hence, we obtain 
 \begin{align*}
  &\underset{y_1\in\tilde{\mathcal{X}}_1}{\mathrm{inf}} \left( \llangle \GRAD_1 f(\bm{x}^i),y_1 - x_1^i \rrangle_1 + \frac{\sigma_1}{2} \left| (y_1 - x_1^i,0) \right|_{\star,1}^2 \right) \\
  &\quad =
  \underset{h \in \mathcal{B}_{L_1/\sigma_1}(x_1^i)}{\mathrm{inf}} \left( \llangle \GRAD_1 f(\bm{x}^i),\frac{L_1}{\sigma_1} h \rrangle_1 + \frac{\sigma_1}{2} \left| \left( \frac{L_1}{\sigma_1} h,0\right) \right|_{\star,1}^2 \right) \\
  &\quad \geq 
  \frac{L_1}{\sigma_1} \, \underset{h \in \mathcal{B}_{1}(x_1^i)}{\mathrm{inf}} \left( \llangle \GRAD_1 f(\bm{x}^i),h \rrangle_1 + \frac{\sigma_1}{2} \left| (h,0) \right|_{\star,1}^2 \right).
 \end{align*}
 Altogether, it holds 
 \begin{align}
  \label{appendix:proof-1:auxiliary-1c}
      f(\bm{x}^i) - f(\bm{x}^\star) 
      \leq 
         & - \frac{L_1}{\sigma_1}\, \underset{x_1^i+h\in\tilde{\mathcal{X}}_1}{\mathrm{inf}} \left( \llangle \GRAD_1 f(\bm{x}^i), h \rrangle_1 + \frac{L_1}{2} \left| (h,0) \right|_{\star,1}^2 \right).
 \end{align}

 \paragraph{Consequence from Lipschitz continuity.}
 
 Consider~\eqref{appendix:lipschitz-continuity-2} for $\bm{x}=\bm{x}^i$. Minimizing both sides wrt.\ $\bm{h}_1=(h_1,0)$ such that $x_1^i+\bm{h}_1\in\tilde{\mathcal{X}}_1$ yields
 \begin{align}
  \label{appendix:proof-1:auxiliary-4}
  f(\bm{x}^i) - f(x_1^{i+1},x_2^i) 
  &\geq  - \underset{x_1^i+h_1\in\tilde{\mathcal{X}}_1}{\mathrm{inf}} \left( \llangle \GRAD_1 f(\bm{x}^i), h_1 \rrangle_1 + \frac{L_1}{2} \left| (h_1,0) \right|_{\star,1}^2 \right). 
 \end{align}

 \paragraph{Consequences for alternating minimization.}
 
 By putting together~\eqref{appendix:proof-1:auxiliary-1c} and~\eqref{appendix:proof-1:auxiliary-4}, and exploiting the definition of alternating minimization, we obtain the \textit{a posteriori} estimate
 \begin{align*}
  &f(\bm{x}^i) - f(\bm{x}^\star)  \\
  &\quad \leq \frac{L_1}{\sigma_1} \left( f(\bm{x}^i) - f(x_1^{i+1},x_2^i) \right) \\
  &\quad \leq \frac{L_1}{\sigma_1} \left( f(\bm{x}^i) - f(\bm{x}^{i+1}) \right).
 \end{align*}
 Adding and subtracting $f(\bm{x}^\star)$ on the right hand side and reordering terms, yields
 \begin{align*}
  f(\bm{x}^{i+1}) - f(\bm{x}^\star) \leq \left(1 - \frac{\sigma_1}{L_1} \right) \left( f(x_1^{i+1},x_2^i) - f(\bm{x}^\star) \right) \leq \left(1 - \frac{\sigma_1}{L_1} \right) \left( f(\bm{x}^i) - f(\bm{x}^\star) \right).
 \end{align*}
 The \textit{a priori} and \textit{a posteriori} results of the first part follow immediately. 
 
 The second part of the thesis is proved analogously with focus on the second step of the alternating minimization algorithm. By a symmetry argument it follows
 \begin{align*}
  f(x_1^{i+1},x_2^i) - f(\bm{x}^\star) \leq \frac{L_2}{\sigma} \left( f(x_1^{i+1},x_2^i) - f(\bm{x}^{i+1}) \right).
 \end{align*}
 Hence, we obtain the \textit{a priori} result
 \begin{align*}
  f(\bm{x}^{i+1}) - f(\bm{x}^\star) \leq \left(1 - \frac{\sigma_1}{L_1} \right) \left( f(x_1^{i+1},x_2^i) - f(\bm{x}^\star) \right) \leq \left(1 - \frac{\sigma_1}{L_1} \right) \left(1 - \frac{\sigma_2}{L_2} \right) \left( f(\bm{x}^i) - f(\bm{x}^\star) \right).
 \end{align*}
 Reformulation yields the \textit{a posteriori} result of the second part of the assertion.

\end{proof}

\newpage 

\section{Nomenclature}\label{appendix:notation}
\begin{table}[!h]
\begin{tabular}{ll}
 \multicolumn{2}{l}{\textbf{Space and  time}}\\
 $x$ & Spatial coordinate \\
 $t$ & Time \\
 $d$ & Space dimension \\
 $\Omega$ & Domain \\
 $\Gamma$ & Boundary of $\Omega$ \\
 $C_\Omega$ & Poincar\'e constant \\
 $T$ & Final time \\
 $\Delta t$ & Time increment\\
 $t_n$ & n-th time step \\[10pt]
 
 \multicolumn{2}{l}{\textbf{Generalized gradient flows}}\\
 $\mathcal{X}$ & State space \\
 $\mathcal{P}_{\dot{\mathcal{X}}}$ & Process space \\
 $\mathcal{E}$ & Free energy \\
 $\mathcal{D}$ & Dissipation potential \\
 $\mathcal{P}_\mathrm{ext}$ & External work rate \\[10pt]

 \multicolumn{2}{l}{\textbf{Physical fields}}\\
 $\u$ & Structural displacement \\
 $\eps{\u}$ & Linear strain / symmetric gradient of $\u$ \\
 $\stress$ & Total stress \\
 $\stress^\mathrm{d}$ & Deviatoric stress \\
 $\stress^\mathrm{h}$ & Hydrostatic stress \\
 $\stress_\mathrm{eff}$ & Effective stress\\
 $\bm{\zeta}$ & Rotation \\
 $\fluidmass$ & Fluid content \\
 $p$  & Fluid pressure \\
 $\flux$ & Volumetric flux\\
 $S$ & Entropy \\
 $\entropyflux$ & Entropy flux \\
 $\viscostrain$ & Visco-elastic strain \\
 $\flux_{\int},\ \entropyfluxint$ & Accumulated volumetric and entropy fluxes \\
  \end{tabular}
\end{table}
 
 \newpage

\begin{table}[!h]
\begin{tabular}{ll} 
 
 \multicolumn{2}{l}{\textbf{External sources}}\\
 $\masssource$ & Source for mass conservation \\
 $\entropysource$ & Entropy source \\
 $\intmasssource,\ \intentropysource$ & Accumulated mass and entropy sources\\
 $\fext$ & External body force acting on the bulk \\
 $\gext$ & External body force acting on the fluid \\
 $\u_\Gamma$ & Prescribed displacement \\
 $\stress_{\Gamma,\mathrm{n}}$ & Prescribed surface force onto boundary\\
 $p_\Gamma$ & Prescribed pressure \\
 $q_{\Gamma,\mathrm{n}}$ & Prescribed normal volumetric flux \\
 $T_\Gamma$ & prescribed temperature \\
 $j_{\Gamma,\mathrm{n}}$ & Prescribed normal entropy flux \\[10pt]

 \multicolumn{2}{l}{\textbf{Function spaces}}\\
 $\Omega$ & Porous medium \\
 $d$ & Spatial dimension \\
 $\mathcal{X}^\star$ & Dual space of some function space $\mathcal{X}$ \\
 $\mathcal{X}^n$ & Space $\mathcal{X}$ evaluated at time $t_n$ \\
 $\mathcal{X}_0$ & Tangent space of some function space $\mathcal{X}$\\
 $\mathcal{V}$ & Space for structural displacement \\
 $\mathcal{S}$ & Space for total stress including the balance of momentum \\
 $\tilde{\mathcal{S}}$ & Space for total stress without the balance of momentum \\
 $\bm{Q}_\mathrm{AS}$ & Space of skew-symmetric tensors in $\mathbb{R}^{d \times d}$\\
 $\mathcal{Q},\tilde{\mathcal{Q}}$ & Space for fluid pressure\\
 $\Pi_{\tilde{\mathcal{Q}}}$ & Orthogonal projection onto $\tilde{\mathcal{Q}}$ \\
 $\mathcal{Z}$ & Space for volumetric flux \\
 $\mathcal{Z}_{\int}$ & Space for accumulated flux\\
 $\mathcal{W}$ & Space for entropy flux \\
 $\mathcal{W}_{\int}$ & Space for accumulated entropy flux \\
 $\mathcal{T}$ & Space for visco-elastic strains\\[10pt]

 \multicolumn{2}{l}{\textbf{Material parameters}}\\
 $\mathbb{C}$, $\mathbb{C}_\mathrm{v}$, $\mathbb{C}_\mathrm{v}'$ & Stiffness tensors \\
 $\mathbb{A}$ & Compliance tensor \\
 $\mathcal{C}_\mathrm{v}$ & Generalized stiffness tensor\\
 $\mathcal{A}_\mathrm{v}$ & Generalized compliance tensor\\
 $\mu,\ \lambda$ & Lam\'e parameters \\
 $\mu_\mathrm{v},\ \lambda_\mathrm{v}$, $\mu_\mathrm{v}',\ \lambda_\mathrm{v}'$ & Visco-elasticity-specific Lam\'e parameters \\
 $E$ & Young's modulus \\
 $\nu$ & Poisson's ratio \\
 $K_\mathrm{dr}$ & Drained bulk modulus \\
 $W(\eps{\u})$ & Strain energy density \\
 $\tfrac{1}{M}$ & Storage coefficient \\
 $b(p)$ & Nonlinear compressibility \\
 $\alpha,\ \alpha_\mathrm{v},\ \alpha_\mathrm{T}$ & Biot coefficients\\
 $\alpha_\varphi$ & Thermo-hydro coupling coefficient \\
 $\permeability$ & Hydraulic conductivity / permeability \\
 $\conductivity$ & Thermal conductivity \\
 $\nu(|\flux|)$ & Fluid viscosity \\
 $C_\mathrm{d}$ & Total volumetric heat capacity \\
\end{tabular} 
\end{table}

\section*{Acknowledgements}
This work is supported in part by the Research Council of Norway Project
250223. The authors also acknowledge the support from the University of Bergen. 

\bibliographystyle{ieeetr}      

%
%

\end{document}